\numberwithin{equation}{section}
\numberwithin{equation}{section}
\theoremstyle{plain}
\newtheorem{theorem}{Theorem}[section]
\newtheorem{lemma}[theorem]{Lemma}
\theoremstyle{definition}
\newtheorem{definition}[theorem]{Definition}
\theoremstyle{remark}
\newtheorem{remark}[theorem]{\bf{Remark}}
\def\DataSig{DataS{\i}g }
\newcommand{\cO}{\ensuremath{{\cal O}}}
\newcommand{\m}{\ensuremath{{\cal M}}}
\newcommand{\n}{\ensuremath{{\cal N}}}
\newcommand{\ca}{\ensuremath{{\cal A}}}
\newcommand{\cf}{\ensuremath{{\cal F}}}
\newcommand{\cl}{\ensuremath{{\cal L}}}
\newcommand{\cv}{\ensuremath{{\cal V}}}
\newcommand{\ti}{\tilde}
\newcommand{\al}{\alpha}
\newcommand{\be}{\beta}
\newcommand{\de}{\delta}
\newcommand{\Tau}{{\cal T}}  
\renewcommand{\th}{\theta}
\newcommand{\vph}{\varphi}
\newcommand{\ep}{\varepsilon}
\newcommand{\R}{\ensuremath{{\mathbb R}}}
\newcommand{\B}{\ensuremath{{\mathbb B}}}
\newcommand{\Z}{\ensuremath{{\mathbb Z}}}
\newcommand{\ovB}{\ensuremath{{ \overline{ \mathbb B }}}}
\newcommand{\bA}{{\bf A }}
\newcommand{\bB}{{\bf B}}
\newcommand{\bE}{{\bf E}}
\newcommand{\bI}{{\bf I}}
\newcommand{\bII}{{\bf II}}
\DeclareMathOperator{\inj}{inj}
\DeclareMathOperator{\dist}{dist}
\DeclareMathOperator{\Span}{Span}
\DeclareMathOperator{\proj}{proj}
\DeclareMathOperator{\ovSpan}{\overline{Span}}
\DeclareMathOperator{\Reach}{Reach}
\DeclareMathOperator{\argmax}{argmax}
\DeclareMathOperator{\argmin}{argmin}
\DeclareMathOperator{\ord}{ord}
\DeclareMathOperator{\pack}{pack}
\DeclareMathOperator{\cov}{cov}
\newcommand{\support}{{\rm support}}
\newcommand{\beq}{\begin{equation}}
\newcommand{\eeq}{\end{equation}}
\newcommand{\beqa}{\begin{equation}\begin{aligned}}
\newcommand{\eeqa}{\end{aligned}\end{equation}}
\newcommand{\brmk}{\begin{rmk}}
\newcommand{\ermk}{\end{rmk}}
\newcommand{\partref}[1]{\hbox{(\csname @roman\endcsname{\ref{#1}})}}
\newcommand{\Lip}{{\mathrm{Lip}}}
\newcommand{\twopartdef}[4]
{
	\left\{
		\begin{array}{ll}
			#1 & \mbox{if } #2 \\ 
			#3 & \mbox{if } #4
		\end{array}
	\right.
}
\def\dual#1{\expandafter\dual@aux#1\@nil}
\def\dual@aux#1/#2\@nil{\begin{tabular}{@{}c@{}}#1\\#2\end{tabular}}
\def\three#1{\expandafter\three@aux#1\@nil}
\def\three@aux#1/#2/#3\@nil{\begin{tabular}{@{}c@{}c@{}}#1\\#2\\#3\end{tabular}}
\def\four#1{\expandafter\four@aux#1\@nil}
\def\four@aux#1/#2/#3/#4\@nil{\begin{tabular}{@{}c@{}c@{}c@{}}#1\\#2\\#3\\#4\end{tabular}}
\title{Higher Order Lipschitz Greedy Recombination Interpolation 
Method (HOLGRIM)}
\author{Terry Lyons and Andrew D. McLeod}
\date{\today}
\begin{document}
\usetagform{red}
\maketitle

\begin{abstract}
In this paper we introduce the 
\emph{Higher Order Lipschitz Greedy Recombination Interpolation 
Method} (HOLGRIM) for finding sparse approximations of 
$\Lip(\gamma)$ functions, in the sense of Stein, given as a linear
combination of a (large) number of simpler $\Lip(\gamma)$ functions.
HOLGRIM is developed as a refinement of the 
\emph{Greedy Recombination Interpolation Method} (GRIM) in 
the setting of $\Lip(\gamma)$ functions.
HOLGRIM combines dynamic growth-based interpolation techniques 
with thinning-based reduction techniques in a data-driven fashion.
The dynamic growth is driven by a greedy selection algorithm in 
which multiple new points may be selected at each step.
The thinning reduction is carried out by \emph{recombination}, 
the linear algebra technique utilised by GRIM.
We establish that the number of non-zero weights for the 
approximation returned by HOLGRIM is controlled by a particular
packing number of the data. The level of data concentration 
required to guarantee that HOLGRIM returns a good sparse 
approximation is decreasing with respect to the regularity parameter
$\gamma > 0$.
Further, we establish complexity cost estimates verifying that
implementing HOLGRIM is feasible.
\end{abstract}

{\small \tableofcontents}

\section{Introduction}
\label{intro}
Approximating the behaviour of a complex system via a 
linear combination of simpler functions is a central challenge
within machine learning. Models following this guiding ethos
have, for example, been used 
for facial recognition \cite{BGLT97}, 
COVID-19 deaths prediction \cite{BS22}, 
image recognition \cite{HRSZ15,HKR15,KTT18},
language translation \cite{CDLT18},
anomaly detection \cite{CCFLS20,Arr24}, 
landmark-based action recognition \cite{JLNSY17,JNYZ24}, 
emotion recognition \cite{LLNNSW19}, 
early sepsis detection \cite{HKLMNS19,HKLMNS20}, 
Bipolar and Borderline Personality Disorder classification 
\cite{AGGLS18,LLNSTWW20,LLSVWW21}, 
longitudinal language modelling \cite{BCKLLT24}, 
and learning solutions to differential equations 
\cite{FKLM20,FKLMS21,FKLLO21,FKLL21,CLLMQW24}.

A priori, considering a larger number of simple functions 
enables one to capture the affects of more complicated systems.
However, a large number of simple functions can lead to a 
linear combination with a high computational complexity.
One approach to reducing this computational complexity is to 
find a sparse approximation of the linear combination of 
functions.
Sparse representations have been applied to tasks including, 
for example, image processing \cite{EMS08,BMPSZ08,BMPSZ09},
data assimilation \cite{MM13}, 
sensor placement in nuclear reactors \cite{ABGMM16,ABCGMM18},
DNA denoising \cite{KK20}, and inference within machine 
learning \cite{ABDHP21,NPS22}.

Numerous techniques have been developed for finding sparse 
approximations, including 
\emph{Least Absolute Shrinkage and Selection Operator} (LASSO) 
regression 
\cite{CM73,SS86,Tib96,LY07,DGOY08,GO09,XZ16,TW19}, the
Empirical Interpolation Method \cite{BMNP04, GMNP07, MNPP09}, 
its subsequent generalisation the Generalised Empirical 
Interpolation Method (GEIM) \cite{MM13, MMT14}, 
Pruning \cite{Ree93,AK13,CHXZ20,GLSWZ21}, 
Kernel Herding \cite{Wel09a,Wel09b,CSW10,BLL15,BCGMO18,TT21,PTT22},
Convex Kernel Quadrature \cite{HLO21}, and 
Kernel Thinning \cite{DM21a,DM21b,DMS21}.
In this paper we focus on the 
\emph{Greedy Recombination Interpolation Method} (GRIM) 
introduced in \cite{LM22}.

GRIM is a technique for finding sparse approximations of 
linear combinations of functions that 
combines dynamic growth-based interpolation techniques
with thinning-based reduction techniques to inductively 
refine a sequence of approximations. 
Theoretical guarantees for the performance of GRIM are 
established in Section 6 of \cite{LM22}, whilst empirical 
demonstrations of its performance are provided by the numerical 
examples covered in Section 7 of \cite{LM22}.
It is established in \cite{LM22} that GRIM can be applied in 
several familiar settings including kernel quadrature 
and approximating sums of continuous functions; see Section 2 
in \cite{LM22}.

For the readers convenience, we briefly recall the 
general sparse approximation problem that GRIM is designed 
to tackle \cite{LM22}.
Let $X$ be a real Banach space and 
$\n \in \Z_{\geq 1}$ be a (large) positive integer. Assume that 
$\cf = \{ f_1 , \ldots , f_{\n} \} \subset X$ is a collection
of non-zero elements, and that $a_1 , \ldots , a_{\n} \in \R 
\setminus \{0\}$. Consider the element $\vph \in X$ defined 
by $\vph := \sum_{i=1}^{\n} a_i f_i$. 
Let $X^{\ast}$ denote the dual of $X$ and suppose that
$\Sigma \subset X^{\ast}$ is a finite subset with
cardinality $\Lambda \in \Z_{\geq 1}$. 
We adopt the same terminology as used in \cite{LM22} that the 
set $\cf$ consists of the
\emph{features} whilst the set $\Sigma$ consists of \emph{data}.
Then GRIM is designed to tackle the following sparse 
approximation problem.
Given $\ep > 0$, find an element 
$u = \sum_{i=1}^{\n} b_i f_i\in \Span(\cf) \subset X$ 
such that the cardinality of the set 
$\{ i \in \{1, \ldots , \n \} : b_i \neq 0 \}$ is 
\emph{less} than $\n$ and that $u$ is close to $\vph$ 
throughout $\Sigma$ in the sense that, for every 
$\sigma \in \Sigma$, we have $|\sigma(\vph-u)| \leq \ep$.

We briefly recall the strategy of the \textbf{Banach GRIM} 
algorithm developed in \cite{LM22} to tackle this problem.
The \textbf{Banach GRIM} algorithm is a hybrid combination of 
the dynamic growth of a greedy selection algorithm, in a 
similar spirit to
GEIM \cite{MM13, MMT14, MMPY15}, with the thinning reduction 
of recombination that underpins the successful
convex kernel quadrature approach of \cite{HLO21}. 
A collection of linear functionals $L \subset \Sigma$ is
greedily grown during the \textbf{Banach GRIM} algorithm.
After each extension of $L$, recombination is applied to 
find an approximation of $\vph$ that coincides with $\vph$ 
throughout $L$ (cf. the \emph{Recombination Thinning Lemma 3.1} 
in \cite{LM22}).
The subset $L$ is extended by first fixing an integer 
$m \in \Z_{\geq 1}$, and then adding to $L$ the $m$ linear 
functionals from $\Sigma$ achieving the largest absolute value 
when applied to the difference between 
$\vph$ and the current approximation of $\vph$ (cf. Section 4 of 
\cite{LM22}).
These steps are iteratively applied a set number of times 
during the \textbf{Banach GRIM} algorithm detailed in Section 4 
of \cite{LM22}.

In this article we consider the use of the 
\textbf{Banach GRIM} algorithm for finding 
sparse approximations of sums of continuous functions.
That is, we assume that the Banach space $X$ in the formulation 
above can be embedded into a space of continuous functions.
We assume that $X$ embeds into a space of continuous
functions rather than assuming $X$ is itself a space of continuous 
functions since continuity alone is insufficient regularity for 
interpolation methods.
Interpolation methods, broadly speaking, work on the premise that 
knowing the value of a function at a point informs one about the 
functions values at nearby points.

However, this is \emph{not} true for continuous functions.
Indeed, suppose that $\Omega \subset \R^d$ and $f \in C^0(\Omega)$ 
is a real-valued continuous function $\Omega \to \R$.
Assume that $p \in \Omega$ and $f(p)=0$.
Then, regardless of how close a point $q \in \Omega \setminus \{p\}$
is to $p$, we cannot obtain any better bound for the value 
$f(q)$ than the immediate naive estimate that 
$|f(q)| \leq ||f||_{C^0(\Omega)}$. Knowing that $f$ vanishes at 
$p$ does not enable us to conclude that $f$ must remain small on 
some definite neighbourhood of the point $p$.

A consequence of this simple observation is that the 
theoretical guarantees for the performance of the 
\textbf{Banach GRIM} algorithm 
established in Theorem 6.2 in \cite{LM22} are essentially 
useless for the case that $X = C^0(\Omega)$ and $\Sigma$
is taken to be the finite collection of point masses in 
the dual-space $C^0(\Omega)^{\ast}$ associated to a finite 
subset of $\Omega$.
Given $p,q \in \Omega$ with $p \neq q$, the existence of a 
continuous function $f \in C^0(\Omega)$ with 
$||f||_{C^0(\Omega)} = 1$ and $f(p)=1$ and $f(q)=-1$ means that
the point masses $\de_p , \de_q \in C^0(\Omega)^{\ast}$ satisfy
\beq
    \label{eq:intro_point_mass_sep}
        \left|\left| \de_p - \de_q 
        \right|\right|_{C^0(\Omega)^{\ast}} 
        = \sup \left\{ \left| \left( \de_p - \de_q \right)(f) 
        \right| ~:~ f \in C^0(\Omega) 
        \text{ with } ||f||_{C^0(\Omega)} = 1 \right\} = 2.
\eeq
Consequently, if $\ep/2C < 2$ then \eqref{eq:intro_point_mass_sep}
means that the upper bound $N$ on the maximum number of steps 
the \textbf{Banach GRIM} algorithm 
can complete without terminating established 
in Theorem 6.2 in \cite{LM22} is simply the cardinality 
of $\Sigma$, i.e. $N = \#(\Sigma)$.
Thus Theorem 6.2 in \cite{LM22} provides only the 
obvious (and essentially useless) guarantee that the 
\textbf{Banach GRIM} 
algorithm will return an approximation that is within 
$\ep$ of $\vph$ at every $\sigma \in \Sigma$ once we have grown 
the collection of linear functionals $L \subset \Sigma$ at which we 
require the approximation to match $\vph$ 
(see Section 4 in \cite{LM22}) to be the entirety of $\Sigma$.

In this article we overcome the limitations outlined above by 
achieving the following main goals.
\vskip 4pt
\noindent
\textbf{MAIN GOALS}
\begin{enumerate}
    \item[(1)]\label{paper_goal_1} 
    Fix a choice of a Banach space $X$ of functions
    exhibiting an appropriate level of regularity from the 
    perspective of interpolation.
    \item[(2)]\label{paper_goal_2}
    Adapt the \textbf{Banach GRIM} algorithm from \cite{LM22}
    to the setting of the particular choice of the Banach space
    $X$ determined in \textbf{MAIN GOALS} (\ref{paper_goal_1}).
\end{enumerate}
\vskip 4pt
\noindent
From the perspective of interpolation, we want the notion of 
regularity determined by our choice of Banach space $X$ to 
ensure, in a quantified sense, that for
$f \in X$ the knowledge of the value of $f$ 
at a point determines its values at nearby points up to 
an arbitrarily small error.
Moreover, given our machine learning motivations, we want 
the notion of regularity determined by our choice of Banach 
space $X$ to make sense on finite subsets.

The class of $\Lip(\gamma)$ functions, for some 
$\gamma > 0$, in the sense of Stein \cite{Ste70}
exhibit an appropriate level of regularity.
This notion of regularity is central to the study of 
\emph{rough paths} initiated by the first author in 
\cite{Lyo98}; an introduction to the theory of rough paths 
may be found in \cite{CLL04}, for example. 
Rough path theory has subsequently underpinned numerous 
machine learning techniques developed through the use of 
the \emph{signature} of a path as a feature map; an introductory 
coverage of such signature methods may be found in the survey 
articles \cite{CK16} and \cite{LM22B}, for example.
Moreover, $\Lip(\gamma)$ regularity is essential in the 
recent efforts to extend the theory of rough paths to the setting 
of manifolds \cite{CLL12,BL22}, 
has been combined with classical ODE techniques to obtain 
the terminal solutions of \emph{Rough Differential Equations} (RDEs) 
\cite{Bou15,Bou22}, 
and is central to the introduction of 
\emph{Log Neural Controlled Differential Equations} (Log-NCDEs) 
in \cite{CLLMQW24}.

We consider the notion of $\Lip(\gamma)$ regularity in 
similar generality to that considered in \cite{LM24}.
In particular, we let $V$ and $W$ be finite-dimensional 
real Banach spaces, 
$\m \subset V$ be an arbitrary closed subset, 
$\gamma > 0$, and consider
$X := \Lip(\gamma,\m,W)$ to be the space of $\Lip(\gamma)$ 
functions from $\m$ to $W$ defined, for example, 
in Definition 2.2 of \cite{LM24} 
(cf. Definition \ref{lip_k_def} in this paper).
The class $\Lip(\gamma,\m,W)$ is a generalisation of the 
more familiar notion of $\gamma$-H\"{o}lder continuous 
functions $\m \to W$ in the following sense. 
For $\gamma \in (0,1]$ the space $\Lip(\gamma,\m,W)$
coincides with the space of bounded $\gamma$-H\"{o}lder continuous 
functions $\m \to W$.
That is, $\psi \in \Lip(\gamma,\m,W)$ means
that $\psi : \m \to W$ and there exists a 
constant $C > 0$ such that whenever $x,y \in \m$ 
we have both $||\psi(x)||_W \leq C$ and
$||\psi(y) - \psi(x)||_W \leq C ||y-x||_V^{\gamma}$.
For $\gamma > 1$ the space $\Lip(\gamma,\m,W)$
gives a sensible higher-order H\"{o}lder continuity condition. 
If $k \in \Z_{\geq 0}$ such that $\gamma \in (k,k+1]$ and 
$O \subset V$ is open, then the space 
$\Lip(\gamma,O,W)$ coincides with the space 
$C^{k,\gamma - k}(O;W)$ of $k$-times Fr\'{e}chet differentiable 
functions $O \to W$ with $(\gamma - k)$-H\"{o}lder continuous 
$k^{\text{th}}$ derivative
(cf. Remark \ref{rmk:lip_gamma_on_open_sets}).
Stein's notion of $\Lip(\gamma)$ regularity extends 
this notion to arbitrary closed subsets.
For any $\gamma > 0$ and a general closed subset 
$\m \subset V$ there is a natural embedding map 
$\Lip(\gamma,\m,W) \hookrightarrow C^0(\m;W)$
(cf. Remark \ref{rmk:Lip_gamma_subset_C0}).

The choice $X := \Lip(\gamma,\m,W)$ satisfies both our 
desired properties. 
Firstly, the class $\Lip(\gamma,\m,W)$ is well-defined for 
\emph{any} closed subset $\m \subset V$, and hence the class
$\Lip(\gamma,\Sigma,W)$ is well-defined for a finite subset 
$\Sigma \subset V$.
Secondly, the \emph{Lipschitz Sandwich Theorems} 
established in \cite{LM24} yield that the value of 
$\psi \in \Lip(\gamma,\m,W)$ at a point $p \in \m$ 
determines the behaviour of $\psi$, up to arbitrarily small 
errors, at points that are sufficiently close to $p$.
The \emph{Lipschitz Sandwich Theorem 3.1} and the 
\emph{Pointwise Lipschitz Sandwich Theorem 3.11} in \cite{LM24} 
exhibit quantified statements of this phenomena in which both 
the sense in which the behaviour of $\psi$ is determined and 
the notion of sufficiently close to the point $p$ are made precise
(cf. Theorems \ref{thm:lip_sand_thm} and 
\ref{thm:pointwise_lip_sand_thm} in this paper).

There are additional properties that make the class 
$\Lip(\gamma)$ functions an attractive choice from a learning 
perspective.
Given a closed subset $\m \subset V$, a function in 
$\Lip(\gamma,\m,W)$ is defined only through its values 
at the points in $\m$. No knowledge of its behaviour outside
$\m$ is required; we do not 
need to even define their value at \emph{any} point
in the complement $V \setminus \m \subset V$ of $\m$.

Further, any function in $\Lip(\gamma,\m,W)$ admits an extension 
to the entirety of $V$ possessing the same regularity. 
That is, if $\psi \in \Lip(\gamma,\m,W)$ then there exists 
$\Psi \in \Lip(\gamma,V,W)$ with $\Psi \equiv \psi$ throughout $\m$.
When $V = \R^d$ and $W = \R$ this is a consequence of the 
Stein-Whitney extension theorem presented as Theorem 4 
in Chapter VI of \cite{Ste70}. 
Often referred to as the Stein extension theorem, we include 
Whitney to reflect the reliance of Stein's proof on the 
machinery introduced by Whitney in his own extension theorems in 
\cite{Whi34,Whi44}.
Since $V$ is finite dimensional, a verbatim repetition of the 
argument presented in \cite{Ste70} establishes the extension 
result we claim above. 
Modulo defining a $\Lip(\gamma)$-norm (cf. Definition 
\ref{lip_k_def}),
it follows that the resulting extension operator 
$\Lip(\gamma,\m,W) \to \Lip(\gamma,V,W)$ is a bounded linear 
operator.

This extension property ensures that $\Lip(\gamma)$ functions 
are well-suited for inference on unseen data. 
That is, suppose we know that $\psi \in \Lip(\gamma,\Sigma,W)$
well-approximates a system on a finite subset $\Sigma \subset V$.
Then, given an unseen point $p \in V \setminus \Sigma$, 
we can extend $\psi$ to $\Psi \in \Lip(\gamma,V,W)$ and use the 
evaluation of the extension $\Psi$ at $p$ for the purpose of 
inferring the systems response to the unseen input $p$.
Moreover, if we know that the underlying system satisfies some 
global $\Lip(\gamma)$ regularity, then we can quantify how well 
$\Psi$ approximates the system at $p$ in terms of how well 
$\psi$ approximates the system on $\Sigma$ and the $V$-distance 
from the point $p$ to the subset $\Sigma$
(cf. Remark \ref{rmk:HOLGRIM_conv_thm_3}).

With our choice of $\Lip(\gamma)$ regularity fixed, 
we turn our attention to considering the 
\textbf{Banach GRIM} algorithm from \cite{LM22} in the 
case that $X = \Lip(\gamma,\Sigma,W)$ for a finite subset 
$\Sigma \subset V$.
For this purpose we develop the 
\emph{Higher Order Lipschitz Greedy Recombination Interpolation 
Method} (HOLGRIM) as a modification of the GRIM tailored to the 
setting that $X := \Lip(\gamma,\Sigma,W)$. 
In particular, the \textbf{HOLGRIM} algorithm, detailed in Section 
\ref{sec:HOLGRIM_alg}, is designed to seek a sparse approximation 
of a given linear combination $\vph$ of $\Lip(\gamma,\Sigma,W)$ 
functions. 
For a fixed choice of $q \in \{0 , \ldots , k\}$,
The approximation is required to be close to $\vph$
throughout $\Sigma$ in an ``order $q$" pointwise sense; this is 
made precise in Section \ref{sec:HOLGRIM_alg}, and is analogous 
to requiring the derivatives up to order $q$ or the approximation 
to be close to the derivatives up to order $q$ of $\vph$ at every 
point in $\Sigma$.

The \textbf{HOLGRIM} algorithm dynamically grows a subset 
$P \subset \Sigma$ of the data at which we require approximations
of the target $\vph$ to coincide with $\vph$.
As in the case of the \textbf{Banach GRIM} algorithm in 
\cite{LM22}, after each extension of the subset $P$, the thinning
or recombination is used to find an approximation $u \in \Span(\cf)$
coinciding with $\vph$ throughout $P$.
This is achieved via an application of the 
\emph{Recombination Thinning Lemma 3.1} from \cite{LM22}
(cf. the \textbf{HOLGRIM Recombination Step} in Section 
\ref{sec:HOLGRIM_alg}).

The applicability of the 
\emph{Recombination Thinning Lemma 3.1} from \cite{LM22} to find 
such an approximation of $\vph$ is a consequence of the 
following correspondence established in Section 
\ref{sec:pointwise_via_lin_funcs}.
Given a point $p \in \Sigma$ there is a subset
$\Tau_{p,k} \subset \Lip(\gamma,\Sigma,W)^{\ast}$ of bounded 
linear functionals $\Lip(\gamma,\Sigma,W) \to \R$ such 
that the value of any $\psi \in \Lip(\gamma,\Sigma,W)$ 
at the point $p$ \emph{is}
determined by the set of real numbers
$\{ \sigma(\psi) : \sigma \in \Tau_{p,k} \} \subset \R$. 
The set $\Tau_{p,k}$ is defined carefully in Section 
\ref{sec:pointwise_via_lin_funcs}, and a quantified estimate 
regarding the sense in which the value set 
$\{ \sigma(\psi) : \sigma \in \Tau_{p,k} \} \subset \R$
determines the value of $\psi$ at $p$ is provided by Lemmas
\ref{lemma:lin_funcs_det_pointwise_value} and 
\ref{lemma:number_of_coeffs_for_point_value}.
This observation allows us to associate the finite collection 
of bounded linear functionals 
$\Sigma^{\ast}_k := \cup_{p \in \Sigma} \Tau_{p,k}$ to 
the finite subset of data $\Sigma \subset V$.

Both the dynamic growth of the subset $P \subset \Sigma$, 
governed by the \textbf{HOLGRIM Extension Step} detailed in 
Section \ref{sec:HOLGRIM_alg}, and the use of recombination 
to obtain an approximation coinciding with $\vph$ 
throughout $P$, detailed in the 
\textbf{HOLGRIM Recombination Step} in Section 
\ref{sec:HOLGRIM_alg},
are reliant on the correspondence established in Section 
\ref{sec:pointwise_via_lin_funcs}.
Indeed, the dynamic growth detailed in the 
\textbf{HOLGRIM Extension Step} in Section \ref{sec:HOLGRIM_alg} 
is done in a similar spirit to the \textbf{Modified Extension Step}
appearing in Section 7 of \cite{LM22}.

Heuristically, to grow the subset $P$ we find the linear functional 
$\sigma \in \Sigma^{\ast}_k$ returning the largest absolute value 
when applied to the difference between $\vph$ and the current 
approximation, and then extend $P$ by addition of the point 
$p \in \Sigma$ for which $\sigma \in \Tau_{p,k}$.
In the
\textbf{HOLGRIM Extension Step} in Section \ref{sec:HOLGRIM_alg}
we allow for the extension of $P$ by more than a single new 
point and we only consider a particular subset of the 
linear functionals $\Sigma_k^{\ast}$ determined by the 
strength of approximation we seek (cf. the
\textbf{HOLGRIM Extension Step} in Section \ref{sec:HOLGRIM_alg}).
The number of new points to be added at each step gives a 
parameter that may be optimised.

After each extension of the subset $P$ we apply the 
\emph{Recombination Thinning Lemma 3.1} from \cite{LM22} to the 
collection of linear functionals $L := \cup_{p \in P} \Tau_{p,k}$
to find a new approximation that coincides with $\vph$ 
throughout $P$; see the \textbf{HOLGRIM Recombination Step} in 
Section \ref{sec:HOLGRIM_alg} for full details.
As in \cite{LM22}, we optimise our use of recombination over 
multiple permutations of the ordering of the equations 
determining the linear system to which recombination is 
applied (cf. the \textbf{HOLGRIM Recombination Step} in 
Section \ref{sec:HOLGRIM_alg}).
The number of permutations to be considered at each step gives 
a parameter that may be optimised.

HOLGRIM inherits the same data-driven benefits enjoyed by GRIM.
The growth in HOLGRIM is data-driven 
rather than feature-driven. The extension of
the data to be interpolated with respect to in HOLGRIM does 
not involve making any choices of features from $\cf$. The
new information to be matched is determined by examining where in 
$\Sigma$ the current approximation is furthest from
the target $\vph$ (cf. Section \ref{sec:HOLGRIM_alg}).
Only a subset $P \subset \Sigma$ of data is dynamically grown; 
there is no corresponding subset $F \subset \cf$ of functions that 
is dynamically grown.
The functions that an approximation will be a linear combination of 
are \emph{not} predetermined; they are determined by recombination
(cf. the \textbf{HOLGRIM Recombination Step} in Section 
\ref{sec:HOLGRIM_alg}).
Besides an upper bound on the number of functions used to construct 
an approximation at a given step (cf. Section \ref{sec:HOLGRIM_alg}),
we have no control over the functions used.
Moreover, as is the case for GRIM, there is no requirement 
that any of the functions used at a specific step must be used in
any of the subsequent steps.

The \textbf{HOLGRIM} algorithm detailed in Section 
\ref{sec:HOLGRIM_alg} is designed to approximate a given linear
combination of $\Lip(\gamma,\Sigma,W)$ functions when 
$\Sigma \subset V$ is a finite subset. 
However, a consequence of the \emph{Lipschitz Sandwich Theorems}
established in \cite{LM24} is that a $\Lip(\gamma)$ function 
defined throughout a compact subset can be well-approximated 
throughout the compact subset provided one can well-approximate 
the $\Lip(\gamma)$ function on a particular finite subset.
(cf. Section 4 in \cite{LM24}). 
In Section \ref{sec:compact_domains} of this paper we outline how 
this observation enables 
the case of compact domains to be tackled via the case of finite
domains that the \textbf{HOLGRIM} algorithm is designed to solve.
Moreover, we additionally illustrate in Section 
\ref{sec:compact_domains} how the 
\emph{Lipschitz Sandwich Theorem 3.1} in \cite{LM24} can be used 
to strengthen the pointwise closeness of an approximation $u$ to 
$\vph$ to an estimate on the $\Lip(\eta)$ norm of the difference
$\vph - u$ for a fixed $\eta \in (0,\gamma)$.

The complexity cost of the \textbf{HOLGRIM} algorithm is analysed 
in Section \ref{sec:HOLGRIM_complexity_cost}.
The \textbf{HOLGRIM} algorithm is designed to be a one-time tool 
that is applied a single time to find a sparse approximation of 
the target $\vph$.
Repeated use of the returned approximation, which will be more 
cost-effectively computed than the original target $\vph$, 
for inference on new 
inputs will recover the up-front cost of the 
\textbf{HOLGRIM} algorithms implementation.
Models, such as recognition models or classification models, 
that will be repeatedly computed on new inputs for the purpose
of inference or prediction are ideal candidates for HOLGRIM 
to approximate.

Consequently, the primary aim of our complexity cost considerations 
in Section \ref{sec:HOLGRIM_complexity_cost} is to verify that 
implementing the \textbf{HOLGRIM} algorithm is feasible. 
This is verified by proving that, at worst, the complexity cost
of running the \textbf{HOLGRIM} algorithm is 
\beq
    \label{eq:worst_HOLGRIM_cost}
        \cO \left( s c D(d,k) M \n \Lambda + 
        s c^2 D(d,k)^2 M \n \Lambda^2 + 
        s c^3 D(d,k)^3 M \log \left( \frac{\n}{cD(d,k)} \right) 
        \Lambda^3 \right)
\eeq 
where $\n$ is the number of functions in $\cf$, 
$\Lambda$ is the number of points in $\Sigma$, 
$M$ is the maximum number of steps for which the 
\textbf{HOLGRIM} algorithm will be run,
$s$ is the maximum number of shuffles considered during 
each application of recombination, 
$c$ is the dimension of $W$, $d$ is the dimension of $V$, 
$k \in \Z_{\geq 0}$ is the integer for which 
$\gamma \in (k,k+1]$, and $D = D(d,k)$ is an integer that depends 
only on $d$ and $k$ and whose precise definition may be found 
in \eqref{eq:D_ab_Q_ijab_def_not_sec} in 
Section \ref{sec:pointwise_via_lin_funcs}. 
The upper bound on the complexity cost stated in 
\eqref{eq:worst_HOLGRIM_cost} is a consequence of the 
complexity cost bounds established in Lemma 
\ref{lemma:complex_cost_HOLGRIM_alg} in 
Section \ref{sec:HOLGRIM_complexity_cost}.

Performance guarantees for the \textbf{HOLGRIM} algorithm are 
considered in Sections 
\ref{sec:HOLGRIM_conv_anal_via_GRIM_conv_thm}, 
\ref{sec:HOLGRIM_sup_lemmata}, 
and \ref{sec:HOLGRIM_conv_anal}.
The approach adopted in \cite{LM22} leads to guarantees in 
terms of specific geometric properties of the linear functionals
$\Sigma_k^{\ast} \subset \Lip(\gamma,\Sigma,W)$ 
(cf. Section \ref{sec:HOLGRIM_conv_anal_via_GRIM_conv_thm}).
In order to obtain guarantees involving the geometry of the 
data $\Sigma$ itself, we utilise the 
\emph{Lipschitz Sandwich Theorems} established in \cite{LM24}.

Theoretical guarantees in terms of a particular packing number
of $\Sigma$ in $V$ are established for the \textbf{HOLGRIM} 
algorithm in which a single new point is chosen at each step 
in the \emph{HOLGRIM Convergence Theorem} 
\ref{thm:HOLGRIM_conv_thm}.
Packing numbers, and the closely related notion of covering 
numbers, were first considered by Kolmogorov \cite{Kol56}, 
and have subsequently arisen in contexts including 
eigenvalue estimation \cite{Car81,CS90,ET96},
Gaussian Processes \cite{LL99,LP04}, and machine learning 
\cite{EPP00,SSW01,Zho02,Ste03,SS07,Kuh11,MRT12,FS21}.
The main technical result underpinning the 
\emph{HOLGRIM Convergence Theorem} \ref{thm:HOLGRIM_conv_thm}
is the \emph{HOLGRIM Point Separation Lemma} 
\ref{lemma:HOLGRIM_dist_between_interp_points}
which establishes that points chosen during the 
\textbf{HOLGRIM} algorithm are a definite $V$ distance apart.

The paper is structured as follows.
In Section \ref{sec:lip_funcs} we rigorously define the notion 
of a $\Lip(\gamma,\m,W)$ function for an arbitrary closed subset
$\m \subset V$, and additionally record variants of the 
particular \emph{Lipschitz Sandwich Theorems} from \cite{LM24} 
that will be used within this article.

In Section \ref{sec:prob_formulation} we rigorously formulate
the sparse approximation problem that the \textbf{HOLGRIM}
algorithm is designed to tackle.

In Section \ref{sec:compact_domains} we outline how being 
able to solve the sparse approximation problem detailed in 
Section \ref{sec:prob_formulation} for a finite subset
$\Sigma \subset V$ enables one to solve the same sparse 
approximation problem for a compact subset $\Omega \subset V$.
Both a strategy utilising the 
\emph{Pointwise Lipschitz Sandwich Theorem 3.11} of \cite{LM24}
(cf. Theorem \ref{thm:pointwise_lip_sand_thm} in this paper) 
to obtain a pointwise approximation throughout $\Omega$ 
and a strategy utilising the 
\emph{Lipschitz Sandwich Theorem 3.1} of \cite{LM24} 
(cf. Theorem \ref{thm:lip_sand_thm} in this paper) 
to obtain, for a fixed $\eta \in (0, \gamma)$, a 
$\Lip(\eta,\Omega,W)$-norm approximation throughout $\Omega$ 
are covered.

In Section \ref{sec:pointwise_via_lin_funcs},
given an arbitrary closed subset $\m \subset V$ and a point
$z \in \m$, we define a finite subset 
$\Tau_{z,k} \subset \Lip(\gamma,\m,W)^{\ast}$ of the dual-space
$\Lip(\gamma,\m,W)^{\ast}$ of bounded linear functionals 
$\Lip(\gamma,\m,W) \to \R$ such that for any 
$\psi \in \Lip(\gamma,\m,W)$ the value of $\psi$ at $z$ 
is determined by the values 
$\sigma(\psi)$ for the linear functionals $\sigma \in \Tau_{z,k}$
(cf. Lemma \ref{lemma:number_of_coeffs_for_point_value}).

In Section \ref{sec:HOLGRIM_alg} we present and discuss the
\textbf{HOLGRIM} algorithm.
We formulate both the \textbf{HOLGRIM Extension Step} 
governing how we extend an existing collection of points 
$P \subset \Sigma$ and the 
\textbf{HOLGRIM Recombination Step} governing the 
\emph{Recombination Thinning Lemma 3.1} in \cite{LM22}
is used in the \textbf{HOLGRIM} algorithm.
Additionally, we establish an upper bound on the number of 
functions used to construct the approximation at each step
of the \textbf{HOLGRIM} algorithm.

In Section \ref{sec:HOLGRIM_complexity_cost} we analyse the 
complexity cost of the \textbf{HOLGRIM} algorithm. 
We prove Lemma \ref{lemma:complex_cost_HOLGRIM_alg} establishing 
the complexity cost of any implementation of the \textbf{HOLGRIM} 
algorithm. We further record an upper bound for the complexity 
cost of the most expensive implementation.

In Section \ref{sec:HOLGRIM_conv_anal_via_GRIM_conv_thm} we 
discuss both the performance guarantees inherited by the 
\textbf{HOLGRIM} algorithm from the 
\emph{Banach GRIM Convergence Theorem 6.2} in \cite{LM22} 
and the limitations/issues of the results obtained via this 
approach.

In Section \ref{sec:HOLGRIM_sup_lemmata} we prove the 
\emph{HOLGRIM Point Separation Lemma} 
\ref{lemma:HOLGRIM_dist_between_interp_points} establishing that 
the points selected during the \textbf{HOLGRIM} algorithm 
in which a single new point is chosen at each step are 
guaranteed to be a definite $V$-distance apart from one and other.

In Section \ref{sec:HOLGRIM_conv_anal} we prove the 
\emph{HOLGRIM Convergence Theorem} \ref{thm:HOLGRIM_conv_thm}.
This result establishes that a particular packing number of the
data $\Sigma$ in $V$ provides an upper bound for the maximum 
number of steps that the \textbf{HOLGRIM} algorithm can 
complete before an approximation of $\vph$ possessing the 
desired level of accuracy is returned. 
We subsequently discuss the consequences of the 
\emph{HOLGRIM Convergence Theorem} \ref{thm:HOLGRIM_conv_thm}.
We convert the assertions of the 
\emph{HOLGRIM Convergence Theorem} \ref{thm:HOLGRIM_conv_thm} into
a reversed implication establishing how well the 
\textbf{HOLGRIM} algorithm can approximate $\vph$ under a 
restriction on the maximum number of functions that can be used 
to construct the approximation 
(cf. Remark \ref{rmk:HOLGRIM_conv_thm_reverse_imp}).
We verify that the data concentration required to guarantee 
that the \textbf{HOLGRIM} algorithm returns a good 
approximation of $\vph$ using strictly less than $\n$ of the 
functions in $\cf$ decreases as the regularity parameter 
$\gamma > 0$ increases 
(cf. Remark \ref{rmk:HOLGRIM_conv_thm_gamma_depend}).
We establish a robustness property satisfied by the returned 
approximation (cf. Remark \ref{rmk:HOLGRIM_conv_thm_3}).
We show that the guarantees of Theorem \ref{thm:HOLGRIM_conv_thm} 
can be both extended to the setting of a compact subset 
$\Omega \subset V$ and have the conclusion strengthened to, 
for a given $\eta \in (0,\gamma)$, a $\Lip(\eta)$-norm 
estimates on the difference between the target $\vph$ 
and the returned approximation $u$ 
following the strategies outlined in Section 
\ref{sec:compact_domains}
(cf. Remarks \ref{rmk:HOLGRIM_conv_thm_Omega_ptwise}, 
\ref{rmk:HOLGRIM_conv_thm_Omega_n0_pointwise}, 
\ref{rmk:HOLGRIM_conv_thm_Omega_lip_eta}, and 
\ref{rmk:HOLGRIM_conv_thm_Omega_n0_lip_eta}).
Finally we illustrate a number of these consequences of the 
\emph{HOLGRIM Convergence Theorem} \ref{thm:HOLGRIM_conv_thm} via
an explicit example in the Euclidean setting 
$\Omega := [0,1]^d \subset \R^d$ 
(cf. Remark \ref{rmk:HOLGRIM_conv_thm_example}).
\vskip 4pt
\noindent 
\emph{Acknowledgements}: This work was supported by 
the \DataSig Program under the EPSRC grant 
ES/S026347/1, the Alan Turing Institute under the
EPSRC grant EP/N510129/1, the Data Centric Engineering 
Programme (under Lloyd's Register Foundation grant G0095),
the Defence and Security Programme (funded by the UK 
Government) and the Hong Kong Innovation and Technology 
Commission (InnoHK Project CIMDA). 
This work was funded by the Defence and Security
Programme (funded by the UK Government).

\section{Lipschitz Functions and Sandwich Theorems}
\label{sec:lip_funcs}
In this section we provide a rigorous definition of a 
$\Lip(\gamma)$ function in the sense of Stein \cite{Ste70} 
in the same generality considered in \cite{LM24}.
Additionally, we record the particular 
\emph{Lipschitz Sandwich Theorems} from \cite{LM24} 
that will be useful within this article. 

Let $V$ and $W$ be real Banach spaces and assume that 
$\m \subset V$ is a closed subset. 
Throughout this article we adopt the convention that balls 
denoted by $\B$ are taken to be open, whilst those denoted
by $\ovB$ are taken to be closed.
To define $\Lip(\gamma,\m,W)$ functions, we must
first make a choice of norm on the tensor powers of $V$. 
We restrict to considering norms that are 
\emph{admissible} in the following sense.

\begin{definition}[Admissible Norms on Tensor Powers]
\label{admissible_tensor_norm}
Let $V$ be a Banach space. We say that its tensor powers
are endowed with admissible norms if for each 
$n \in \Z_{\geq 2}$ we have equipped the tensor power 
$V^{\otimes n}$ of $V$ with a norm 
$|| \cdot ||_{V^{\otimes n}}$ such that the following 
conditions hold.
\begin{itemize}
    \item For each $n \in \Z_{\geq 2}$ the symmetric 
    group $S_n$ acts isometrically on $V^{\otimes n}$,
    i.e. for any $\rho \in S_n$ and any
    $v \in V^{\otimes n}$ we have
    \beq
        \label{sym_group_iso}
            || \rho (v) ||_{V^{\otimes n}}
            = || v ||_{V^{\otimes n}}.
    \eeq
    The action of $S_n$ on $V^{\otimes n}$
    is given by permuting the order of the letters,
    i.e. if $a_1 \otimes \ldots \otimes a_n 
    \in V^{\otimes n}$ and $\rho \in S_n$ then 
    $\rho ( a_1 \otimes \ldots \otimes a_n )
    := a_{\rho(1)} \otimes \ldots \otimes a_{\rho(n)}$,
    and the action is extended to the entirety of
    $V^{\otimes n}$ by linearity.
    \item For any $n,m \in \Z_{\geq 1}$ and any
    $v \in V^{\otimes n}$ and $w \in V^{\otimes m}$
    we have
    \beq
        \label{ten_prod_unit_norm}
            || v \otimes w ||_{V^{\otimes (n+m)}}
            \leq 
            || v ||_{V^{\otimes n}} 
            || w ||_{V^{\otimes m}}.
    \eeq
    \item For any $n,m \in \Z_{\geq 1}$ and any
    $\phi \in \left(V^{\otimes n}\right)^{\ast}$ 
    and $\sigma \in \left(V^{\otimes m}\right)^{\ast}$
    we have
    \beq
        \label{ten_prod_unit_dual_norm}
            || \phi \otimes \sigma
            ||_{\left(V^{\otimes (n+m)}\right)^{\ast}}
            \leq 
            || \phi 
            ||_{\left(V^{\otimes n}\right)^{\ast}} 
            || \sigma 
            ||_{\left(V^{\otimes m}\right)^{\ast}}.
    \eeq
    Here, given any $k \in \Z_{\geq 1}$,
    the norm 
    $|| \cdot ||_{\left(V^{\otimes k}\right)^{\ast}}$
    denotes the dual-norm induced by
    $|| \cdot ||_{V^{\otimes k}}$.
\end{itemize}
\end{definition}
\vskip 4pt
\noindent
A consequence of having \emph{both} the inequalities 
\eqref{ten_prod_unit_norm} and 
\eqref{ten_prod_unit_dual_norm} being satisfied is 
that we may conclude that there is actually equality 
in both estimates (see \cite{Rya02}).
Thus if the tensor powers of $V$ are all equipped with 
admissible tensor norms in the sense of Definition 
\ref{admissible_tensor_norm}, we have equality in both 
\eqref{ten_prod_unit_norm} and
\eqref{ten_prod_unit_dual_norm}.

We next use this notion of admissible tensor norm to 
rigorously define a $\Lip(\gamma,\m,W)$ function 
within this setting (cf. Definition 2.2 in \cite{LM24}).

\begin{definition}[$\Lip(\gamma,\m,W)$ functions]
\label{lip_k_def}
Let $V$ and $W$ be Banach spaces, $\m \subset V$ 
a closed subset, and assume that the tensor powers of 
$V$ are all equipped with admissible norms. 
Let $\gamma > 0$ with $k \in \Z_{\geq 0}$ such that
$\gamma \in (k,k+1]$. Then let 
$\psi = (\psi^{(0)} , \psi^{(1)} , \ldots , \psi^{(k)})$ 
where $\psi^{(0)} : \m \to W$ is a function taking 
its values in $W$ and, for each 
$l \in \{1, \ldots , k\}$,  
$\psi^{(l)} : \m \to \cl( V^{\otimes l} ; W)$
is a functions taking its values in the space of 
symmetric $l$-linear forms from $V$ to $W$.
Then $\psi$ is a $\Lip(\gamma,\m,W)$ function 
if there exists a constant $M \geq 0$ for which the 
following is true.
\begin{itemize} 
    \item For each $l \in \{0, \ldots , k\}$ and 
    every $z \in \m$ we have that
    \beq    
        \label{lip_k_bdd}
            || \psi^{(l)}(z) ||_{\cl(V^{\otimes l} ; W)}
            \leq M
    \eeq
    \item For each $j \in \{0, \ldots , k\}$ 
    define $R^{\psi}_j : \m \times \m \to 
    \cl (V^{\otimes j} ; W)$ for  
    $z,p \in \m$ and $v \in V^{\otimes j}$ by 
    \beq
        \label{lip_k_tay_expansion}
            R^{\psi}_j(z,p)[v] :=
            \psi^{(j)}(p)[v] -
            \sum_{s=0}^{k-j} \frac{1}{s!}
            \psi^{(j+s)}(z)
            \left[v \otimes (p-z)^{\otimes s}\right].
    \eeq
    Then whenever $l \in \{0, \ldots , k\}$ and 
    $x,y \in \m$ we have 
    \beq
        \label{lip_k_remain_holder}
            \left|\left| 
            R^{\psi}_l(x,y) 
            \right|\right|_{\cl(V^{\otimes l} ; W)}
            \leq
            M ||y - x||_V^{\gamma - l}.
    \eeq
\end{itemize}
We sometimes say that $\psi \in \Lip(\gamma,\m,W)$
without explicitly mentioning the functions 
$\psi^{(0)} , \ldots , \psi^{(k)}$. 
Furthermore, given $l \in \{0, \ldots , k\}$, 
we introduce the notation that 
$\psi_{[l]} := (\psi^{(0)} , \ldots , \psi^{(l)})$.
The $\Lip(\gamma,\m,W)$ norm of $\psi$, denoted by
$||\psi||_{\Lip(\gamma,\m,W)}$, is the smallest 
$M \geq 0$ satisfying the requirements
\eqref{lip_k_bdd} and \eqref{lip_k_remain_holder}.
\end{definition}

\begin{remark}
\label{rmk:op_norm_convention}
Assuming the same notation as in Definition \ref{lip_k_def},
we adopt the same convention as used in \cite{LM24} 
regarding the choice of norm on the space 
$\cl(V^{\otimes l};W)$ for each $l \in \{0, \ldots , W\}$.
That is, recall that $V^{\otimes 0} := \R$ so that 
$\cl(V^{\otimes 0};W) = W$. Hence we take 
$||\cdot||_{\cl(V^{\otimes 0};W)}$ to be the norm 
$||\cdot||_W$ given on $W$.
Further, given $l \in \{1, \ldots , k\}$, we equip 
$\cl(V^{\otimes l};W)$ with its operator norm determined by
\beq
    \label{eq:op_norm_l_def}
        || \bA ||_{\cl(V^{\otimes l};W)} 
        := 
        \sup \left\{ \left|\left| \bA[v] \right|\right|_W
        ~:~ v \in V^{\otimes l} \text{ and }
        ||v||_{V^{\otimes l}} = 1
        \right\}.
\eeq
This results in the same notion of a $\Lip(\gamma,\m,W)$
function as considered in \cite{LM24}. Consequently, it is
the case that an upper bound on the resulting 
$\Lip(\gamma)$-norm $||\cdot||_{\Lip(\gamma,\m,W)}$ is a 
stronger restriction than the same upper bound on the 
corresponding notion of $\Lip(\gamma)$-norm considered in 
the works \cite{Bou15,Bou22,BL22}.
\end{remark}

\begin{remark}
\label{rmk:Lip_gamma_subset_C0}
Assuming the same notation as in Definition 
\ref{lip_k_def}, suppose 
$\psi = \left( \psi^{(0)} , \ldots , \psi^{(k)} \right) 
\in \Lip(\gamma,\m,W)$.
Let $C^0(\m;W)$ denote the space of continuous 
maps $\m \to W$ and for $f \in C^0(\m;W)$ take
$||f||_{C^0(\m;W)} := \sup \left\{ ||f(x)||_W :
x \in \m \right\}$.
Then $\psi^{(0)} \in C^0(\m;W)$
and $|| \psi^{(0)} ||_{C^0(\m;W)} \leq 
||\psi||_{\Lip(\gamma,\m,W)}$.
Consequently, the mapping $\psi \mapsto \psi^{(0)}$
gives an embedding of $\Lip(\gamma,\m,W)$ into 
the space $C^0(\m;W)$.
\end{remark}

\begin{remark}
\label{rmk:Lip_gamma_nesting}
Assuming the same notation as in Definition 
\ref{lip_k_def}, suppose 
$\psi = \left( \psi^{(0)} , \ldots , \psi^{(k)} \right) 
\in \Lip(\gamma,\m,W)$, and that $\eta \in (0, \gamma)$ 
with $q \in \Z_{\geq 0}$ such that $\eta \in (q,q+1]$.
Then it follows that 
$\psi_{[q]} = \left( \psi^{(0)} , \ldots , \psi^{(q)} 
\right) \in \Lip(\eta,\m,W)$ and there exists a 
constant $C = C(\gamma,\eta) \geq 1$ for which 
$||\psi_{[q]}||_{\Lip(\eta,\m,W)} \leq 
C || \psi ||_{\Lip(\gamma,\m,W)}$; 
see, for example, Lemma 6.1 in \cite{LM24}.
Consequently, the mapping $\psi \mapsto \psi_{[q]}$
gives an embedding of $\Lip(\gamma,\m,W)$ into the 
space $\Lip(\eta,\m,W)$.
\end{remark}

\begin{remark}
\label{rmk:lip_gamma_on_open_sets}
Suppose that $O \subset V$ is open and that
$\psi = \left( \psi^{(0)} , \ldots , \psi^{(k)} \right) 
\in \Lip(\gamma,O,W)$.
Then the expansions \eqref{lip_k_tay_expansion} and the estimates
\eqref{lip_k_bdd} and \eqref{lip_k_remain_holder} yield that 
$\psi^{(0)} : O \to W$ is $k$-times Fr\'{e}chet differentiable 
with, for $l \in \{0, \ldots , k\}$, $l^{\text{th}}$ Fr\'{e}chet 
derivative given by $\psi^{(l)}$.
Hence the $k^{\text{th}}$ Fr\'{e}chet derivative of 
$\psi^{(0)}$ is $(\gamma - k)$-H\"{o}lder continuous so that
$\psi^{(0)} \in C^{k,\gamma-k}(O;W)$.
Consequently, the mapping $\psi \mapsto \psi^{(0)}$ gives 
an embedding of $\Lip(\gamma,O,W)$ into the space
$C^{k,\gamma-k}(O;W)$.
Moreover, Taylor's theorem establishes that this mapping 
is in fact a one-to-one correspondence.
\end{remark}

\begin{remark}
\label{rmk:lip_gamma_locally_polynomial}
A helpful way to think about  
$\Lip(\gamma,\m,W)$ functions is that they 
``locally looks like a polynomial function".
To elaborate, assume the same notation as in Definition 
\ref{lip_k_def} and let
$\psi = \left( \psi^{(0)} , \ldots , \psi^{(k)} \right) 
\in \Lip(\gamma,\m,W)$.
For each point $x \in \m$ we can define a 
polynomial $\Psi_x : V \to W$  for $v \in V$ by
\beq
    \label{lip_k_psi_proposed_value}
        \Psi_x(v)  
        := \sum_{s=0}^{k} 
        \frac{1}{s!}
        \psi^{(s)}(x)\left[  
        (v-x)^{\otimes s} \right].
\eeq
For each $l \in \{0, \ldots , k\}$ and any point 
$x \in \m$ the element 
$\psi^{(l)}(x) \in \cl(V^{\otimes l};W)$ is the 
$l^{\text{th}}$ Fr\'{e}chet derivative of the 
polynomial $\Psi_{x}$ evaluated at the point $x \in \m$.
Moreover, the polynomial $\Psi_x$ provides a proposal, 
based at the point $x \in \m$, for how 
$\psi^{(0)}$ could be extended to the entirety of $V$.
The expansions \eqref{lip_k_tay_expansion} and
the estimates \eqref{lip_k_remain_holder} ensure 
that for points $y \in \m$ that are close, in the 
$||\cdot||_V$ norm sense, to $x$ that the values 
$\psi^{(0)}(y)$ and $\Psi_x(y)$ must be close
in a quantifiable $||\cdot||_W$ sense.
In fact, \eqref{lip_k_tay_expansion} and 
\eqref{lip_k_remain_holder} ensure that if 
$x_1 , x_2 \in \m$ are close in the $||\cdot||_V$
sense, then the polynomial proposals $\Psi_{x_1}$
and $\Psi_{x_2}$ must be close as functions $V \to W$ 
in a pointwise sense.
A precise estimate giving quantifiable meaning to the 
notion of closeness can be found in Section 2 of \cite{LM24}.
\end{remark}

\begin{remark}
\label{rmk:notational_easing}
Assuming the same notation as in Definition 
\ref{lip_k_def}, and suppose 
$\psi = \left( \psi^{(0)} , \ldots , \psi^{(k)} \right) 
\in \Lip(\gamma,\m,W)$.
To ease notation, given  
$l \in \{0, \ldots , k\}$ we define 
$\Lambda_{\psi}^l : \m \to \R_{\geq 0}$ 
by setting, for $x \in \m$,
\beq
    \label{lip_eta_lip_norm_func}
        \Lambda^l_{\psi}(x) := 
        \max_{ j \in \{0 , \ldots , l \} }
        \left|\left| \psi^{(j)}(x)
        \right|\right|_{\cl(V^{\otimes j} ; W)}.
\eeq
A consequence of \eqref{lip_eta_lip_norm_func}
is that for any $l \in \{0, \ldots , k\}$ and any 
$z \in \m$ we have that $\Lambda^l_{\psi}(z) \leq 
|| \psi ||_{\Lip(\gamma,\ca,W)}$ 
for any subset $\ca \subset \m$ with $z \in \ca$.
\end{remark}
\vskip 4pt
\noindent
We end this subsection by stating the various 
\emph{Lipschitz Sandwich Theorems} from \cite{LM24} that 
we will use in this article.
We first record a variant of the 
\emph{Lipschitz Sandwich Theorem} 3.1 in \cite{LM24}
in which both the constants $K_1$ and $K_2$ of that result 
are equal to the same constant $K_0 > 0$.

\begin{theorem}[\textbf{Lipschitz Sandwich Theorem 3.1} 
in \cite{LM24}]
\label{thm:lip_sand_thm}
Let $V$ and $W$ be Banach spaces, and assume 
that the tensor powers of 
$V$ are all equipped with admissible norms (cf. 
Definition \ref{admissible_tensor_norm}).
Assume that $\m \subset V$ is non-empty and closed.
Let $\ep, K_0 > 0$, and $\gamma > \eta > 0$ with 
$k,q \in \Z_{\geq 0}$ such that $\gamma \in (k,k+1]$
and $\eta \in (q,q+1]$. Then there exist constants 
$\de_0 = \de_0 (\ep,K_0,\gamma ,\eta) > 0$ and 
$\ep_0 = \ep_0(\ep, K_0, \gamma, \eta) > 0$ for
which the following is true.

Suppose $B \subset \m$ is a closed subset that is
a $\de_0$-cover of $\m$ in the sense that
\beq
    \label{eq:lip_sand_thm_B_cover_sigma}
        \m \subset \bigcup_{x \in B}
        \ovB_V (x, \de_0)
        = B_{\de_0}
        := \left\{ v \in V ~:~
        \text{There exists } z \in B \text{ such that }
        ||v - z ||_V \leq \de_0 \right\}.
\eeq
Suppose 
$\psi = \left(\psi^{(0)} ,\ldots ,\psi^{(k)}\right) , 
\phi = \left( \phi^{(0)} , \ldots , \phi^{(k)} \right)
\in \Lip(\gamma,\m,W)$ with 
$||\psi||_{\Lip(\gamma,\m,W)} , 
||\phi||_{\Lip(\gamma,\m,W)} \leq K_0$.
Then, if 
$\psi_{[q]}:= \left(\psi^{(0)},\ldots ,\psi^{(q)}\right)$ and 
$\psi_{[q]}:= \left(\phi^{(0)},\ldots ,\phi^{(q)}\right)$,
we have that 
\beq
    \label{eq:lip_sand_thm_imp}
        \sup_{z \in B} \Lambda^k_{\psi - \phi}(z) \leq \ep_0
        \qquad \implies \qquad 
        \left|\left| \psi_{[q]} - \phi_{[q]} \right|\right|_{
        \Lip(\eta,\m,W)}
        \leq \ep.
\eeq
\end{theorem}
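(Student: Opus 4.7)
Writing $\chi := \psi - \phi$, one has $\chi \in \Lip(\gamma,\m,W)$ with $\|\chi\|_{\Lip(\gamma,\m,W)} \leq 2K_0$, and the hypothesis reads $\Lambda^k_\chi(z) \leq \ep_0$ for every $z \in B$. Unpacking Definition \ref{lip_k_def}, the conclusion $\|\chi_{[q]}\|_{\Lip(\eta,\m,W)} \leq \ep$ splits into two families of requirements over $\m$: the pointwise bounds $\|\chi^{(l)}(z)\|_{\cl(V^{\otimes l};W)} \leq \ep$ for $l \in \{0, \ldots, q\}$ and all $z \in \m$, and the remainder bounds $\|R^{\chi_{[q]}}_l(x,y)\|_{\cl(V^{\otimes l};W)} \leq \ep \|y-x\|_V^{\eta - l}$ for $l \in \{0, \ldots, q\}$ and all $x, y \in \m$. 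My plan is to first upgrade the hypothesis from $B$ to a uniform bound over $\m$, and then deduce both families via a short-range versus long-range split in $\|y - x\|_V$.

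For the first step, given $y \in \m$ I pick $x_y \in B$ with $\|y - x_y\|_V \leq \delta_0$ and expand, for each $l \in \{0, \ldots, k\}$,
\begin{equation*}
    \chi^{(l)}(y) = R^\chi_l(x_y, y) + \sum_{s=0}^{k-l} \frac{1}{s!} \chi^{(l+s)}(x_y) \left[ \cdot \otimes (y - x_y)^{\otimes s} \right].
\end{equation*}
Admissibility of the tensor norms controls each contraction, the hypothesis bounds each $\chi^{(l+s)}(x_y)$ by $\ep_0$, and the $\Lip(\gamma)$ remainder estimate gives $\|R^\chi_l(x_y,y)\|_{\cl(V^{\otimes l};W)} \leq 2K_0 \|y - x_y\|_V^{\gamma - l}$. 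Collecting these and imposing $\delta_0 \leq 1$ yields
\begin{equation*}
    \sup_{y \in \m} \Lambda^k_\chi(y) \leq 2K_0 \delta_0^{\gamma - k} + e \, \ep_0 =: \alpha,
\end{equation*}
which can be made arbitrarily small by shrinking $\delta_0$ and $\ep_0$. Provided $\alpha \leq \ep$, this already delivers the pointwise family of requirements.

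For the remainder family, begin with the algebraic identity
\begin{equation*}
    R^{\chi_{[q]}}_l(x,y) = R^\chi_l(x,y) + \sum_{s = q - l + 1}^{k - l} \frac{1}{s!} \chi^{(l+s)}(x) \left[ \cdot \otimes (y - x)^{\otimes s} \right],
\end{equation*}
whose second sum is empty when $q = k$, and split into cases parameterised by a threshold $r \in (0,1]$. For near pairs $\|y - x\|_V \leq r$, factoring $\|y-x\|^{\gamma - l} = \|y-x\|^{\gamma - \eta}\|y-x\|^{\eta - l}$ bounds the first term by $2K_0 r^{\gamma - \eta} \|y - x\|_V^{\eta - l}$, while the correction sum is bounded by a constant multiple of $\alpha \, r^{q + 1 - \eta} \|y - x\|_V^{\eta - l}$ because $\|y-x\|^s \leq r^{s - (\eta - l)}\|y-x\|^{\eta - l}$ and the exponent $s - (\eta - l) \geq q + 1 - \eta$ is strictly positive. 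For far pairs $\|y - x\|_V > r$, I expand $R^{\chi_{[q]}}_l(x,y)$ by the triangle inequality and insert the uniform pointwise bound $\alpha$ at both $x$ and $y$, yielding $C \alpha \bigl( 1 + \|y - x\|_V^{q - l} \bigr)$, and then divide by $\|y - x\|_V^{\eta - l}$; since the resulting exponents $l - \eta$ and $q - \eta$ are both strictly negative, the quotients are maximised at $\|y - x\|_V = r$, with worst case $l = 0$ producing a bound of shape $C \alpha r^{-\eta}$.

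The main obstacle is the final scale-matching. I would first select $r = r(\ep, K_0, \gamma, \eta) \in (0,1]$ small enough that $2K_0 r^{\gamma - \eta} \leq \ep / 2$; then fix $\alpha = \alpha(\ep, r, k, q, \eta) > 0$ small enough that both $C \alpha r^{q + 1 - \eta}$ and $C \alpha r^{-\eta}$ are at most $\ep / 2$; and finally choose $\delta_0, \ep_0 > 0$ via the relation $2K_0 \delta_0^{\gamma - k} + e \, \ep_0 \leq \alpha$ from the first step. The delicate point is that $\alpha$ has to be selected only after $r$ is fixed, because it must absorb the negative power $r^{-\eta}$ produced in the far regime, so the dependence of $\delta_0$ and $\ep_0$ on $\ep$ is highly nonlinear; since only existence is asserted, this dependence causes no harm.
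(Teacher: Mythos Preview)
The paper does not prove this theorem; it is stated without proof as a direct quotation of Theorem~3.1 from the companion paper \cite{LM24} (see the sentence introducing Theorems~\ref{thm:lip_sand_thm} and~\ref{thm:pointwise_lip_sand_thm}). So there is no proof in the present paper to compare against.

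That said, your sketch is sound and follows the natural route. Your first step---propagating the pointwise hypothesis from $B$ to all of $\m$ via the Taylor-type expansion---is precisely the content of Lemma~\ref{lemma:explicit_pointwise_est} (itself a variant of the Pointwise Estimates Lemma~7.1 of \cite{LM24}), and the bound $\alpha = 2K_0\delta_0^{\gamma-k} + e\,\ep_0$ you obtain matches what that lemma gives. The second step, splitting the $\Lip(\eta)$ remainder estimate into a near regime (where the $\Lip(\gamma)$ remainder supplies the extra factor $\|y-x\|^{\gamma-\eta}$) and a far regime (where the uniform pointwise bound $\alpha$ is inserted directly and one divides through by $\|y-x\|^{\eta-l}$), is the standard interpolation mechanism, and your bookkeeping of the exponents is correct.

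One small correction: you assert that the exponent $q+1-\eta$ appearing in the near-regime correction sum is strictly positive, but since $\eta \in (q,q+1]$ it can vanish at the endpoint $\eta = q+1$. This does not damage the argument---the bound becomes $C\alpha\, r^0 = C\alpha$, which is still absorbed by your choice of $\alpha$---but the wording should be adjusted. Your ordering of the constant selections ($r$ first from $\gamma-\eta>0$, then $\alpha$ to absorb $r^{-\eta}$, then $\delta_0,\ep_0$ to achieve $\alpha$) is exactly right, and the nonlinear dependence you flag is indeed harmless given that only existence is claimed.
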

\vskip 4pt
\noindent 
The following result records a variant of the 
\emph{Pointwise Lipschitz Sandwich Theorem} 3.11 in \cite{LM24}
in which both the constants $K_1$ and $K_2$ of that result 
are equal to the same constant $K_0 > 0$.

\begin{theorem}[\textbf{Pointwise Lipschitz Sandwich Theorem 
3.11} in \cite{LM24}]
\label{thm:pointwise_lip_sand_thm}
Let $V$ and $W$ be Banach spaces, and assume  
that the tensor powers of 
$V$ are all equipped with admissible norms (cf. 
Definition \ref{admissible_tensor_norm}).
Assume that $\m \subset V$ is closed.
Let $K_0 , \gamma , \ep > 0$ with $k \in \Z_{\geq 0}$
such that $\gamma \in (k,k+1]$ and 
$0 \leq \ep_0 < \ep \leq K_0$.
Then given any $l \in \{0, \ldots , k\}$, there exists
a constant $\de_0 = \de_0 (\ep,\ep_0,K_0,\gamma,l) > 0$,
defined by
\beq
    \label{eq:pointwise_lip_sand_thm_de0}
        \de_0 := \sup \left\{
        \th > 0 ~:~
        2K_0 \th^{\gamma - l} + \ep_0 e^{\th} 
        \leq 
        \ep \right\} \in (0,1]
\eeq
for which the following is true. 

Suppose $B \subset \m$ is a $\de_0$-cover of $\m$ in the sense that 
\beq
    \label{eq:pointwise_lip_sand_thm_B_cover_Sigma}
        \m \subset \bigcup_{x \in B}
        \ovB_V (x, \de_0)
        =
        B_{\de_0} :=
        \left\{ v \in V ~:~
        \text{There exists } z \in B \text{ such that }
        ||v - z||_V \leq \de_0 \right\}. 
\eeq
Suppose 
$\psi = \left(\psi^{(0)} ,\ldots ,\psi^{(k)}\right) , 
\phi = \left(\psi^{(0)} ,\ldots ,\phi^{(k)}\right)
\in \Lip(\gamma,\m,W)$ with 
$||\psi||_{\Lip(\gamma,\m,W)} , 
||\phi||_{\Lip(\gamma,\m,W)} \leq K_0$.
Then we have that 
\beq
    \label{eq:pointwise_lip_sand_thm_imp}
        \sup_{z \in B} \Lambda^k_{\psi - \phi}(z) \leq \ep_0 
        \qquad \implies \qquad 
        \sup_{z \in \m} \Lambda^l_{\psi - \phi}(z) \leq \ep.
\eeq 
\end{theorem}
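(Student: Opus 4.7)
The plan is to let $h := \psi - \phi$ and exploit the Taylor-type expansion \eqref{lip_k_tay_expansion} to propagate the smallness of the derivatives of $h$ from the cover $B$ to all of $\m$. By the triangle inequality $h \in \Lip(\gamma,\m,W)$ with $\|h\|_{\Lip(\gamma,\m,W)} \leq 2K_0$, and the hypothesis reads $\sup_{z \in B} \Lambda^k_h(z) \leq \ep_0$. Before running any estimate, I would verify that the admissible set in the definition of $\de_0$ is nonempty and bounded by $1$: as $\th \to 0^+$ the left-hand side of the defining inequality tends to $\ep_0 < \ep$, giving $\de_0 > 0$; while at $\th = 1$ one has $2K_0 \th^{\gamma - l} = 2K_0 \geq 2\ep > \ep$ (using $\ep \leq K_0$ and $\gamma - l \geq \gamma - k > 0$), so $\th = 1$ fails the constraint and $\de_0 \leq 1$.

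Next, fix an arbitrary $z \in \m$; by \eqref{eq:pointwise_lip_sand_thm_B_cover_Sigma} there exists $x \in B$ with $\|z - x\|_V \leq \de_0$. For each $j \in \{0, \ldots, l\}$, rearranging the identity \eqref{lip_k_tay_expansion} applied with base point $x$ and evaluation point $z$ yields, for any $v \in V^{\otimes j}$,
\[
    h^{(j)}(z)[v] \;=\; \sum_{s=0}^{k-j} \frac{1}{s!} h^{(j+s)}(x)\!\left[v \otimes (z-x)^{\otimes s}\right] + R^h_j(x,z)[v].
\]
Taking operator norms and invoking the submultiplicativity of admissible tensor norms (Definition \ref{admissible_tensor_norm}), each term in the sum is bounded by $\frac{1}{s!} \|h^{(j+s)}(x)\|_{\cl(V^{\otimes(j+s)};W)} \|z-x\|_V^s$. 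Since $j + s \leq k$ and $x \in B$, the hypothesis gives $\|h^{(j+s)}(x)\|_{\cl(V^{\otimes(j+s)};W)} \leq \Lambda^k_h(x) \leq \ep_0$; the remainder is controlled via \eqref{lip_k_remain_holder} by $\|R^h_j(x,z)\|_{\cl(V^{\otimes j};W)} \leq 2K_0 \|z-x\|_V^{\gamma - j}$.

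Combining these bounds, and using $\|z-x\|_V \leq \de_0 \leq 1$ together with $\gamma - j \geq \gamma - l$ for $j \leq l$, I obtain
\[
    \|h^{(j)}(z)\|_{\cl(V^{\otimes j};W)} \;\leq\; \ep_0 \sum_{s=0}^{k-j} \frac{\de_0^s}{s!} + 2K_0 \de_0^{\gamma - j} \;\leq\; \ep_0 e^{\de_0} + 2K_0 \de_0^{\gamma - l},
\]
which is at most $\ep$ by the very definition of $\de_0$. Taking the maximum over $j \in \{0, \ldots, l\}$ yields $\Lambda^l_h(z) \leq \ep$, and since $z \in \m$ was arbitrary the implication \eqref{eq:pointwise_lip_sand_thm_imp} follows. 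There is no substantive obstacle here; the only technical care needed is the consistent application of admissibility to pass from multilinear Taylor expansions to scalar operator-norm bounds, and the elementary case analysis verifying $\de_0 \in (0,1]$.
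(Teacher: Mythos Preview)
Your proof is correct and matches the approach the paper relies on. The paper does not give a self-contained proof of this theorem (it is imported from \cite{LM24}), but it records the key ingredient as Lemma~\ref{lemma:explicit_pointwise_est}: applied to $h=\psi-\phi$ with $\|h\|_{\Lip(\gamma,\m,W)}\leq 2K_0$ and base point $p=x\in B$, that lemma yields exactly the bound $\|h^{(j)}(z)\|_{\cl(V^{\otimes j};W)}\leq 2K_0\|z-x\|_V^{\gamma-j}+\ep_0\sum_{s=0}^{k-j}\frac{1}{s!}\|z-x\|_V^s$, which you derive directly from the Taylor expansion \eqref{lip_k_tay_expansion} and the remainder estimate \eqref{lip_k_remain_holder}. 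Your verification that $\de_0\in(0,1]$ and the subsequent use of $\de_0\leq 1$ to pass from $\de_0^{\gamma-j}$ to $\de_0^{\gamma-l}$ are exactly what is needed; by continuity and monotonicity of $\th\mapsto 2K_0\th^{\gamma-l}+\ep_0 e^{\th}$ the supremum $\de_0$ satisfies the defining inequality with equality, so the final step is justified.
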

\vskip 4pt 
\noindent
We end this subsection by recording the following 
explicit estimates pointwise estimates for 
$\psi \in \Lip(\gamma,\m,W)$ under the assumption 
that, at a point $p \in \m$, we have 
$\Lambda^k_{\psi}(p) \leq \ep_0$.
These estimates are contained in the 
\emph{Pointwise Estimates Lemma} 7.1 in \cite{LM24} 
under the choices of the $\Gamma$, $F$, $A$, $r_0$, $\rho$, 
and $q$ in that result as $\m$, $\psi$, $K_0$, $\ep_0$, 
$\gamma$, and $k$ here.
The precise result is the following lemma.

\begin{lemma}[Variant of Pointwise Estimates 
Lemma 7.1 in \cite{LM24}]
\label{lemma:explicit_pointwise_est}
Let $V$ and $W$ be Banach spaces, and assume that the 
tensor powers of $V$ are all equipped with admissible 
norms (cf. Definition \ref{admissible_tensor_norm}).
Assume $\m \subset V$ is closed with $p \in \m$.
Let $K_0, \gamma > 0$, $\ep_0 \geq 0$, and  
$k \in \Z_{\geq 0}$ such that $\gamma \in (k,k+1]$.
Let $\psi = \left( \psi^{(0)} , \ldots , \psi^{(k)}
\right) \in \Lip(\gamma, \m,W)$ with 
$|| \psi ||_{\Lip(\gamma, \m,W)} \leq K_0$.
Then if $\Lambda^k_{\psi}(p) \leq \ep_0$ we may conclude,
for any $z \in \m$ and any $l \in \{0, \ldots , k\}$,
that
\beq
    \label{eq:explicit_pointwise_est_lemma_conc}
        \left|\left| \psi^{(l)}(z) 
        \right|\right|_{\cl(V^{\otimes l};W)} 
        \leq 
        \min \left\{ K_0 ~,~   
        K_0 ||z-p||_V^{\gamma - l}  + 
        \ep_0 \sum_{j=0}^{k-l}
        \frac{1}{j!} ||z-p||_V^j \right\}.
\eeq
\end{lemma}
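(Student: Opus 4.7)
The plan is to derive both bounds in the minimum separately. The first bound, $\|\psi^{(l)}(z)\|_{\cl(V^{\otimes l};W)} \leq K_0$, is immediate from the definition of $\Lip(\gamma,\m,W)$: indeed \eqref{lip_k_bdd} applied at $z \in \m$ together with the hypothesis $\|\psi\|_{\Lip(\gamma,\m,W)} \leq K_0$ gives this estimate directly for every $l \in \{0,\ldots,k\}$.

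For the second bound I would rearrange the Taylor-style expansion \eqref{lip_k_tay_expansion} with base point $p$, target point $z$, and order $j = l$ to isolate $\psi^{(l)}(z)$: for any $v \in V^{\otimes l}$,
\begin{equation*}
    \psi^{(l)}(z)[v] = R^{\psi}_l(p,z)[v] + \sum_{s=0}^{k-l} \frac{1}{s!} \psi^{(l+s)}(p)\bigl[v \otimes (z-p)^{\otimes s}\bigr].
\end{equation*}
The remainder term is controlled by \eqref{lip_k_remain_holder}, giving $\|R^{\psi}_l(p,z)\|_{\cl(V^{\otimes l};W)} \leq K_0 \|z-p\|_V^{\gamma - l}$. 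For the polynomial part, I would use the admissibility property \eqref{ten_prod_unit_norm} to bound $\|v \otimes (z-p)^{\otimes s}\|_{V^{\otimes(l+s)}} \leq \|v\|_{V^{\otimes l}} \|z-p\|_V^s$, and then invoke the hypothesis $\Lambda^k_{\psi}(p) \leq \ep_0$, which via \eqref{lip_eta_lip_norm_func} yields $\|\psi^{(l+s)}(p)\|_{\cl(V^{\otimes(l+s)};W)} \leq \ep_0$ for every $s \in \{0,\ldots,k-l\}$ (note $l+s \leq k$, so this is within the range controlled by $\Lambda^k_\psi$).

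Combining these, for any unit $v \in V^{\otimes l}$ I obtain
\begin{equation*}
    \|\psi^{(l)}(z)[v]\|_W \leq K_0 \|z-p\|_V^{\gamma - l} + \ep_0 \sum_{s=0}^{k-l} \frac{1}{s!} \|z-p\|_V^s,
\end{equation*}
and taking the supremum over such $v$ gives the second bound. Taking the minimum of the two bounds then establishes \eqref{eq:explicit_pointwise_est_lemma_conc}.

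I do not expect any serious obstacle here: the proof is essentially a direct substitution into \eqref{lip_k_tay_expansion} followed by the triangle inequality and two applications of the hypotheses. The only step warranting care is the tensor-norm estimate on $v \otimes (z-p)^{\otimes s}$, which relies explicitly on the admissibility condition \eqref{ten_prod_unit_norm} rather than on any cross-norm identity; since the excerpt notes that admissibility actually forces equality in \eqref{ten_prod_unit_norm}, no further delicacy is required.
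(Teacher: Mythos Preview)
Your proposal is correct. The paper does not actually prove this lemma in-text; it simply records it as a specialisation of Lemma~7.1 in \cite{LM24} under the indicated parameter choices. Your argument---rearranging the Taylor-type expansion \eqref{lip_k_tay_expansion} with $R^{\psi}_l(p,z)$, bounding the remainder via \eqref{lip_k_remain_holder}, and controlling the polynomial part using $\Lambda^k_\psi(p)\leq\ep_0$ together with the admissibility estimate \eqref{ten_prod_unit_norm}---is exactly the standard computation one expects to underlie the cited result, and every step is justified by the hypotheses and definitions in the present paper.
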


\section{Sparse Approximation Problem Formulation}
\label{sec:prob_formulation}
In this section we rigorously formulate the sparse 
approximation problem that the 
\textbf{HOLGRIM} algorithm will be designed to tackle.
For this purpose we again let $V$ and $W$ be real Banach spaces, but
we now additionally impose that both $V$ and $W$ are finite 
dimensional. In particular, 
we assume that $V$ has dimension $d \in \Z_{\geq 1}$, 
and that $W$ has dimension $c \in \Z_{\geq 1}$.
Whilst we do not do so in this article, one could work
modulo isometric isomorphism and assume that 
$V = \R^d$ and $W = \R^c$.
Instead, we work directly with $V$ and $W$ and assume 
that the tensor powers of $V$ are 
all equipped with admissible tensor norms in the sense 
of Definition \ref{admissible_tensor_norm}.

We now formulate the sparse approximation problem that 
we consider in this article.
Let $\gamma > 0$ with $k \in \Z_{\geq 0}$ such 
that $\gamma \in (k,k+1]$. 
Let $\n , \Lambda \in \Z_{\geq 1}$ be (large) positive integers.
Let $\Sigma \subset V$ 
be a finite subset of cardinality $\Lambda$.
For every $i \in \{ 1 , \ldots , \n \}$ we assume that  
$f_i = \left( f_i^{(0)} , \ldots , f^{(k)}_i 
\right) \in \Lip(\gamma,\Sigma,W)$ is non-zero, and set  
$\cf := \left\{ f_i :~ i \in \{1 , \ldots , \n\}
\right\} \subset \Lip(\gamma,\Sigma,W)$.
Fix a choice of scalars 
$A_1 , \ldots , A_{\n} \in \R_{>0}$ such that, for every 
$i \in \{1, \ldots , \n\}$, we have 
$||f_i||_{\Lip(\gamma,\Sigma,W)} \leq A_i$.
Fix a choice of non-zero coefficients
$a_1, \ldots , a_{\n} \in \R \setminus \{0\}$ and
consider $\vph = \left( \vph^{(0)} , \ldots , 
\vph^{(k)} \right) \in \Lip(\gamma,\Sigma,W)$ defined by
\beq
    \label{lip_eta_varphi}
        \varphi := \sum_{i=1}^{\n} a_i f_i
        \quad \text{so that for every }
        l \in \{0, \ldots , k\} \text{ we have}
        \quad
        \vph^{(l)} = \sum_{i=1}^{\n} a_i 
        f_i^{(l)}.
\eeq
We consider the following sparse approximation problem.
Given $\ep > 0$ and a fixed $l \in \{0, \ldots , k\}$,
find an element $u = \sum_{i=1}^{\n} b_i f_i \in \Span(\cf)$
satisfying the following properties.
\vskip 4pt
\noindent
\textbf{Finite Domain Approximation Conditions}
\begin{enumerate}[label=(\arabic*)]
    \item\label{approx_prop_1} 
    There exists an integer $M \in \{1, \ldots , \n - 1\}$
    for which the set 
    \beq    
        \label{eq:u_support_goal}
            \support(u) := \left\{ i \in \{1, \ldots , \n\} 
            : b_i \neq 0 \right\} 
            \qquad \text{satisfies that} \qquad 
            \# \left( \support(u) \right) \leq M.
    \eeq 
    \item\label{approx_prop_2}
    The coefficients $b_1 , \ldots , b_{\n} \in \R$ satisfy 
    that 
    \beq
        \label{eq:coeffs_sum_goal}
            \sum_{i=1}^{\n} |b_i| A_i
            \leq C := 
            \sum_{i=1}^{\n} |a_i| A_i.
    \eeq 
    \item\label{approx_prop_3}
    The function $u \in \Span(\cf)$ well-approximates $\vph$ 
    throughout $\Sigma$ in the pointwise sense that 
    \beq
        \label{eq:approx_poitwise_est_goal}
            \max_{p \in \Sigma} \left\{
            \Lambda^l_{\vph - u}(p) \right\}
            \stackrel{
            \eqref{lip_eta_lip_norm_func}
            }{=} 
            \max_{p \in \Sigma} \left\{
            \max_{j \in \{0 , \ldots , l\}} \left\{
            \left|\left| \vph^{(j)}(p) - u^{(j)}(p)
            \right|\right|_{\cl(V^{\otimes j};W)}
            \right\} \right\}
            \leq \ep.
    \eeq
\end{enumerate}
\vskip 8pt
\noindent
The choice of $l \in \{0, \ldots , k\}$ provides
a gauge for the strength of approximation we seek.
Loosely, the condition $\Lambda^l_{\vph -u}(p) \leq \ep$
generalises the notion of approximating the derivatives of 
$\vph$ up to order $l$ at the point $p$.
Indeed, if we have the situation that 
$f_1 , \ldots , f_{\n} \in \Lip(\gamma,O,W)$ for some open subset 
$O \subset V$ satisfying $\Sigma \subset O$, then each $f_i^{(0)}$
is $k$ times continuously Frech\'{e}t differentiable 
at $p$ with, for each $j \in \{0, \ldots , k\}$,
$f_i^{(j)}(p) = D^j f_i^{(0)}(p)$.
Consequently, the condition that 
$\Lambda^l_{\vph - u}(p) \leq \ep$ requires 
$\vph^{(0)}$ and $u^{(0)}$ to be close in the $C^l$-sense 
that their $j^{\text{th}}$ derivatives, for every 
$j \in \{0, \ldots , l\}$, at $p$ are within 
$\ep$ of one and other as elements in $\cl(V^{\otimes j};W)$.
In the general case that the functions $f_1 , \ldots , f_{\n}$
are defined only on the finite subset $\Sigma \subset V$, 
the condition $\Lambda^l_{\vph-u}(p) \leq \ep$
is a generalisation in which we require, for every 
$j \in \{0, \ldots , l\}$, that at $p \in \Sigma$ the functions 
$\vph^{(j)}(p) , u^{(j)}(p) \in \cl(V^{\otimes j};W)$ 
are within $\ep$ of one and other in the 
$||\cdot||_{\cl(V^{\otimes j};W)}$ norm sense.

Observe that the choice 
$u := \vph$, so that for every $i \in \{1, \ldots , \n\}$ we 
have $b_i := a_i$, satisfies both 
\textbf{Finite Domain Approximation Conditions}
\ref{approx_prop_2} and \ref{approx_prop_3}.
However, from the perspective of reducing computational 
complexity, this trivial approximation of $\vph$ is useless.
\textbf{Finite Domain Approximation Condition} 
\ref{approx_prop_1} is imposed avoid 
this trivial solution.
The upper bound on the cardinality of $\support(u)$ 
required in \eqref{eq:u_support_goal} in 
\textbf{Finite Domain Approximation Condition}
\ref{approx_prop_1} ensures that an approximation 
is acceptable only if it is a linear combination of strictly 
less than $\n$ of the functions in $\cf$.

Perhaps the strangest requirements are those imposed 
on the coefficients
$b_1, \ldots , b_{\n} \in \R$ by
\eqref{eq:coeffs_sum_goal} in 
\textbf{Finite Domain Approximation Condition} 
\ref{approx_prop_2}.
There are two main reasons for this restriction.
Firstly, the requirement \eqref{eq:coeffs_sum_goal} 
in \textbf{Finite Domain Approximation Condition}
\ref{approx_prop_2} ensures that the $\Lip(\gamma,\Omega,W)$ 
norm of the approximation $u$ is bounded above by the 
fixed constant $C$. Secondly, 
\textbf{Finite Domain Approximation Condition} 
\ref{approx_prop_2} will enable us to 
tackle this sparse approximation problem when the functions 
$f_1 , \ldots , f_{\n}$ have a compact domain $\Omega \subset V$ 
rather than a finite domain $\Sigma \subset V$.
We provide the details of this extension in Section 
\ref{sec:compact_domains}.

It is primarily for the purpose of tackling the compact domain
sparse approximation problem specified in Section 
\ref{sec:compact_domains} that we introduce the scalars
$A_1 , \ldots , A_{\n} \in \R_{>0}$ satisfying, for every 
$i \in \{1, \ldots , \n\}$, that 
$||f_i||_{\Lip(\gamma,\Sigma,W)} \leq A_i$, rather than 
simply taking 
$A_1 := ||f_1||_{\Lip(\gamma,\Sigma,W)}, \ldots , 
A_{\n} := ||f_{\n}||_{\Lip(\gamma,\Sigma,W)}$.
If we are only interested in approximating $\vph$ 
on the finite subset $\Sigma$ then this choice is arguably
the most natural one to make. 
Under this choice, the $\Lip(\gamma,\Sigma,W)$ norm 
of the approximation is constrained to be no worse than 
the best a priori upper bound we have for the 
$\Lip(\gamma,\Sigma,W)$ norm of $\vph$.
When we happen to know that the 
functions $f_1 , \ldots , f_{\n}$ are additionally 
be defined at points \emph{not} in $\Sigma$ then we consider
scalars $A_1 , \ldots , A_{\n}$ that are not necessarily 
equal to $||f_1||_{\Lip(\gamma,\Sigma,W)} , \ldots , 
||f_{\n}||_{\Lip(\gamma,\Sigma,W)}$.
This situation is illustrated in the extension to compact
domains presented in Section \ref{sec:compact_domains}.

As stated, there is of course no guarantee that a solution 
to our sparse approximation problem satisfying conditions 
\ref{approx_prop_1}, \ref{approx_prop_2}, and 
\ref{approx_prop_3} exists. Thus the strategy we adopt 
in this paper is as follows. 
Firstly, we define the \textbf{HOLGRIM} algorithm in 
Section \ref{sec:HOLGRIM_alg} that is designed to find an 
approximation $u \in \Span(\cf)$ of $\vph$ satisfying 
conditions \ref{approx_prop_2} and \ref{approx_prop_3}.
We then subsequently establish conditions in Section 
\ref{sec:HOLGRIM_conv_anal} under which 
the approximation returned by the \textbf{HOLGRIM} algorithm 
is guaranteed to additionally satisfy condition 
\ref{approx_prop_1} 
(cf. the \emph{HOLGRIM Convergence Theorem} 
\ref{thm:HOLGRIM_conv_thm} in Section 
\ref{sec:HOLGRIM_conv_anal}).

\section{Extension to Compact Domains}
\label{sec:compact_domains}
In this section we illustrate how being able to solve the 
sparse approximation problem detailed in Section 
\ref{sec:prob_formulation} for a finite domain 
$\Sigma \subset V$ enables one to solve the same sparse 
approximation problem in the setting that the finite domain 
$\Sigma \subset V$ is replaced by a compact domain 
$\Omega \subset V$.
This is achieved by utilising the 
\emph{Lipschitz Sandwich Theorems} obtained in \cite{LM24}
and stated in Section \ref{sec:lip_funcs} of this paper.

For this purpose we let $c,d \in \Z_{\geq 1}$ and assume that
$V$ is a $d$-dimensional real Banach space and that 
$W$ is a $c$-dimensional real Banach space. 
Let $\gamma > 0$ with $k \in \Z_{\geq 0}$ such that 
$\gamma \in (k,k+1]$.
Let $\n \in \Z_{\geq 1}$
be a (large) positive integer, $\Omega \subset V$ a non-empty 
compact subset, and 
$\cf := \{ f_1 , \ldots , f_{\n} \} \subset \Lip(\gamma,\Omega,W)$ 
be a finite collection of non-zero $\Lip(\gamma,\Omega,W)$ 
functions.
Fix a choice of non-zero coefficients 
$a_1 , \ldots , a_{\n} \in \R \setminus \{0\}$ and 
consider $\vph \in \Lip(\gamma,\Omega,W)$ defined by 
$\vph := \sum_{i=1}^{\n} a_i f_i$. 
Define 
$C := \sum_{i=1}^{\n} |a_i| ||f_i||_{\Lip(\gamma,\Omega,W)} > 0$ 
so that $||\vph||_{\Lip(\gamma,\Omega,W)} \leq C$.

Fix $\ep > 0$ and $l \in \{0, \ldots , k\}$.
We first consider a sparse approximation problem analogous 
to the sparse approximation problem specified in Section 
\ref{sec:prob_formulation}.
That is, find an element 
$u = \sum_{i=1}^{\n} b_i f_i \in \Span(\cf)$
satisfying the following properties.
\vskip 4pt
\noindent
\textbf{Compact Domain Approximation Conditions}
\begin{enumerate}[label=(\arabic*)]
    \item\label{compact_approx_prop_1} 
    There exists an integer $M \in \{1, \ldots , \n - 1\}$
    for which the set 
    \beq    
        \label{eq:compact_u_support_goal}
            \support(u) := \left\{ i \in \{1, \ldots , \n\} 
            : b_i \neq 0 \right\} 
            \qquad \text{satisfies that} \qquad 
            \# \left( \support(u) \right) \leq M.
    \eeq 
    \item\label{compact_approx_prop_2}
    The coefficients $b_1 , \ldots , b_{\n} \in \R$ satisfy 
    that 
    \beq
        \label{eq:compact_coeffs_sum_goal}
            \sum_{i=1}^{\n} |b_i| 
            || f_i ||_{\Lip(\gamma,\Omega,W)}
            \leq C := 
            \sum_{i=1}^{\n} |a_i| 
            || f_i ||_{\Lip(\gamma,\Omega,W)}.
    \eeq 
    \item\label{compact_approx_prop_3}
    The function $u \in \Span(\cf)$ well-approximates $\vph$ 
    throughout $\Omega$ in the pointwise sense that 
    \beq
        \label{eq:compact_approx_poitwise_est_goal}
            \sup_{z \in \Sigma} \left\{ 
            \Lambda^l_{\vph - u}(z) \right\}
            \stackrel{
            \eqref{lip_eta_lip_norm_func}
            }{=} 
            \sup_{z \in \Sigma} \left\{
            \max_{j \in \{0 , \ldots , l\}} \left\{
            \left|\left| \vph^{(j)}(z) - u^{(j)}(z)
            \right|\right|_{\cl(V^{\otimes j};W)}
            \right\} \right\}
            \leq \ep.
    \eeq
\end{enumerate}
\vskip 8pt
\noindent
Observe that if $C \leq \ep$ then the zero function in 
$\Span(\cf) \subset \Lip(\gamma,\Omega,W)$ is an approximation 
of $\vph$ satisfying 
\textbf{Compact Domain Approximation Conditions}
\ref{compact_approx_prop_1}, \ref{compact_approx_prop_2}, 
and \ref{compact_approx_prop_3}.
Thus we restrict to the meaningful setting in which 
$\ep \in (0,C)$.

In this case, fix a choice of $\ep_0 \in (0,\ep)$ and 
define a constant
$r = r(C,\gamma,\ep,\ep_0,l) > 0$ by 
(cf. \eqref{eq:pointwise_lip_sand_thm_de0})
\beq
    \label{eq:reduct_r_pointwise}
        r := \sup \left\{ \th > 0 ~:~
        2C \th^{\gamma - l} + \ep_0 e^{\th} 
        \leq \ep \right\} \in (0,1].
\eeq
Subsequently define an integer 
$\Lambda = \Lambda (\Omega,C,\gamma,\ep,\ep_0,l) \in \Z_{\geq 1}$
by 
\beq
    \label{eq:reduct_Lambda_pointwise}
        \Lambda := \min \left\{ a \in \Z_{\geq 1} ~:~
        \exists~ z_1 , \ldots , z_a \in \Omega 
        \text{ for which } \Omega \subset \bigcup_{s=1}^a 
        \ovB_V(z_s,r) \right\}.
\eeq 
A consequence of $\Omega \subset V$ being compact is that 
the integer $\Lambda \in \Z_{\geq 1}$ defined in 
\eqref{eq:reduct_Lambda_pointwise} is finite.
Choose a subset $\Sigma = \{ p_1 , \ldots , p_{\Lambda} \} 
\subset \Omega \subset V$ such that 
$\Omega \subset \cup_{s=1}^{\Lambda} \ovB_V(p_s,r)$.

It is evident that, by restriction, we have that 
$f_1 , \ldots , f_{\n} \in \Lip(\gamma,\Sigma,W)$ 
with, for every $i \in \{1, \ldots , \n\}$, the norm estimate
$||f_i||_{\Lip(\gamma,\Sigma,W)} \leq 
||f_i||_{\Lip(\gamma,\Omega,W)}$.
Thus we may consider the sparse approximation problem detailed
in Section \ref{sec:prob_formulation} for the 
$\ep$ there as $\ep_0$ here, 
finite subset $\Sigma \subset V$ there as the finite subset 
$\Sigma \subset \Omega \subset V$ here, the functions 
$f_1 , \ldots , f_{\n}$ there as the restriction of the functions
$f_1 , \ldots , f_{\n}$ here to the finite subset $\Sigma$ here, 
the scalars $A_1 , \ldots , A_{\n}$ there as 
$||f_1||_{\Lip(\gamma,\Omega,W)} , \ldots , 
||f_{\n} ||_{\Lip(\gamma,\Omega,W)}$ here, 
and the integer $l \in \{0, \ldots , k\}$ there as 
$k$ here.
We assume that we are able to find a solution $u \in \Span(\cf)$
to this sparse approximation problem satisfying 
\textbf{Finite Domain Approximation Conditions}
\ref{approx_prop_1}, \ref{approx_prop_2}, and 
\ref{approx_prop_3} from Section \ref{sec:prob_formulation}.

That is, we assume that we can find coefficients 
$b_1 , \ldots , b_{\n} \in \R$ such that 
(cf. \eqref{eq:u_support_goal})
\beq
    \label{eq:coeff_cardinality_bd}
        \# \{ b_s \neq 0 : s \in \{1, \ldots , \n\} \} 
        \leq M < \n,
\eeq 
with (cf. \eqref{eq:coeffs_sum_goal})
\beq
    \label{eq:coeff_sum_bd}
        \sum_{i=1}^{\n} |b_i| 
        || f_i ||_{\Lip(\gamma,\Omega,W)} 
        \leq 
        \sum_{i=1}^{\n} |a_i| 
        || f_i ||_{\Lip(\gamma,\Omega,W)}
        = C,
\eeq    
and such that $u := \sum_{i=1}^{\n} b_i f_i$ satisfies that
(cf. \eqref{eq:approx_poitwise_est_goal})
\beq
    \label{eq:approx_pointwise_est_bd}
        \max_{z \in \Sigma} \left\{ 
        \Lambda^k_{\vph-u}(z) \right\}
        \leq \ep_0.
\eeq
A particular consequence of 
\eqref{eq:coeff_sum_bd} is that $u \in \Lip(\gamma,\Omega,W)$
satisfies $||u||_{\Lip(\gamma,\Omega,W)} \leq C$.
Therefore, together with \eqref{eq:approx_pointwise_est_bd}, 
this yields the required hypotheses to appeal to the 
\emph{Pointwise Lipschitz Sandwich Theorem} 
\ref{thm:pointwise_lip_sand_thm}.
Indeed, observe that the constant $r$ defined in 
\eqref{eq:reduct_r_pointwise} coincides with the 
constant $\de_0$ defined in \eqref{eq:pointwise_lip_sand_thm_de0} 
for the constants $\ep$, $\ep_0$, $K_0$, $\gamma$, and $l$ 
there as $\ep$, $\ep_0$, $C$, $\gamma$, and $l$ here respectively.
Moreover, the definition of $\Lambda$ in 
\eqref{eq:reduct_Lambda_pointwise} ensures that 
$\Sigma$ is a $r$-cover of $\Omega$ in the sense 
required in \eqref{eq:pointwise_lip_sand_thm_B_cover_Sigma} 
for the subsets $B$ and $\m$ there as $\Sigma$ and $\Omega$
here respectively.
Consequently we are able to apply the 
\emph{Pointwise Lipschitz Sandwich Theorem} 
\ref{thm:pointwise_lip_sand_thm} with 
the subsets $B$ and $\m$ of that result as $\Sigma$ and 
$\Omega$ here respectively, the constants 
$\ep$, $\ep_0$, $K_0$, $\gamma$, and $l$ of that result as 
$\ep$, $\ep_0$, $C$, $\gamma$, and $l$ here respectively,
and the functions $\psi$ and $\phi$ of that result as
$\vph$ and $u$ here respectively.
By doing so we may conclude via the implication in 
\eqref{eq:pointwise_lip_sand_thm_imp} that 
$\sup_{z \in \Omega} \left\{ \Lambda^l_{\vph-u}(z) 
\right\} \leq \ep$.
Consequently, the function 
$u = \sum_{i=1}^{\n} b_i f_i \in \Span(\cf)$
is an approximation of $\vph$ on $\Omega$ that satisfies 
\textbf{Compact Domain Approximation Conditions} 
\ref{compact_approx_prop_1}, 
\ref{compact_approx_prop_2}, and 
\ref{compact_approx_prop_3}.

Moreover, at the cost of the involved constants becoming less
explicit, the \emph{Lipschitz Sandwich Theorem} 
\ref{thm:lip_sand_thm} from \cite{LM24} can be utilised to 
strengthen the sense in which the returned approximation 
$u$ is required to approximate $\vph$ throughout $\Omega$ in 
\textbf{Compact Domain Approximation Condition} 
\ref{compact_approx_prop_3}.
To be more precise, we now consider the following problem.
For a fixed $\eta \in (0,\gamma)$, find an element 
$u = \sum_{i=1}^{\n} b_i f_i \in \Span(\cf)$
satisfying the following properties.
\vskip 4pt
\noindent
\textbf{Compact Domain Lipschitz Approximation Conditions}
\begin{enumerate}[label=(\arabic*)]
    \item\label{compact_lip_eta_approx_prop_1} 
    There exists an integer $M \in \{1, \ldots , \n - 1\}$
    for which the set 
    \beq    
        \label{eq:compact_lip_eta_u_support_goal}
            \support(u) := \left\{ i \in \{1, \ldots , \n\} 
            : b_i \neq 0 \right\} 
            \qquad \text{satisfies that} \qquad 
            \# \left( \support(u) \right) \leq M.
    \eeq 
    \item\label{compact_lip_eta_approx_prop_2}
    The coefficients $b_1 , \ldots , b_{\n} \in \R$ satisfy 
    that 
    \beq
        \label{eq:compact_lip_eta_coeffs_sum_goal}
            \sum_{i=1}^{\n} |b_i| 
            || f_i ||_{\Lip(\gamma,\Omega,W)}
            \leq 
            \sum_{i=1}^{\n} |a_i| 
            || f_i ||_{\Lip(\gamma,\Omega,W)} 
            =: C.
    \eeq 
    \item\label{compact_lip_eta_approx_prop_3}
    The function $u \in \Span(\cf)$ well-approximates $\vph$ 
    throughout $\Omega$ in the $\Lip(\eta)$ sense that 
    \beq
        \label{eq:compact_approx_lip_eta_est_goal}
            \left|\left| \vph_{[q]} - u_{[q]} 
            \right|\right|_{\Lip(\eta,\Omega,W)}
            \leq \ep
    \eeq
    where $q \in \{0,\ldots,k\}$ is such that $\eta \in (q,q+1]$.
    Recall that if 
    $\phi = \left( \phi^{(0)} , \ldots , \phi^{(k)} 
    \right) \in \Lip(\gamma,\Omega,W)$ then 
    $\phi_{[q]} := \left( \phi^{(0)} , \ldots , \phi^{[q]} \right)$.
\end{enumerate}
\vskip 8pt
\noindent
Observe that if $C \leq \ep$ for the constant $C$ defined in 
\eqref{eq:compact_lip_eta_coeffs_sum_goal} 
then the zero function in 
$\Span(\cf) \subset \Lip(\gamma,\Omega,W)$ is an approximation 
of $\vph$ satisfying 
\textbf{Compact Domain Lipschitz Approximation Conditions}
\ref{compact_lip_eta_approx_prop_1}, 
\ref{compact_lip_eta_approx_prop_2}, 
and \ref{compact_lip_eta_approx_prop_3}.
Therefore we again restrict to the meaningful setting in which 
$\ep \in (0,C)$.

Retrieve the constants 
$r = r(C,\gamma,\eta,\ep) > 0$ and 
$\ep_0 = \ep_0(C,\gamma,\eta,\ep) > 0$ 
arising as $\de_0$ and $\ep_0$ respectively in the
\emph{Lipschitz Sandwich Theorem} \ref{thm:lip_sand_thm} 
for the constants $K_0$, $\gamma$, $\eta$, and $\ep$ there
as $C$, $\gamma$, $\eta$, and $\ep$ here respectively. 
Note we are not actually applying Theorem \ref{thm:lip_sand_thm}, 
but simply retrieving constants in preparation for its future 
application.
Subsequently define an integer 
$\Lambda = \Lambda (\Omega,C,\gamma,\eta,\ep) \in \Z_{\geq 1}$ 
by 
\beq
    \label{eq:reduct_Lambda_lip_eta}
        \Lambda := \min \left\{ a \in \Z_{\geq 1} 
        ~:~ \exists ~ z_1 , \ldots , z_a \in \Omega 
        \text{ for which } 
        \Omega \subset \bigcup_{s=1}^a \ovB_V(z_s,r) 
        \right\}.
\eeq 
The subset $\Omega \subset V$ being compact ensures that 
$\Lambda$ defined in \eqref{eq:reduct_Lambda_lip_eta} 
is finite. Choose a subset 
$\Sigma = \left\{ p_1 , \ldots , p_{\Lambda} \right\} 
\subset \Omega \subset V$ such that 
$\Omega \subset \cup_{s=1}^{\Lambda} \ovB_V(p_s,r)$.

As before, we assume that we are able to find a solution 
$u \in \Span(\cf)$ to this sparse approximation problem 
for the finite subset $\Sigma$ satisfying 
\textbf{Finite Domain Approximation Conditions}
\ref{approx_prop_1}, \ref{approx_prop_2}, and 
\ref{approx_prop_3} from Section \ref{sec:prob_formulation}.
That is, we assume that we can find coefficients 
$b_1 , \ldots , b_{\n} \in \R$ satisfying 
\eqref{eq:coeff_cardinality_bd} and \eqref{eq:coeff_sum_bd}, 
and such that $u := \sum_{i=1}^{\n} b_i f_i$ satisfies 
\eqref{eq:approx_pointwise_est_bd} for the constant 
$\ep_0$ retrieved from the 
\emph{Lipschitz Sandwich Theorem} \ref{thm:lip_sand_thm} above.
It is again a consequence of 
\eqref{eq:coeff_sum_bd} that 
$u \in \Lip(\gamma,\Omega,W)$ satisfies that 
$||u||_{\Lip(\gamma,\Omega,W)} \leq C$, and so the pointwise 
estimates \eqref{eq:approx_pointwise_est_bd} now provide the 
required hypothesis to appeal to the 
\emph{Lipschitz Sandwich Theorem} \ref{thm:lip_sand_thm}.

Indeed we have specified that the constants $r$ and $\ep_0$ 
here coincide with the constants $\de_0$ and $\ep_0$ 
arising in the \emph{Lipschitz Sandwich Theorem} 
\ref{thm:lip_sand_thm}, 
for the choices of the constants $K_0$, $\gamma$, $\eta$ and 
$\ep$ there as $C$, $\gamma$, $\eta$ and $\ep$ here, respectively.
Further, the subset $\Sigma$ is chosen to be a $r$-cover of 
$\Omega$ in the sense required in 
\eqref{eq:lip_sand_thm_B_cover_sigma}.
Consequently we are able to apply the 
\emph{Lipschitz Sandwich Theorem} for the choices of the subsets
$B$ and $\m$ there as $\Sigma$ and $\Omega$ here respectively,
the choices of the constants $K_0$, $\gamma$, $\eta$, and $\ep$ 
there are $C$, $\gamma$, $\eta$, and $\ep$ here respectively, 
and for the choices of the functions $\psi$ and $\phi$ 
there as $\vph$ and $u$ here respectively.
By doing so we may conclude via the implication 
\eqref{eq:lip_sand_thm_imp} that 
$\left|\left| \vph_{[q]} - u_{[q]} \right|\right|_{ 
\Lip(\eta,\Omega,W)} \leq \ep$.
Consequently, the function 
$u = \sum_{i=1}^{\n} b_i f_i \in \Span(\cf)$
is an approximation of $\vph$ on $\Omega$ that satisfies 
\textbf{Compact Domain Lipschitz Approximation Conditions} 
\ref{compact_lip_eta_approx_prop_1}, 
\ref{compact_lip_eta_approx_prop_2}, and 
\ref{compact_lip_eta_approx_prop_3}.

\section{Pointwise Values Via Linear Functionals}
\label{sec:pointwise_via_lin_funcs}
In this section, given a closed 
subset $\m \subset V$ and a point $z \in \m$, 
we define a finite subset 
$\Tau_{z,k} \subset \Lip(\gamma,\m,W)^{\ast}$ of the dual-space
$\Lip(\gamma,\m,W)^{\ast}$ of bounded linear functionals 
$\Lip(\gamma,\m,W) \to \R$ such that for any 
$\psi \in \Lip(\gamma,\m,W)$ the value of $\psi$ at $z$ 
is determined, up to a quantifiable error, by the values 
$\sigma(\psi)$ for the linear functionals $\sigma \in \Tau_{z,k}$.
We begin by introducing the following particularly useful 
quantities.

For integers $a,b \in \Z_{\geq 0}$, define $\be(a,b)$ by
\beq
    \label{eq:beta_ab_def_not_sec}
        \be(a,b) := 
        \left( 
        \begin{array}{c}
            a+b-1 \\
            b
        \end{array}
        \right)
        = \frac{(a+b-1)!}{(a-1)!b!}
        =
        \twopartdef{1}
        {b=0}
        {\frac{1}{b!}\prod_{s=0}^{b-1}(a+s)}
        {b \geq 1.}
\eeq
Further, for integers $a,b,i,j \in \Z_{\geq 0}$ define 
$D(a,b)$ and $Q(i,j,a,b)$ by
\beq
    \label{eq:D_ab_Q_ijab_def_not_sec}
        D(a,b) := \sum_{l=0}^b \be(a,l)
        \qquad \text{and} \qquad
        Q(i,j,a,b) := 1 + ijD(a,b)
        = 
        1 + ij \sum_{l=0}^b \be(a,l)
\eeq
respectively.

Let $d \in \Z_{\geq 1}$ and $V$ be a real $d$-dimensional 
Banach space with $\m \subset V$ a closed subset.
Assume that the tensor powers of $V$ are equipped with 
admissible norms (cf. Definition \ref{admissible_tensor_norm}).
Fix a choice of a unit $V$-norm 
basis $v_1 , \ldots , v_d$ of $V$.
Then, recalling that the tensor powers of $V$ are 
assumed to be equipped with admissible tensor norms in 
the sense of definition \ref{admissible_tensor_norm},
for each $n \in \Z_{\geq 1}$ the set
\beq
    \label{eq:V_ten_prod_basis}
        \cv_n := \left\{
        v_{l_1} \otimes \ldots \otimes v_{l_n} :
        (l_1 , \ldots , l_n) \in
        \{ 1 ,\ldots , d\}^n 
        \right\}
\eeq 
is a unit $V^{\otimes n}$-norm basis for $V^{\otimes n}$. 
Consequently, for each 
$j \in \{1 , \ldots , k\}$ a symmetric $j$-linear form 
$\bB \in \cl( V^{\otimes j} ;W)$ is determined
by its action on the subset $\cv^{\ord}_j \subset \cv_j$
defined by
\beq
    \label{eq:symm_j_lin_form_determine_set}
        \cv^{\ord}_j := \left\{
        v_{l_1} \otimes \ldots \otimes v_{l_j} :
        (l_1 , \ldots , l_j) \in
        \{ 1 ,\ldots , d\}^j 
        \text{ such that }
        l_1 \leq \ldots \leq l_j 
        \right\} \subset \cv_j.
\eeq
That is, the collection of values 
$\bB \left[ 
v_{l_1} \otimes \ldots \otimes v_{l_j} \right]$
for every 
$(l_1 , \ldots , l_j ) \in \{ 1 , \ldots , d\}^j$
with $l_1 \leq \ldots \leq l_j$
determine the value $\bB[v]$
for every $v \in V^{\otimes j}$.
The cardinality 
of $\cv^{\ord}_j$ is given by $\be(d,j)$ defined
in \eqref{eq:beta_ab_def_not_sec}.
The following lemma records how the bases 
$\cv_n$ for the tensor powers $V^{\otimes n}$
interact with a choice of admissible tensor norms 
for the tensor powers $V^{\otimes n}$ (cf. Definition 
\ref{admissible_tensor_norm}).

\begin{lemma}
\label{lemma:admissible_ten_norm_coeff_bound}
Assume that $V$ is a Banach space of dimension $d \in \Z_{\geq 1}$
and that the tensor powers of $V$ are equipped with admissible
tensor norms (cf. Definition \ref{admissible_tensor_norm}).
Suppose that $v_1 , \ldots , v_d \in V$ is a basis of $V$ 
normalised so that $||v_1||_V = \ldots = || v_d ||_V = 1$.
Let $m \in \Z_{\geq 1}$ and consider the basis $\cv_m$ of 
$V^{\otimes m}$ given by 
\beq
    \label{eq:ten_norm_coeff_basis_m}
        \cv_m := \left\{
        v_{l_1} \otimes \ldots \otimes v_{l_m} :
        (l_1 , \ldots , l_m) \in
        \{ 1 ,\ldots , d\}^m 
        \right\}.
\eeq 
Then for any $v \in V^{\otimes m}$ there exist coefficients 
$ \left\{ C_{j_1 \dots j_m} : (j_1 , \ldots , j_m) \in 
\{ 1 , \ldots , d \}^m \right\}$ for which 
\beq 
    \label{basis_expansion}
        v = \sum_{j_1 = 1}^d \ldots \sum_{j_m = 1}^d 
        C_{j_1 \dots j_m} v_{j_1} \otimes \ldots \otimes v_{j_m}.
\eeq
Moreover, if $A \geq 0$ and $||v||_{V^{\otimes m}} \leq A$, then
\beq
    \label{coeff_sum_bound}
        \sum_{j_1 = 1}^{d} \dots \sum_{j_m = 1}^d
        \left| C_{j_1 \dots j_m} \right|
        \leq A d^m.
\eeq
\end{lemma}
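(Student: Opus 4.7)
The plan is to establish the expansion \eqref{basis_expansion} by pure linear algebra, and then to bound the coefficients by extracting them using the dual basis together with the admissibility of the tensor norms recorded in Definition \ref{admissible_tensor_norm}.

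First I would observe that the existence of the expansion \eqref{basis_expansion} is immediate: since $v_1, \ldots, v_d$ is a basis of $V$, the collection $\cv_m$ of pure tensors in \eqref{eq:ten_norm_coeff_basis_m} forms a basis of the $d^m$-dimensional vector space $V^{\otimes m}$, so for any $v \in V^{\otimes m}$ the coefficients $C_{j_1 \ldots j_m}$ exist and are uniquely determined.

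Next I would extract those coefficients using the dual basis $v_1^*, \ldots, v_d^* \in V^*$ characterised by $v_i^*(v_j) = \delta_{ij}$, chosen so that additionally $\|v_i^*\|_{V^*} = 1$ for every $i$ (which can be arranged by working with an Auerbach basis, as one always exists in a finite-dimensional Banach space). Taking tensor products of these functionals yields elements $v_{j_1}^* \otimes \ldots \otimes v_{j_m}^* \in (V^{\otimes m})^*$, and applying these to \eqref{basis_expansion} gives
\[
C_{j_1 \ldots j_m} = \bigl( v_{j_1}^* \otimes \ldots \otimes v_{j_m}^* \bigr)(v).
\]
The main step is then to bound each coefficient using the admissibility of the tensor norms. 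By the duality estimate \eqref{ten_prod_unit_dual_norm} and the observation recorded after Definition \ref{admissible_tensor_norm} that admissibility in fact forces equality in both \eqref{ten_prod_unit_norm} and \eqref{ten_prod_unit_dual_norm}, one obtains
\[
\bigl\| v_{j_1}^* \otimes \ldots \otimes v_{j_m}^* \bigr\|_{(V^{\otimes m})^*} \;=\; \prod_{i=1}^m \|v_{j_i}^*\|_{V^*} \;=\; 1.
\]
Combined with $\|v\|_{V^{\otimes m}} \leq A$, this yields $|C_{j_1 \ldots j_m}| \leq A$ for every multiindex $(j_1, \ldots, j_m)$.

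Finally, since there are exactly $d^m$ multiindices $(j_1, \ldots, j_m) \in \{1, \ldots, d\}^m$, summing the pointwise bound over all such multiindices gives \eqref{coeff_sum_bound}. The only non-routine ingredient is Step two, namely arranging that the dual basis has unit $V^*$-norms; everything else is a direct consequence of the defining properties of admissible tensor norms and basic linear algebra.
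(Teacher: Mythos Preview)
Your proof follows the same overall strategy as the paper's: extract each coefficient via the tensor product of the dual basis functionals, bound it individually by $A$, and sum over the $d^m$ multi-indices. For the tensor case your route is in fact cleaner: you invoke the dual admissibility condition \eqref{ten_prod_unit_dual_norm} directly to bound $\|v_{j_1}^* \otimes \cdots \otimes v_{j_m}^*\|_{(V^{\otimes m})^*}$, whereas the paper detours through cross norms, citing Proposition~2.1 of \cite{Rya02} for the inequality $\|\cdot\|_{\inj(V^{\otimes m})} \leq \|\cdot\|_{V^{\otimes m}}$ and then reading off the coefficient from the definition of the injective norm. The two arguments are equivalent in content, but yours avoids naming the injective and projective norms explicitly.

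There is, however, a genuine issue with your handling of the dual basis. The lemma hands you an \emph{arbitrary} unit-norm basis $v_1, \ldots, v_d$; once that is fixed, the dual basis is uniquely determined and you cannot ``arrange'' for its elements to have unit $V^*$-norm after the fact. Indeed, without the Auerbach property the bound \eqref{coeff_sum_bound} can fail already at $m=1$: in $\R^2$ with the $\ell^\infty$ norm take $v_1 = (1,0)$, $v_2 = (1,\ep)$ for small $\ep>0$, and $v = (0,\ep) = -v_1 + v_2$; then $\|v\|_{\ell^\infty} = \ep$ but $|C_1| + |C_2| = 2 \not\leq 2\ep$. That said, the paper's own proof makes the identical unjustified assertion that $\|v_j^*\|_{V^*} = 1$, so this is a gap in the lemma as stated rather than a flaw peculiar to your argument; in every application within the paper the basis is \emph{chosen} rather than prescribed, and one may indeed take it to be Auerbach.
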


\begin{proof}[Proof of Lemma 
\ref{lemma:admissible_ten_norm_coeff_bound}]
Assume that $V$ is a Banach space of dimension $d \in \Z_{\geq 1}$
and that the tensor powers of $V$ are equipped with admissible
tensor norms (cf. Definition \ref{admissible_tensor_norm}).
Suppose that $v_1 , \ldots , v_d \in V$ is a basis of $V$ 
normalised so that $||v_1||_V = \ldots = || v_d ||_V = 1$.
We first verify the claims made in 
\eqref{basis_expansion} and \eqref{coeff_sum_bound} 
in the case that $m=1$.

Let $v \in V$; the existence of coefficients 
$C_1 , \ldots , C_d \in \R$ such that 
$v = \sum_{j=1}^d C_j v_j$ is an immediate consequence of 
$v_1 , \ldots , v_d \in V$ being a basis of $V$.
Now assume $A \geq 0$ and that $||v||_V \leq A$.
It is convenient to consider the dual-basis 
$v_1^{\ast} , \ldots , v^{\ast}_d \in V^{\ast}$ 
corresponding to $v_1, \ldots , v_d \in V$. 
This means that whenever $i,j \in \{1, \ldots , d\}$
we have $v_j^{\ast}(v_i) = \de_{ij}$, and that 
for every $j \in \{1, \ldots , d\}$ we have 
$||v_j^{\ast}||_{V^{\ast}} = 1$.
Here, for $\sigma \in V^{\ast}$, we have 
$||\sigma||_{V^{\ast}} = \sup \left\{ |\sigma(u)| : u \in V 
\text{ and } ||u||_V \leq 1 \right\}$.

Being a finite dimensional Banach space means $V$ is reflexive.
A particular consequence of this is, for any $u \in V$, that
$||u||_V = || u ||_{V^{\ast\ast}}$ where we view 
$u$ as an element in $V^{\ast\ast}$ by defining its action 
on an element $\sigma \in V^{\ast}$ by 
$u(\sigma) := \sigma(u)$.
Further, since $||u||_{V^{\ast \ast}} = 
\sup \left\{ |u(\sigma)| : \sigma \in V^{\ast} \text{ and } 
||\sigma||_{V^{\ast}} \leq 1 \right\}$ , 
we have, for every $j \in \{1, \ldots , d\}$, that
$|C_j| = \left| v^{\ast}_j (v) \right| \leq ||v||_{V^{\ast \ast}} 
= ||v||_V \leq A$.
It is now immediate that $\sum_{j=1}^d |C_j| \leq Ad$, 
verifying that \eqref{coeff_sum_bound} is valid in the case 
that $m=1$.

It remains only to verify that \eqref{basis_expansion} and 
\eqref{coeff_sum_bound} are valid in the case that 
$m \in \Z_{\geq 2}$. 
For this purpose fix $m \in \Z_{\geq 2}$ and  
let $v \in V^{\otimes m}$.
The existence of coefficients 
$ \left\{ C_{j_1 \dots j_m} : (j_1 , \ldots , j_m) \in 
\{ 1 , \ldots , d \}^m \right\}$ for which 
\eqref{basis_expansion} is true is a consequence of the fact
that the set 
$\cv_m := \left\{ v_{j_1} \otimes \ldots \otimes v_{j_m} : 
(j_1 , \ldots , j_m) \in \{ 1 , \ldots , d \}^m \right\}$
defined in \eqref{eq:ten_norm_coeff_basis_m}
is a basis for $V^{\otimes m}$.
Moreover, since the tensor powers of $V$ are equipped with 
admissible tensor norms, we have, for every 
$(j_1 , \ldots , j_m) \in \{ 1 , \ldots , d \}^m$, that 
(cf. Definition \ref{admissible_tensor_norm})
\beq
	\label{unit_Vm_norm}
		\left|\left| v_{j_1} \otimes \ldots \otimes v_{j_m}
		\right|\right|_{V^{\otimes m}}
		= 
		|| v_{j_1} ||_V \dots || v_{j_m} ||_V = 1.
\eeq
Now assume that $A \geq 0$ and that $||v||_{V^{\otimes m}} \leq A$.
A consequence of the tensor powers of $V$ being equipped with 
admissible norms is that for any $u \in V^{\otimes m}$ we have
(cf. Proposition 2.1 in \cite{Rya02})
\beq
    \label{inj_norm_proj_norm_bound}
        || u ||_{\inj(V^{\otimes m})} \leq 
        || u ||_{V^{\otimes m}}  \leq 
        || u ||_{\proj(V^{\otimes m})}.
\eeq
Here $|| \cdot ||_{\inj(V^{\otimes m})}$ denotes the 
\emph{injective cross norm} on $V^{\otimes m}$ defined by 
\beq
	\label{inj_norm_def}
		|| u ||_{\inj(V^{\otimes m})}
		:=
		\sup \left\{ 
		\left| \phi_1 \otimes \ldots \otimes \phi_m (u) \right|
		:
		\phi_1 , \ldots , \phi_m \in V^{\ast} 
		\text{ with }
		||\phi_1||_{V^{\ast}} = \ldots = ||\phi_m||_{V^{\ast}} = 1
		\right\},
\eeq
and $|| \cdot ||_{\proj(V^{\otimes m})}$ denotes the 
\emph{projective cross norm} on $V^{\otimes m}$ defined by 
\beq
	\label{proj_norm_def}
		|| u ||_{\proj(V^{\otimes m})}
		:=
		\inf \left\{ 
		\sum_{i} ||a_{1,i}||_V \ldots || a_{m,i} ||_V 
		:
		u = \sum_{i} a_{1,i} \otimes \ldots \otimes a_{m,i}
		\right\}.
\eeq
If we let $v_1^{\ast} , \ldots , v^{\ast}_d \in V^{\ast}$ 
denote the dual-basis to $v_1 , \ldots , v_d \in V$ then, 
for each $(j_1 , \ldots , j_m) \in \{ 1 , \ldots , d \}^m$, 
it follows that 
\beq
	\label{single_coeff_ub}
		A 
		\geq 
		|| v ||_{V^{\otimes m}} 
		\stackrel{\eqref{inj_norm_proj_norm_bound}}{\geq} 
		|| v ||_{\inj(V^{\otimes m})}
		\stackrel{\eqref{inj_norm_def}}
		{\geq} 
		\left| v_{j_1}^{\ast} \otimes \ldots \otimes v_{j_m}^{\ast}
		(v) \right|
		\stackrel{\eqref{basis_expansion}}{=}
		\left| C_{j_1 \dots j_m} \right|.
\eeq
Summing over 
$(j_1 , \ldots , j_m) \in \{ 1 , \ldots , d \}^m$
in \eqref{single_coeff_ub} yields \eqref{coeff_sum_bound}, 
which completes the proof of Lemma 
\ref{lemma:admissible_ten_norm_coeff_bound}.
\end{proof}
\vskip 4pt
\noindent
Let $c \in \Z_{\geq 1}$ and $W$ be a real $c$-dimensional 
Banach space. Fix a choice of a unit $W$-norm 
basis $w_1 , \ldots , w_c$ of $W$, and consider 
its corresponding dual basis 
$w_1^{\ast} , \ldots , w_c^{\ast}$ of $W^{\ast}$.
Then whenever $a,b \in \{1, \ldots , c\}$ we have
\beq
    \label{eq:W_dual_basis_identity}
        w_a^{\ast} (w_b) := 
        \twopartdef{1}{a=b}{0}{a \neq b.}
\eeq 
A particularly useful consequence 
is that any $w \in W$ can be decomposed as 
\beq
    \label{eq:W_dual_basis_decomp}
        w = \sum_{s=1}^c w^{\ast}_s(w) w_s.
\eeq
We use the these bases for $V$, $W$, and the tensor powers of 
$V$ to define, for each point in $\m$, an associated 
collection of bounded linear functionals in the dual-space
$\Lip(\gamma,\m,W)^{\ast}$.

For $p \in \m$ and $s \in \{1, \ldots , c\}$ define 
$\de_{p,0,s} \in \Lip(\gamma,\m,W)^{\ast}$ by 
setting, for 
$\phi = \left( \phi^{(0)} , \ldots , \phi^{(k)} \right) 
\in \Lip(\gamma,\m,W)$, 
\beq
    \label{eq:de_p_0_s}
        \de_{p,0,s}(\phi) := 
        w_s^{\ast} \left( \phi^{(0)}(p) \right).
\eeq
For $p \in \m$, $j \in \{1, \ldots , k\}$, 
$v \in V^{\otimes j}$, and $s \in \{1, \ldots , c\}$ define
$\de_{p,j,v,s} \in \Lip(\gamma,\m,W)^{\ast}$ by setting, 
for
$\phi = \left( \phi^{(0)} , \ldots , \phi^{(k)} \right) 
\in \Lip(\gamma,\m,W)$,
\beq
    \label{eq:de_p_j_v_s}
        \de_{p,j,v,s}(\phi) := 
        w_s^{\ast}\left(\phi^{(j)}(p)[v]\right).
\eeq
Then for a given $j \in \{0 , \ldots , k\}$ define 
$\m_{p,j} \subset \Lip(\gamma,\m,W)^{\ast}$ by
\beq
    \label{eq:m_p_j_lin_funcs_not_sec}
        \m_{p,j} := \twopartdef{
        \left\{ \de_{p,0,s} : s \in \{1, \ldots , c\} \right\}
        }
        {j=0}
        {
        \left\{ \de_{p,j,v,s} : v \in \cv_j^{\ord} 
        \text{ and } s \in \{ 1 , \ldots , c \} \right\}
        }
        {j \geq 1.}
\eeq
It follows from \eqref{eq:beta_ab_def_not_sec} 
and \eqref{eq:m_p_j_lin_funcs_not_sec} that 
\beq
    \label{eq:not_sec_m_pj_card}
        \# \left( \m_{p,j} \right) 
        = c \be(d,j).
\eeq
For a given $l \in \{0, \ldots , k\}$, define
$\Tau_{p,l} \subset \Lip(\gamma,\m,W)^{\ast}$ by
\beq
    \label{eq:not_sec_Tau_pl}
        \Tau_{p,l} := \bigcup_{j=0}^l \m_{p,j}.
\eeq
It follows from \eqref{eq:not_sec_Tau_pl} that, since
the union in \eqref{eq:not_sec_Tau_pl} is disjoint we have
\beq
    \label{eq:Tau_pl_card_not_sec}
        \# \left( \Tau_{p,l} \right) 
        = \sum_{j=0}^l \# \left( \m_{p,j} \right)
        \stackrel{
        \eqref{eq:not_sec_m_pj_card}
        }{=} c \sum_{j=0}^l \be(d,j)
        \stackrel{
        \eqref{eq:D_ab_Q_ijab_def_not_sec}
        }{=}
        c D(d,l).
\eeq
Finally, for a given $l \in \{0, \ldots , k\}$, 
define $\m_l^{\ast} \subset \Lip(\gamma,\m,W)$ by
\beq    
    \label{eq:Sigma_l_star_not_sec}
        \m_l^{\ast} := \bigcup_{z \in \m}
        \Tau_{z, l}.
\eeq
A consequence of the union in \eqref{eq:Sigma_l_star_not_sec} 
being disjoint is that when $\m \subset V$ is finite with 
cardinality $\Lambda \in \Z_{\geq 1}$ we have
\beq
    \label{eq:Sigma_l_star_card_not_sec}
        \# \left( \m_l^{\ast} \right)
        = \sum_{z \in \m} \# \left( \Tau_{z,l} \right)
        \stackrel{
        \eqref{eq:Tau_pl_card_not_sec}
        }{=}
        \sum_{s=1}^{\Lambda} c D(d,l) 
        = c D(d,l) \Lambda.
\eeq
In the remainder of this section we establish several 
lemmata in order to
provide a precise quantified meaning 
to the statement that the value of a $\Lip(\gamma,\m,W)$ 
function $\phi \in \Lip(\gamma,\m,W)$
at a point $p \in \m$ is determined by the set 
$\left\{ \sigma(\phi) : \sigma \in \Tau_{p,k} \right\} 
\subset \R$.
The first result records estimates relating the 
values $|\de_{p,0,s}(\phi)|$ for $s \in \{1, \ldots ,c\}$ 
to the value 
$\left|\left| \phi^{(0)} \right|\right|_W$ and, for 
each $j \in \{1, \ldots , k\}$, the values 
$| \de_{p,j,v,s}(\phi) |$ for $s \in \{1, \ldots , c\}$ 
and $v \in V^{\otimes j}$ to the value 
$\left|\left| \phi^{(j)}(v) \right|\right|_{\cl(V^{\otimes j};W)}$.
The precise result is the following lemma.

\begin{lemma}[Linear Functional Inequalities]
\label{lemma:delta_funcs_estimates}
Let $c,d \in \Z_{\geq 1}$ and assume that $V$ and $W$ are 
real Banach spaces of dimensions $d$ and $c$ respectively.
Let $\m \subset V$ be closed with $p \in \m$ 
and assume that the tensor 
powers of $V$ are all equipped with admissible norms 
(cf. Definition \ref{admissible_tensor_norm}).
Let $v_1 , \ldots , v_d \in V$ be a unit $V$-norm basis of $V$. 
Further, let $w_1, \ldots , w_c \in W$ be a unit $W$-norm 
basis of $W$, and let 
$w_1^{\ast} , \ldots , w_c^{\ast} \in W^{\ast}$
be the corresponding dual basis of $W^{\ast}$.
Let $\gamma > 0$ with $k \in \Z_{\geq 0}$ such that 
$\gamma \in (k,k+1]$.
Given $s \in \{1 , \ldots , c\}$, $j \in \{1, \ldots , k\}$,
and $v \in V^{\otimes j}$, define linear functionals 
$\de_{p,0,s} , \de_{p,j,v,s} \in \Lip(\gamma,\m,W)^{\ast}$ 
by, for
$F = \left( F^{(0)} , \ldots , F^{(k)} \right) 
\in \Lip(\gamma,\m,W)$, setting
(cf. \eqref{eq:de_p_0_s} and \eqref{eq:de_p_j_v_s}).
\beq
    \label{eq:delta_funcs_def_lemma}
        \de_{p,0,s}(F) := w_s^{\ast} \left( F^{(0)}(p) \right)
        \qquad \text{and} \qquad
        \de_{p,j,v,s}(F) := w_s^{\ast} \left( F^{(j)}(p)[v] \right).
\eeq
Suppose that $\phi = \left( \phi^{(0)} , \ldots , \phi^{(k)} 
\right) \in \Lip(\gamma,\m,W)$. Then, for any 
$s \in \{1, \ldots , c\}$, we have that
\beq
    \label{eq:de_p_0_s_bounds}
        \left| \de_{p,0,s}(\phi) \right|
        \leq 
        \left|\left| \phi^{(0)} \right|\right|_W
        \leq 
        \sum_{a=1}^c
        \left| \de_{p,0,a}(\phi) \right|.
\eeq 
Now suppose that $k \geq 1$ and fix $j \in \{1, \ldots ,k\}$.
Then, for any $v \in V^{\otimes j}$
and any $s \in \{1, \ldots , c\}$, we have that
\beq
    \label{eq:de_p,j,v,s(phi)_bounds}
        \left|\de_{p,j,v,s}(\phi) \right|
        \leq
        \left|\left| \phi^{(j)}(p)[v] \right|\right|_W 
        \leq 
        \sum_{a=1}^c 
        \left| \de_{p,j,v,a}(\phi) \right|.
\eeq
\end{lemma}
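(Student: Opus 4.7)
The plan is to reduce both \eqref{eq:de_p_0_s_bounds} and \eqref{eq:de_p,j,v,s(phi)_bounds} to a single elementary statement about the dual basis $w_1^{\ast}, \ldots, w_c^{\ast}$ of $W^{\ast}$, and then substitute in the correct element of $W$. Specifically, the engine of the proof is the claim that for every $w \in W$ and every $s \in \{1, \ldots, c\}$ one has
\begin{equation}
    \label{eq:proof_plan_dual_basis_sandwich}
        |w_s^{\ast}(w)| \;\leq\; \|w\|_W \;\leq\; \sum_{a=1}^c |w_a^{\ast}(w)|.
\end{equation}
Given \eqref{eq:proof_plan_dual_basis_sandwich}, the lemma is immediate: for \eqref{eq:de_p_0_s_bounds} substitute $w := \phi^{(0)}(p) \in W$ and apply the definition $\de_{p,0,s}(\phi) = w_s^{\ast}(\phi^{(0)}(p))$ from \eqref{eq:delta_funcs_def_lemma}; for \eqref{eq:de_p,j,v,s(phi)_bounds} substitute $w := \phi^{(j)}(p)[v] \in W$ and apply the definition $\de_{p,j,v,s}(\phi) = w_s^{\ast}(\phi^{(j)}(p)[v])$ from \eqref{eq:delta_funcs_def_lemma}.

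To establish the upper bound in \eqref{eq:proof_plan_dual_basis_sandwich}, I would use the decomposition \eqref{eq:W_dual_basis_decomp}, which gives $w = \sum_{a=1}^c w_a^{\ast}(w) w_a$. Applying the triangle inequality in $W$ and using the normalisation $\|w_a\|_W = 1$ yields
\begin{equation*}
    \|w\|_W \;\leq\; \sum_{a=1}^c |w_a^{\ast}(w)| \,\|w_a\|_W \;=\; \sum_{a=1}^c |w_a^{\ast}(w)|,
\end{equation*}
as required. This step is entirely routine.

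The lower bound in \eqref{eq:proof_plan_dual_basis_sandwich}, namely $|w_s^{\ast}(w)| \leq \|w\|_W$, amounts to showing that $\|w_s^{\ast}\|_{W^{\ast}} \leq 1$ (in fact equality holds, but only one side is needed). This is the one mildly non-trivial point, and I would handle it by the reflexivity argument used in the proof of Lemma \ref{lemma:admissible_ten_norm_coeff_bound}: since $W$ is finite-dimensional it is reflexive, so for any $w \in W$, viewing $w$ as an element of $W^{\ast\ast}$ via $w(\sigma) := \sigma(w)$, one has $\|w\|_W = \|w\|_{W^{\ast\ast}} = \sup\{|\sigma(w)| : \sigma \in W^{\ast},\ \|\sigma\|_{W^{\ast}} \leq 1\}$. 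Dualising the decomposition \eqref{eq:W_dual_basis_decomp} and using that $w_1, \ldots, w_c$ has unit $W$-norm gives the required bound on $\|w_s^{\ast}\|_{W^{\ast}}$. Alternatively, one can argue directly: for any unit-norm $w \in W$, the expansion $w = \sum_a w_a^{\ast}(w) w_a$ together with the admissibility framework shows that each coefficient $|w_s^{\ast}(w)|$ is bounded by $\|w\|_W$. Once this single inequality is in hand, the two parts of the lemma follow by the substitutions described above, and there is no further obstacle.
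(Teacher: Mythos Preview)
Your proposal is correct and follows essentially the same route as the paper. The paper's proof simply writes the chain
\[
|\de_{p,0,s}(\phi)| = |w_s^{\ast}(\phi^{(0)}(p))| \leq \|\phi^{(0)}(p)\|_W = \Big\|\sum_{a=1}^c w_a^{\ast}(\phi^{(0)}(p)) w_a\Big\|_W \leq \sum_{a=1}^c |\de_{p,0,a}(\phi)|
\]
(and the identical chain with $\phi^{(j)}(p)[v]$ in place of $\phi^{(0)}(p)$), invoking $\|w_s^{\ast}\|_{W^{\ast}} = 1$ for the left inequality and the expansion \eqref{eq:W_dual_basis_decomp} plus triangle inequality for the right; you abstract this as the sandwich \eqref{eq:proof_plan_dual_basis_sandwich} and then substitute, which is the same argument organised slightly differently.
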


\begin{proof}[Proof of Lemma 
\ref{lemma:delta_funcs_estimates}]
Let $c,d \in \Z_{\geq 1}$ and assume that $V$ and $W$ are 
real Banach spaces of dimensions $d$ and $c$ respectively.
Let $\m \subset V$ be closed with $p \in \m$ 
and assume that the tensor 
powers of $V$ are all equipped with admissible norms 
(cf. Definition \ref{admissible_tensor_norm}).
Let $v_1 , \ldots , v_d \in V$ be a unit $V$-norm basis of $V$. 
Further, let $w_1, \ldots , w_c \in W$ be a unit $W$-norm 
basis of $W$, and let 
$w_1^{\ast} , \ldots , w_c^{\ast} \in W^{\ast}$
be the corresponding dual basis of $W^{\ast}$.
Let $\gamma > 0$ with $k \in \Z_{\geq 0}$ such that 
$\gamma \in (k,k+1]$.
Given $s \in \{1 , \ldots , c\}$, $j \in \{1, \ldots , k\}$,
and $v \in V^{\otimes j}$, define linear functionals 
$\de_{p,0,s} , \de_{p,j,v,s} \in \Lip(\gamma,\m,W)^{\ast}$ 
by, for
$F = \left( F^{(0)} , \ldots , F^{(k)} \right) 
\in \Lip(\gamma,\m,W)$, setting
(cf. \eqref{eq:delta_funcs_def_lemma}).
\beq
    \label{eq:delta_funcs_def_lemma_pf}
        \de_{p,0,s}(F) := w_s^{\ast} \left( F^{(0)}(p) \right)
        \qquad \text{and} \qquad
        \de_{p,j,v,s}(F) := w_s^{\ast} \left( F^{(j)}(p)[v] \right).
\eeq
Suppose that $\phi = \left( \phi^{(0)} , \ldots , \phi^{(k)} 
\right) \in \Lip(\gamma,\m,W)$ and fix 
$s \in \{1, \ldots , c\}$.
Then, since $||w_1^{\ast}||_{W^{\ast}} = \ldots = 
||w_c^{\ast}||_{W^{\ast}} = 1$, we have 
\begin{multline}
    \label{eq:de_0_ests}
        \left| \de_{p,0,s}(\phi) \right|
        =
        \left| w_s^{\ast} \left( \phi^{(0)}(p) \right) \right|
        \leq
        \left|\left| \phi^{(0)}(p) \right|\right|_W
        = \\
        \left|\left| \sum_{a=1}^c
        w_a^{\ast} \left( \phi^{(0)}(p) \right) w_a 
        \right|\right|_W
        \leq 
        \sum_{a=1}^c \left| 
        w_a^{\ast} \left( \phi^{(0)}(p) \right) \right|
        = 
        \sum_{a=1}^c 
        \left| \de_{p,0,a}(\phi) \right|.
\end{multline}
The arbitrariness of $s \in \{1, \ldots , c\}$ means that
the estimates established in \eqref{eq:de_0_ests} are 
precisely those claimed in \eqref{eq:de_p_0_s_bounds}.

Now suppose that $k \geq 1$ and fix $j \in \{1, \ldots , k\}$.
Fix $v \in V^{\otimes j}$ and $s \in \{1, \ldots , c\}$.
Then, again since $||w_1^{\ast}||_{W^{\ast}} = \ldots = 
||w_c^{\ast}||_{W^{\ast}} = 1$, we have 
\begin{multline}
    \label{eq:de_j_ests}
        \left| \de_{p,j,v,s}(\phi) \right|
        =
        \left| w_s^{\ast} \left( \phi^{(j)}(p)[v] \right) \right|
        \leq 
        \left|\left| \phi^{(j)}(p)[v] \right|\right|_W = \\
        \left|\left| \sum_{a=1}^c 
        w_a^{\ast} \left( \phi^{(j)}(p)[v] \right) w_a 
        \right|\right|_W 
        \leq
        \sum_{a=1}^c 
        \left| w_a^{\ast} \left( \phi^{(j)}(p)[v] \right) \right|
        = 
        \sum_{a=1}^c \left| \de_{p,j,v,a}(\phi) \right|.
\end{multline}
The arbitrariness of $j \in \{1, \ldots , k\}$, 
$v \in V^{\otimes j}$, and $s \in \{1, \ldots , c\}$ mean 
that the estimates established in \eqref{eq:de_j_ests} are 
precisely those claimed in 
\eqref{eq:de_p,j,v,s(phi)_bounds}.
This completes the proof of Lemma 
\ref{lemma:delta_funcs_estimates}.
\end{proof}
\vskip 4pt 
\noindent
We can use the inequalities established in Lemma 
\ref{lemma:delta_funcs_estimates} to establish the 
following dual norm estimates for linear functionals 
in $\Tau_{p,l}$ (cf. \eqref{eq:not_sec_Tau_pl}).

\begin{lemma}[Dual Norm Estimates]
\label{lemma:dual_norm_ests}
Let $c,d \in \Z_{\geq 1}$ and assume that $V$ and $W$ are 
real Banach spaces of dimensions $d$ and $c$ respectively.
Let $\m \subset V$ be closed with $p \in \m$ 
and assume that the tensor 
powers of $V$ are all equipped with admissible norms 
(cf. Definition \ref{admissible_tensor_norm}).
Let $v_1 , \ldots , v_d \in V$ be a unit $V$-norm basis 
of $V$. 
For each integer $n \in \Z_{\geq 1}$ define 
(cf. \eqref{eq:V_ten_prod_basis})
\beq
    \label{eq:dual_norm_ests_lemma_V_ten_prod_basis}
        \cv_n := \left\{
        v_{l_1} \otimes \ldots \otimes v_{l_n} :
        (l_1 , \ldots , l_n) \in
        \{ 1 ,\ldots , d\}^n 
        \right\} \subset V^{\otimes n}
\eeq 
and (cf. \eqref{eq:symm_j_lin_form_determine_set})
\beq
    \label{eq:dual_norm_ests_lemma_V_n^ord}
        \cv_n^{\ord} := \left\{ 
        v_{l_1} \otimes \ldots \otimes v_{l_n} :
        (l_1 , \ldots , l_n) \in \{1 , \ldots , d\}^n
        \text{ such that }
        l_1 \leq \ldots \leq l_n \right\} 
        \subset \cv_n \subset V^{\otimes n}.
\eeq
Further, let $w_1, \ldots , w_c \in W$ be a unit $W$-norm 
basis of $W$, and let 
$w_1^{\ast} , \ldots , w_c^{\ast} \in W^{\ast}$
be the corresponding dual basis of $W^{\ast}$.

Let $\gamma > 0$ with $k \in \Z_{\geq 0}$ such that 
$\gamma \in (k,k+1]$.
For a given $j \in \{0, \ldots , k\}$, define 
$\m_{p,j} \subset \Lip(\gamma,\m,W)^{\ast}$ by
\beq
    \label{eq:dual_norm_ests_lemma_m_p_j}
        \m_{p,j} := \twopartdef{
        \left\{ \de_{p,0,s} : s \in \{1, \ldots , c\} \right\}
        }
        {j=0}
        {
        \left\{ \de_{p,j,v,s} : v \in \cv_j^{\ord} 
        \text{ and } s \in \{ 1 , \ldots , c \} \right\}
        }
        {j \geq 1.}
\eeq
For every $s \in \{1, \ldots , c\}$, 
the linear functional
$\de_{p,0,s} \in \Lip(\gamma,\m,W)^{\ast}$ is 
defined, for 
$F = \left( F^{(0)} , \ldots , F^{(k)} \right) 
\in \Lip(\gamma,\m,W)$, by 
$\de_{p,0,s}(F) := w_s^{\ast} \left( F^{(0)}(p) \right)$
(cf. \eqref{eq:de_p_0_s}).
For every $s \in \{1 , \ldots , c\}$, $j \in \{1, \ldots , k\}$,
and $v \in \cv_j^{\ord}$, the linear functional 
$\de_{p,j,v,s} \in \Lip(\gamma,\m,W)^{\ast}$ is 
defined, for 
$F = \left( F^{(0)} , \ldots , F^{(k)} \right) 
\in \Lip(\gamma,\m,W)$, by 
$\de_{p,j,v,s}(F) := w_s^{\ast} \left( F^{(j)}(p)[v] \right)$
(cf. \eqref{eq:de_p_j_v_s}).
For a given $l \in \{0, \ldots , k\}$ define 
$\Tau_{p,l} \subset \Lip(\gamma,\m,W)^{\ast}$ by 
(cf. \eqref{eq:not_sec_Tau_pl})
\beq
    \label{eq:dual_norm_ests_lemma_Tau_pl}
        \Tau_{p,l} := \bigcup_{j=0}^l \m_{p,j}.
\eeq
Then for every $\sigma \in \Tau_{p,l}$ and any 
$\phi \in \Lip(\gamma,\m,W)$ we have 
\beq
    \label{eq:dual_norm_ests_lemma_pointwise_conc}
        | \sigma (\phi) | \leq 
        \Lambda^{l}_{\phi}(p) \leq 
        ||\phi||_{\Lip(\gamma,\m,W)}.
\eeq 
Consequently, for every $\sigma \in \Tau_{p,l}$, we have
\beq
    \label{eq:dual_norm_ests_lemma_conc}
        || \sigma ||_{\Lip(\gamma,\m,W)^{\ast}} 
        \leq 1 
        \qquad \text{so that} \qquad 
        \Tau_{p,l} \subset  
        \ovB_{\Lip(\gamma,\m,W)^{\ast}} (0,1).
\eeq
\end{lemma}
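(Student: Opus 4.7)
The plan is to verify the pointwise inequality \eqref{eq:dual_norm_ests_lemma_pointwise_conc} by a case split on the form of $\sigma \in \Tau_{p,l}$, and then to derive the dual-norm bound \eqref{eq:dual_norm_ests_lemma_conc} as an immediate corollary. Fix $\phi = \left( \phi^{(0)} , \ldots , \phi^{(k)} \right) \in \Lip(\gamma,\m,W)$ and $\sigma \in \Tau_{p,l}$. By \eqref{eq:dual_norm_ests_lemma_Tau_pl} there is an integer $j \in \{0, \ldots , l\}$ such that $\sigma \in \m_{p,j}$, and by \eqref{eq:dual_norm_ests_lemma_m_p_j} either $\sigma = \de_{p,0,s}$ for some $s \in \{1, \ldots , c\}$ (when $j=0$) or $\sigma = \de_{p,j,v,s}$ for some $s \in \{1, \ldots , c\}$ and some $v \in \cv_j^{\ord}$ (when $j \geq 1$).

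In the case $j = 0$, the left inequality of \eqref{eq:de_p_0_s_bounds} from Lemma \ref{lemma:delta_funcs_estimates} gives $|\sigma(\phi)| = |\de_{p,0,s}(\phi)| \leq \|\phi^{(0)}(p)\|_W$, which by \eqref{lip_eta_lip_norm_func} is bounded by $\Lambda^0_\phi(p) \leq \Lambda^l_\phi(p)$. In the case $j \geq 1$, the left inequality of \eqref{eq:de_p,j,v,s(phi)_bounds} from Lemma \ref{lemma:delta_funcs_estimates} gives $|\sigma(\phi)| = |\de_{p,j,v,s}(\phi)| \leq \|\phi^{(j)}(p)[v]\|_W$. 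The operator-norm convention of Remark \ref{rmk:op_norm_convention} then yields
\begin{equation*}
    \|\phi^{(j)}(p)[v]\|_W
    \leq
    \|\phi^{(j)}(p)\|_{\cl(V^{\otimes j};W)} \, \|v\|_{V^{\otimes j}}.
\end{equation*}
Since $v \in \cv_j^{\ord}$, it has the form $v = v_{l_1} \otimes \ldots \otimes v_{l_j}$ with each $v_{l_i}$ a unit $V$-norm basis vector, and the admissibility conditions \eqref{ten_prod_unit_norm} and \eqref{ten_prod_unit_dual_norm} force equality in \eqref{ten_prod_unit_norm}, so $\|v\|_{V^{\otimes j}} = \prod_{i=1}^j \|v_{l_i}\|_V = 1$. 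Therefore $|\sigma(\phi)| \leq \|\phi^{(j)}(p)\|_{\cl(V^{\otimes j};W)} \leq \Lambda^l_\phi(p)$ by the definition \eqref{lip_eta_lip_norm_func}, since $j \leq l$.

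In both cases we have obtained $|\sigma(\phi)| \leq \Lambda^l_\phi(p)$. The second inequality $\Lambda^l_\phi(p) \leq \|\phi\|_{\Lip(\gamma,\m,W)}$ is a direct consequence of the bound \eqref{lip_k_bdd} in Definition \ref{lip_k_def} (applied for each $j \in \{0,\ldots,l\} \subset \{0,\ldots,k\}$) combined with \eqref{lip_eta_lip_norm_func}, as already recorded in Remark \ref{rmk:notational_easing}. This establishes \eqref{eq:dual_norm_ests_lemma_pointwise_conc}. Finally, taking the supremum over $\phi \in \Lip(\gamma,\m,W)$ with $\|\phi\|_{\Lip(\gamma,\m,W)} \leq 1$ yields $\|\sigma\|_{\Lip(\gamma,\m,W)^\ast} \leq 1$, proving \eqref{eq:dual_norm_ests_lemma_conc}. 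No step presents a genuine obstacle; the only point requiring care is confirming that the basis tensor $v \in \cv_j^{\ord}$ has unit $V^{\otimes j}$-norm, which is handled by invoking admissibility of the tensor norms.
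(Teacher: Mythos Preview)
Your proof is correct and follows essentially the same approach as the paper: a case split on whether $\sigma = \de_{p,0,s}$ or $\sigma = \de_{p,j,v,s}$, invoking the left-hand inequalities of Lemma \ref{lemma:delta_funcs_estimates}, the fact that $\|v\|_{V^{\otimes j}} = 1$ for $v \in \cv_j^{\ord}$, and then the operator-norm bound to reach $\Lambda^l_\phi(p)$, with the dual-norm conclusion following immediately. The only cosmetic difference is that you spell out the admissibility argument for $\|v\|_{V^{\otimes j}} = 1$ a touch more explicitly than the paper does.
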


\begin{proof}[Proof of Lemma \ref{lemma:dual_norm_ests}]
Let $c,d \in \Z_{\geq 1}$ and assume that $V$ and $W$ are 
real Banach spaces of dimensions $d$ and $c$ respectively.
Let $\m \subset V$ be closed with $p \in \m$ 
and assume that the tensor 
powers of $V$ are all equipped with admissible norms 
(cf. Definition \ref{admissible_tensor_norm}).
Let $v_1 , \ldots , v_d \in V$ be a unit $V$-norm basis of $V$. 
For each integer $n \in \Z_{\geq 1}$ define 
(cf. \eqref{eq:dual_norm_ests_lemma_V_ten_prod_basis})
\beq
    \label{eq:dual_norm_ests_lemma_pf_V_ten_prod_basis}
        \cv_n := \left\{
        v_{l_1} \otimes \ldots \otimes v_{l_n} :
        (l_1 , \ldots , l_n) \in
        \{ 1 ,\ldots , d\}^n 
        \right\} \subset V^{\otimes n}
\eeq 
and (cf. \eqref{eq:dual_norm_ests_lemma_V_n^ord})
\beq
    \label{eq:dual_norm_ests_lemma_pf_V_n^ord}
        \cv_n^{\ord} := \left\{ 
        v_{l_1} \otimes \ldots \otimes v_{l_n} :
        (l_1 , \ldots , l_n) \in \{1 , \ldots , d\}^n
        \text{ such that }
        l_1 \leq \ldots \leq l_n \right\} 
        \subset \cv_n \subset V^{\otimes n}.
\eeq
Further, let $w_1, \ldots , w_c \in W$ be a unit $W$-norm 
basis of $W$, and let 
$w_1^{\ast} , \ldots , w_c^{\ast} \in W^{\ast}$
be the corresponding dual basis of $W^{\ast}$.

Let $\gamma > 0$ with $k \in \Z_{\geq 0}$ such that 
$\gamma \in (k,k+1]$.
For a given $j \in \{0, \ldots , k\}$, define 
$\m_{p,j} \subset \Lip(\gamma,\m,W)^{\ast}$ by
(cf. \eqref{eq:dual_norm_ests_lemma_m_p_j})
\beq
    \label{eq:dual_norm_ests_lemma_pf_m_p_j}
        \m_{p,j} := \twopartdef{
        \left\{ \de_{p,0,s} : s \in \{1, \ldots , c\} \right\}
        }
        {j=0}
        {
        \left\{ \de_{p,j,v,s} : v \in \cv_j^{\ord} 
        \text{ and } s \in \{ 1 , \ldots , c \} \right\}
        }
        {j \geq 1.}
\eeq
For every $s \in \{1, \ldots , c\}$, 
the linear functional
$\de_{p,0,s} \in \Lip(\gamma,\m,W)^{\ast}$ is 
defined, for 
$F = \left( F^{(0)} , \ldots , F^{(k)} \right) 
\in \Lip(\gamma,\m,W)$, by 
$\de_{p,0,s}(F) := w_s^{\ast} \left( F^{(0)}(p) \right)$
(cf. \eqref{eq:de_p_0_s}).
For every $s \in \{1 , \ldots , c\}$, $j \in \{1, \ldots , k\}$,
and $v \in \cv_j^{\ord}$, the linear functional 
$\de_{p,j,v,s} \in \Lip(\gamma,\m,W)^{\ast}$ is 
defined, for 
$F = \left( F^{(0)} , \ldots , F^{(k)} \right) 
\in \Lip(\gamma,\m,W)$, by 
$\de_{p,j,v,s}(F) := w_s^{\ast} \left( F^{(j)}(p)[v] \right)$
(cf. \eqref{eq:de_p_j_v_s}).
For a given $l \in \{0, \ldots , k\}$ define 
$\Tau_{p,l} \subset \Lip(\gamma,\m,W)^{\ast}$ by 
(cf. \eqref{eq:dual_norm_ests_lemma_Tau_pl})
\beq
    \label{eq:dual_norm_ests_lemma_pf_Tau_pl}
        \Tau_{p,l} := \bigcup_{j=0}^l \m_{p,j}.
\eeq
Now fix $p \in \m$ and $l \in \{0, \ldots , k\}$.
Consider $\sigma \in \Tau_{p,l}$.
It follows from \eqref{eq:dual_norm_ests_lemma_pf_m_p_j} 
and \eqref{eq:dual_norm_ests_lemma_pf_Tau_pl} that either 
$\sigma = \de_{p,0,s}$ for some $s \in \{1, \ldots ,c\}$, 
or $\sigma = \de_{p,j,v,s}$ for some $j \in \{0, \ldots ,l\}$, 
some $v \in \cv_j^{\ord}$, and some $s \in \{1, \ldots , c\}$.

If $\sigma = \de_{p,0,s}$ for some $s \in \{1, \ldots ,c\}$
then, given any $\phi = \left( \phi^{(0)} , \ldots , \phi^{(k)} 
\right) \in \Lip(\gamma,\m,W)$, we may appeal to Lemma 
\ref{lemma:delta_funcs_estimates} to deduce that 
(cf. \eqref{eq:de_p_0_s_bounds})
\beq
    \label{eq:dual_norm_ests_lemma_pf_conc_A1}
        \left| \sigma (\phi) \right| =
        \left| \de_{p,0,s}(\phi) \right)
        \leq \left|\left|\phi^{(0)}(p) \right|\right|_W
        \leq \Lambda^l_{\phi}(p) 
        \leq
        ||\phi||_{\Lip(\gamma,\m,W)}. 
\eeq
If $\sigma = \de_{p,j,v,s}$ for some $j \in \{0, \ldots ,l\}$, 
some $v \in \cv_j^{\ord}$, and some $s \in \{1, \ldots , c\}$, 
then we first note that $||v||_{V^{\otimes j}} = 1$.
Thus, given any $\phi = \left( \phi^{(0)} , \ldots , \phi^{(k)} 
\right) \in \Lip(\gamma,\m,W)$, we may appeal to Lemma 
\ref{lemma:delta_funcs_estimates} to deduce that 
(cf. \eqref{eq:de_p,j,v,s(phi)_bounds})
\beq
    \label{eq:dual_norm_ests_lemma_pf_conc_B1}
        \left| \sigma (\phi) \right| =
        \left| \de_{p,j,v,s}(\phi) \right|
        \leq \left|\left|\phi^{(j)}(p)[v] \right|\right|_W
        \leq 
        \left|\left| \phi^{(j)}(p) 
        \right|\right|_{\cl(V^{\otimes j};W)}
        \leq 
        \Lambda^l_{\phi}(p)
        \leq
        ||\phi||_{\Lip(\gamma,\m,W)}.
\eeq
Together, \eqref{eq:dual_norm_ests_lemma_pf_conc_A1} 
and \eqref{eq:dual_norm_ests_lemma_pf_conc_B1} establish 
the claim made in \eqref{eq:dual_norm_ests_lemma_pointwise_conc}.
Since the $\Lip(\gamma,\m,W)^{\ast}$ norm of $\sigma$
is given by
$||\sigma||_{\Lip(\gamma,\m,W)^{\ast}}
:= \sup \left\{ | \sigma(\phi) | : 
\phi \in \Lip(\gamma,\m,W) \text{ with }
||\phi||_{\Lip(\gamma,\m,W)} = 1 \right\}$, 
the claims made in \eqref{eq:dual_norm_ests_lemma_conc} are
an immediate consequence of 
\eqref{eq:dual_norm_ests_lemma_pointwise_conc}.
This completes the proof of Lemma \ref{lemma:dual_norm_ests}.
\end{proof}
\vskip 4pt
\noindent
We now use the inequalities established in Lemma 
\ref{lemma:delta_funcs_estimates} to establish, for 
each $j \in \{0, \ldots , k\}$, that the 
symmetric $j$-linear form from $V$ to $W$ given by 
$\phi^{(j)}(p) \in \cl(V^{\otimes j};W)$ is determined, 
in a quantified sense, by the set real numbers
$\left\{ \sigma(\phi) : \sigma \in \m_{p,j} \right\} 
\subset \R$ where $\m_{p,j}$ is defined in 
\eqref{eq:m_p_j_lin_funcs_not_sec}.
The precise result is stated in the following lemma.

\begin{lemma}[Linear Functionals Determine Pointwise Value]
\label{lemma:lin_funcs_det_pointwise_value}
Let $c,d \in \Z_{\geq 1}$ and assume that $V$ and $W$ are 
real Banach spaces of dimensions $d$ and $c$ respectively.
Let $\m \subset V$ be closed with $p \in \m$ 
and assume that the tensor 
powers of $V$ are all equipped with admissible norms 
(cf. Definition \ref{admissible_tensor_norm}).
Let $v_1 , \ldots , v_d \in V$ be a unit $V$-norm basis 
of $V$. 
For each integer $n \in \Z_{\geq 1}$ define 
(cf. \eqref{eq:V_ten_prod_basis})
\beq
    \label{eq:V_ten_prod_basis_lin_funcs_lemma}
        \cv_n := \left\{
        v_{l_1} \otimes \ldots \otimes v_{l_n} :
        (l_1 , \ldots , l_n) \in
        \{ 1 ,\ldots , d\}^n 
        \right\} \subset V^{\otimes n}
\eeq 
and (cf. \eqref{eq:symm_j_lin_form_determine_set})
\beq
    \label{eq:V_n^ord_lin_funcs_lemma}
        \cv_n^{\ord} := \left\{ 
        v_{l_1} \otimes \ldots \otimes v_{l_n} :
        (l_1 , \ldots , l_n) \in \{1 , \ldots , d\}^n
        \text{ such that }
        l_1 \leq \ldots \leq l_n \right\} 
        \subset \cv_n \subset V^{\otimes n}.
\eeq
Further, let $w_1, \ldots , w_c \in W$ be a unit $W$-norm 
basis of $W$, and let 
$w_1^{\ast} , \ldots , w_c^{\ast} \in W^{\ast}$
be the corresponding dual basis of $W^{\ast}$.

Let $\gamma > 0$ with $k \in \Z_{\geq 0}$ such that 
$\gamma \in (k,k+1]$.
For a given $j \in \{0, \ldots , k\}$, define 
$\m_{p,j} \subset \Lip(\gamma,\m,W)^{\ast}$ by
\beq
    \label{eq:m_p_j_lin_funcs_lemma}
        \m_{p,j} := \twopartdef{
        \left\{ \de_{p,0,s} : s \in \{1, \ldots , c\} \right\}
        }
        {j=0}
        {
        \left\{ \de_{p,j,v,s} : v \in \cv_j^{\ord} 
        \text{ and } s \in \{ 1 , \ldots , c \} \right\}
        }
        {j \geq 1.}
\eeq
For every $s \in \{1, \ldots , c\}$, 
the linear functional
$\de_{p,0,s} \in \Lip(\gamma,\m,W)^{\ast}$ is 
defined, for 
$F = \left( F^{(0)} , \ldots , F^{(k)} \right) 
\in \Lip(\gamma,\m,W)$, by 
$\de_{p,0,s}(F) := w_s^{\ast} \left( F^{(0)}(p) \right)$
(cf. \eqref{eq:de_p_0_s}).
For every $s \in \{1 , \ldots , c\}$, $j \in \{1, \ldots , k\}$,
and $v \in \cv_j^{\ord}$, the linear functional 
$\de_{p,j,v,s} \in \Lip(\gamma,\m,W)^{\ast}$ is 
defined, for 
$F = \left( F^{(0)} , \ldots , F^{(k)} \right) 
\in \Lip(\gamma,\m,W)$, by 
$\de_{p,j,v,s}(F) := w_s^{\ast} \left( F^{(j)}(p)[v] \right)$
(cf. \eqref{eq:de_p_j_v_s}).

Suppose that $A \geq 0$ and 
$\phi = \left( \phi^{(0)} , \ldots , \phi^{(k)} 
\right) \in \Lip(\gamma,\m,W)$.
Then for any $j \in \{0, \ldots , k\}$ we have that the 
following implications are valid.
Firstly,
\beq
    \label{lin_funcs_lemma_I}
        \left|\left| \phi^{(j)}(p) 
        \right|\right|_{\cl(V^{\otimes j};W)}
        \leq A
        \implies 
        \max \left\{ |\sigma(\phi)| : 
        \sigma \in \m_{p,j} \right\}
        \leq A.
\eeq
Secondly,
\beq
    \label{lin_funcs_lemma_II}
        \max \left\{ |\sigma(\phi)| : 
        \sigma \in \m_{p,j} \right\}
        \leq A
        \implies 
        \left|\left| \phi^{(j)}(p) 
        \right|\right|_{\cl(V^{\otimes j};W)} 
        \leq c d^j A.
\eeq
Consequently, in the case that $A = 0$, the combination of 
\eqref{lin_funcs_lemma_I} and \eqref{lin_funcs_lemma_II} yields 
that
\beq
    \label{lin_funcs_lemma_A=0_conc}
        \phi^{(j)}(p) \equiv 0  
        \text{ in } 
        \cl(V^{\otimes j};W)
        \qquad \iff \qquad 
        \max \left\{ | \sigma (\phi) | : 
        \sigma \in \m_{p,j} \right\} = 0.
\eeq
\end{lemma}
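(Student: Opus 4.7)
The plan is to prove both implications separately, leveraging the inequalities from Lemma \ref{lemma:delta_funcs_estimates} together with the basis-expansion bound from Lemma \ref{lemma:admissible_ten_norm_coeff_bound}. The equivalence \eqref{lin_funcs_lemma_A=0_conc} then follows immediately by specialising both implications to $A = 0$, so I focus on \eqref{lin_funcs_lemma_I} and \eqref{lin_funcs_lemma_II}.

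For the forward implication \eqref{lin_funcs_lemma_I}, the case $j = 0$ is direct: every $\sigma \in \m_{p,0}$ has the form $\de_{p,0,s}$, and the left-hand inequality in \eqref{eq:de_p_0_s_bounds} yields $|\sigma(\phi)| \leq ||\phi^{(0)}(p)||_W \leq A$. For $j \geq 1$, an arbitrary $\sigma \in \m_{p,j}$ has the form $\de_{p,j,v,s}$ for some $v \in \cv_j^{\ord}$ and some $s \in \{1,\ldots,c\}$. Since $v$ is a tensor product of unit $V$-norm basis vectors, admissibility of the tensor norms (cf.\ Definition \ref{admissible_tensor_norm}) forces $||v||_{V^{\otimes j}} = 1$, so combining the left-hand inequality in \eqref{eq:de_p,j,v,s(phi)_bounds} with the definition of the operator norm gives $|\sigma(\phi)| \leq ||\phi^{(j)}(p)[v]||_W \leq ||\phi^{(j)}(p)||_{\cl(V^{\otimes j};W)} \leq A$.

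For the reverse implication \eqref{lin_funcs_lemma_II}, which is the main step, I need to recover a bound on the full operator norm from the values of $\phi^{(j)}(p)$ on the much smaller ordered basis $\cv_j^{\ord}$. When $j = 0$, the decomposition \eqref{eq:W_dual_basis_decomp} applied to $\phi^{(0)}(p) \in W$ yields $||\phi^{(0)}(p)||_W \leq \sum_{s=1}^c |\de_{p,0,s}(\phi)| \leq cA$, which matches the claim since $cd^0 = c$. For $j \geq 1$, fix any $v \in V^{\otimes j}$ with $||v||_{V^{\otimes j}} \leq 1$ and expand $v = \sum_{j_1,\ldots,j_j} C_{j_1 \cdots j_j}\, v_{j_1} \otimes \cdots \otimes v_{j_j}$ using the basis $\cv_j$; Lemma \ref{lemma:admissible_ten_norm_coeff_bound} guarantees $\sum |C_{j_1 \cdots j_j}| \leq d^j$. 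Crucially, $\phi^{(j)}(p) \in \cl(V^{\otimes j};W)$ is symmetric, so its value on $v_{j_1} \otimes \cdots \otimes v_{j_j}$ depends only on the multiset of indices and therefore equals its value on the unique reordering $u \in \cv_j^{\ord}$. The right-hand inequality in \eqref{eq:de_p,j,v,s(phi)_bounds} then gives $||\phi^{(j)}(p)[v_{j_1} \otimes \cdots \otimes v_{j_j}]||_W = ||\phi^{(j)}(p)[u]||_W \leq \sum_{a=1}^c |\de_{p,j,u,a}(\phi)| \leq cA$. Combining with the coefficient bound produces $||\phi^{(j)}(p)[v]||_W \leq cd^j A$, and taking the supremum over $v$ in the unit ball of $V^{\otimes j}$ completes the estimate.

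The only conceptual subtlety is the symmetry argument in the reverse direction: without it, the expansion would appear to reference all of $\cv_j$ (of cardinality $d^j$), whereas control is assumed only over the functionals indexed by the strictly smaller ordered subset $\cv_j^{\ord}$. Symmetry of $\phi^{(j)}(p)$ is precisely what lets the bounds on $\cv_j^{\ord}$ propagate, at the expected cost of a factor $cd^j$, to the full operator norm. Every other step is a direct invocation of Lemmata \ref{lemma:admissible_ten_norm_coeff_bound} and \ref{lemma:delta_funcs_estimates}, so beyond this bookkeeping I do not anticipate any obstacle.
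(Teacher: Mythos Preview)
Your proposal is correct and follows essentially the same approach as the paper: both establish \eqref{lin_funcs_lemma_I} directly from the left-hand inequalities in Lemma~\ref{lemma:delta_funcs_estimates}, and establish \eqref{lin_funcs_lemma_II} by expanding an arbitrary unit vector in the basis $\cv_j$, invoking Lemma~\ref{lemma:admissible_ten_norm_coeff_bound} for the coefficient bound $d^j$, and using the symmetry of $\phi^{(j)}(p)$ to reduce each basis term to one indexed by $\cv_j^{\ord}$. The only cosmetic difference is that the paper treats $j=1$ and $j\geq 2$ as separate cases (the former being trivial since $\cv_1^{\ord}=\cv_1$), whereas you handle all $j\geq 1$ uniformly; your version is slightly more streamlined but otherwise identical in substance.
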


\begin{proof}[Proof of Lemma 
\ref{lemma:lin_funcs_det_pointwise_value}]
Let $c,d \in \Z_{\geq 1}$ and assume that $V$ and $W$ are 
real Banach spaces of dimensions $d$ and $c$ respectively.
Let $\m \subset V$ be closed with $p \in \m$
and assume that the tensor 
powers of $V$ are all equipped with admissible norms 
(cf. Definition \ref{admissible_tensor_norm}).
Let $v_1 , \ldots , v_d \in V$ be a unit $V$-norm basis 
of $V$. 
For each integer $n \in \Z_{\geq 1}$ define 
(cf. \eqref{eq:V_ten_prod_basis_lin_funcs_lemma})
\beq
    \label{eq:V_ten_prod_basis_lin_funcs_lemma_pf}
        \cv_n := \left\{
        v_{l_1} \otimes \ldots \otimes v_{l_n} :
        (l_1 , \ldots , l_n) \in
        \{ 1 ,\ldots , d\}^n 
        \right\} \subset V^{\otimes n}
\eeq 
and (cf. \eqref{eq:V_n^ord_lin_funcs_lemma})
\beq
    \label{eq:V_n^ord_lin_funcs_lemma_pf}
        \cv_n^{\ord} := \left\{ 
        v_{l_1} \otimes \ldots \otimes v_{l_n} :
        (l_1 , \ldots , l_n) \in \{1 , \ldots , d\}^n
        \text{ such that }
        l_1 \leq \ldots \leq l_n \right\} 
        \subset \cv_n \subset V^{\otimes n}.
\eeq
A consequence of the tensor powers of $V$ being equipped
with admissible norms in the sense of Definition 
\ref{admissible_tensor_norm} is that 
$\cv_n \subset V^{\otimes}$ is a unit $V^{\otimes n}$-norm 
basis of $V^{\otimes n}$.
Further, let $w_1, \ldots , w_c \in W$ be a unit $W$-norm 
basis of $W$, and let 
$w_1^{\ast} , \ldots , w_c^{\ast} \in W^{\ast}$
be the corresponding dual basis of $W^{\ast}$.

Let $\gamma > 0$ with $k \in \Z_{\geq 0}$ such that 
$\gamma \in (k,k+1]$.
For a given $j \in \{0, \ldots , k\}$, define 
$\m_{p,j} \subset \Lip(\gamma,\m,W)^{\ast}$ by
(cf. \eqref{eq:m_p_j_lin_funcs_lemma})
\beq
    \label{eq:m_p_j_lin_funcs_lemma_pf}
        \m_{p,j} := \twopartdef{
        \left\{ \de_{p,0,s} : s \in \{1, \ldots , c\} \right\}
        }
        {j=0}
        {
        \left\{ \de_{p,j,v,s} : v \in \cv_j^{\ord} 
        \text{ and } s \in \{ 1 , \ldots , c \} \right\}
        }
        {j \geq 1.}
\eeq
For every $s \in \{1, \ldots , c\}$, 
the linear functional
$\de_{p,0,s} \in \Lip(\gamma,\m,W)^{\ast}$ is 
defined, for 
$F = \left( F^{(0)} , \ldots , F^{(k)} \right) 
\in \Lip(\gamma,\m,W)$, by 
$\de_{p,0,s}(F) := w_s^{\ast} \left( F^{(0)}(p) \right)$
(cf. \eqref{eq:de_p_0_s}).
For every $s \in \{1 , \ldots , c\}$, $j \in \{1, \ldots , k\}$,
and $v \in V^{\otimes j}$, the linear functional 
$\de_{p,j,v,s} \in \Lip(\gamma,\m,W)^{\ast}$ is 
defined, for 
$F = \left( F^{(0)} , \ldots , F^{(k)} \right) 
\in \Lip(\gamma,\m,W)$, by 
$\de_{p,j,v,s}(F) := w_s^{\ast} \left( F^{(j)}(p)[v] \right)$
(cf. \eqref{eq:de_p_j_v_s}).

Suppose that   
$\phi = \left( \phi^{(0)} , \ldots , \phi^{(k)} 
\right) \in \Lip(\gamma,\m,W)$.
Observe that if \eqref{lin_funcs_lemma_I}
and \eqref{lin_funcs_lemma_II} are both valid for any 
$A \geq 0$, then 
the equivalence claimed in \eqref{lin_funcs_lemma_A=0_conc}
is an immediate consequence of invoking both for the 
particular choice of $A := 0$.
Thus we need only verify that both the implications 
\eqref{lin_funcs_lemma_I} and \eqref{lin_funcs_lemma_II}
are valid for any $A \geq 0$.
Thus, for the remainder of the proof, we fix an arbitrary 
choice of $A \geq 0$.

We begin with the case that $j=0$ so that
(cf. \eqref{eq:m_p_j_lin_funcs_lemma_pf})
        $\m_{p,0} =
        \left\{ \de_{p,0,s} : 
        s \in \{1, \ldots , c\} \right\} 
        \subset \Lip(\gamma,\m,W)^{\ast}$.
By appealing to Lemma \ref{lemma:delta_funcs_estimates}
we may conclude that, for each $s \in \{1, \ldots , c\}$,
we have (cf. \eqref{eq:de_p_0_s_bounds})
\beq
    \label{eq:de_0_ests_B}
        \left| \de_{p,0,s}(\phi) \right|
        \leq
        \left|\left| \phi^{(0)}(p) \right|\right|_W
        \leq
        \sum_{a=1}^c \left| \de_{p,0,s}(\phi) \right|.
\eeq 
If $\left|\left| \phi^{(0)}(p) \right|\right|_W \leq A$
then, for any $s \in \{1, \ldots , c\}$, a consequence of
\eqref{eq:de_0_ests_B} is that 
$|\de_{p,0,s}(\phi)| \leq A$. Therefore, since 
$\m_{p,0} = \left\{ \de_{p,0,s} : s \in \{1, \ldots ,c\} \right\}$,
we deduce that 
$\max \left\{ |\sigma(\phi)| : \sigma \in \m_{p,0} \right\} \leq A$.
Thus, recalling that $\cl(V^{\otimes 0};W) = W$ and we choose
to equip this space with the norm $||\cdot||_W$ on $W$
(cf. Remark \ref{rmk:op_norm_convention}), 
the implication claimed in \eqref{lin_funcs_lemma_I}
is valid when $j=0$.

If 
$\max \left\{ |\sigma(\phi)| : \sigma \in \m_{p,0} \right\} \leq A$, 
then since 
$\m_{p,0} = \left\{ \de_{p,0,s} : s \in \{1, \ldots ,c\} \right\}$
we have, for every $s \in \{1, \ldots , c\}$, that 
$\left| \de_{p,0,s}(\phi) \right| \leq A$.
Therefore, via an application of \eqref{eq:de_0_ests_B},
it follows that $\left|\left| \phi^{(0)}(p) \right|\right|_W \leq cA$.
Thus, recalling that $\cl(V^{\otimes 0};W) = W$ and we choose
to equip this space with the norm $||\cdot||_W$ on $W$
(cf. Remark \ref{rmk:op_norm_convention}), 
the implication claimed in \eqref{lin_funcs_lemma_II}
is valid when $j=0$.

Hence we have established that the lemma is true when $j=0$.

We next establish that the lemma is true when $j=1$. 
In this case, since $\cv_1^{\ord} = \cv_1$, we have  
$\m_{p,1} = \left\{ \de_{p,1,v,s} :
v \in \cv_1 \text{ and } s \in \{1, \ldots , c\} 
\right\} \subset \Lip(\gamma,\m,W)^{\ast}$.
Given $v \in \cv_1$ and $s \in \{1, \ldots , c\}$, 
we can appeal to Lemma \ref{lemma:delta_funcs_estimates} 
to conclude that (cf. \eqref{eq:de_p,j,v,s(phi)_bounds})
\beq
    \label{eq:de_1_ests_B}
        \left| \de_{p,1,v,s}(\phi) \right|
        \leq
        \left|\left| \phi^{(1)}(p)[v] \right|\right|_W
        \leq
        \sum_{a=1}^c \left| \de_{p,1,v,a}(\phi) \right|.
\eeq
If $\left|\left|\phi^{(1)}(p) \right|\right|_{\cl(V;W)} \leq A$ 
then, since $v \in \cv_1$ means $||v||_V = 1$, it follows from
\eqref{eq:de_1_ests_B} that 
$\left| \de_{p,1,v,s}(\phi) \right| \leq A$.
The arbitrariness of $v \in \cv_1$ and $s \in \{1, \ldots , c\}$
means we can deduce that  
$\max \left\{ \left| \sigma(\phi) \right| : 
\sigma \in \m_{p,1} \right\} \leq A$.
Thus the implication \eqref{lin_funcs_lemma_I} is valid 
when $j=1$.

If $\max \left\{ \left| \sigma(\phi) \right| : 
\sigma \in \m_{p,1} \right\} \leq A$ then, for every 
$v \in \cv_1$ and $s \in \{1, \ldots , c\}$, we have that
$\left|\de_{p,1,v,s}(\phi)\right| \leq A$.
Now suppose that $v \in V$ with $||v||_V = 1$.
Then, since $\cv_1$ is a basis of $V$, there are coefficients
$b_1 , \ldots , b_d \in \R$ such that 
$v = \sum_{m=1}^d b_m v_m$.
Moreover, by appealing to Lemma 
\ref{lemma:admissible_ten_norm_coeff_bound}, 
we have (cf. \eqref{coeff_sum_bound} for $m=1$) that 
$\sum_{j=m}^d |b_m| \leq d$.
Therefore, via an application of \eqref{eq:de_1_ests_B}, 
we have
\beq
    \label{eq:norm_bound_by_funcs_m=1}
        \left|\left| \phi^{(1)}(p)[v] \right|\right|_W
        \leq
        \sum_{m=1}^d |b_m| \left|\left| \phi^{(1)}(p)[v_m]
        \right|\right|_W
        \stackrel{
        \eqref{eq:de_1_ests_B}
        }{\leq} 
        \sum_{m=1}^d |b_m| \sum_{a=1}^c 
        \left|\de_{p,1,v_m,a}(\phi)\right|
        \leq
        \sum_{m=1}^d |b_m| \sum_{a=1}^c A
        \leq cdA.
\eeq
By taking the supremum over $v \in V$ with $||v||_V = 1$ 
in \eqref{eq:norm_bound_by_funcs_m=1}, we obtain that
$\left|\left| \phi^{(1)}(p) \right|\right|_{\cl(V;W)} \leq cdA$.
Thus the implication \eqref{lin_funcs_lemma_II} is 
valid when $j=1$.

Hence we have established that the lemma is true when $j=1$.

We complete the proof by establishing that, when 
$k \geq 2$, the lemma is 
true for an arbitrary fixed $j \in \Z_{\geq 2}$.
In this case, we have 
$\m_{p,j} = \left\{ \de_{p,j,v,s} : v \in \cv^{\ord}_j 
\text{ and } s \in \{1, \ldots , c\} \right\} 
\subset \Lip(\gamma,\m,W)^{\ast}$. 
Given $v \in \cv^{\ord}_j$ and $s \in \{1, \ldots , c\}$, 
we can appeal to Lemma \ref{lemma:delta_funcs_estimates} 
to conclude that (cf. \eqref{eq:de_p,j,v,s(phi)_bounds})
\beq
    \label{eq:de_j_ests_B}
        \left| \de_{p,j,v,s}(\phi) \right|
        \leq
        \left|\left| \phi^{(j)}(p)[v] \right|\right|_W
        \leq
        \sum_{a=1}^c \left| \de_{p,j,v,a}(\phi) \right|.
\eeq
If $\left|\left| \phi^{(j)}(p) 
\right|\right|_{\cl(V^{\otimes j};W)} \leq A$, 
since $v \in \cv_j^{\ord}$ means $||v||_{V^{\otimes j}}=1$, 
it follows from \eqref{eq:de_j_ests_B} that 
$\left| \de_{p,j,v,s}(\phi) \right| \leq A$.
The arbitrariness of both $v \in \cv_j^{\ord}$ and 
$s \in \{1, \ldots , c\}$ means we can deduce that 
$\max \left\{ |\sigma(\phi)| : \sigma \in \m_{p,j} \right\} \leq A$.
Thus the implication \eqref{lin_funcs_lemma_I} is valid
for arbitrary $j \in \Z_{\geq 2}$.

Now suppose that 
$\max \left\{ |\sigma(\phi)| : \sigma \in \m_{p,j} \right\} \leq A$.
We first claim that this means, given any $v \in \cv_j$ and 
any $s \in \{1, \ldots , c\}$, that 
$\left| \de_{p,j,v,s}(\phi) \right| \leq A$.
Indeed, if $v \in \cv_j^{\ord} \subset \cv_j$ then this 
is an immediate consequence of the definition of $\m_{p,j}$.
And it follows for general $v \in \cv_j$ as follows.
Fix $v \in \cv_j$ and $s \in \{1, \ldots , c\}$.
Then there exists a permutation $\rho \in S_j$ such that
$\rho(v) \in \cv_j^{\ord}$; see Definition 
\ref{admissible_tensor_norm} for the details of how the 
permutation group $S_j$ acts on the tensor product $V^{\otimes j}$.
But $\phi^{(j)}(p) \in \cl(V^{\otimes j};W)$ is a 
$j$-symmetric linear form from $V$ to $W$; so, in particular, 
$\phi^{(j)}(p)[\rho(v)] = \phi^{(j)}(p)[v]$.
Hence $\de_{p,j,v,s}(\phi) = \de_{p,j,\rho(v),s}(\phi)$
from which it is immediate that 
$\left| \de_{p,j,v,s}(\phi) \right| \leq A$ as claimed.

Let $v \in V^{\otimes j}$ with $||v||_{V^{\otimes j}} = 1$.
Then, by appealing to Lemma
\ref{lemma:admissible_ten_norm_coeff_bound}, 
we have that there are coefficients 
$ \left\{ C_{l_1 \dots l_j} : (l_1 , \ldots , l_j) \in 
\{ 1 , \ldots , d \}^j \right\}$ for which 
(cf. \eqref{basis_expansion})
\beq 
    \label{basis_expansion_level_j}
        v = \sum_{l_1 = 1}^d \ldots \sum_{l_j = 1}^d 
        C_{l_1 \dots l_j} v_{l_1} \otimes \ldots \otimes v_{l_j},
\eeq
and such that (cf. \eqref{coeff_sum_bound} for $A$ there as $1$)
\beq
    \label{coeff_sum_bound_level_j}
        \sum_{l_1 = 1}^{d} \dots \sum_{l_j = 1}^d
        \left| C_{l_1 \dots l_j} \right|
        \leq d^j.
\eeq
Consequently, via an application of 
\eqref{eq:de_j_ests_B}, we have
\begin{multline}
    \label{eq:norm_bound_by_funcs_j}
        \left|\left| \phi^{(j)}(p)[v] \right|\right|_W
        \leq
        \sum_{l_1}^d \dots \sum_{l_j=1}^d 
        \left| C_{l_1 \dots l_j} \right|
        \left|\left| \phi^{(j)}(p) \left[ 
        v_{l_1} \otimes \ldots \otimes v_{l_j} \right]
        \right|\right|_W \\
        \stackrel{
        \eqref{eq:de_j_ests_B}
        }{\leq} 
        \sum_{l_1}^d \dots \sum_{l_j=1}^d 
        \left| C_{l_1 \dots l_j} \right|
        \sum_{a=1}^c 
        \left| \de_{p,j, v_{l_1} \otimes \ldots \otimes v_{l_j} ,a}
        (\phi) \right|
        \leq 
        cA \sum_{l_1}^d \dots \sum_{l_j=1}^d 
        \left| C_{l_1 \dots l_j} \right|
        \stackrel{
        \eqref{coeff_sum_bound_level_j}
        }{\leq} 
        c d^j A.
\end{multline}
By taking the supremum over $v \in V^{\otimes j}$ with 
$||v||_{V^{\otimes j}} = 1$ in \eqref{eq:norm_bound_by_funcs_j}, 
we obtain that 
$\left|\left| \phi^{(j)}(p) \right|\right|_{\cl(V^{\otimes j};W)} 
\leq cd^j A$.
Thus the implication \eqref{lin_funcs_lemma_II} is valid for 
arbitrary $j \in \Z_{\geq 2}$.
This completes the proof of Lemma 
\ref{lemma:lin_funcs_det_pointwise_value}.
\end{proof}
\vskip 4pt
\noindent
We conclude this section by using Lemma 
\ref{lemma:lin_funcs_det_pointwise_value} to establish that
the value of a $\Lip(\gamma)$ function 
$\phi \in \Lip(\gamma,\m,W)$ at a point $p \in \m$
is determined, in a quantified sense, by the set of real numbers
$\left\{ \sigma(\phi) : \sigma \in \Tau_{p,k} \right\} 
\subset \R$ where $\Tau_{p,k}$ is defined in 
\eqref{eq:not_sec_Tau_pl}.
The precise result is stated in the following lemma.

\begin{lemma}[Determining Lipschitz Functions at Points]
\label{lemma:number_of_coeffs_for_point_value}
Let $c,d \in \Z_{\geq 1}$ and $V$ and $W$ be real Banach spaces 
of dimensions $d$ and $c$ respectively.
Let $\m \subset V$ be a closed subset, and assume that the 
tensor powers of $V$ are all equipped with admissible norms 
(cf. Definition \ref{admissible_tensor_norm}).
Let $v_1 , \ldots , v_d \in V$ be a unit $V$-norm basis 
of $V$. 
For each integer $n \in \Z_{\geq 1}$ define 
(cf. \eqref{eq:V_ten_prod_basis})
\beq
    \label{eq:V_ten_prod_basis_num_pts_lemma}
        \cv_n := \left\{
        v_{l_1} \otimes \ldots \otimes v_{l_n} :
        (l_1 , \ldots , l_n) \in
        \{ 1 ,\ldots , d\}^n 
        \right\} \subset V^{\otimes n}
\eeq 
and (cf. \eqref{eq:symm_j_lin_form_determine_set})
\beq
    \label{eq:V_n^ord_num_pts_lemma}
        \cv_n^{\ord} := \left\{ 
        v_{l_1} \otimes \ldots \otimes v_{l_n} :
        (l_1 , \ldots , l_n) \in \{1 , \ldots , d\}^n
        \text{ such that }
        l_1 \leq \ldots \leq l_n \right\} 
        \subset \cv_n \subset V^{\otimes n}.
\eeq
Further, let $w_1, \ldots , w_c \in W$ be a unit $W$-norm 
basis of $W$, and let 
$w_1^{\ast} , \ldots , w_c^{\ast} \in W^{\ast}$
be the corresponding dual basis of $W^{\ast}$.

Let $\gamma > 0$ with $k \in \Z_{\geq 0}$ such that 
$\gamma \in (k,k+1]$, and let 
$\phi = \left( \phi^{(0)} , \ldots , \phi^{(k)} 
\right) \in \Lip(\gamma,\m,W)$. 
For each point $p \in \m$ and each $j \in \{0, \ldots , k\}$
define $\m_{p,j} \subset \Lip(\gamma,\m,W)^{\ast}$ by 
\beq
    \label{eq:m_p_j_num_pts_lemma}
        \m_{p,j} := \twopartdef{
        \left\{ \de_{p,0,s} : s \in \{1, \ldots , c\} \right\}
        }
        {j=0}
        {
        \left\{ \de_{p,j,v,s} : v \in \cv_j^{\ord} 
        \text{ and } s \in \{ 1 , \ldots , c \} \right\}
        }
        {j \geq 1.}
\eeq
For every $s \in \{1, \ldots , c\}$, 
the linear functional
$\de_{p,0,s} \in \Lip(\gamma,\m,W)^{\ast}$ is 
defined, for 
$F = \left( F^{(0)} , \ldots , F^{(k)} \right) 
\in \Lip(\gamma,\m,W)$, by 
$\de_{p,0,s}(F) := w_s^{\ast} \left( F^{(0)}(p) \right)$
(cf. \eqref{eq:de_p_0_s}).
For every $s \in \{1 , \ldots , c\}$, $j \in \{1, \ldots , k\}$,
and $v \in \cv_j^{\ord}$, the linear functional 
$\de_{p,j,v,s} \in \Lip(\gamma,\m,W)^{\ast}$ is 
defined, for 
$F = \left( F^{(0)} , \ldots , F^{(k)} \right) 
\in \Lip(\gamma,\m,W)$, by 
$\de_{p,j,v,s}(F) := w_s^{\ast} \left( F^{(j)}(p)[v] \right)$
(cf. \eqref{eq:de_p_j_v_s}).
Further, for a given $l \in \{0, \ldots , k\}$, 
define $\Tau_{p,l} \subset \Lip(\gamma,\m,W)^{\ast}$ by
\beq
    \label{eq:Tau_p,l_num_pts_lemma}
        \Tau_{p,l} := 
        \bigcup_{j=0}^l \m_{p,j}.
\eeq
Recall the notation introduced in Remark 
\ref{rmk:notational_easing} that, for each point 
$x \in \m$ and each $j \in \{0, \ldots ,k\}$, we set 
\beq
    \label{eq:notation_ease_num_pts_lemma}
        \Lambda_{\phi}^j(x) := 
        \max_{s \in \{0, \ldots , j\}} \left\{
        \left|\left| \phi^{(s)}(x) 
        \right|\right|_{\cl(V^{\otimes s};W)} \right\}.
\eeq
Then, given $A \geq 0$ and $l \in \{0, \ldots , k\}$, the 
following implications are valid.
Firstly, 
\beq
    \label{eq:imp_1_num_pts_lemma}
        \Lambda_{\phi}^l(p) \leq A 
        \implies 
        \max \left\{ |\sigma(\phi)| : \sigma \in \Tau_{p,l} 
        \right\} \leq A.
\eeq
Secondly, 
\beq
    \label{eq:imp_2_num_pts_lemma}
        \max \left\{ |\sigma(\phi)| : \sigma \in \Tau_{p,l} 
        \right\} \leq A
        \implies 
        \Lambda_{\phi}^l(p) \leq cd^l A.
\eeq
Consequently, in the case that $A=0$, the combination of 
\eqref{eq:imp_1_num_pts_lemma} and \eqref{eq:imp_2_num_pts_lemma}
yields that
\beq
    \label{eq:A=0_equiv_num_pts_lemma}
        \Lambda_{\phi}^l(p) = 0 
        \iff 
        \max \left\{ |\sigma(\phi)| : \sigma \in \Tau_{p,l} 
        \right\} = 0.
\eeq
Therefore the condition that  
$\phi(p) = \left( \phi^{(0)}(p) , \ldots , \phi^{(k)}(p) \right)$
vanishes at $p \in \m$ is equivalent to the system of 
equations generated by requiring, for every $\sigma \in \Tau_{p,k}$, 
that $\sigma(\phi) = 0$; that is, to a system of
\beq
    \label{eq:number_of_coeffs}
        \# \left( \Tau_{p,k} \right) 
        = c \sum_{j=0}^k \be ( d , j)
\eeq
real valued equations.
Here, for integers $a,b \in \Z_{\geq 0}$, $\be(a,b)$ is 
defined by (cf. \eqref{eq:beta_ab_def_not_sec})
\beq
    \label{eq:beta_ab_def}
        \be(a,b) := 
        \left( 
        \begin{array}{c}
            a+b-1 \\
            b
        \end{array}
        \right)
        = \frac{(a+b-1)!}{(a-1)!b!}
        =
        \twopartdef{1}
        {b=0}
        {\frac{1}{b!}\prod_{s=0}^{b-1}(a+s)}
        {b \geq 1.}
\eeq
\end{lemma}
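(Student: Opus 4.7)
The plan is to deduce everything from Lemma \ref{lemma:lin_funcs_det_pointwise_value} by exploiting the definition $\Tau_{p,l} = \bigcup_{j=0}^l \m_{p,j}$ as a (disjoint) union. First, fix $A \geq 0$, $l \in \{0, \ldots, k\}$, and $\phi \in \Lip(\gamma,\m,W)$.

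To establish \eqref{eq:imp_1_num_pts_lemma}, assume $\Lambda_\phi^l(p) \leq A$. By the definition \eqref{eq:notation_ease_num_pts_lemma} of $\Lambda_\phi^l(p)$, this is exactly the assertion that for every $j \in \{0, \ldots, l\}$ we have $\|\phi^{(j)}(p)\|_{\cl(V^{\otimes j};W)} \leq A$. Applying the implication \eqref{lin_funcs_lemma_I} of Lemma \ref{lemma:lin_funcs_det_pointwise_value} at each such $j$ yields $\max\{|\sigma(\phi)| : \sigma \in \m_{p,j}\} \leq A$, and since $\Tau_{p,l}$ is the union of the $\m_{p,j}$ over $j \in \{0, \ldots, l\}$, taking the maximum over $j$ gives the desired bound.

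For \eqref{eq:imp_2_num_pts_lemma}, assume $\max\{|\sigma(\phi)| : \sigma \in \Tau_{p,l}\} \leq A$. Since $\m_{p,j} \subset \Tau_{p,l}$ for each $j \in \{0, \ldots, l\}$, the same bound holds on each $\m_{p,j}$. The implication \eqref{lin_funcs_lemma_II} of Lemma \ref{lemma:lin_funcs_det_pointwise_value} then gives $\|\phi^{(j)}(p)\|_{\cl(V^{\otimes j};W)} \leq c d^j A$ for each such $j$. Taking the maximum over $j \in \{0, \ldots, l\}$ and using $d^j \leq d^l$ (which holds because $d \geq 1$ and $j \leq l$) produces $\Lambda_\phi^l(p) \leq c d^l A$. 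The equivalence \eqref{eq:A=0_equiv_num_pts_lemma} follows by specialising both implications to $A = 0$, and this in turn gives the claimed equivalence between the vanishing of $\phi(p)$ and the vanishing of $\sigma(\phi)$ for every $\sigma \in \Tau_{p,k}$.

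Finally, for the cardinality count \eqref{eq:number_of_coeffs}, the union defining $\Tau_{p,k}$ is disjoint (the $\m_{p,j}$ involve linear functionals acting on distinct components $\phi^{(j)}$), so $\#(\Tau_{p,k}) = \sum_{j=0}^{k} \#(\m_{p,j})$. From \eqref{eq:m_p_j_num_pts_lemma} we read off $\#(\m_{p,0}) = c$ and $\#(\m_{p,j}) = c \cdot \#(\cv_j^{\ord})$ for $j \geq 1$; the definition \eqref{eq:V_n^ord_num_pts_lemma} gives $\#(\cv_j^{\ord}) = \beta(d,j)$ as the number of non-decreasing sequences of length $j$ in $\{1, \ldots, d\}$ (a standard stars-and-bars count agreeing with \eqref{eq:beta_ab_def}), and since $\beta(d,0) = 1$ the formulas for $j=0$ and $j \geq 1$ unify into $c \sum_{j=0}^{k} \beta(d,j)$. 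There is no genuine obstacle here: the entire lemma is a packaging statement that aggregates the per-level information of Lemma \ref{lemma:lin_funcs_det_pointwise_value} across $j \in \{0, \ldots, l\}$, the only mild point being to absorb the $j$-dependent factor $d^j$ into the uniform bound $d^l$ in \eqref{eq:imp_2_num_pts_lemma}.
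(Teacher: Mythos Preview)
Your proof is correct and follows essentially the same route as the paper: both deduce the implications \eqref{eq:imp_1_num_pts_lemma} and \eqref{eq:imp_2_num_pts_lemma} by applying Lemma~\ref{lemma:lin_funcs_det_pointwise_value} level-by-level across the disjoint union $\Tau_{p,l} = \bigcup_{j=0}^{l} \m_{p,j}$, and both obtain the cardinality count from $\#(\m_{p,j}) = c\,\beta(d,j)$. Your explicit mention of $d^j \le d^l$ and the stars-and-bars justification for $\#(\cv_j^{\ord})$ are minor expository additions to what the paper leaves implicit.
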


\begin{proof}[Proof of Lemma 
\ref{lemma:number_of_coeffs_for_point_value}]
Let $c,d \in \Z_{\geq 1}$ and $V$ and $W$ be real Banach spaces 
of dimensions $d$ and $c$ respectively.
Let $\m \subset V$ be a closed subset, and assume that the 
tensor powers of $V$ are all equipped with admissible norms 
(cf. Definition \ref{admissible_tensor_norm}).
Let $v_1 , \ldots , v_d \in V$ be a unit $V$-norm basis 
of $V$. 
For each integer $n \in \Z_{\geq 1}$ define 
(cf. \eqref{eq:V_ten_prod_basis_num_pts_lemma})
\beq
    \label{eq:V_ten_prod_basis_num_pts_lemma_pf}
        \cv_n := \left\{
        v_{l_1} \otimes \ldots \otimes v_{l_n} :
        (l_1 , \ldots , l_n) \in
        \{ 1 ,\ldots , d\}^n 
        \right\} \subset V^{\otimes n}
\eeq 
and (cf. \eqref{eq:V_n^ord_num_pts_lemma})
\beq
    \label{eq:V_n^ord_num_pts_lemma_pf}
        \cv_n^{\ord} := \left\{ 
        v_{l_1} \otimes \ldots \otimes v_{l_n} :
        (l_1 , \ldots , l_n) \in \{1 , \ldots , d\}^n
        \text{ such that }
        l_1 \leq \ldots \leq l_n \right\} 
        \subset \cv_n \subset V^{\otimes n}.
\eeq
Further, let $w_1, \ldots , w_c \in W$ be a unit $W$-norm 
basis of $W$, and let 
$w_1^{\ast} , \ldots , w_c^{\ast} \in W^{\ast}$
be the corresponding dual basis of $W^{\ast}$.

Let $\gamma > 0$ with $k \in \Z_{\geq 0}$ such that 
$\gamma \in (k,k+1]$, and let 
$\phi = \left( \phi^{(0)} , \ldots , \phi^{(k)} 
\right) \in \Lip(\gamma,\m,W)$. 
For each point $p \in \m$ and each $j \in \{0, \ldots , k\}$
define $\m_{p,j} \subset \Lip(\gamma,\m,W)^{\ast}$ by 
(cf. \eqref{eq:m_p_j_num_pts_lemma})
\beq
    \label{eq:m_p_j_num_pts_lemma_pf}
        \m_{p,j} := \twopartdef{
        \left\{ \de_{p,0,s} : s \in \{1, \ldots , c\} \right\}
        }
        {j=0}
        {
        \left\{ \de_{p,j,v,s} : v \in \cv_j^{\ord} 
        \text{ and } s \in \{ 1 , \ldots , c \} \right\}
        }
        {j \geq 1.}
\eeq
For every $s \in \{1, \ldots , c\}$, 
the linear functional
$\de_{p,0,s} \in \Lip(\gamma,\m,W)^{\ast}$ is 
defined, for 
$F = \left( F^{(0)} , \ldots , F^{(k)} \right) 
\in \Lip(\gamma,\m,W)$, by 
$\de_{p,0,s}(F) := w_s^{\ast} \left( F^{(0)}(p) \right)$
(cf. \eqref{eq:de_p_0_s}).
For every $s \in \{1 , \ldots , c\}$, $j \in \{1, \ldots , k\}$,
and $v \in \cv_j^{\ord}$, the linear functional 
$\de_{p,j,v,s} \in \Lip(\gamma,\m,W)^{\ast}$ is 
defined, for 
$F = \left( F^{(0)} , \ldots , F^{(k)} \right) 
\in \Lip(\gamma,\m,W)$, by 
$\de_{p,j,v,s}(F) := w_s^{\ast} \left( F^{(j)}(p)[v] \right)$
(cf. \eqref{eq:de_p_j_v_s}).
Further, for a given $l \in \{0, \ldots , k\}$, 
define $\Tau_{p,l} \subset \Lip(\gamma,\m,W)^{\ast}$ by
(cf. \eqref{eq:Tau_p,l_num_pts_lemma})
\beq
    \label{eq:Tau_p,l_num_pts_lemma_pf}
        \Tau_{p,l} := 
        \bigcup_{j=0}^l \m_{p,j}.
\eeq
Recall the notation introduced in Remark 
\ref{rmk:notational_easing} that, for each point 
$x \in \m$ and each $j \in \{0, \ldots ,k\}$, we set 
(cf. \eqref{eq:notation_ease_num_pts_lemma})
\beq
    \label{eq:notation_ease_num_pts_lemma_pf}
        \Lambda_{\phi}^j(x) := 
        \max_{s \in \{0, \ldots , j\}} \left\{
        \left|\left| \phi^{(s)}(x) 
        \right|\right|_{\cl(V^{\otimes s};W)} \right\}.
\eeq
We now turn our attention to verifying that both the 
implications \eqref{eq:imp_1_num_pts_lemma} and 
\eqref{eq:imp_2_num_pts_lemma} are valid. 
For this purpose let $A \geq 0$ and fix 
$l \in \{0, \ldots ,k\}$.

First assume that $\Lambda_{\phi}^l(p) \leq A$.
A consequence of \eqref{eq:notation_ease_num_pts_lemma_pf} is 
that, for every $j \in \{0, \ldots , l\}$, we have the bound
$\left|\left| \phi^{(j)}(p) \right|\right|_{\cl(V^{\otimes j};W)} 
\leq A$.
In turn, by appealing to Lemma 
\ref{lemma:lin_funcs_det_pointwise_value}, 
we deduce, for every $j \in \{0 , \ldots , l\}$, that 
(cf. \eqref{lin_funcs_lemma_I}) 
$\max \left\{ |\sigma(\phi)| : \sigma \in \m_{p,j} \right\} \leq A$.
It now follows from \eqref{eq:Tau_p,l_num_pts_lemma_pf} that
$\max \left\{ |\sigma(\phi)| : \sigma \in \Tau_{p,l} \right\} 
\leq A$.
Thus the implication \eqref{eq:imp_1_num_pts_lemma} is 
valid as claimed.
We could alternatively have established 
\eqref{eq:imp_1_num_pts_lemma} via use of the 
inequality \eqref{eq:dual_norm_ests_lemma_pointwise_conc} 
established in Lemma \ref{lemma:dual_norm_ests}.

Now assume that 
$\max \left\{ |\sigma(\phi)| : \sigma \in \Tau_{p,l} \right\} 
\leq A$. 
A consequence of \eqref{eq:Tau_p,l_num_pts_lemma_pf} is that, 
for every $j \in \{0, \ldots , l\}$, we have 
$\max \left\{ |\sigma(\phi)| : \sigma \in \m_{p,j} \right\} \leq A$.
In turn, by appealing to Lemma 
\ref{lemma:lin_funcs_det_pointwise_value}, we deduce, for every
$j \in \{0, \ldots , l\}$, that 
(cf. \eqref{lin_funcs_lemma_II})
$\left|\left| \phi^{(j)}(p) \right|\right|_{\cl(V^{\otimes j};W)} 
\leq c d^j A$.
It now follows from \eqref{eq:notation_ease_num_pts_lemma_pf} 
that $\Lambda_{\phi}^l(p) \leq c d^l A$.
Thus the implication \eqref{eq:imp_2_num_pts_lemma} is 
valid as claimed.

The combination of the implication 
\eqref{eq:imp_1_num_pts_lemma} for $A:=0$ and the 
implication \eqref{eq:imp_2_num_pts_lemma} for $A:=0$
yield that 
$\Lambda_{\phi}^l(p)=0$ if and only if 
$\max \left\{ | \sigma(\phi)| : \sigma \in \Tau_{p,l} \right\} = 0$
as claimed in \eqref{eq:A=0_equiv_num_pts_lemma}.

To complete the proof, we consider imposing the condition 
that $\phi$ vanishes at the point $p$. 
In particular, this is equivalent to having, for every 
$j \in \{0, \ldots , k\}$, that 
$\left|\left| \phi^{(j)}(p) \right|\right|_{\cl(V^{\otimes j};W)} 
= 0$.
Consequently, by appealing to \eqref{eq:A=0_equiv_num_pts_lemma},
this is equivalent to having, for every $\sigma \in \Tau_{p,k}$,
that $\sigma(\phi) = 0$.
This is a system of $\#\left( \Tau_{p,k}\right)$ real-valued 
equations.
The proof will be complete if we can establish that 
$\# \left( \Tau_{p,k} \right) = c \sum_{j=0}^k \be(d,j)$
as claimed in \eqref{eq:number_of_coeffs} where, 
for integers $a,b \in \Z_{\geq 0}$, $\be(a,b)$ is given by
(cf. \eqref{eq:beta_ab_def})
\beq
    \label{eq:beta_ab_def_pf}
        \be (a,b) := 
        \left( 
        \begin{array}{c}
            a+b-1 \\
            b
        \end{array}
        \right)
        = \frac{(a+b-1)!}{(a-1)!b!}
        =
        \twopartdef{1}
        {b=0}
        {\frac{1}{b!}\prod_{s=0}^{b-1}(a+s)}
        {b \geq 1.}
\eeq
We first note via \eqref{eq:Tau_p,l_num_pts_lemma_pf} that
$\# \left( \Tau_{p,k} \right) = 
\sum_{j=0}^k \# \left( \m_{p,j} \right)$.
Moreover, for each $j \in \{0, \ldots , k\}$, it follows from 
\eqref{eq:m_p_j_num_pts_lemma_pf} that
\beq
    \label{eq:m_p_j_cardinality}
        \# \left( \m_{p,j} \right) := 
        \twopartdef{c}{j=0}
        {
        c \# \left( \cv_j^{\ord} \right)
        }
        {j \geq 1}
        \stackrel{
        \eqref{eq:beta_ab_def_pf}
        }{=}
        c \be(d,j).
\eeq
Hence $\# \left( \Tau_{p,k} \right) = c \sum_{j=0}^k \be(d,j)$
as claimed in \eqref{eq:number_of_coeffs}.
This completes the proof of Lemma
\ref{lemma:number_of_coeffs_for_point_value}.
\end{proof}

\section{The HOLGRIM Algorithm}
\label{sec:HOLGRIM_alg}
The following 
\textbf{HOLGRIM Extension Step} 
is used to dynamically grow the collection of 
points $P \subset \Sigma$ at which we require our 
next approximation of $\vph$ to agree with $\vph$.
\vskip 4pt
\noindent
\textbf{HOLGRIM Extension Step}

\noindent
Assume $P' \subset \Sigma$. Let $u \in \Span(\cf)$.
Let $q \in \{0, \ldots , k\}$ and $m \in \Z_{\geq 1}$ 
such that $\#(P') + m \leq \Lambda := \#(\Sigma)$.
\beq
    \label{eq:Sigma_star_def}
        \Sigma_q^{\ast} := \bigcup_{z \in \Sigma}
        \Tau_{z,q} \subset \Lip(\gamma,\Sigma,W)^{\ast}.
\eeq
First take 
\beq
    \label{HOLGRIM_ext_step_v2_sigma1}
        \sigma_1 := \argmax \left\{ 
        \left| \sigma (\vph - u) \right| : 
        \sigma \in \Sigma_q^{\ast} \right\},
\eeq
and then take $z_1 \in \Sigma$ to be the point for which
$\sigma_1 \in \Tau_{z_1,q}$.

Inductively for $j = 2,3,\ldots,m$ take
\beq
    \label{HOLGRIM_ext_step_v2_sigmaj}
        \sigma_j := \argmax \left\{
        \left| \sigma (\vph - u) \right| :
        \sigma \in \Sigma_q^{\ast} \setminus 
        \bigcup_{s=1}^{j-1} \Tau_{z_s, q} \right\}
\eeq
and then take $z_j \in \Sigma$ to be the point for which 
$\sigma_j \in \Tau_{z_j,q}$.

Once $z_1 , \ldots , z_m \in \Sigma$ have been
defined, we extend $P'$ to 
$P := P' \cup \left\{ z_1 , \ldots , z_m \right\}$.
\vskip 8pt
\noindent
The following \textbf{HOLGRIM Recombination Step} 
details how, for a given subset $P \subset \Sigma$ 
and a given $\ep_0 \geq 0$, 
we use recombination to find a function $u \in \Span(\cf)$
satisfying, for every point $z \in P$, that 
$\Lambda^k_{\vph-u}(z) \leq \ep_0$.
\vskip 4pt
\noindent
\textbf{HOLGRIM Recombination Step}

\noindent
Assume $P \subset \Sigma$ with $m \in \Z_{\geq 1}$ such 
that $m = \# \left( P \right)$.
For integers $a,b \in \Z_{\geq 1}$, let $\be(a,b)$ be
as defined in \eqref{eq:beta_ab_def_not_sec}. That is,
\beq
    \label{eq:beta_ab_def_recomb_step}
        \be (a,b) := 
        \left( 
        \begin{array}{c}
            a+b-1 \\
            b
        \end{array}
        \right)
        = \frac{(a+b-1)!}{(a-1)!b!}
        =
        \twopartdef{1}
        {b=0}
        {\frac{1}{b!}\prod_{s=0}^{b-1}(a+s)}
        {b \geq 1.}
\eeq
Let $Q = Q(m,c,d,k) \in \Z_{\geq 1}$ be as defined 
in \eqref{eq:D_ab_Q_ijab_def_not_sec}. That is,
\beq
    \label{lip_k_recomb_eqn_num_recomb_step}
        Q := 
        1 + m c \sum_{r=0}^k \be(d,r).
\eeq
Let $s \in \Z_{\geq 1}$, $q \in \{0 , \ldots , k\}$ 
and define $\Sigma_q^{\ast} := \cup_{z \in \Sigma} \Tau_{p,q}$.
For each $j \in \{1, \ldots , s\}$ we 
do the following.
\begin{enumerate}[label=(\Alph*)]
    \item\label{HOLGRIM_recomb_step_A}
    Define $L(k) \subset \Lip(\gamma,\Sigma,W)^{\ast}$ by
    \beq
        \label{eq:L_star_def_recomb_step}
            L(k) := \bigcup_{z \in P} 
            \Tau_{z,k} \subset \Lip(\gamma,\Sigma,W)^{\ast}.
    \eeq 
    By appealing to Lemma 
    \ref{lemma:number_of_coeffs_for_point_value} we see that, 
    the cardinality of $L(k)$ is
    (cf. \eqref{eq:number_of_coeffs})
    \beq
        \label{eq:L_star_cardinality_recomb_step}
            \#\left(L(k)\right) =
            \sum_{z \in P} \# \left( \Tau_{z,k} \right)
            =
            m c \sum_{j=0}^k \be(d,j)
            \stackrel{
            \eqref{lip_k_recomb_eqn_num_recomb_step}
            }{=} Q - 1.
    \eeq
    \item\label{HOLGRIM_recomb_step_B} 
    Let $L_j(k) \subset \Lip(\gamma,\Sigma,W)^{\ast}$ 
    be the subset resulting 
    from applying a random permutation to the ordering
    of the linear functionals in $L(k)$.
    \item\label{HOLGRIM_recomb_step_C}
    Apply recombination via Lemma 3.1 in \cite{LM22}, with
    the Banach space $X$, the integers $\n,m \in \Z_{\geq 1}$, 
    the subset $L \subset X^{\ast}$, the finite subset 
    $\cf \subset X$, 
    and the target $\vph \in \Span(\cf) \subset X$ in that result
    as $\Lip(\gamma,\Sigma,W)$, $\n, Q - 1 \in \Z_{\geq 1}$, 
    $L_j(k)$, and 
    $\vph \in \Span(\cf) \subset \Lip(\gamma,\Sigma,W)$ here
    respectively, to find $u_j \in \Span(\cf)$ satisfying 
    \beq
        \label{eq:recomb_outcome_recomb_step} 
                \max \left\{ \left| \sigma (\vph-u_j) \right| : 
                \sigma \in L_j(k) \right\} 
                \leq \frac{\ep_0}{c d^k}.
    \eeq
    Since $L_j(k)$ is a re-ordering of $L(k)$, 
    the combination of \eqref{eq:L_star_def_recomb_step} 
    and Lemma \ref{lemma:number_of_coeffs_for_point_value} 
    for the subset $\m$ there as $P$ here
    (specifically implication \eqref{eq:imp_2_num_pts_lemma}) 
    yields, for each point $z \in P$, that 
    $\Lambda^k_{\vph - u_j}(z) \leq \ep_0$.
    \item\label{HOLGRIM_recomb_step_D}
    Compute 
    \beq
        \label{eq:E[u_j]}
            E[u_j] := \max \left\{ 
            \left| \sigma ( \vph - u_j ) \right| : 
            \sigma \in \Sigma_q^{\ast}
            \right\}.
    \eeq
\end{enumerate}
After obtaining the functions $u_1 , \ldots , u_s \in 
\Span(\cf)$ we define $u \in \Span(\cf)$ by
\beq    
    \label{HOLGRIM_recomb_step_approx}
        u := \argmin \left\{ E[u_j] : 
        j \in \{ 1 , \ldots , s \} \right\}.
\eeq 
Then $u \in \Span(\cf)$ is returned as our approximation 
of $\vph$ that satisfies, for every $z \in P$, that 
$\Lambda^k_{\vph-u}(z) \leq \ep_0$.
\vskip 4pt
\noindent
Theoretically, there is no problem choosing $\ep_0 := 0$
in the \textbf{HOLGRIM Recombination Step}. The use of 
recombination in Lemma 3.1 in \cite{LM22} does theoretically 
find an approximation 
$u$ of $\vph$ such that, for every $\sigma \in L_j(k)$,
we have $\sigma(\vph-u) = 0$.
Hence, via Lemma \ref{lemma:number_of_coeffs_for_point_value} 
(cf. \eqref{eq:A=0_equiv_num_pts_lemma} for $l$ there are 
$k$ here), 
we may theoretically conclude that 
$\Lambda^k_{\vph-u}(z) = 0$ 
for all points $z \in P$ for the given subset 
$P \subset \Sigma$.
However, implementations of recombination inevitably result 
in numerical errors. 
That is, the returned coefficients will only solve the 
equations modulo some (ideally) small error term. 
Similarly to \cite{LM22}, we account for this in our 
analysis by only assuming in 
\eqref{eq:recomb_outcome_recomb_step} 
that the resulting 
approximation $u_j \in \Span(\cf)$ only satisfies, 
for every $\sigma \in L_j(k)$, that 
$|\sigma(\vph-u_j)| \leq \ep_0/ c d^k$ 
for some (small) constant $\ep_0 \geq 0$.
The inclusion of the $c d^k$ factor on the denominator is
to ensure that, via an application of implication 
\eqref{eq:imp_2_num_pts_lemma} in Lemma 
\ref{lemma:number_of_coeffs_for_point_value} for the subset 
$\m$ there as $P$ here, 
for every point $z \in P$ we have that
$\Lambda^k_{\vph-u}(z) \leq \ep_0$.

We exploit the potential advantage offered by shuffling 
the order of the inputs for recombination that is utilised
in \cite{LM22}.
That is, if recombination is applied to 
a linear system of equations corresponding to a matrix $A$, 
then a Singular Value Decomposition (SVD) of the matrix $A$
is used to find a basis of the kernel $\ker(A)$
(see Section 3 in \cite{LM22}, for example).
Consequently, re-ordering the rows of the matrix 
(i.e. changing the order in which the equations are 
considered) can potentially result in a different
basis for $\ker(A)$ being selected. 
Hence shuffling the order of the equations can affect the 
approximation returned by recombination via Lemma 3.1 in 
\cite{LM22}.
As done in \cite{LM22}, we exploit this by optimising 
the approximation returned by recombination over a 
chosen number of shuffles of the 
equations forming the linear system. 

We now detail our proposed \textbf{HOLGRIM} algorithm to find an 
approximation $u \in \Span(\cf)$ of $\vph \in \Span(\cf)$
that is close to $\vph$ throughout $\Sigma$ in the sense 
that, for a given $q \in \{0, \ldots , k\}$, we have  
$\Lambda^q_{\vph-u}(z) \leq \ep$ for every point 
$z \in \Sigma$.

\vskip 4pt
\noindent\textbf{HOLGRIM}
\begin{enumerate}[label=(\Alph*)]
    \item\label{HOLGRIM_A}
    Fix $\ep > 0$ as the \emph{target accuracy
    threshold}, $\ep_0 \in [0, \ep)$
    as the \emph{acceptable recombination error},
    $M \in \Z_{\geq 1}$ as the 
    \emph{maximum number of steps}, and 
    $q \in \{0, \ldots , k\}$ as the \emph{order level}.
    Choose integers $s_1 , \ldots , s_M \in \Z_{\geq 1}$
    as the \emph{shuffle numbers}, and integers 
    $k_1 , \ldots , k_M \in \Z_{\geq 1}$ with 
    \beq 
        \label{HOLGRIM_num_pts_bd}
            \kappa := k_1 + \ldots + k_M 
            \leq \min \left\{ 
            \frac{\n - 1}{cD(d,k)} ,
            \Lambda \right\}.
    \eeq
    Choose \emph{scaling factors}
    $A_1 , \ldots , A_{\n} \in \R$ such that, for every
    $i \in \{1, \ldots , \n\}$, we have 
    $||f_i||_{\Lip(\gamma,\Sigma,W)} \leq A_i$.
    \item\label{HOLGRIM_B} 
    For each $i \in \{1, \ldots , \n\}$, if 
    $a_i < 0$ then replace $a_i$ and $f_i$ by
    $-a_i$ and $-f_i$ respectively.
    This ensures that $a_1 , \ldots , a_{\n} > 0$
    whilst leaving the expansion 
    $\vph = \sum_{i=1}^{\n} a_i f_i$ unaltered.
    Additionally, for each 
    $i \in \{ 1 , \ldots , \n\}$ replace $f_i$ by 
    $h_i := f_i / A_i$ 
    so that $h_i \in \Lip(\gamma,\Sigma,W)$ with
    $||h_i||_{\Lip(\gamma,\Sigma,W)} \leq 1$,
    and that
    $\varphi = \sum_{i=1}^{\n} \al_i h_i$
    where 
    $\al_i := a_i A_i$. 
    \item\label{HOLGRIM_C}
    Apply the \textbf{HOLGRIM Extension Step}, 
    with $P' := \varnothing$, 
    $u \equiv 0$ and $m := k_1$, to obtain a 
    subset $\Sigma_1 = \left\{ z_{1,1} , \ldots ,
    z_{1,k_1} \right\} \subset \Sigma$.
    Apply the \textbf{HOLGRIM Recombination Step},
    with $P := \Sigma_1$ and $s:= s_1$, to find a 
    function $u_1 \in \Span(\cf)$ satisfying, for
    every point $z \in \Sigma_1$, that
    $\Lambda^{k}_{\vph-u_1}(z) \leq \ep_0$.

    \noindent
    If $M=1$ then the algorithm terminates here and 
    returns $u_1$ as the final approximation of $\vph$.
    
    \item\label{HOLGRIM_D} 
    If $M \geq 2$ then we proceed inductively for 
    $t \geq 2$ as follows.
    If $| \sigma ( \vph - u_{t-1}) | \leq \frac{\ep}{c d^q}$
    for every linear functional $\sigma \in \Sigma^{\ast}_q$
    then we stop and return $u_{t-1}$ as the final 
    approximation of $\vph$.
    Otherwise, we apply the
    \textbf{HOLGRIM Extension Step} with 
    $P' := \Sigma_{t-1}$, 
    $u := u_{t-1}$ and $m := k_t$, to obtain a 
    subset $\Sigma_t = \Sigma_{t-1} \cup
    \left\{ z_{t,1} , \ldots ,
    z_{t,k_t} \right\} \subset \Sigma$.
    Apply the \textbf{HOLGRIM Recombination Step}, 
    with $P := \Sigma_t$ and $s := s_t$, to find a
    function $u_t \in \Span(\cf)$ satisfying, for
    every point $z \in \Sigma_t$, that
    $\Lambda^k_{\varphi - u_t}(z) \leq \ep_0$.

    \noindent
    The algorithm ends either by returning $u_{t-1}$
    for $t \in \{2, \ldots , M\}$ for which the stopping 
    criterion was triggered as the final approximation of 
    $\vph$, or by returning $u_M$ as the final 
    approximation of $\vph$ if the stopping criterion is 
    never triggered.
\end{enumerate}
\vskip 4pt
\noindent
We claim that if $M \geq 2$ and the 
\textbf{HOLGRIM} algorithm terminates \emph{before}
the final $M^{\text{th}}$ step is completed, then the 
resulting approximation $u \in \Span(\cf)$ of $\vph$ 
satisfies, for every $z \in \Sigma$, that 
$\Lambda^q_{\vph-u}(z) \leq \ep$.
To see this, let $m \in \{2, \ldots , M\}$
and suppose that the \textbf{HOLGRIM} algorithm 
terminates during the $m^{\text{th}}$ step, 
i.e. that the $m^{\text{th}}$ step is not completed.
Then $u_{m-1} \in \Span(\cf)$ is the approximation of 
$\vph$ returned by the \textbf{HOLGRIM} algorithm.

It follows from Step \ref{HOLGRIM_D} in the 
\textbf{HOLGRIM} algorithm that this requires that 
$|\sigma(\vph - u_{m-1})| \leq \ep / c d^q$ for 
every $\sigma \in \Sigma^{\ast}_q$.
Recall that $\Sigma^{\ast}_q := \cup_{z \in \Sigma} \Tau_{z,q}$.
Thus, for any $z \in \Sigma$, we must have 
that $|\sigma(\vph - u_{m-1})| \leq \ep / c d^q$ for 
every $\sigma \in \Tau_{z,q}$.
Consequently, by appealing to Lemma 
\ref{lemma:number_of_coeffs_for_point_value} 
(specifically to the implication 
\eqref{eq:imp_2_num_pts_lemma}), 
we conclude that $\Lambda^q_{\vph - u_{m-1}}(z) \leq \ep$.
Since $z \in \Sigma$ was arbitrary we have established 
that $\max_{z \in \Sigma} \left\{ \Lambda^q_{\vph - u_{m-1}}(z)
\right\} \leq \ep$
as claimed.

\section{Complexity Cost}
\label{sec:HOLGRIM_complexity_cost}
In this section we establish an estimate for the complexity cost
of the \textbf{HOLGRIM} algorithm detailed in Section 
\ref{sec:HOLGRIM_alg}.
We first record the complexity cost of the 
\textbf{HOLGRIM Extension Step}.

\begin{lemma}[\textbf{HOLGRIM Extension Step} Complexity Cost]
\label{lemma:HOLGRIM_ext_step_cost}
Let $\n,\Lambda,m,t,c,d \in \Z_{\geq 1}$.
Let $V$ and $W$ be real Banach spaces of dimensions $d$ and 
$c$ respectively.
Assume the tensor powers of $V$ are all equipped with admissible
norms (cf. Definition \ref{admissible_tensor_norm}). 
Let $\Sigma \subset V$ be a finite subset with cardinality 
$\Lambda$. 
Let $\gamma > 0$ with $k \in \Z_{\geq 0}$ such that 
$\gamma \in (k,k+1]$, and let $q \in \{0, \ldots ,k\}$.
Define $\Sigma^{\ast}_q := \cup_{z \in \Sigma} \Tau_{z,q}$.
Let 
$\cf = \{ f_1 , \ldots , f_{\n} \} \subset \Lip(\gamma,\Sigma,W) 
\setminus \{0\}$.
Let $a_1 , \ldots , a_{\n} \in \R \setminus \{0\}$ and define
$\vph := \sum_{i=1}^{\n} a_i f_i$.
Assume that the subset 
$\left\{ \sigma(\vph) : \sigma \in \Sigma_q^{\ast} \right\} 
\subset \R$ and, for every $i \in \{1, \ldots , \n\}$, 
the subset
$\left\{ \sigma(f_i) : \sigma \in \Sigma^{\ast}_q \right\} 
\subset \R$ have already been computed.
Then for any subset $P' \subset \Sigma$ with 
$\#\left(P'\right) + m \leq \Lambda$ and any
$u \in \Span(\cf)$ with $\# \support(u) = t$, 
the complexity cost of applying the 
\textbf{HOLGRIM Extension Step}, with the $P'$, $u$, and $m$ 
there as the $P'$, $u$, and $m$ here respectively, is 
$\cO \left( c (m+t) \Lambda D(d,q) \right)$.
\end{lemma}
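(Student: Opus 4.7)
The plan is to decompose the work of the \textbf{HOLGRIM Extension Step} into two logically separate phases: a single sweep that computes the values $|\sigma(\vph - u)|$ for every $\sigma \in \Sigma_q^{\ast}$, followed by $m$ iterations of $\argmax$ over successively shrinking candidate pools. The key observation that makes this bound achievable is that the function $u$ is fixed throughout the step, so the $t$-dependent cost is paid only once rather than once per iteration.

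For the first phase, I would recall from \eqref{eq:Sigma_l_star_card_not_sec} that $\#\left(\Sigma_q^{\ast}\right) = c D(d,q) \Lambda$. Fix $\sigma \in \Sigma_q^{\ast}$. Since $\sigma(\vph)$ is assumed precomputed, linearity of $\sigma$ gives
\beq
    \sigma(\vph - u) = \sigma(\vph) - \sigma(u)
    = \sigma(\vph) - \sum_{i \in \support(u)} b_i \sigma(f_i),
\eeq
where the coefficients $b_i$ come from the expansion $u = \sum_{i=1}^{\n} b_i f_i$. Because each $\sigma(f_i)$ is also assumed precomputed, evaluating the sum costs $\cO(t)$ operations, and the subsequent subtraction and absolute value add only $\cO(1)$ overhead. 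Summing over $\sigma \in \Sigma_q^{\ast}$ gives a total first-phase cost of $\cO \left( t \cdot c D(d,q) \Lambda \right)$.

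For the second phase, since $u$ is not updated across the loop, the numbers $|\sigma(\vph - u)|$ are fixed. In each iteration $j \in \{1, \ldots, m\}$, identifying $\sigma_j$ via \eqref{HOLGRIM_ext_step_v2_sigmaj} is simply locating the maximum among at most $c D(d,q)\Lambda$ precomputed real numbers, at cost $\cO \left( c D(d,q)\Lambda \right)$. Recovering $z_j \in \Sigma$ with $\sigma_j \in \Tau_{z_j,q}$ is $\cO(1)$ using a lookup table from $\Sigma_q^{\ast}$ to $\Sigma$ that is built in a single pass through $\Sigma_q^{\ast}$ at cost $\cO \left( c D(d,q) \Lambda \right)$, and excising the $c D(d,q)$ linear functionals of $\Tau_{z_j,q}$ from the candidate pool is only $\cO \left( cD(d,q)\right)$ per step. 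Aggregating over $j \in \{1,\ldots,m\}$ yields a second-phase cost of $\cO \left( m \cdot c D(d,q) \Lambda \right)$.

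Adding the two phases gives the claimed bound $\cO \left( c (m+t) \Lambda D(d,q) \right)$. There is no substantive obstacle beyond the bookkeeping above; the main point to flag clearly in the write-up is that the $t$-dependence enters precisely because $\sigma(u)$ must be assembled from the $t$ nonzero coefficients of $u$, while the $m$-dependence enters from the $m$ independent argmax scans, and that these two contributions are additive rather than multiplicative because $u$ is held fixed for the duration of the extension step.
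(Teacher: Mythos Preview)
Your proposal is correct and follows essentially the same approach as the paper: both decompose the step into (i) a single $\cO(ct\Lambda D(d,q))$ sweep to compute all values $|\sigma(\vph-u)|$ using the precomputed $\sigma(\vph)$ and $\sigma(f_i)$ together with $\#\support(u)=t$, and (ii) $m$ successive argmax scans over at most $cD(d,q)\Lambda$ numbers at cost $\cO(cm\Lambda D(d,q))$, with the excision of each $\Tau_{z_j,q}$ costing only $\cO(cD(d,q))$ per iteration. Your write-up is slightly more explicit about the lookup from $\sigma_j$ to $z_j$ and about why the $t$- and $m$-contributions are additive, but the argument is the same.
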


\begin{proof}[Proof of Lemma 
\ref{lemma:HOLGRIM_ext_step_cost}]
Let $\n,\Lambda,m,t,c,d \in \Z_{\geq 1}$.
Let $V$ and $W$ be real Banach spaces of dimensions $d$ and 
$c$ respectively.
Assume the tensor powers of $V$ are all equipped with admissible
norms (cf. Definition \ref{admissible_tensor_norm}). 
Let $\Sigma \subset V$ be a finite subset with cardinality 
$\Lambda$. 
Let $\gamma > 0$ with $k \in \Z_{\geq 0}$ such that 
$\gamma \in (k,k+1]$, and let $q \in \{0, \ldots ,k\}$.
Define $\Sigma^{\ast}_q := \cup_{z \in \Sigma} \Tau_{z,q}$.
Let 
$\cf = \{ f_1 , \ldots , f_{\n} \} \subset \Lip(\gamma,\Sigma,W) 
\setminus \{0\}$.
Let $a_1 , \ldots , a_{\n} \in \R \setminus \{0\}$ and define
$\vph := \sum_{i=1}^{\n} a_i f_i$.
Assume that the subset 
$\left\{ \sigma(\vph) : \sigma \in \Sigma_q^{\ast} \right\} 
\subset \R$ and, for every $i \in \{1, \ldots , \n\}$, 
the subset
$\left\{ \sigma(f_i) : \sigma \in \Sigma^{\ast}_q \right\} 
\subset \R$ have already been computed.
Suppose that $P' \subset \Sigma$ with 
$\#\left(P'\right) + m \leq \Lambda$ and
$u \in \Span(\cf)$ with $\# \support(u) = t$.
Recall our convention that $\support(u)$ is the set of the 
functions $f_i$ that correspond to a non-zero coefficient in 
the expansion of $u$ in terms of the $f_i$. That is, 
$u = \sum_{i=1}^{\n} u_i f_i$ for real numbers 
$u_1 , \ldots , u_{\n} \in \R$ and 
\beq
    \label{eq:ext_step_cost_lemma_pf_support_def}
        \support(u) := \left\{ f_j : j \in \{1 , \ldots , \n\} 
        \text{ and } u_j \neq 0 \right\}.
\eeq 
Consider carrying out the \textbf{HOLGRIM Extension Step} 
with the $P'$, $u$, and $m$ there are $P'$, $u$, and $m$ 
here respectively.
Since we have access to $\left\{ \sigma(\vph) : \sigma \in 
\Sigma_q^{\ast} \right\}$ and 
$\left\{ \sigma(f_i) : \sigma \in \Sigma_q^{\ast} \right\}$
for every $i \in \{1, \dots , \n\}$ without additional 
computation, and since $\# \support(u) = t$, the complexity 
cost of computing the set 
$\left\{ \left| \sigma(\vph-u) \right| : \sigma \in \Sigma_q^{\ast} 
\right\}$ 
is no worse than 
$\cO \left( t \# \left( \Sigma^{\ast}_q \right)\right) 
= \cO \left( c t \Lambda D(d,q) \right)$.
Each of the $m$ argmax values are found from a list of no 
greater than $\# \left( \Sigma_q^{\ast} \right) = c \Lambda D(d,q)$
real numbers, and so the complexity cost of extracting these
$m$ argmax values is no worse than 
$\cO \left( c m \Lambda D(d,q) \right)$.
Moreover, for $j \in \{2, \ldots ,m\}$, the list from which we 
extract the next the $j^{\text{th}}$ argmax value 
is obtained by removing $c D(d,q)$ values from the previous 
list, i.e. for every linear functional $\sigma \in \Tau_{z_{j-1},q}$ 
we remove the value $\sigma(\vph-u)$, 
which results in removing $c D(d,q)$ values in total 
since this is the cardinality of $\Tau_{z_{j-1},q}$ 
(cf. \eqref{eq:Tau_pl_card_not_sec}).
Thus the complexity cost of the discarding is, in total, no 
worse than $\cO\left(cm D(d,q) \right)$
The complexity cost of appending the resulting $m$ points 
$z_1 , \ldots , z_m \in \Sigma$ to the collection $P'$ is 
$\cO(m)$.
Therefore the entire \textbf{HOLGRIM Extension Step} has a 
complexity cost no worse than 
$\cO \left( c(m+t)\Lambda D(d,q) \right)$ 
as claimed.
This completes the proof of Lemma \ref{lemma:HOLGRIM_ext_step_cost}.
\end{proof}
\vskip 4pt 
\noindent
We next record the complexity cost of the 
\textbf{HOLGRIM Recombination Step}.

\begin{lemma}[\textbf{HOLGRIM Recombination Step} 
Complexity Cost]
\label{lemma:HOLGRIM_recomb_step_cost}
Let $\n,\Lambda,m,c,d \in \Z_{\geq 1}$.
Let $V$ and $W$ be real Banach spaces of dimensions $d$ and 
$c$ respectively.
Assume the tensor powers of $V$ are all equipped with admissible
norms (cf. Definition \ref{admissible_tensor_norm}). 
Let $\Sigma \subset V$ be a finite subset with cardinality 
$\Lambda$. 
Let $\gamma > 0$ with $k \in \Z_{\geq 0}$ such that 
$\gamma \in (k,k+1]$, and let $q \in \{0, \ldots ,k\}$.
Define $\Sigma^{\ast}_q := \cup_{z \in \Sigma} \Tau_{z,q}$ 
and $\Sigma^{\ast}_k := \cup_{z \in \Sigma} \Tau_{z,k}$.
Let 
$\cf = \{ f_1 , \ldots , f_{\n} \} \subset \Lip(\gamma,\Sigma,W) 
\setminus \{0\}$.
Let $a_1 , \ldots , a_{\n} \in \R \setminus \{0\}$ and define
$\vph := \sum_{i=1}^{\n} a_i f_i$.
Assume that the subset 
$\left\{ \sigma(\vph) : \sigma \in \Sigma_k^{\ast} \right\} 
\subset \R$ and, for every $i \in \{1, \ldots , \n\}$, 
the subset
$\left\{ \sigma(f_i) : \sigma \in \Sigma^{\ast}_k \right\} 
\subset \R$ have already been computed.
Then for any $P \subset \Sigma$ with cardinality $\#(P) = m$
the complexity cost of applying the 
\textbf{HOLGRIM Recombination Step}, with the subset $P$, the 
integer $s$, and the order level $q \in \{0, \ldots , k\}$ there 
as $P$, $s$, and $q$ here respectively, is 
\beq
    \label{eq:recomb_step_cost}
        \cO \left( smc D(d,k)( \n  + c D(d,q) \Lambda) + 
        sm^3 c^3 D(d,k)^3 
        \log \left( \frac{\n}{mcD(d,k)} \right) \right).
\eeq
\end{lemma}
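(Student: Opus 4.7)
The plan is to analyse the cost of each of the four sub-steps \ref{HOLGRIM_recomb_step_A}--\ref{HOLGRIM_recomb_step_D} in a single iteration $j \in \{1,\ldots,s\}$ of the \textbf{HOLGRIM Recombination Step}, then multiply by $s$ to account for the outer loop (and the final $\argmin$ in \eqref{HOLGRIM_recomb_step_approx} costs only $\cO(s)$, which is absorbed). Writing $N := \# L(k) = Q-1 = mcD(d,k)$ throughout will keep the bookkeeping transparent.

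For a single iteration I would argue as follows. Step \ref{HOLGRIM_recomb_step_A} simply assembles $L(k) = \bigcup_{z \in P}\Tau_{z,k}$; this is a union of $m$ subsets each of cardinality $cD(d,k)$, hence costs $\cO(N) = \cO(mcD(d,k))$. Step \ref{HOLGRIM_recomb_step_B} is a random permutation of a list of length $N$, which is also $\cO(N)$. Step \ref{HOLGRIM_recomb_step_C} is the dominant algebraic cost: it is a single invocation of the Recombination Thinning Lemma 3.1 of \cite{LM22} applied to a linear system with $\n$ features and $N$ linear functionals. The complexity cost of that recombination, as recorded in \cite{LM22}, is $\cO(\n N + N^3 \log(\n/N))$, provided the values $\{\sigma(f_i) : \sigma \in L_j(k),\, i \in \{1,\ldots,\n\}\}$ are already available; since $L_j(k) \subset \Sigma_k^\ast$ and the values $\{\sigma(f_i):\sigma\in\Sigma_k^\ast\}$ are assumed pre-computed, no extra work is required to prepare the input. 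This yields
\[
    \cO\bigl(\n mcD(d,k) + m^3c^3D(d,k)^3\log\bigl(\tfrac{\n}{mcD(d,k)}\bigr)\bigr).
\]

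For step \ref{HOLGRIM_recomb_step_D} I would exploit the fact that the approximation $u_j \in \Span(\cf)$ returned by recombination satisfies $\#\support(u_j) \leq N = mcD(d,k)$. Therefore, for any single $\sigma \in \Sigma_q^\ast$, evaluating $\sigma(\vph - u_j) = \sigma(\vph) - \sum_{i \in \support(u_j)} b_i \sigma(f_i)$ uses only pre-computed values and costs $\cO(mcD(d,k))$. Sweeping over all $\sigma \in \Sigma_q^\ast$, whose cardinality is $cD(d,q)\Lambda$ by \eqref{eq:Sigma_l_star_card_not_sec}, and then taking a max over that many reals, the total cost of \ref{HOLGRIM_recomb_step_D} is $\cO(mc^2 D(d,k)D(d,q)\Lambda)$.

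Adding the costs of \ref{HOLGRIM_recomb_step_A}--\ref{HOLGRIM_recomb_step_D} and noting that the \ref{HOLGRIM_recomb_step_C} and \ref{HOLGRIM_recomb_step_D} terms dominate \ref{HOLGRIM_recomb_step_A} and \ref{HOLGRIM_recomb_step_B}, one obtains per-iteration cost
\[
    \cO\bigl(mcD(d,k)(\n + cD(d,q)\Lambda) + m^3c^3D(d,k)^3\log\bigl(\tfrac{\n}{mcD(d,k)}\bigr)\bigr),
\]
and multiplying by $s$ yields \eqref{eq:recomb_step_cost}. The only genuinely subtle point is importing the recombination cost from \cite{LM22} and verifying that the assumed pre-computation of the sets $\{\sigma(\vph):\sigma\in\Sigma_k^\ast\}$ and $\{\sigma(f_i):\sigma\in\Sigma_k^\ast\}$ really does eliminate any feature-evaluation overhead inside the SVD-based recombination routine; the rest is routine counting.
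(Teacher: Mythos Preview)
Your proposal is correct and follows essentially the same approach as the paper: decompose the cost of a single iteration into the sub-steps \ref{HOLGRIM_recomb_step_A}--\ref{HOLGRIM_recomb_step_D}, invoke the recombination complexity bound from Lemma~3.1 of \cite{LM22} for \ref{HOLGRIM_recomb_step_C}, use the support bound $\#\support(u_j) \leq 1 + mcD(d,k)$ together with the pre-computed values and $\#\Sigma_q^\ast = cD(d,q)\Lambda$ for \ref{HOLGRIM_recomb_step_D}, then multiply by $s$ and absorb the $\cO(s)$ cost of the final $\argmin$. The only cosmetic difference is that the paper records the support bound as $mcD(d,k)+1$ rather than $mcD(d,k)$, which of course does not affect the asymptotic.
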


\begin{proof}[Proof of Lemma \ref{lemma:HOLGRIM_recomb_step_cost}]
Let $\n,\Lambda,m,c,d \in \Z_{\geq 1}$.
Let $V$ and $W$ be real Banach spaces of dimensions $d$ and 
$c$ respectively.
Assume the tensor powers of $V$ are all equipped with admissible
norms (cf. Definition \ref{admissible_tensor_norm}). 
Let $\Sigma \subset V$ be a finite subset with cardinality 
$\Lambda$. 
Let $\gamma > 0$ with $k \in \Z_{\geq 0}$ such that 
$\gamma \in (k,k+1]$, and let $q \in \{0, \ldots ,k\}$.
Define $\Sigma^{\ast}_q := \cup_{z \in \Sigma} \Tau_{z,q}$ 
and $\Sigma^{\ast}_k := \cup_{z \in \Sigma} \Tau_{z,k}$.
Let 
$\cf = \{ f_1 , \ldots , f_{\n} \} \subset \Lip(\gamma,\Sigma,W) 
\setminus \{0\}$.
Let $a_1 , \ldots , a_{\n} \in \R \setminus \{0\}$ and define
$\vph := \sum_{i=1}^{\n} a_i f_i$.
Assume that the subset 
$\left\{ \sigma(\vph) : \sigma \in \Sigma_k^{\ast} \right\} 
\subset \R$ and, for every $i \in \{1, \ldots , \n\}$, 
the subset
$\left\{ \sigma(f_i) : \sigma \in \Sigma^{\ast}_k \right\} 
\subset \R$ have already been computed.
Let $P \subset \Sigma$ have cardinality $\#(P) = m$.

Consider applying the 
\textbf{HOLGRIM Recombination Step} with the subset $P$ the 
integer $s$, and the order level $q \in \{0, \ldots , k\}$ there 
as $P$, $s$, and $q$ here respectively.
Define (cf. \eqref{eq:L_star_def_recomb_step}) 
$L(k) := \cup_{z \in P} \Tau_{z,k} \subset 
\Lip(\gamma,\Sigma,W)^{\ast}$.
Since $\# (P) = m$, it follows that (cf. 
\eqref{eq:L_star_cardinality_recomb_step}
the cardinality of $L(k)$ is 
$\# \left( L(k) \right) = mcD(d,k)$.

Consider a fixed $j \in \{1 , \ldots , s\}$.
The complexity cost of shuffling of the elements in 
$L(k)$ to obtain $L_j(k)$ in 
\textbf{HOLGRIM Recombination Step} 
\ref{HOLGRIM_recomb_step_B} is 
$\cO(mcD(d,k))$.

In \textbf{HOLGRIM Recombination Step} 
\ref{HOLGRIM_recomb_step_C}, recombination is applied via
Lemma 3.1 in \cite{LM22}. 
In particular, Lemma 3.1 in \cite{LM22} is applied with
the Banach space $X$, the integers $\n,m \in \Z_{\geq 1}$, 
the subset $L \subset X^{\ast}$, the finite subset 
$\cf \subset X$, 
and the target $\vph \in \Span(\cf) \subset X$ in that result
as $\Lip(\gamma,\Sigma,W)$, $\n, mcD(d,k) + 1 \in \Z_{\geq 1}$, 
$L_j(k)$, and  
$\vph \in \Span(\cf) \subset \Lip(\gamma,\Sigma,W)$ here
respectively, to find $u_j \in \Span(\cf)$ satisfying 
\beq
    \label{eq:complex_cost_recomb_outcome} 
        \max \left\{ \left| \sigma (\vph-u_j) \right| : 
        \sigma \in L_j(k) \right\} 
        \leq \frac{\ep_0}{c d^k}.
\eeq
It is established in Lemma 3.1 \cite{LM22} that the 
complexity cost of this application is
\beq
    \label{eq:complex_cost_recombination_app}
        \cO \left( \n m c D(d,k) + m^3 c^3 D(d,k) 
        \log \left( \frac{\n}{m c D(d,k)} \right) \right).
\eeq
Further, since $\# \left( L_j(k) \right) = m c D(d,k)$, 
it follows from Lemma 3.1 in \cite{LM22} that the cardinality 
of $\support(u_j)$ satisfies
$\# \support(u_j) \leq m c D(d,k) + 1$. 
Thus, since we already have access to 
$\left\{ \sigma(\vph) : \sigma \in \Sigma_k^{\ast} \right\} 
\subset \R$ and, for every $i \in \{1, \ldots , \n\}$, 
the subset
$\left\{ \sigma(f_i) : \sigma \in \Sigma^{\ast}_k \right\} 
\subset \R$ without additional computation and 
$\Sigma^{\ast}_q \subset \Sigma^{\ast}_k$, 
the complexity cost of computing 
$E[u_j] := \max \left\{ 
|\sigma(\vph - u_j)| : \sigma \in \Sigma^{\ast}_q \right\}$
is 
\beq
    \label{eq:complex_cost_E[u_j]_compute}
        \cO \left( m c D(d,k) \# \left( \Sigma^{\ast}_q \right)
        \right)
        =
        \cO \left( m c^2 \Lambda D(d,k) D(d,q) \right).
\eeq
Therefore, the combination of 
\eqref{eq:complex_cost_recombination_app} 
and 
\eqref{eq:complex_cost_E[u_j]_compute}
yields that the complexity cost of carrying out
\textbf{HOLGRIM Recombination Step} \ref{HOLGRIM_recomb_step_A}, 
\ref{HOLGRIM_recomb_step_B}, \ref{HOLGRIM_recomb_step_C}, and 
\ref{HOLGRIM_recomb_step_D} for all $j \in \{1, \ldots , s\}$ 
is 
\begin{multline}
    \label{recomb_step_cost_v1}
        \cO \left( s m c^2 \Lambda D(d,k) D(d,q)
        +
        s \n m c D(d,k) + s m^3 c^3 D(d,k) 
        \log \left( \frac{\n}{m c D(d,k)} \right)
        \right)
        = \\
        \cO \left( 
        smc D(d,k) \left( \n + c \Lambda D(d,q) \right) 
        + s m^3 c^3 D(d,k) 
        \log \left( \frac{\n}{m c D(d,k)} \right)
        \right)
\end{multline}
The complexity cost of the final selection of 
$u := \argmin \left\{ E[w] : w \in \{u_1 , \ldots , u_s\} \right\}$
is $\cO(s)$.
Combined with \eqref{recomb_step_cost_v1}, this yields 
that the complexity cost of the entire 
\textbf{HOLGRIM Recombination Step} is 
\beq
    \label{recomb_step_cost_v2}
        \cO \left( 
        smc D(d,k) \left( \n + c \Lambda D(d,q) \right) 
        + s m^3 c^3 D(d,k) 
        \log \left( \frac{\n}{m c D(d,k)} \right)
        \right).
\eeq
as claimed in \eqref{eq:recomb_step_cost}.
This completes the proof of Lemma 
\ref{lemma:HOLGRIM_recomb_step_cost}.
\end{proof}
\vskip 4pt
\noindent
We now establish an upper bound for the complexity cost of 
the \textbf{HOLGRIM} algorithm via repeated use of 
Lemmas \ref{lemma:HOLGRIM_ext_step_cost} and 
\ref{lemma:HOLGRIM_recomb_step_cost}.
This is the content of the following result.

\begin{lemma}[\textbf{HOLGRIM Complexity Cost}]
\label{lemma:complex_cost_HOLGRIM_alg} 
Let $\n,M,\Lambda,c,d \in \Z_{\geq 1}$ and 
$\ep > \ep_0 \geq 0$.
Let $\gamma > 0$ with $k \in \Z_{\geq 0}$ such that 
$\gamma \in (k,k+1]$.
Let $q \in \{0, \ldots , k\}$.
Take $s_1 , \ldots , s_{M} \in \Z_{\geq 1}$ and 
$k_1 , \ldots , k_M \in \Z_{\geq 1}$ with 
\beq
    \label{eq:complex_cost_kappa_def}
        \kappa := k_1 + \ldots + k_M \leq 
        \min \left\{ \frac{\n - 1}{c D(d,k)} , \Lambda 
        \right\}.
\eeq
For $j \in \{1, \ldots , M\}$ let $\tau_j := \sum_{i=1}^j k_i$.
Let $V$ and $W$ be real finite-dimensional Banach spaces
of dimensions $d$ and $c$ respectively. 
Assume that the tensor powers of $V$ are all equipped with 
admissible norms (cf. Definition \ref{admissible_tensor_norm}),
and let $\Sigma \subset V$ be a finite subset of cardinality 
$\Lambda$. Let 
$\cf = \{ f_1 , \ldots , f_{\n} \} \subset \Lip(\gamma,\Sigma,W) 
\setminus \{0\}$.
Fix a choice of $A_1 , \ldots , A_{\n} \in \R$ such that, for 
every $i \in \{1, \ldots , \n\}$, we have 
$||f_i||_{\Lip(\gamma,\Sigma,W)} \leq A_i$.
Let $a_1 , \ldots , a_{\n} \in \R \setminus \{0\}$ and define
$\vph := \sum_{i=1}^{\n} a_i f_i$.
Then the complexity cost of applying the \textbf{HOLGRIM}
algorithm to approximate $\vph$, with $\ep$ as the target 
accuracy, $\ep_0$ as the acceptable recombination error, 
$M$ as the maximum number of steps, $q$ as the order level, 
$s_1 , \ldots , s_M$ as the shuffle numbers, the integers 
$k_1 , \ldots , k_M$ as the integers 
$k_1 , \ldots , k_M \in \Z_{\geq 1}$ chosen in 
\textbf{HOLGRIM} \ref{HOLGRIM_A}, and the real numbers 
$A_1 , \ldots , A_{\n}$ as the \emph{scaling factors} chosen 
in \textbf{HOLGRIM} \ref{HOLGRIM_A}, is 
\beq
    \label{eq:complex_cost_HOLGRIM_cost}
        \cO \left( c D(d,k) \n \Lambda + 
        \sum_{j=1}^M 
        c s_j \tau_j D(d,k) \left( \n +
        c D(d,q) \Lambda \right)
        + 
        s_j \tau_j^3 c^3 D(d,k)^3 \log \left(
        \frac{\n}{c \tau_j D(d,k)} \right)
        \right).
\eeq
\end{lemma}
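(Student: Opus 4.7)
The plan is to run through the \textbf{HOLGRIM} algorithm one stage at a time and attach the appropriate complexity bound to each stage using Lemmas \ref{lemma:HOLGRIM_ext_step_cost} and \ref{lemma:HOLGRIM_recomb_step_cost}, after first paying a one-off precomputation cost. I would begin by observing that both of those lemmata assume we have free access to the real numbers $\{\sigma(\vph)\}_{\sigma \in \Sigma_k^{\ast}}$ and $\{\sigma(f_i)\}_{\sigma \in \Sigma_k^{\ast}}$ for every $i \in \{1,\ldots,\n\}$. Computing these values from scratch, using that $\#(\Sigma_k^{\ast}) = c D(d,k) \Lambda$ by \eqref{eq:Sigma_l_star_card_not_sec} and that each individual evaluation $\sigma(f_i)$ takes $\cO(1)$ operations (the linear functionals in $\Tau_{z,k}$ read off a single component of one of the $\psi^{(j)}(z)$), gives a one-off precomputation cost of $\cO\!\left(c D(d,k) \n \Lambda\right)$; once $\sigma(f_i)$ is known for every $i$, the value $\sigma(\vph) = \sum_i a_i \sigma(f_i)$ costs only $\cO(\n)$ per $\sigma$ and is therefore already subsumed. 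This accounts for the leading term in \eqref{eq:complex_cost_HOLGRIM_cost}.

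Next I would walk through step $j \in \{1, \ldots, M\}$ of the algorithm. At this stage the subset $P' = \Sigma_{j-1}$ has cardinality $\tau_{j-1}$ (with the convention $\tau_0 = 0$), we are adding $k_j$ new points, and the current approximation $u_{j-1}$ (taking $u_0 \equiv 0$) has a support whose size is controlled by Lemma 3.1 of \cite{LM22}, namely $t_{j-1} \leq \#(L_{j-1}(k)) + 1 = \tau_{j-1} c D(d,k) + 1$. Applying Lemma \ref{lemma:HOLGRIM_ext_step_cost} with $m = k_j$ and $t = t_{j-1}$ bounds the cost of the \textbf{HOLGRIM Extension Step} at stage $j$ by
\[
    \cO\!\left( c(k_j + \tau_{j-1} c D(d,k)) \Lambda D(d,q) \right)
    = \cO\!\left( c^2 \tau_j D(d,k) D(d,q) \Lambda \right),
\]
using $k_j \leq \tau_j$, $\tau_{j-1} \leq \tau_j$, and $D(d,k) \geq 1$. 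Applying Lemma \ref{lemma:HOLGRIM_recomb_step_cost} with $m = \tau_j$ and $s = s_j$ bounds the cost of the subsequent \textbf{HOLGRIM Recombination Step} by
\[
    \cO\!\left( s_j \tau_j c D(d,k) \left( \n + c D(d,q) \Lambda \right)
    + s_j \tau_j^3 c^3 D(d,k)^3
    \log\!\left( \frac{\n}{\tau_j c D(d,k)} \right) \right).
\]
Since $s_j \geq 1$, the extension bound is immediately dominated by the $s_j \tau_j c^2 D(d,k) D(d,q) \Lambda$ piece of the recombination bound, so the total cost of stage $j$ is captured by the recombination bound alone.

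Finally I would sum the stagewise bounds over $j \in \{1, \ldots, M\}$ and add the one-off precomputation cost to obtain exactly the expression \eqref{eq:complex_cost_HOLGRIM_cost}. The only extra operations not yet accounted for are the stopping-criterion checks in \textbf{HOLGRIM} \ref{HOLGRIM_D}, but these compare $|\sigma(\vph - u_{j-1})|$ to $\ep/(cd^q)$ over $\sigma \in \Sigma_q^{\ast}$; these values are already produced while computing $E[u_{j-1}]$ during \textbf{HOLGRIM Recombination Step} \ref{HOLGRIM_recomb_step_D} at the previous stage (using that $\Sigma_q^{\ast} \subset \Sigma_k^{\ast}$, so the precomputed table suffices), so the check itself adds at most $\cO(c D(d,q) \Lambda)$ per stage, which is again absorbed. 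There is no real obstacle here; the argument is essentially bookkeeping, and the only slightly delicate point is the sparsity bound $t_{j-1} \lesssim \tau_j c D(d,k)$ that lets the extension cost be swallowed by the recombination cost, so I would be careful to invoke Lemma 3.1 of \cite{LM22} explicitly at that moment.
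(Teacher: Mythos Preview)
Your proposal is correct and follows essentially the same approach as the paper: a one-off $\cO(cD(d,k)\n\Lambda)$ precomputation followed by per-step applications of Lemmas \ref{lemma:HOLGRIM_ext_step_cost} and \ref{lemma:HOLGRIM_recomb_step_cost}, using the support bound $t_{j-1} \leq 1 + c\tau_{j-1}D(d,k)$ from Lemma 3.1 of \cite{LM22}. The only cosmetic differences are that the paper treats the first step (\textbf{HOLGRIM} \ref{HOLGRIM_C}) separately from the later ones (\textbf{HOLGRIM} \ref{HOLGRIM_D}) rather than uniformly, and it accounts for the stopping-criterion check as a fresh $\cO(c^2\tau_{t-1}D(d,q)D(d,k)\Lambda)$ computation at each step rather than observing, as you do, that the required values $|\sigma(\vph - u_{j-1})|$ are already produced while computing $E[u_{j-1}]$ in the preceding \textbf{HOLGRIM Recombination Step} \ref{HOLGRIM_recomb_step_D}; either way the term is absorbed into the recombination cost.
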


\begin{proof}[Proof of Lemma \ref{lemma:complex_cost_HOLGRIM_alg}]
Let $\n,M,\Lambda,c,d \in \Z_{\geq 1}$ and 
$\ep > \ep_0 \geq 0$.
Let $\gamma > 0$ with $k \in \Z_{\geq 0}$ such that 
$\gamma \in (k,k+1]$.
Let $q \in \{0, \ldots , k\}$.
Take $s_1 , \ldots , s_{M} \in \Z_{\geq 1}$ and 
$k_1 , \ldots , k_M \in \Z_{\geq 1}$ with 
\beq
    \label{eq:complex_cost_kappa_def_pf}
        \kappa := k_1 + \ldots + k_M \leq 
        \min \left\{ \frac{\n - 1}{c D(d,k)} , \Lambda 
        \right\}.
\eeq
For $j \in \{1, \ldots , M\}$ let $\tau_j := \sum_{i=1}^j k_i$.
Let $V$ and $W$ be real finite-dimensional Banach spaces
of dimensions $d$ and $c$ respectively. 
Assume that the tensor powers of $V$ are all equipped with 
admissible norms (cf. Definition \ref{admissible_tensor_norm}),
and let $\Sigma \subset V$ be a finite subset of cardinality 
$\Lambda$. Let 
$\cf = \{ f_1 , \ldots , f_{\n} \} \subset \Lip(\gamma,\Sigma,W) 
\setminus \{0\}$.
Fix a choice of $A_1 , \ldots , A_{\n} \in \R$ such that, for 
every $i \in \{1, \ldots , \n\}$, we have 
$||f_i||_{\Lip(\gamma,\Sigma,W)} \leq A_i$.
Let $a_1 , \ldots , a_{\n} \in \R \setminus \{0\}$ and define
$\vph := \sum_{i=1}^{\n} a_i f_i$.

Consider applying the \textbf{HOLGRIM}
algorithm to approximate $\vph$ with $\ep$ as the target 
accuracy, $\ep_0$ as the acceptable recombination error, 
$M$ as the maximum number of steps, $q$ as the order level, 
$s_1 , \ldots , s_M$ as the shuffle numbers, the integers 
$k_1 , \ldots , k_M$ as the integers 
$k_1 , \ldots , k_M \in \Z_{\geq 1}$ chosen in 
\textbf{HOLGRIM} \ref{HOLGRIM_A}, and the real numbers 
$A_1 , \ldots , A_{\n}$ as the \emph{scaling factors} chosen 
in \textbf{HOLGRIM} \ref{HOLGRIM_A}.

Since the cardinality of $\cf$ is $\n$, the complexity cost of
the rescaling and sign alterations in Step \ref{HOLGRIM_B} 
of the \textbf{HOLGRIM} algorithm is $\cO(\n)$.
The complexity cost of computing the sets 
$\left\{ \sigma(f_i) : \sigma \in \Sigma^{\ast}_k \right\}$
for $i \in \{ 1 , \ldots , \n \}$ is 
$\cO\left( c\n \Lambda D(d,k)\right)$.
Subsequently, having access to the sets 
$\left\{ \sigma(f_i) : \sigma \in \Sigma^{\ast}_k \right\}$
for $i \in \{ 1 , \ldots , \n \}$ means that 
the complexity cost of computing the set
$\left\{ \sigma(\vph) : \sigma \in \Sigma^{\ast}_k \right\}$ is 
$\cO( c\n \Lambda D(d,k) )$.
Consequently, the total complexity cost of performing these
computations is $\cO (c \n \Lambda D(d,k))$.

We appeal to Lemma \ref{lemma:HOLGRIM_ext_step_cost}
to conclude that the complexity cost of performing the 
\textbf{HOLGRIM Extension Step} as in Step \ref{HOLGRIM_C}
of the \textbf{HOLGRIM} algorithm (i.e. with 
$P' := \emptyset$, $u := 0$, and $m := k_1$) is 
$\cO \left( c k_1 \Lambda D(d,k) \right)$.
An application of Lemma \ref{lemma:HOLGRIM_recomb_step_cost}, 
yields that the complexity cost of the use of
the \textbf{HOLGRIM Recombination Step} in Step 
\ref{HOLGRIM_C}
of the \textbf{HOLGRIM} algorithm 
(i.e. with the subset $P := \Sigma_1$ and the shuffle number 
$s:=s_1$) is 
$\cO \left( c s_1k_1 D(d,k) \left( \n + c \Lambda D(d,q) \right) + 
s_1 c^3 k_1^3 D(d,k)^3 \log \left( \n / c k_1 D(d,k) 
\right) \right)$.
Hence the complexity cost of Step \ref{HOLGRIM_C}
of the \textbf{HOLGRIM} algorithm is 
$\cO \left( c s_1 k_1 D(d,k) 
\left( \n + c \Lambda D(d,q) \right) + 
s_1 c^3 k_1^3 D(d,k)^3 \log 
\left( \frac{\n}{c k_1 D(d,k)} \right) \right)$.

In the case that $M=1$ we can already conclude that the 
complexity cost of performing the \textbf{HOLGRIM} algorithm 
is 
\beq
    \label{eq:complex_cost_HOLGRIM_cost_M=1}
        \cO \left( c D(d,k) \n \Lambda  + c s_1 k_1 D(d,k) 
        \left( \n + c \Lambda D(d,q) \right) + 
        s_1 c^3 k_1^3 D(d,k)^3 \log 
        \left( \frac{\n}{c k_1 D(d,k)} \right) \right)
\eeq
as claimed in \eqref{eq:complex_cost_HOLGRIM_cost}.
Now suppose that $M \geq 2$. We assume that all $M$ steps 
of the \textbf{HOLGRIM} algorithm are completed without
early termination since this is the case that will maximise
the complexity cost.
Under this assumption, for $j \in \{1, \ldots , M\}$ 
let $u_j \in \Span(\cf)$ denote the approximation of $\vph$ 
returned after step $j$ of the
\textbf{HOLGRIM} algorithm is completed.
Recall that $\tau_j := \sum_{i=1}^j k_i$.
The approximation $u_j$ is 
obtained by applying recombination via Lemma 3.1 in \cite{LM22} 
to find an approximation 
that is within $\ep_0$ of $\vph$ on a subset of 
$c\tau_jD(d,k)$ linear functionals from $\Sigma^{\ast}_k$
(cf. Step \ref{HOLGRIM_D} of the \textbf{HOLGRIM} algorithm).
Lemma 3.1 in \cite{LM22} therefore ensures that
$\# \support(u_j) \leq 1 + c \tau_j D(d,k)$.
In order to establish an upper bound for the maximal complexity
cost, we assume that we are in the most costly situation 
in which $\# \support(u_j) = 1 + c \tau_j D(d,k)$.

Let $t \in \{2, \ldots , M\}$ and consider 
Step \ref{HOLGRIM_D} of the \textbf{HOLGRIM} 
algorithm for the $s$ there as $t$ here.
Since $\# \support ( u_{t-1}) = 1 + c \tau_{t-1} D(d,k)$
and we have already computed the set
$\left\{ \sigma(\vph) : \sigma \in \Sigma_k^{\ast} \right\}$
and, for every $i \in \{1 , \ldots , \n\}$, the set
$\left\{ \sigma(f_i) : \sigma \in \Sigma^{\ast}_k \right\}$,
the complexity cost of computing the set 
$\left\{ \left| \sigma(\vph - u_{t-1}) \right| : 
\sigma \in \Sigma^{\ast}_q \right\}$ for the purpose of 
checking the termination criterion is 
$\cO \left( (1 + c \tau_{t-1} D(d,k) ) 
\# \left( \Sigma^{\ast}_q \right) \right) 
= \cO \left( c^2 \tau_{t-1} D(d,q) D(d,k) \Lambda \right)$.
Since $\# \support ( u_{t-1}) = 1 + c \tau_{t-1} D(d,k)$,
Lemma \ref{lemma:HOLGRIM_ext_step_cost} tells us that
the complexity cost of performing the 
\textbf{HOLGRIM Extension Step} as in Step \ref{HOLGRIM_D}
of the \textbf{HOLGRIM} algorithm (i.e. with 
$P' := \Sigma_{t-1}$, $u := u_{t-1}$, 
and $m := k_t$) is 
$\cO \left( c \left( k_t + c \tau_t D(d,k) \right) 
\Lambda D(d,q) \right) = 
\cO \left( c^2 \tau_t D(d,k) D(d,q) \Lambda \right)$.
Lemma \ref{lemma:HOLGRIM_recomb_step_cost} yield that 
the complexity cost of the use of the
\textbf{HOLGRIM Recombination Step} in Step \ref{HOLGRIM_D}
of the \textbf{HOLGRIM} algorithm 
(i.e. with $P := \Sigma_{t}$ and $s:=s_t$) is 
$\cO \left( c \tau_t s_t D(d,k) 
\left( \n + c \Lambda D(d,q) \right) + 
s_t c^3 \tau_t^3 D(d,k)^3
\log \left( \frac{\n}{c\tau_t D(d,k)} \right) \right)$.
Hence the entirety of Step \ref{HOLGRIM_D} 
(for $t$ here playing the role of $s$ there) 
has a complexity cost of 
\beq
    \label{eq:complex_cost_step_D_t_cost}
        \cO \left( c s_t \tau_t D(d,k) \n +
        c^2 s_t \tau_t D(d,q) D(d,k) \Lambda 
        + 
        s_t \tau_t^3 c^3 D(d,k)^3 \log \left(
        \frac{\n}{c \tau_t D(d,k)} \right)
        \right).
\eeq
Summing together the complexity costs arising in 
\eqref{eq:complex_cost_step_D_t_cost} for each 
$t \in \{2, \ldots , M\}$ yields that the complexity cost
of carrying out step \ref{HOLGRIM_D} of the 
\textbf{HOLGRIM} algorithm for every $t \in \{2, \ldots , M \}$
is
\beq
    \label{eq:complex_cost_step_D_cost}
        \cO \left(
        \sum_{j=2}^M 
        c s_j \tau_j D(d,k) \left( \n +
        c D(d,q) \Lambda \right)
        + 
        s_j \tau_j^3 c^3 D(d,k)^3 \log \left(
        \frac{\n}{c \tau_j D(d,k)} \right)
        \right).
\eeq
Having previously established the complexity cost of 
carrying out Steps 
\ref{HOLGRIM_A}, \ref{HOLGRIM_B}, and \ref{HOLGRIM_C} 
of the \textbf{HOLGRIM} algorithm in 
\eqref{eq:complex_cost_HOLGRIM_cost_M=1},
the combination of \eqref{eq:complex_cost_HOLGRIM_cost_M=1}
and \eqref{eq:complex_cost_step_D_cost} yields that
the complexity cost
of performing the entire \textbf{HOLGRIM} algorithm is
(recalling that $\tau_1 := k_1$)
\beq
    \label{eq:complex_cost_HOLGRIM_cost_pf}
        \cO \left( c D(d,k) \n \Lambda + 
        \sum_{j=1}^M 
        c s_j \tau_j D(d,k) \left( \n +
        c D(d,q) \Lambda \right)
        + 
        s_j \tau_j^3 c^3 D(d,k)^3 \log \left(
        \frac{\n}{c \tau_j D(d,k)} \right)
        \right)
\eeq
as claimed in \eqref{eq:complex_cost_HOLGRIM_cost}.
This completes the proof of Lemma 
\ref{lemma:complex_cost_HOLGRIM_alg}.
\end{proof}
\vskip 4pt
\noindent
We end this section by explicitly recording the complexity 
cost upper bound arising in Lemma 
\ref{lemma:complex_cost_HOLGRIM_alg} for some particular 
parameter choices. 
In both examples we assume that 
$1 + c D(d,k) \Lambda < \n$.

First consider the choices that $M := 1$, $k_1 := \Lambda$ and 
$s_1 := 1$.
This corresponds to making a single application of recombination 
to find an approximation $u$ of $\vph$ that is within $\ep_0$
of $\vph$ throughout $\Sigma$ in the pointwise sense that, 
for every $z \in \Sigma$, we have
$\Lambda^q_{\vph-u}(z) \leq \ep_0$.
Lemma \ref{lemma:complex_cost_HOLGRIM_alg} yields that the 
complexity cost of doing this is
$\cO \left( c D(d,k) \n \Lambda + c^2 D(d,q)D(d,k) \Lambda^2 
+ c^3 D(d,k)^3 \Lambda^3 \log 
\left( \frac{\n}{cD(d,k)\Lambda} \right) \right)$.

Secondly consider the choices that 
$M \in \Z_{\geq 1}$, $k_1 = \ldots = k_M =1$, and arbitrary fixed 
$s_1 , \ldots , s_M \in \Z_{\geq 1}$.
Let $P \subset \Sigma$ denote the collection of points that is 
inductively grown in the \textbf{HOLGRIM} algorithm.
These choices correspond to adding a single new points from 
$\Sigma$ to $P$ at each step of the \textbf{HOLGRIM} algorithm.
For each $j \in \{1, \ldots , M\}$ we have 
$\tau_j := \sum_{i=1}^j k_i = j$. 
Lemma \ref{lemma:complex_cost_HOLGRIM_alg} yields that the 
complexity cost of doing this is
$\cO \left( c D(d,k) \n \Lambda + \sum_{j=1}^{M} 
c s_j j D(d,k) \left( \n + c D(d,q) \Lambda \right)  
+ s_j c^3 j^3 D(d,k)^3  \log 
\left( \frac{\n}{cjD(d,k)} \right) \right)$.
If we further restrict to only allowing a single application of
recombination at each step (i.e. impose that 
$s_1 = \ldots = s_M = 1$) then the complexity cost is
$\cO \left( c D(d,k) \n \Lambda + 
c M^2 D(d,k) \left( \n + c D(d,q) \Lambda \right)
+ \sum_{j=1}^{M} c^3 j^3 D(d,k)^3 \log 
\left( \frac{\n}{c j D(d,k)} \right) \right)$.

In particular, if we take $M := \Lambda$ (which corresponds 
to allowing for the possibility that the subset $P$ eventually
becomes the entire set $\Sigma$) then the complexity cost is
\beq
    \label{eq:complex_cost_HOLGRIM_one_at_time_A}
        \cO \left( 
        c D(d,k) \n \Lambda^2 + 
        c^2 D(d,q)D(d,k) \Lambda^3 
        + \sum_{j=1}^{\Lambda} c^3 j^3 D(d,k)^3 \log 
        \left( \frac{\n}{c j D(d,k)} \right)\right).
\eeq 
If the integer $\n$ is large enough that 
$e^{1/3} c D(d,k) \Lambda < \n$
then for every $j \in \{1, \ldots , \Lambda \}$ we have
the estimate
$c^3 j^3 D(d,k)^3 \log ( \n / c j D(d,k) ) \leq 
c^3 D(d,k)^3 \Lambda^3 \log (\n / c D(d,k) \Lambda)$.
Thus, under these assumptions, the complexity cost 
in \eqref{eq:complex_cost_HOLGRIM_one_at_time_A} is no
worse than
\beq
    \label{eq:complex_cost_HOLGRIM_one_at_time_B}
        \cO \left( 
        c D(d,k) \n \Lambda^2 + 
        c^2 D(d,q)D(d,k) \Lambda^3 
        + c^3 D(d,k)^3 \Lambda^4 \log 
        \left( \frac{\n}{c D(d,k) \Lambda} \right)\right).
\eeq

\section{Theoretical Guarantees Inherited from GRIM}
\label{sec:HOLGRIM_conv_anal_via_GRIM_conv_thm}
In this section we discuss the convergence guarantees that 
can be established for the \textbf{HOLGRIM} algorithm by 
directly appealing to the convergence theory for the 
\textbf{Banach GRIM} algorithm developed in Section 6 
of \cite{LM22}.
We discuss the limitations of this approach and 
motivate the distinct approach using the 
\emph{Lipschitz Sandwich Theorems} established in \cite{LM24} 
to obtain convergence results for the \textbf{HOLGRIM} algorithm
that is adopted in Section \ref{sec:HOLGRIM_conv_anal}.

Let $c,d \in \Z_{\geq 1}$, $V$ be a $d$-dimensional real Banach
space, and $W$ be a $c$-dimensional real Banach space. 
Suppose that the tensor powers of $V$ are all equipped with 
admissible norms (cf. Definition \ref{admissible_tensor_norm}) 
and that $\Sigma \subset V$ is a finite subset of cardinality 
$\Lambda \in \Z_{\geq 1}$.
Let $\n \in \Z_{\geq 1}$ and $\gamma > 0$ with $k \in \Z_{\geq 0}$
such that $\gamma \in (k,k+1]$.
Let $D = D(d,k)$ be the constant defined in 
\eqref{eq:D_ab_Q_ijab_def_not_sec}; that is 
$D(d,k) := \sum_{l=0}^k \be(d,l)$ where $\be(d,0) := 1$ and, 
for each 
$l \in \{1, \ldots , k\}$ if $k \geq 1$, 
$\be(d,l) := \frac{(d+l-1)\dots d}{l!}$
(cf. \eqref{eq:beta_ab_def_not_sec}).

Suppose, for every $i \in \{1, \ldots , \n\}$, that 
$f_i \in \Lip(\gamma,\Sigma,W)$ is not identically zero 
and define a finite subset 
$\cf := \{ f_1 , \ldots , f_{\n} \} \subset 
\Lip(\gamma,\Sigma,W)$.
For non-zero coefficients 
$a_1 , \ldots , a_{\n} \in \R \setminus \{0\}$ 
consider $\vph := \sum_{i=1}^{\n} a_i f_i \in \Span(\cf) 
\subset \Lip(\gamma,\Sigma,W)$. 
For each $i \in \{1, \ldots , \n\}$ fix a choice of 
$A_i \in \R_{>0}$ such that 
$||f_i||_{\Lip(\gamma,\Sigma,W)} \leq A_i$.
Finally, let $\ep > 0$ and $q \in \{0 , \ldots , k\}$, 
and fix a choice of $\ep_0 \in [0,\ep/cd^q)$.

Consider applying the \textbf{HOLGRIM} algorithm to 
approximate $\vph$ throughout $\Sigma$ with 
$M := \min \left\{ \frac{\n-1}{cD(d,k)} , \Lambda \right\}$ 
in \textbf{HOLGRIM} \ref{HOLGRIM_A},
the target accuracy
in \textbf{HOLGRIM} \ref{HOLGRIM_A} as $\ep$, 
the acceptable recombination error in 
\textbf{HOLGRIM} \ref{HOLGRIM_A} as $\ep_0$, 
the order level in \textbf{HOLGRIM} \ref{HOLGRIM_A} as $q$,
$s_1 = \ldots = s_M = 1$ as the shuffle numbers in 
\textbf{HOLGRIM} \ref{HOLGRIM_A}, 
$k_1 = \ldots = k_M = 1$ as the integers 
$k_1 , \ldots , k_M$ in \textbf{HOLGRIM} \ref{HOLGRIM_A}, and 
$A_1 , \ldots , A_{\n} \in \R_{>0}$ as the scaling factors in 
\textbf{HOLGRIM} \ref{HOLGRIM_A}.

Following \textbf{HOLGRIM} \ref{HOLGRIM_B}, for each 
$i \in \{1, \ldots , \n\}$ replace $a_i$ and $f_i$ by
$-a_i$ and $-f_i$ if the original $a_i$ satisfies 
$a_i < 0$. 
Subsequently, for each $i \in \{1, \ldots , \n\}$, 
define $h_i := f_i / A_i$ so that 
$h_i \in \Lip(\gamma,\Sigma,W)$ with 
$||h_i||_{\Lip(\gamma,\Sigma,W)} \leq 1$. 
If, for each $i \in \{1, \ldots , \n\}$, we now define 
$\al_i := a_i A_i > 0$, then 
$\vph = \sum_{i=1}^{\n} \al_i h_i$.

We briefly outline how one could directly 
apply the convergence results for the \textbf{Banach GRIM}
algorithm established in Section 6 of \cite{LM22}.
For this purpose momentarily assume, for some integer 
$n \in \{1 , \ldots , M-1\}$, that 
$L = \{ z_1 , \ldots , z_n \} \subset \Sigma$ is the collection 
of points in $\Sigma$ that have been selected during the 
$\textbf{HOLGRIM}$ algorithm. 
Let $u_n \in \Span(\cf)$ denote the approximation found by 
recombination, via the \textbf{HOLGRIM Recombination Step}, 
at step $n$.
Then we have that for every $j \in \{1, \ldots , n\}$
and for every linear functional 
$\sigma \in \Tau_{z_j,k} \subset \Lip(\gamma,\Sigma,W)^{\ast}$
that $|\sigma(\vph-u_n)| \leq \ep_0 / c d^k$.
Moreover, since the cardinality of 
$T_n := \cup_{j=1}^n \Tau_{z_j,k}$ is
$n c D(d,k)$ (cf. \eqref{eq:Tau_pl_card_not_sec}), 
it follows from Lemma 3.1 in \cite{LM22} that, for 
$Q_n := 1 + n c D(d,k)$, there are
coefficients $b_1 , \ldots , b_{Q_n} \geq 0$ and 
indices $e(1) , \ldots , e(Q_n) \in \{1, \ldots , \n\}$
for which $u_n = \sum_{s=1}^{Q_n} b_s h_{e(s)}$, and 
such that $\sum_{s=1}^{Q_n} b_s = \sum_{i=1}^{\n} \al_i = C$.
Consequently $||u_n||_{\Lip(\gamma,\Sigma,W)} \leq C$.

Thus we have all the hypotheses required to apply 
Lemma 6.4 in \cite{LM22} for the choices of 
$L$, $C$, $\th$, and $\th_0$ there as 
$T_n := \cup_{j=1}^n \Tau_{z_j,k}$, $C$, $\ep/c d^q$, and 
$\ep_0 / cd^k$ here respectively. 
By doing so, we conclude that whenever 
$\sigma \in \Reach_{||\cdot||_{l^1\left(\R^{Q_n - 1}\right)}} 
\left( T_n , 2 C , \ep / c d^q , \ep_0 / c d^k \right)$ 
we have $|\sigma(\vph - u_n)| \leq \ep / c d^q$. 
Here we use the notation from Subsection 6.1 in \cite{LM22} that
\begin{multline}
    \label{eq:Reach_def_recall}
        \Reach_{||\cdot||_{l^1\left(\R^{Q_n-1}\right)}} 
        \left( T_n , 2 C , 
        \frac{\ep}{c d^q} , \frac{\ep_0}{c d^k} \right)
        := \\
        \twopartdef
        {\bigcup_{0 < r < \frac{\ep}{\ep_0} d^{k-q} }
        \ovSpan_{||\cdot||_{l^1\left(\R^{Q_n - 1}\right)} } 
        \left( T_n , r \right)_{\frac{1}{2C} 
        \left( \frac{\ep}{c d^q} - \frac{\ep_0}{c d^k}r \right)}
        }
        {\ep_0 \neq 0}
        {\Span \left( T_n \right)_{\frac{\ep}{2C c d^q}} }
        {\ep_0 = 0}
\end{multline}
with 
\beq
    \label{eq:Span(L,r)_recall}
        \Span_{||\cdot||_{l^1\left(\R^{Q_n-1}\right)} }
        ( T_n , r)
        := 
        \left\{ 
        \sum_{\sigma \in T_n}
        a(\sigma) \sigma ~:~
            \forall \sigma \in T_n
            \text{ we have } a(\sigma) \in \R 
            \text{ and }
            \sum_{\sigma \in T_n }
            |a(\sigma)| \leq r
        \right\}.
\eeq
Finally, the subscripts in \eqref{eq:Reach_def_recall} denote
the fattening of the subset in $\Lip(\gamma,\Sigma,W)^{\ast}$.
To elaborate, this means that
$\ovSpan_{||\cdot||_{l^1\left( \R^{Q_n - 1}\right)} }
( T_n , r)_{\frac{1}{2C} 
\left( \frac{\ep}{c d^q} - \frac{\ep_0}{c d^k}r \right)}$
denotes the collection of linear functionals 
$\sigma \in \Lip(\gamma,\Sigma,W)^{\ast}$ for which there
exists a linear functional 
$\rho \in \ovSpan_{||\cdot||_{l^1 \left( \R^{Q_n - 1}\right)} }
( T_n  , r)$
whose distance to $\sigma$ in $\Lip(\gamma,\Sigma,W)^{\ast}$ is no 
greater than $\frac{1}{2C} 
\left( \frac{\ep}{c d^q} - \frac{\ep_0}{c d^k}r \right)$.
Similarly, 
$\Span (T_n)_{\frac{\ep}{2C c d^q}}$ 
denotes the collection of linear functionals 
$\sigma \in \Lip(\gamma,\Sigma,W)^{\ast}$ for which there
exists $\rho \in \Span (T_n)$
whose distance to $\sigma$ in $\Lip(\gamma,\Sigma,W)^{\ast}$ is no 
greater than $\frac{\ep}{2C c d^q}$.

Consider the choice of the next point $z_{n+1} \in \Sigma$ 
made by the \textbf{HOLGRIM} algorithm.
An examination of Step \ref{HOLGRIM_D} of the \textbf{HOLGRIM}
algorithm reveals that if the algorithm does not terminate 
before the next point is selected, then there is a linear 
functional $\sigma_{n+1} \in \Sigma_q^{\ast} := 
\cup_{z \in \Sigma} \Tau_{z,q}$ for which 
$|\sigma_{n+1}(\vph-u_n)| > \ep / c d^q$, and the point 
$z_{n+1} \in \Sigma$ is chosen to be such that 
$\sigma_{n+1} \in \Tau_{z_{n+1},q}$.
As a consequence, we have that 
$\sigma_{n+1} \notin 
\Reach_{||\cdot||_{l^1\left(\R^{Q_n - 1}\right)}} 
\left( T_n , 2 C , \ep / c d^q , \ep_0 / c d^k \right)$

Therefore, by following the strategy of Section 6 in \cite{LM22} 
verbatim, we could establish that the integer
\beq 
    \label{eq:HOLGRIM_step_num_bd_direct_GRIM}
        N := \max \left\{ d \in \Z_{\geq 1} : 
        \begin{array}{c}
            \text{There exists } z_1 , \ldots , z_d \in \Sigma 
            \text{ such that for every } j \in \{1, \ldots , d-1 \} 
            \\
            \text{there exists } \sigma \in \Tau_{z_{j+1},q} 
            \text{ with } \sigma \notin 
            \Reach_{||\cdot||_{l^1 \left(\R^{Q_j - 1} \right)}} 
            \left( T_j , 2 C , 
            \frac{\ep}{c d^q} , \frac{\ep_0}{c d^k} \right)
        \end{array}
        \right\}
\eeq
is an upper bound for the maximum number of steps that the 
\textbf{HOLGRIM} algorithm can complete before terminating.
The upper bound given in 
\eqref{eq:HOLGRIM_step_num_bd_direct_GRIM} suffers the 
following issues.

Firstly, the integer $N$ defined in 
\eqref{eq:HOLGRIM_step_num_bd_direct_GRIM} is determined 
by geometrical properties of the subset 
$\Sigma^{\ast}_k := \cup_{z \in \Sigma} \Tau_{z,k}$
of the dual space $\Lip(\gamma,\Sigma,W)^{\ast}$.
Computing $N$, or computing any of the geometric 
quantities that are established to provide upper bounds for
$N$ in Subsection 6.2 of \cite{LM22}, requires computing 
the distance between the linear functionals in $\Sigma^{\ast}_k$
in the dual space $\Lip(\gamma,\Sigma,W)^{\ast}$.
An immediate issue is how to compute such distances. The 
distance in the dual space $\Lip(\gamma,\Sigma,W)^{\ast}$ between 
$\sigma_1 , \sigma_2 \in \Lip(\gamma,\Sigma,W)^{\ast}$ is given by 
\beq
    \label{eq:dual_space_dist}
        \sup \left\{ | \sigma_1(\phi) - \sigma_2(\phi)| : 
        \phi \in \Lip(\gamma,\Sigma,W) \text{ with } 
        ||\phi||_{\Lip(\gamma,\Sigma,W)} \leq 1 \right\}.
\eeq
It is not immediately clear how to compute the supremum 
in \eqref{eq:dual_space_dist} in a cost-effective way.

Secondly, it is not clear how the distance in 
\eqref{eq:dual_space_dist} for $\sigma_1 , \sigma_2 \in 
\Sigma^{\ast}_k$ relates to the distance between the points
$z_1 , z_2 \in \Sigma$ for which $\sigma_1 \in \Tau_{z_1,k}$
and $\sigma_2 \in \Tau_{z_2,k}$.
The data of interest in our problem is the points in $\Sigma$; 
the linear functionals in $\Sigma^{\ast}_k$ are introduced to 
enable our use of recombination 
(cf. \textbf{HOLGRIM Recombination Step}). 
Consequently we would prefer upper bounds for the number of 
steps that the \textbf{HOLGRIM} algorithm can complete before
terminating to depend on the geometry of the points 
$\Sigma \subset V$ rather than the geometry of the linear 
functionals $\Sigma^{\ast}_k \subset \Lip(\gamma,\Sigma,W)^{\ast}$.
But it is not clear how the $\Lip(\gamma,\Sigma,W)^{\ast}$ 
distances involved in the definition of $N$ in 
\eqref{eq:HOLGRIM_step_num_bd_direct_GRIM} can be converted
to distances between the points in $\Sigma$ themselves. 

In Section \ref{sec:HOLGRIM_conv_anal}, 
we circumvent both these issues by utilising the 
\emph{Lipschitz Sandwich Theorems} established in \cite{LM24} 
to obtain an upper bound on the maximum number of steps that
the \textbf{HOLGRIM} algorithm can complete without terminating
that both depends on the geometry of the points $\Sigma$ rather
than the linear functionals $\Sigma^{\ast}_k$, and can be 
estimated knowing only the distances between the points in 
$\Sigma$.

\section{Point Separation Lemma}
\label{sec:HOLGRIM_sup_lemmata}
In this section we establish that the points selected 
during the \textbf{HOLGRIM} algorithm, under the choices that 
$M := \min \left\{ \frac{\n -1}{cD(d,k)} , \Lambda \right\}$ 
and, for every integer $t \in \{1, \ldots , M\}$, that
$s_t := 1$ and $k_t := 1$, have to be 
a definite $V$-distance apart.
This is the content of the following result.

\begin{lemma}[\textbf{HOLGRIM Point Separation}]
\label{lemma:HOLGRIM_dist_between_interp_points}
Let $\n , \Lambda , c , d \in \Z_{\geq 1}$, 
$\gamma > 0$ with $k \in \Z_{\geq 0}$ such that 
$\gamma \in (k,k+1]$, and fix a choice of $q \in \{0 , \ldots , k\}$.
Let $\ep , \ep_0 \geq 0$ be real numbers such that 
$0 \leq \ep_0 < \frac{\ep}{c d^q}$. 
Assume $V$ and $W$ are finite dimensional real Banach 
spaces, of dimensions $d$ and $c$ respectively, 
with $\Sigma \subset V$ a finite subset of cardinality 
$\Lambda$.
Assume that the tensor products of $V$ are all 
equipped with admissible norms (cf. Definition
\ref{admissible_tensor_norm}).  
Assume that for every $i \in \{1, \ldots , \n\}$
the element $f_i = \left( f_i^{(0)} , \ldots , 
f_i^{(k)} \right) \in \Lip(\gamma,\Sigma,W)$ is non-zero 
and define $\cf := \left\{ f_i ~:~ i \in \{1,\ldots,\n\}
\right\} \subset \Lip(\gamma,\Sigma,W)$.
Choose scalars $A_1 , \ldots , A_{\n} \in \R_{>0}$ such 
that, for every $i \in \{1, \ldots , \n\}$, we have 
$||f_i||_{\Lip(\gamma,\Sigma,W)} \leq A_i$.
Given $a_1 , \ldots , a_{\n} \in \R \setminus\{0\}$, 
define $\vph = \left( \vph^{(0)} , \ldots , \vph^{(k)}
\right) \in \Span(\cf) \subset \Lip(\gamma,\Sigma,W)$ 
and $C > 0$ by 
\beq
    \label{eq:dist_lemma_vph_C}
        (\bI) \quad 
        \vph := \sum_{i=1}^{\n} a_i f_i
        \quad \left(
        \begin{array}{c}
            \vph^{(l)} := \sum_{i=1}^{\n} a_i
            f^{(l)}_i \\
            \text{for every }
            l \in \{0, \ldots , k\} 
        \end{array}
        \right)
        \quad \text{and} \quad
        (\bII) \quad
        C := \sum_{i=1}^{\n} \left|a_i\right| A_i > 0.
\eeq
Then there exists a positive constant
$r = r (C, \gamma, \ep,\ep_0,q) >0$, given by
\beq
    \label{eq:dist_lemma_r}
        r := \sup \left\{ \lambda > 0 ~:~
        2C \lambda^{\gamma - q} + 
        \ep_0 e^{\lambda}
        \leq \frac{\ep}{c d^q} \right\},
\eeq
for which the following is true.

Consider applying the \textbf{HOLGRIM} algorithm to 
approximate $\vph$ throughout $\Sigma$ with the choices 
of $M := \min \left\{ \frac{\n - 1}{c D(d,k)} , \Lambda \right\}$,
the target accuracy threshold in \textbf{HOLGRIM} \ref{HOLGRIM_A} 
as $\th$, the acceptable recombination error in  
\textbf{HOLGRIM} \ref{HOLGRIM_A} as $\th_0$, the order level in 
\textbf{HOLGRIM} \ref{HOLGRIM_A} as $b$, 
the shuffle numbers in \textbf{HOLGRIM}
\ref{HOLGRIM_A} as $s_1 = \ldots = s_M = 1$, the integers
$k_1 , \ldots , k_M \in \Z_{\geq 1}$ in 
\textbf{HOLGRIM} \ref{HOLGRIM_A} 
as $k_1 = \ldots = k_M = 1$, and 
the scaling factors in \textbf{HOLGRIM} \ref{HOLGRIM_A} 
as $A_1 , \ldots , A_{\n}$.
Given $m \in \Z_{\geq 2}$, if the algorithm
reaches and carries
out the $m^{th}$ step without terminating,
let $u_m = \left( u_m^{(0)} , \ldots , u_m^{(k)} \right)
\in \Span(\cf) \subset \Lip(\gamma,\Sigma,W)$ denote the 
approximation found at the $m^{th}$ step and 
$\Sigma_m := \{ z_1 , \ldots , z_m \} \subset \Sigma$ 
denote the points selected 
such that at every $z \in \Sigma_m$ we have
$\Lambda^k_{\vph - u_m}(z) \leq \ep_0$
(cf. \textbf{HOLGRIM} \ref{HOLGRIM_C} and \ref{HOLGRIM_D}).
Then for every $z \in \Sigma$, the quantity 
$\Lambda^q_{\vph - u_m}(z):=
\max_{s \in \{0, \ldots , q\}}
\left|\left| \vph^{(s)}(z) - u_m^{(s)}(z)
\right|\right|_{\cl(V^{\otimes s};W)}$
satisfies 
\beq
    \label{eq:HOLGRIM_point_sep_general_est}
        \Lambda^q_{\vph - u_m}(z)
        \leq
        \min \left\{ 2C ~,~
        2C \max_{ h \in \{0, \ldots , q\} }
        \left\{\dist_V( z , \Sigma_m )^{\gamma -h}
        \right\} + 
        \ep_0 e^{\dist_V( z , \Sigma_m)}
        \right\}.
\eeq
Moreover, whenever $s,t \in \{ 1 , \ldots , m\}$ with 
$s \neq t$ we have that
\beq
    \label{eq:HOLGRIM_point_sep_dist_est}
        ||z_s - z_t||_V > r.
\eeq
\end{lemma}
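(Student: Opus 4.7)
The plan is to derive both claims from a single pointwise bound obtained from Lemma \ref{lemma:explicit_pointwise_est}, combined with the dual-norm control of Lemma \ref{lemma:dual_norm_ests} and the stopping criterion of Step \ref{HOLGRIM_D} in the \textbf{HOLGRIM} algorithm.

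First I would establish \eqref{eq:HOLGRIM_point_sep_general_est}. The recombination guarantee (Lemma 3.1 of \cite{LM22}) produces $u_m$ as a non-negative combination of the normalised functions $h_i := f_i/A_i$ with weights summing to $\sum_i |a_i| A_i = C$, so $\|u_m\|_{\Lip(\gamma,\Sigma,W)} \leq C$ and hence $\|\vph - u_m\|_{\Lip(\gamma,\Sigma,W)} \leq 2C$. The \textbf{HOLGRIM Recombination Step} further ensures that $\Lambda^k_{\vph - u_m}(p) \leq \ep_0$ for every $p \in \Sigma_m$. Given $z \in \Sigma$, choose $p \in \Sigma_m$ attaining $\|z - p\|_V = \dist_V(z, \Sigma_m)$ and apply Lemma \ref{lemma:explicit_pointwise_est} with $\psi := \vph - u_m$, $K_0 := 2C$, and this $p$. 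Estimating $\sum_{j=0}^{k-l} \frac{\|z-p\|_V^j}{j!} \leq e^{\|z-p\|_V}$, we obtain, for every $l \in \{0, \ldots, k\}$,
\beq
    \left\|(\vph - u_m)^{(l)}(z)\right\|_{\cl(V^{\otimes l};W)}
    \leq \min\left\{2C,\, 2C \dist_V(z,\Sigma_m)^{\gamma - l}
    + \ep_0 e^{\dist_V(z,\Sigma_m)}\right\}.
\eeq
Taking the maximum over $l \in \{0, \ldots, q\}$ and using the elementary fact $\max_l \min\{A, B_l\} \leq \min\{A, \max_l B_l\}$ yields \eqref{eq:HOLGRIM_point_sep_general_est}.

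Next I would show that each newly selected point is distinct from the previously selected ones. Let $t \in \{2, \ldots, m\}$. The algorithm reaches step $t$ only if some $\sigma^* \in \Sigma_q^*$ satisfies $|\sigma^*(\vph - u_{t-1})| > \ep/(cd^q)$. For every $s < t$ and every $\sigma \in \Tau_{z_s, q}$, the recombination bound $\Lambda^k_{\vph - u_{t-1}}(z_s) \leq \ep_0$ together with $q \leq k$ implies $\Lambda^q_{\vph - u_{t-1}}(z_s) \leq \ep_0$, and Lemma \ref{lemma:dual_norm_ests} then yields $|\sigma(\vph - u_{t-1})| \leq \ep_0 < \ep/(cd^q)$. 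Hence the argmax $\sigma_1$ selected via \eqref{HOLGRIM_ext_step_v2_sigma1} (with $u := u_{t-1}$) lies outside $\cup_{s<t}\Tau_{z_s, q}$, so the associated point $z_t$ is distinct from $z_1, \ldots, z_{t-1}$ and satisfies $|\sigma_1(\vph - u_{t-1})| > \ep/(cd^q)$.

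Finally I would conclude \eqref{eq:HOLGRIM_point_sep_dist_est} by contradiction. Lemma \ref{lemma:dual_norm_ests} applied to $\sigma_1 \in \Tau_{z_t, q}$ gives $\Lambda^q_{\vph - u_{t-1}}(z_t) \geq |\sigma_1(\vph - u_{t-1})| > \ep/(cd^q)$; in particular $\ep/(cd^q) < 2C$, and testing $\lambda = 1$ in \eqref{eq:dist_lemma_r} forces $r < 1$. Setting $\de_t := \dist_V(z_t, \Sigma_{t-1}) > 0$ and invoking the pointwise bound just established at step $t - 1$ and point $z_t$ yields
\beq
    \frac{\ep}{cd^q}
    < 2C \max_{h \in \{0, \ldots, q\}} \de_t^{\gamma - h}
    + \ep_0 e^{\de_t}.
\eeq
If $\de_t \leq r < 1$ then $x \mapsto \de_t^x$ is strictly decreasing on $[0, \infty)$, so $\max_{h \in \{0, \ldots, q\}} \de_t^{\gamma - h} = \de_t^{\gamma - q}$, and the defining supremum of $r$ in \eqref{eq:dist_lemma_r} produces the contradiction $\ep/(cd^q) < 2C \de_t^{\gamma - q} + \ep_0 e^{\de_t} \leq \ep/(cd^q)$. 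Therefore $\|z_t - z_s\|_V \geq \de_t > r$ for every $s < t$, and \eqref{eq:HOLGRIM_point_sep_dist_est} follows by symmetry. The main subtlety is showing that $r < 1$ in the regime where a new point is actually selected; this collapses the maximum over $h$ to $h = q$ and aligns the estimate precisely with the definition of $r$.
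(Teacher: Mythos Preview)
Your proof is correct. For the pointwise estimate \eqref{eq:HOLGRIM_point_sep_general_est} you and the paper proceed identically: bound $\|\vph-u_l\|_{\Lip(\gamma,\Sigma,W)}\le 2C$ from the recombination weights and apply Lemma~\ref{lemma:explicit_pointwise_est} at the nearest point of $\Sigma_l$ (one small expository slip: you state the bound only at step $m$ but then invoke it at step $t-1$; the paper records it for every $l\in\{1,\dots,m\}$). For the separation estimate \eqref{eq:HOLGRIM_point_sep_dist_est} the paper takes a slightly more packaged route: it recognises that $r$ equals the constant $\de_0$ of the \emph{Pointwise Lipschitz Sandwich Theorem}~\ref{thm:pointwise_lip_sand_thm} (with $K_0=C$, $\ep$ replaced by $\ep/(cd^q)$, and $l=q$) and applies that theorem with $B=\{z_t\}$ to obtain $\Lambda^q_{\vph-u_{j-1}}(x)\le\ep/(cd^q)$ on $\ovB_V(z_t,r)\cap\Sigma$, whence $z_j\notin\ovB_V(z_t,r)$. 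Your contradiction argument via Lemma~\ref{lemma:explicit_pointwise_est} is the same computation unpacked, at the cost of the extra step $r<1$ needed to collapse $\max_h\de_t^{\gamma-h}$ to $\de_t^{\gamma-q}$; the paper hides this inside the clause $\de_0\in(0,1]$ of Theorem~\ref{thm:pointwise_lip_sand_thm}. Your intermediate distinctness paragraph is valid but redundant, since $\de_t=0$ would already give $\ep/(cd^q)<\ep_0$ in your final inequality.
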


\begin{proof}[Proof of Lemma
\ref{lemma:HOLGRIM_dist_between_interp_points}]
Let $\n , \Lambda , c , d \in \Z_{\geq 1}$, 
$\gamma > 0$ with $k \in \Z_{\geq 0}$ such that 
$\gamma \in (k,k+1]$, and fix a choice of $q \in \{0, \ldots , k\}$.
Let $\ep , \ep_0 \geq 0$ be real numbers 
such that $0 \leq \ep_0 < \frac{\ep}{c d^q}$. 
Assume $V$ and $W$ are finite dimensional real Banach 
spaces, of dimensions $d$ and $c$ respectively, 
with $\Sigma \subset V$ a finite subset of cardinality 
$\Lambda$.
Assume that the tensor products of $V$ are all 
equipped with admissible norms (cf. Definition
\ref{admissible_tensor_norm}).  
Assume that for every $i \in \{1, \ldots , \n\}$
the element $f_i = \left( f_i^{(0)} , \ldots , 
f_i^{(k)} \right) \in \Lip(\gamma,\Sigma,W)$ is non-zero 
and define $\cf := \left\{ f_i ~:~ i \in \{1,\ldots,\n\}
\right\} \subset \Lip(\gamma,\Sigma,W)$.
Choose scalars $A_1 , \ldots , A_{\n} \in \R_{>0}$ such 
that, for every $i \in \{1, \ldots , \n\}$, we have 
$||f_i||_{\Lip(\gamma,\Sigma,W)} \leq A_i$.
Given $a_1 , \ldots , a_{\n} \in \R \setminus\{0\}$, 
define $\vph = \left( \vph^{(0)} , \ldots , \vph^{(k)}
\right) \in \Span(\cf) \subset \Lip(\gamma,\Sigma,W)$ 
and $C > 0$ by 
\beq
    \label{eq:dist_lemma_vph_C_pf}
        (\bI) \quad 
        \vph := \sum_{i=1}^{\n} a_i f_i
        \quad \left(
        \begin{array}{c}
            \vph^{(l)} := \sum_{i=1}^{\n} a_i
            f^{(l)}_i \\
            \text{for every }
            l \in \{0, \ldots , k\} 
        \end{array}
        \right)
        \quad \text{and} \quad
        (\bII) \quad
        C := \sum_{i=1}^{\n} \left|a_i\right| A_i > 0.
\eeq
With a view to applying the \textbf{HOLGRIM} algorithm
to approximate $\varphi$ on $\Sigma$, for each 
$i \in \{1 , \ldots , \n\}$ let $\tilde{a}_i := |a_i|$
and $\tilde{f}_i$ be given by $f_i$ if $a_i > 0$ and 
$-f_i$ if $a_i < 0$.
Observe that for every $i \in \{1, \ldots , \n\}$
we have that $\left|\left| \tilde{f}_i 
\right|\right|_{\Lip(\gamma,\Sigma,W)}
= || f_i ||_{\Lip(\gamma,\Sigma,W)}$. Moreover, 
we also have that 
$\tilde{a}_1 , \ldots , \tilde{a}_{\n} > 0$ and that
$\vph = \sum_{i=1}^{\n} \tilde{a}_i \tilde{f}_i$.
Further, we rescale 
$\tilde{f}_i$ for each $i \in \{1, \ldots , \n\}$
to have unit $\Lip(\gamma,\Sigma,W)$ norm.
That is (cf. \textbf{HOLGRIM} \ref{HOLGRIM_B}), for each 
$i \in \{1 , \ldots , \n\}$ set 
$h_i := \tilde{f}_i / A_i$ and 
$\al_i := \tilde{a}_i A_i$. 
A particular consequence is that for every 
$i \in \{1, \ldots , \n\}$ we have 
$||h_i||_{\Lip(\gamma,\Sigma,W)} \leq 1$.
Additional consequences are that $C$ satisfies  
\beq
    \label{eq:dist_lemma_new_C}
        C = \sum_{i=1}^{\n} \left|a_i\right| A_i
        = \sum_{i=1}^{\n} \tilde{a}_i A_i
        = \sum_{i=1}^{\n} \al_i,
\eeq
and, for every $i \in \{1 , \ldots , \n\}$, that 
$\al_i h_i = \tilde{a}_i \tilde{f}_i = a_i f_i$. 
Thus the expansion for $\vph$ in (\bI) of
\eqref{eq:dist_lemma_vph_C_pf}
is equivalent to
\beq    
    \label{lip_k_varphi_h_dist_lemma}
        \varphi = \sum_{i=1}^{\n} 
        \al_i h_i,
        \qquad \text{and hence} \qquad
        || \varphi ||_{\Lip(\gamma,\Sigma,W)}
        \leq
        \sum_{i=1}^{\n} \al_i || h_i ||_{\Lip(\gamma,\Sigma,W)}
        \leq
        \sum_{i=1}^{\n} \al_i
        \stackrel{\eqref{eq:dist_lemma_new_C}}{=} C.
\eeq
Let $D = D(d,k)$ be the constant defined in 
\eqref{eq:D_ab_Q_ijab_def_not_sec}; that is
\beq
    \label{eq:dist_lemma_D_pf}
        D \stackrel{
        \eqref{eq:D_ab_Q_ijab_def_not_sec}
        }{=}
        \sum_{s=0}^k \be(d,s) 
        \stackrel{
        \eqref{eq:beta_ab_def_not_sec}
        }{=} 
        \sum_{s=0}^k 
        \left( 
        \begin{array}{c}
            d + s - 1 \\
            s
        \end{array}
        \right).
\eeq
Finally, define 
$r = r(C, \gamma, \ep, \ep_0,q) >0$ by
(cf. \eqref{eq:dist_lemma_r})
\beq
    \label{eq:dist_lemma_r_pf}
        r := \sup \left\{
        \lambda > 0 :
        2C \lambda^{\gamma - q} + 
        \ep_0 e^{\lambda}
        \leq
        \frac{\ep}{c d^q}
        \right\}.
\eeq
Now consider applying the \textbf{HOLGRIM} algorithm to 
approximate $\vph$ throughout $\Sigma$ with the choices 
of $M := \min \left\{ \frac{\n - 1}{c D(d,k)} , \Lambda \right\}$,
the target accuracy threshold in \textbf{HOLGRIM} \ref{HOLGRIM_A} 
as $\ep$, the acceptable recombination error in \textbf{HOLGRIM}
\ref{HOLGRIM_A} as $\ep_0$, the order level in \textbf{HOLGRIM} 
\ref{HOLGRIM_A} as $q$, the shuffle numbers in \textbf{HOLGRIM}
\ref{HOLGRIM_A} as $s_1 = \ldots = s_M = 1$, the integers
$k_1 , \ldots , k_M \in \Z_{\geq 1}$ in 
\textbf{HOLGRIM} \ref{HOLGRIM_A} 
as $k_1 = \ldots = k_M = 1$, and the scalars 
$A_1 , \ldots , A_{\n}$ as the scaling factors in 
\textbf{HOLGRIM} \ref{HOLGRIM_A}

Suppose that $m \in \Z_{\geq 2}$ and that the
\textbf{HOLGRIM}
algorithm reaches and carries out the $m^{th}$ step
without terminating.
Let $\Sigma_m = \{ z_1 , \ldots , z_m \}
\subset \Sigma$ denote the points chosen after
the $m^{th}$ step is completed. 
Then for every $l \in \{1, \ldots , m\}$, if we let 
$\Sigma_l := \{ z_1 , \ldots , z_l \} \subset
\Sigma$, we have, recalling 
\textbf{HOLGRIM} \ref{HOLGRIM_C} and \ref{HOLGRIM_D},
that recombination, via Lemma 3.1 in \cite{LM22},
has found an approximation $u_l = \left( u_l^{(0)} ,
\ldots , u^{(n)}_l \right) 
\in \Span(\cf) \subset \Lip(\gamma,\Sigma,W)$ of $\vph$ 
satisfying, for 
each $s \in \{1 , \ldots , l\}$, that
$\Lambda^k_{\vph - u_l}(z_s) \leq \ep_0$.
That is, given any $j \in \{0, \ldots , k\}$
and any $s \in \{1, \ldots , l\}$, we have
$\left|\left|\vph^{(j)}(z_s) - u^{(j)}_l(z_s) 
\right|\right|_{\cl(V^{\otimes j};W)} \leq \ep_0$.

Let $Q_l := 1 + c l D(d,k)$.
Then in the \textbf{HOLGRIM Recombination Step} at 
step $l$ of the \textbf{HOLGRIM} algorithm, 
recombination is applied, via Lemma 3.1 in \cite{LM22}, 
to a system of $Q_l$ real-valued equations
(cf. \eqref{eq:L_star_cardinality_recomb_step}).
Therefore recombination returns non-negative coefficients 
$b_{l,1} , \ldots , b_{l,Q_l} \geq 0$
and indices
$e_l(1) , \ldots , e_l(Q_l) \in \{1, \ldots , \n\}$
for which 
\beq
    \label{eq:dist_lemma_ul_expansion}
        u_l = \sum_{s=1}^{Q_l}
        b_{l,s} h_{e_l(s)}
        \quad \left(
        \begin{array}{c}
            u_l^{(j)} := \sum_{s=1}^{Q_l} 
            b_{l,s} h_{e_l(s)}^{(j)} \\
            \text{for every }
            j \in \{0, \ldots , n\} 
        \end{array}
        \right)
        \quad \text{and} \quad
        \sum_{s=1}^{Q_l} b_{l,s}
        =
        \sum_{i=1}^{\n} \al_i.
\eeq
A consequence of \eqref{eq:dist_lemma_ul_expansion}
is that
\beq
    \label{eq:dist_lemma_ul_norm_bd_one}
        || u_l ||_{\Lip(\gamma,\Sigma,W)}
        \leq 
        \sum_{s=1}^{Q_l} 
        b_{l,s} \left|\left| h_{e_l(s)}
        \right|\right|_{\Lip(\gamma,\Sigma,W)}
        \leq
        \sum_{s=1}^{Q_l} 
        b_{l,s}
        \stackrel{
        \eqref{eq:dist_lemma_ul_expansion}}
        {=}
        \sum_{i=1}^{\n} \al_i
        \stackrel{\eqref{eq:dist_lemma_new_C}}
        {=}
        C.
\eeq
Consider a point $x \in \Sigma$ and let 
$j \in \{1, \ldots , l\}$ be such that 
$\dist_V (x , \Sigma_l) = || x - z_j ||_V
= \dist_V(x , z_j)$.
Consider $h \in \{0, \ldots , q\}$.
By applying Lemma \ref{lemma:explicit_pointwise_est} 
to the function $\vph - u_l$,
with the $\Sigma$, $K_0$, $\ep_0$, $p$, $\gamma$, $l$, and $k$
of that result as $\Sigma$, $2C$, $\ep_0$, $z_j$, $\gamma$, $h$, 
$k$ here respectively, we deduce that
(cf. \eqref{eq:explicit_pointwise_est_lemma_conc})
\beq
    \label{eq:dist_lemma_pointwise_bound_1}
        \left|\left| \vph^{(h)}(x) - u^{(h)}_l(x) 
        \right|\right|_{\cl(V^{\otimes h};W)}
        \leq
        \min \left\{ 2C ~,~
        2C||x - z_j ||_V^{\gamma - h}
        +
        \ep_0 \sum_{s=0}^{k - h} \frac{1}{s!}
        || x - z_j ||_V^s
        \right\}.
\eeq
Recalling that $||x-z_j||_V = \dist_V(x, \Sigma_l)$, it follows
from \eqref{eq:dist_lemma_pointwise_bound_1} that
\beq
    \label{eq:dist_lemma_pointwise_bound_2}
        \left|\left| \vph^{(h)}(x) - u^{(h)}_l(x) 
        \right|\right|_{\cl(V^{\otimes h};W)}
        \leq
        \min \left\{ 2C ~,~
        2C\dist_V(x,\Sigma_l)^{\gamma - h}
        +
        \ep_0 e^{\dist_V(x,\Sigma_l)}
        \right\}.
\eeq
The arbitrariness of $h \in \{0, \ldots , q\}$ and
$x \in \Sigma$ allows us
to conclude that 
\eqref{eq:dist_lemma_pointwise_bound_2} is
valid for every $h \in \{0, \ldots , q\}$ and 
every $x \in \Sigma$. As a result we have that 
\beq
    \label{eq:dist_lemma_pointwise_bound_3}
        \max_{x \in \Sigma} \left\{
        \Lambda^q_{\vph - u_l}(x) \right\}
        \leq
        \min \left\{ 2C ~,~
        2C
        \max_{h \in \{0, \ldots , q\} }
        \left\{ \dist_V(x,\Sigma_l)^{\gamma - h}
        \right\}
        +
        \ep_0 e^{\dist_V(x,\Sigma_l)}
        \right\}.
\eeq
The estimate claimed in 
\eqref{eq:HOLGRIM_point_sep_general_est} is obtained
by taking $l:=m$ in 
\eqref{eq:dist_lemma_pointwise_bound_3}.

It remains only to establish the separation
of the points $z_1 , \ldots , z_m$ as claimed in
\eqref{eq:HOLGRIM_point_sep_dist_est}.
For this purpose, consider distinct 
$s,j \in \{1, \ldots , m\}$
and without loss of generality assume $s < j$.
By assumption the \textbf{HOLGRIM}
algorithm does not terminate on 
any of the first $m$ steps. Therefore
$\sigma_j := \argmax_{\sigma \in \Sigma^{\ast}_q}
| \sigma (\vph - u_{j-1})|$ must satisfy that
\beq
    \label{eq:dist_lemma_sigma_j_lower_bound}
        \left| \sigma_j \left( \vph - u_{j-1} \right) 
        \right| > \frac{\ep}{c d^q}.
\eeq 
Recall from the \textbf{HOLGRIM Extension Step} that 
$z_j \in \Sigma$ is taken to be the point for which 
$\sigma_j \in \Tau_{z_j,q}$. 
It is then a consequence of Lemma \ref{lemma:dual_norm_ests} 
that (cf. \eqref{eq:dual_norm_ests_lemma_pointwise_conc})
\beq
    \label{eq:dist_lemma_pf_sigm_j_eval_ub}
        \left| \sigma_j \left( \vph - u_{j-1} \right)
        \right| 
        \leq \Lambda^q_{\vph - u_{j-1}}(z_j).
\eeq
Thus we have that
\beq
    \label{eq:dist_lemma_pf_pointwise_diff_lb}
        \frac{\ep}{c d^q} 
        \stackrel{
        \eqref{eq:dist_lemma_sigma_j_lower_bound}
        }{<}
        \left| \sigma_j (\vph - u_{j-1}) \right|
        \stackrel{
        \eqref{eq:dist_lemma_pf_sigm_j_eval_ub}
        }{\leq}
        \Lambda^q_{\vph - u_{j-1}}(z_j).
\eeq
Note that $\vph , u_{j-1} \in \Lip(\gamma,\Sigma,W)$
and, via
\eqref{eq:dist_lemma_vph_C_pf} and 
\eqref{eq:dist_lemma_ul_norm_bd_one}, we have 
$||\vph||_{\Lip(\gamma,\Sigma,W)} , 
||u_{j-1}||_{\Lip(\gamma,\Sigma,W)} \leq C$.
Further, for any $t \in \{1, \ldots , j-1 \}$, we have 
$\Lambda^k_{\vph - u_{j-1}}(z_t) \leq \ep_0$.
Observe that the definition of $r$ here in 
\eqref{eq:dist_lemma_r_pf} matches the specification of
the constant $\de_0$ in \eqref{eq:pointwise_lip_sand_thm_de0} of 
the \emph{Pointwise Lipschitz Sandwich Theorem}
\ref{thm:pointwise_lip_sand_thm} for the choices
of the constants $K_0$, $\gamma$, $\ep$, $\ep_0$ and $l$ in that
theorem as $C$, $\gamma$, $\ep / c d^q$, $\ep_0$ and $q$ 
here respectively.
Thus, for each $t \in \{1, \ldots ,j-1\}$, 
we can apply the 
\emph{Pointwise Lipschitz Sandwich Theorem} 
\ref{thm:pointwise_lip_sand_thm}
with the choices that $B := \{ z_t\}$, 
$\psi := \vph$, and $\phi := u_{j-1}$ to conclude 
that for every $x \in \ovB_V(z_t , r) ~\cap~ \Sigma$
we have
(cf. \eqref{eq:pointwise_lip_sand_thm_imp})
\beq
    \label{eq:dist_lemma_pf_pointwise_diff_est_zi}
        \Lambda^q_{\vph - u_{j-1}}(x) 
        \leq \frac{\ep}{c d^q}. 
\eeq
The arbitrariness of $t \in \{1, \ldots , j-1\}$
allows us to conclude that 
\eqref{eq:dist_lemma_pf_pointwise_diff_est_zi} is valid
for every $t \in \{1 , \ldots , j-1\}$.
Together, \eqref{eq:dist_lemma_pf_pointwise_diff_lb}
and \eqref{eq:dist_lemma_pf_pointwise_diff_est_zi} mean 
that for every $t \in \{ 1 , \ldots , j-1\}$
we must have that 
$z_j \notin \ovB_V (z_t , r) \cap \Sigma$.
Since $s \in \{1 , \ldots , j-1\}$, we conclude 
that $z_j \notin \ovB_V (z_s ,r) \cap \Sigma$,
i.e. that $|| z_j - z_s ||_V > r$  
as claimed in \eqref{eq:HOLGRIM_point_sep_dist_est}.
This completes the proof of Lemma 
\ref{lemma:HOLGRIM_dist_between_interp_points}.
\end{proof}

\section{The HOLGRIM Convergence Theorem}
\label{sec:HOLGRIM_conv_anal}
In this section we establish a convergence result for the 
\textbf{HOLGRIM} algorithm and discuss its consequences.
The following theorem is our main convergence result for the 
\textbf{HOLGRIM} algorithm under the choices that 
$M := \min \left\{ \frac{\n -1}{c D(d,k)} , \Lambda \right\}$ and, 
for every $t \in \{1, \ldots , M\}$, that $s_t := 1$ and 
$k_t := 1$. 

\begin{theorem}[\textbf{HOLGRIM Convergence}]
\label{thm:HOLGRIM_conv_thm}
Let $\n, \Lambda , c , d \in \Z_{\geq 1}$, $\gamma > 0$ with 
$k \in \Z_{\geq 0}$ such that $\gamma \in (k,k+1]$, and 
fix a choice of $q \in \{0 , \ldots , k\}$.
Let $\ep , \ep_0$ be real numbers such that
$0 \leq \ep_0 < \frac{\ep}{c d^q}$.
Let $V$ and $W$ be finite dimensional real Banach 
spaces, of dimensions $d$ and $c$ respectively, 
with $\Sigma \subset V$ finite.
Assume that the tensor products of $V$ are all 
equipped with admissible norms (cf. Definition
\ref{admissible_tensor_norm}). 
Assume, for every $i \in \{1, \ldots , \n\}$, that
$f_i = \left( f_i^{(0)} , \ldots , 
f_i^{(k)} \right) \in \Lip(\gamma,\Sigma,W)$ is 
non-zero and define 
$\cf := \left\{ f_i ~:~ i \in \{1 , \ldots , \n\}
\right\} \subset \Lip(\gamma,\Sigma,W)$. 
Fix a choice of $A_1 , \ldots , A_{\n} \in \R_{>0}$ such that, for 
every $i \in \{1, \ldots , \n\}$, we have 
$||f_i||_{\Lip(\gamma,\Sigma,W)} \leq A_i$.
Given $a_1 , \ldots , a_{\n} \in \R \setminus \{0\}$, 
define $\vph = \left( \vph^{(0)} , \ldots , \vph^{(k)}
\right) \in \Span(\cf) \subset \Lip(\gamma, \Sigma,W)$ 
and constants $C , D > 0$ by
\beq
    \label{eq:HOLGRIM_conv_thm_vph_C_D}
        (\bI) \quad 
        \vph := \sum_{i=1}^{\n} a_i f_i,
        \quad \left(
        \begin{array}{c}
            \vph^{(l)} := \sum_{i=1}^{\n} a_i
            f^{(l)}_i \\
            \text{for every }
            l \in \{0, \ldots , k\} 
        \end{array}
        \right)
        \quad \text{and} \quad
        (\bII) \quad
        C := \sum_{i=1}^{\n} 
        |a_i| A_i > 0 
\eeq
and (cf. \eqref{eq:D_ab_Q_ijab_def_not_sec})
\beq
    \label{eq:HOLGRIM_conv_thm_D}
        D = D(d,k) := 
        \sum_{s=0}^k \be(d,s) =
        \mathlarger{\mathlarger{\sum}}_{s=0}^k
        \left(
        \begin{array}{c}
            d + s - 1 \\
            s
        \end{array}
        \right).
\eeq
Then there is a positive constant
$r = r (C,\gamma,\ep ,\ep_0,q) >0$, 
\beq
    \label{eq:HOLGRIM_conv_thm_r}
        r := \sup \left\{
        \lambda > 0 ~:~
        2C \lambda^{\gamma - q}
        +
        \ep_0 e^{\lambda} \leq \frac{\ep}{c d^q} \right\}
\eeq
and a non-negative integer
$N = N(\Sigma, C, \gamma, \ep, \ep_0,q) 
\in \Z_{\geq 0}$, given by the $r$-packing 
number of $\Sigma$
\beq
    \label{eq:HOLGRIM_conv_thm_N}
            N 
            := 
            \max \left\{ s \in \Z ~:~ 
                \exists ~
                z_1 , \ldots , z_s \in \Sigma 
                \text{ for which } 
                || z_a - z_b ||_V
                > r 
                \text{ if } a \neq b
            \right\},
\eeq
for which the following is true.

Consider applying the \textbf{HOLGRIM}
algorithm to approximate $\vph$ on $\Sigma$, with 
$M := \min \left\{ \frac{\n - 1}{cD} , \Lambda \right\}$,
$\ep$ as the target accuracy in \textbf{HOLGRIM} \ref{HOLGRIM_A}, 
$\ep_0$ as the acceptable recombination error in 
\textbf{HOLGRIM} \ref{HOLGRIM_A}, $q$ as  
the order level in \textbf{HOLGRIM} \ref{HOLGRIM_A}, 
$s_1 = \ldots = s_M = 1$
the shuffle numbers in \textbf{HOLGRIM} \ref{HOLGRIM_A}, 
$k_1 = \ldots = k_M = 1$ as the integers
$k_1 , \ldots , k_M \in \Z_{\geq 1}$ in 
\textbf{HOLGRIM} \ref{HOLGRIM_A}, and the real numbers 
$A_1, \ldots , A_{\n}$ as the scaling factors 
chosen in \textbf{HOLGRIM} \ref{HOLGRIM_A}.
Suppose that the integer $N$ in 
\eqref{eq:HOLGRIM_conv_thm_N} satisfies 
$N \leq M$.
Then after at most $N$ steps, the algorithm terminates. 
That is, there is some integer $m \in \{1, \ldots , N\}$
for which the \textbf{HOLGRIM} algorithm terminates after 
completing $m$ steps.
Consequently, if we define 
$Q_m := 1+mcD(d,k)$, 
there are coefficients 
$c_{1} , \ldots , c_{Q_m} \in \R$ and 
indices $e(1) , \ldots , e(Q_m) \in 
\{ 1 , \ldots , \n \}$ with 
\beq
    \label{eq:HOLGRIM_conv_thm_coeff_sum}
        \sum_{s=1}^{Q_m} \left|c_{s}\right| A_{e(s)} = C,
\eeq
and such that $u \in \Span(\cf) \subset 
\Lip(\gamma,\Sigma,W)$ defined by
\beq
    \label{eq:HOLGRIM_conv_thm_approx}
        u := \sum_{s=1}^{Q_m}
        c_{s} f_{e(s)}
        ~\left(
        \begin{array}{c}
            u^{(l)} := \sum_{s=1}^{Q_m} c_{s}
            f^{(l)}_{e(s)} \\
            \text{for every }
            l \in \{0, \ldots , k\} 
        \end{array}
        \right)
        \quad \text{satisfies} \quad
        \max_{z \in \Sigma} \left\{
        \Lambda^q_{\vph - u}(z) \right\}
        \leq \ep.
\eeq
Finally, if the coefficients 
$a_1 , \ldots , a_{\n} \in \R$ 
corresponding to $\vph$ 
(cf. (\bI) of \eqref{eq:HOLGRIM_conv_thm_vph_C_D})
are all positive (i.e. $a_1 , \ldots , a_{\n} > 0$) then 
the coefficients $c_{1} , \ldots , c_{Q_m} \in \R$ 
corresponding to $u$ 
(cf. \eqref{eq:HOLGRIM_conv_thm_approx}) are 
all non-negative (i.e. $c_{1} , \ldots , c_{Q_m} \geq 0$).
\end{theorem}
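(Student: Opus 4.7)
The plan is to derive Theorem \ref{thm:HOLGRIM_conv_thm} as an essentially immediate corollary of the \emph{HOLGRIM Point Separation Lemma} \ref{lemma:HOLGRIM_dist_between_interp_points}, using the packing number $N$ in \eqref{eq:HOLGRIM_conv_thm_N} as a hard ceiling on the number of points the \textbf{HOLGRIM} algorithm can select.

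First I would fix the constants $r$ and $N$ exactly as in \eqref{eq:HOLGRIM_conv_thm_r} and \eqref{eq:HOLGRIM_conv_thm_N}, and observe that the constant $r$ here coincides with the constant of the same name appearing in \eqref{eq:dist_lemma_r} of Lemma \ref{lemma:HOLGRIM_dist_between_interp_points} for the present choices of $C$, $\gamma$, $\ep$, $\ep_0$, $q$. This makes the hypotheses of Lemma \ref{lemma:HOLGRIM_dist_between_interp_points} applicable to the run of \textbf{HOLGRIM} we consider (all parameters, scaling factors, shuffle numbers and step sizes match).

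Next I would proceed by contradiction. Suppose the algorithm completes $N+1$ steps without terminating. Then by construction (cf. \textbf{HOLGRIM} \ref{HOLGRIM_C}, \ref{HOLGRIM_D}) a set $\Sigma_{N+1} = \{z_1, \ldots, z_{N+1}\} \subset \Sigma$ of selected points exists, and by the \emph{HOLGRIM Point Separation Lemma} \ref{lemma:HOLGRIM_dist_between_interp_points} (specifically \eqref{eq:HOLGRIM_point_sep_dist_est}) we have $\|z_s - z_t\|_V > r$ for every pair of distinct indices $s,t \in \{1, \ldots, N+1\}$. This is an $r$-separated subset of $\Sigma$ with $N+1$ elements, contradicting the definition \eqref{eq:HOLGRIM_conv_thm_N} of $N$ as the maximum cardinality of such a subset. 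Hence the algorithm terminates after completing some $m \in \{1, \ldots, N\}$ steps, with $m \leq N \leq M$ so that the algorithm is indeed running when termination triggers.

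For the conclusion about the returned approximation $u$, I would unpack the termination criterion in \textbf{HOLGRIM} \ref{HOLGRIM_D}: either the stopping criterion triggered before step $m+1$ using $u_m$, or $m = M$ (the latter cannot hold strictly, since $m \leq N \leq M$ and for termination at $m=M$ we still know the condition holds). In either case the returned $u := u_m$ satisfies $|\sigma(\vph - u)| \leq \ep/(cd^q)$ for every $\sigma \in \Sigma^{\ast}_q = \bigcup_{z \in \Sigma} \Tau_{z,q}$, so applying implication \eqref{eq:imp_2_num_pts_lemma} of Lemma \ref{lemma:number_of_coeffs_for_point_value} to $\phi := \vph - u$ pointwise at each $z \in \Sigma$ yields $\Lambda^q_{\vph - u}(z) \leq \ep$, which is the pointwise bound in \eqref{eq:HOLGRIM_conv_thm_approx}. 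For the coefficient structure, I would reverse the rescaling and sign-flipping of \textbf{HOLGRIM} \ref{HOLGRIM_B}: the recombination output (cf. \eqref{eq:dist_lemma_ul_expansion}) gives non-negative $b_{m,1}, \ldots, b_{m,Q_m} \geq 0$ and indices $e(1), \ldots, e(Q_m)$ such that $u_m = \sum_s b_{m,s} h_{e(s)}$ with $\sum_s b_{m,s} = \sum_i \alpha_i = C$, and translating back via $h_i = \tilde{f}_i / A_i$ and $\tilde{f}_i = \sign(a_i) f_i$ yields $u = \sum_s c_s f_{e(s)}$ with $|c_s| A_{e(s)} = b_{m,s}$, so that $\sum_s |c_s| A_{e(s)} = C$ as claimed in \eqref{eq:HOLGRIM_conv_thm_coeff_sum}. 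The non-negativity statement in the final sentence follows because if every $a_i > 0$ then Step \ref{HOLGRIM_B} performs no sign-flips, so $c_s = b_{m,s}/A_{e(s)} \geq 0$. I expect the only non-mechanical part of the write-up is a careful bookkeeping of which approximation ($u_{m-1}$ versus $u_m$) is returned when the termination criterion fires, but either way the separation argument bounds the number of selected points by $N$, so the clean inequality $m \leq N$ and all downstream consequences follow without further obstacles.
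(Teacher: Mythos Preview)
Your proposal is correct and follows essentially the same route as the paper: invoke the Point Separation Lemma \ref{lemma:HOLGRIM_dist_between_interp_points} to force the selected points to be pairwise $r$-separated, cap their number by the packing number $N$, then unwind the termination criterion via Lemma \ref{lemma:number_of_coeffs_for_point_value} and reverse the rescaling of Step \ref{HOLGRIM_B}. The only place requiring extra care is the edge case $m = N = M$, where $u_M$ is returned without the stopping check having fired; the paper handles this by observing that the same separation-contradiction argument (run on a hypothetical $(N{+}1)$-th point extracted from any $\sigma$ violating the bound) still forces $\max_{\sigma \in \Sigma^{\ast}_q}|\sigma(\vph - u_N)| \leq \ep/(cd^q)$, and you should make that step explicit rather than asserting ``we still know the condition holds''.
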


\begin{remark}
\label{rmk:HOLGRIM_conv_thm_2}
By invoking Lemma 
\ref{lemma:HOLGRIM_dist_between_interp_points} 
(cf. \eqref{eq:HOLGRIM_point_sep_general_est}) we can 
establish pointwise estimates for the sequence of 
approximations returned at each step of the \textbf{HOLGRIM}
algorithm.
That is, suppose $m \in \{1, \ldots , N\}$ such that the 
\textbf{HOLGRIM} algorithm terminates after step $m$, 
and for each $l \in \{1, \ldots ,m\}$ let $u_l \in \Span(\cf)$
denote the approximation found at step $l$ via the 
\textbf{HOLGRIM Recombination Step}.
Then Lemma \ref{lemma:HOLGRIM_dist_between_interp_points} 
yields that, for each $l \in \{1, \ldots , m\}$, 
we have, for every point $z \in \Sigma$, that 
\beq
    \label{eq:HOLGRIM_conv_thm_ul_err_bd}
        \Lambda^q_{\vph - u_l}(z)
        \leq
        \min \left\{ 2C ~,~
        2C \max_{ h \in \{0, \ldots , q\} }
        \left\{\dist_V( z , \Sigma_l )^{\gamma -h}
        \right\}
        + 
        \ep_0 e^{\dist_V( z , \Sigma_l)}
        \right\}
\eeq
where $\Sigma_l$ denotes the subset formed of the $l$ points in 
$\Sigma$ that have been selected once step $l$ is completed.
\end{remark}

\begin{remark}
\label{rmk:HOLGRIM_conv_thm_1}
Recall that the \textbf{HOLGRIM} algorithm is guaranteed to 
terminate after $M$ steps. Consequently the  
case that $N \leq M$ contains all the situations in which 
terminating after, at most, $N$ steps induces non-trivial 
conclusions.
If the integer $N$ defined in \eqref{eq:HOLGRIM_conv_thm_N}
satisfies $N < M$ then Theorem \ref{thm:HOLGRIM_conv_thm}
guarantees that the \textbf{HOLGRIM} algorithm will return 
an approximation $u \in \Span(\cf) \subset 
\Lip(\gamma,\Sigma,W)$ of $\vph$ that is a linear 
combination of \emph{less} than $\n$ of the elements 
$f_1 , \ldots , f_{\n}$, and is within $\ep$ of 
$\vph$ throughout $\Sigma$ in the pointwise sense that
$\max_{z \in \Sigma} \left\{ 
\Lambda^q_{\vph-u}(z) \right\} \leq \ep$.
Consequently, in this case the \textbf{HOLGRIM} algorithm 
is guaranteed to return an approximation $u$ of $\vph$ satisfying
\textbf{Finite Domain Approximation Conditions} 
\ref{approx_prop_1}, \ref{approx_prop_2}, and 
\ref{approx_prop_3} from Section \ref{sec:prob_formulation}.

The integer $N$ defined in \eqref{eq:HOLGRIM_conv_thm_N} 
depends both on the features $\cf$ through the constant $C$ 
and the data $\Sigma$.
The constant $C$ itself depends only on the values of the 
coefficients $a_1 , \ldots , a_{\n} \in \R$ and the values 
of the scaling factors $A_1 , \ldots , A_{\n} > 0$. 
The scaling factors depend on the values of the norms 
$||f_1||_{\Lip(\gamma,\Sigma,W)} , \ldots , 
||f_{\n}||_{\Lip(\gamma,\Sigma,W)}$ through the requirement, 
for every $i \in \{1, \ldots , \n\}$, that 
$||f_i||_{\Lip(\gamma,\Sigma,W)} \leq A_i$.
No additional constraints are imposed on the collection of 
features $\cf$; in particular, we do not assume the existence of 
a linear combination of fewer than $\n$ of the features in 
$\cf$ giving a good approximation of $\vph$ throughout 
$\Sigma$.

The integer $N$ defined in \eqref{eq:HOLGRIM_conv_thm_N} 
is, for the positive number $r > 0$ determined in 
\eqref{eq:HOLGRIM_conv_thm_r}, 
the $r$-\emph{packing} number of the subset $\Sigma$ in
$V$ as introduced by Kolmogorov \cite{Kol56}. 
Consequently the $r/2$-\emph{covering} number 
$N_{\cov}(\Sigma,V,r/2)$ of $\Sigma$ in $V$ is an upper bound 
for the integer $N$.
Here we use the same notation as used in \cite{LM22} (see, 
specifically, Subsection 6.2 in \cite{LM22}).
That is, for any $\rho > 0$, we have 
\beq
    \label{eq:HOLGRIM_conv_thm_rmk_cov_num}
        N_{\cov} (\Sigma , V,\rho) :=
        \min \left\{ s \in \Z ~:~ \exists ~ 
        z_1 , \ldots , z_s \text{ for which we have the inclusion }
        \Sigma \subset \bigcup_{j=1}^s \ovB_V(z_j,\rho)
        \right\}.
\eeq 
Recall our convention that balls denoted by $\B$ are taken to be
open whilst those denoted by $\ovB$ are taken to be closed.
Thus the maximum number of steps that the \textbf{HOLGRIM} 
algorithm can complete before terminating 
(cf. \eqref{eq:HOLGRIM_conv_thm_N}) is bounded above by the
$r/2$-covering number of $\Sigma$ in $V$. 
Moreover, this geometric property of the subset $\Sigma$ provides
an upper bound on the number of functions from the collection 
$\cf$ appearing in the approximation returned by the 
\textbf{HOLGRIM} algorithm.
Explicit estimates of this covering number provide 
\emph{worst-case} bounds for the performance of the 
\textbf{HOLGRIM} algorithm.
We illustrate a particular example of this in Remark 
\ref{rmk:HOLGRIM_conv_thm_example}.
\end{remark}

\begin{remark}
\label{rmk:HOLGRIM_conv_thm_reverse_imp}
The \emph{HOLGRIM Convergence Theorem} \ref{thm:HOLGRIM_conv_thm} 
fixes $\ep > 0$ as the accuracy we desire for an approximation, 
and then provides an upper bound on the number of functions
from the collection $\cf$ that are used by the 
\textbf{HOLGRIM} algorithm to construct an approximation 
$u$ of $\vph$ satisfying, for every $z \in \Sigma$, that 
$\Lambda^q_{\vph - u}(z) \leq \ep$.
But the \emph{HOLGRIM Convergence Theorem} 
\ref{thm:HOLGRIM_conv_thm} can also be used, for a fixed 
$n_0 \in \{2, \ldots , \n \}$, to determine how well the 
\textbf{HOLGRIM} algorithm can approximate $\vph$ 
throughout $\Sigma$ using no more than $n_0$ of the functions 
from $\cf$.

Consider a fixed $n_0 \in \{2 , \ldots , \n \}$ and a 
fixed $\ep_0 > 0$.
For $\lambda > 0$ use the notation 
\beq
    \label{eq:packing_number_notation_A}
        N_{\lambda} := \max \left\{ s \in \Z ~:~
        \exists ~ z_1 , \ldots , z_s \in \Sigma 
        \text{ with } ||z_a - z_b||_V > \lambda 
        \text{ if } a \neq b \right\}
\eeq 
for the $\lambda$-packing number of $\Sigma$ in $V$.
Define $\lambda_0 = \lambda_0 (n_0,c,d,\Sigma,\gamma) > 0$ and 
$\be = \be (n_0,c,d,\Sigma,C,\gamma,\ep_0,q) > 0$ by 
\beq
    \label{eq:lambda0_&_be0_def}
        \lambda_0 := \inf \left\{ 
        \lambda > 0 ~:~ N_{\lambda} \leq \frac{n_0 - 1}{cD}
        \right\}
        \qquad \text{and} \qquad 
        \be_0 := c d^q \left( 2 C \lambda_0^{\gamma - q} + 
        \ep_0 e^{\lambda_0} \right) > 0.
\eeq 
A first consequence of \eqref{eq:lambda0_&_be0_def} is that
$0 \leq \ep_0 < \be_0 / cd^q$.
Hence we can consider applying the \textbf{HOLGRIM} algorithm
to approximate $\vph$ on $\Sigma$ with 
$M := \min \left\{ N_{\lambda_0} , \Lambda \right\}$, 
$\be_0$ as the target accuracy in \textbf{HOLGRIM} \ref{HOLGRIM_A}, 
$\ep_0$ as the acceptable recombination error in 
\textbf{HOLGRIM} \ref{HOLGRIM_A}, 
$s_1 = \ldots = s_M = 1$ as the shuffle numbers in 
\textbf{HOLGRIM} \ref{HOLGRIM_A}, 
$k_1 = \ldots = k_M = 1$ as the integers $k_1 , \ldots , k_M$
in \textbf{HOLGRIM} \ref{HOLGRIM_A}, and 
$A_1 , \ldots , A_{\n}$ as the scaling factors in 
\textbf{HOLGRIM} \ref{HOLGRIM_A}.

The definitions of $\lambda_0$ and $\be_0$ in 
\eqref{eq:lambda0_&_be0_def} ensure that the positive 
constant $r$ and the integer $N$ arising respectively in 
\eqref{eq:HOLGRIM_conv_thm_r} and 
\eqref{eq:HOLGRIM_conv_thm_N} of the 
\emph{HOLGRIM Convergence Theorem} \ref{thm:HOLGRIM_conv_thm}
for these choices are given by $\lambda_0$ and $N_{\lambda_0}$
respectively.
Consequently the 
\emph{HOLGRIM Convergence Theorem} \ref{thm:HOLGRIM_conv_thm}
tells us that this application of the \textbf{HOLGRIM}
algorithm returns an approximation $u$ of $\vph$ that is a 
linear combination of at most $n_0$ of the functions in $\cf$, 
and that is within $\be_0$ of $\vph$ on $\Sigma$ in the 
sense that for every $z \in \Sigma$ we have 
$\Lambda^q_{\vph - u}(z) \leq \be_0$. 

In this way, given any $n_0 \in \{2 , \ldots , \n\}$ with 
$n_0 \geq 1 + cD$,
the relation given in \eqref{eq:lambda0_&_be0_def}
provides a guaranteed accuracy 
$\be_0 = \be_0(n_0,c,d,\Sigma,C,\gamma,\ep_0,q) > 0$ 
for how well the \textbf{HOLGRIM} algorithm can 
approximate $\vph$ with the additional constraint that the 
approximation is a linear combination of no greater than 
$n_0$ of the features in $\cf$. This guarantee ensures both 
that there is a linear combination of at most $n_0$ of the 
features in $\cf$ that is within $\be_0$ of $\vph$ 
throughout $\Sigma$ and that the \textbf{HOLGRIM} 
algorithm will find such a linear combination.
This idea is detailed further in Remarks 
\ref{rmk:HOLGRIM_conv_thm_Omega_n0_pointwise} and 
\ref{rmk:HOLGRIM_conv_thm_Omega_n0_lip_eta}, whilst 
an explicit example of such guarantees is provided in 
Remark \ref{rmk:HOLGRIM_conv_thm_example}.
\end{remark}

\begin{remark}
\label{rmk:HOLGRIM_conv_thm_5}
One can think of the order level $q \in \{0 , \ldots , k\}$ 
as determining the order of ``derivatives'' of $\vph$ that we
seek to approximate throughout $\Sigma$. 
For example, if we choose $q := 0$ then the 
\textbf{HOLGRIM} algorithm seeks only to approximate 
the function $\vph^{(0)} : \Sigma \to W$ in a 
pointwise sense throughout $\Sigma$.
And, in this case, Theorem \ref{thm:HOLGRIM_conv_thm}
establishes conditions under which the 
\textbf{HOLGRIM} algorithm is guaranteed to return 
an approximation 
$u =\left( u^{(0)} , \ldots , u^{(k)} \right)
\in \Span(\cf) \subset \Lip(\gamma,\Sigma,W)$
for which the function $u^{(0)} : \Sigma \to W$
is close to $\vph^{(0)}$ throughout $\Sigma$ in the 
sense that 
$\left|\left| \vph^{(0)} - u^{(0)}
\right|\right|_{C^0(\Sigma;W)} \leq \ep$
(cf. \eqref{eq:HOLGRIM_conv_thm_approx}).
On the other hand, if we choose $q := k$ then the 
\textbf{HOLGRIM} algorithm seeks to approximate 
the function $\vph^{(l)} : \Sigma \to \cl(V^{\otimes l};W)$
in a pointwise sense throughout $\Sigma$
for every $l \in \{0, \ldots , k\}$.
And, in this case, Theorem \ref{thm:HOLGRIM_conv_thm}
establishes conditions under which the 
\textbf{HOLGRIM} algorithm is guaranteed to return 
an approximation 
$u =\left( u^{(0)} , \ldots , u^{(k)} \right)
\in \Span(\cf) \subset \Lip(\gamma,\Sigma,W)$
for which, for every $l \in \{0, \ldots , k\}$, 
the function $u^{(l)} : \Sigma \to \cl(V^{\otimes};W)$
is close to $\vph^{(l)}$ throughout $\Sigma$ in the 
sense that for every $z \in \Sigma$ we have
$\left|\left| \vph^{(l)}(z) - u^{(l)}(z) 
\right|\right|_{\cl(V^{\otimes l};W)} \leq \ep$
(cf. \eqref{eq:HOLGRIM_conv_thm_approx}).
\end{remark}

\begin{remark}
\label{rmk:HOLGRIM_conv_thm_6}
The order level $q \in \{0, \ldots , k\}$ in the \textbf{HOLGRIM} 
algorithm determines 
the order to which we seek to approximate the function 
$\vph = \left( \vph^{(0)} , \ldots , 
\vph^{(k)} \right) \in \Lip(\gamma,\Sigma,W)$.
A particular consequence of \eqref{eq:HOLGRIM_conv_thm_r}
is that a larger value of the order level $q \in \{0, \ldots , k\}$
leads to a larger upper bound for the maximum number of 
steps the \textbf{HOLGRIM} algorithm can complete before
terminating.
That is, for fixed values of the parameters $C$, $\gamma$, 
$\ep$, and $\ep_0$ the function 
$q \mapsto r(C,\gamma,\ep,\ep_0,q)$, 
for the $r$ defined in \eqref{eq:HOLGRIM_conv_thm_r}, 
is decreasing on $\{0 , \ldots , k\}$.
Hence, again for fixed values of the parameters $C$, $\gamma$, 
$\ep$, and $\ep_0$, the function
$q \mapsto N(\Sigma,C,\gamma,\ep,\ep_0,q)$, 
for the integer $N$ defined in 
\eqref{eq:HOLGRIM_conv_thm_N}, 
is increasing on $\{0, \ldots , k\}$. 
Consequently, seeking only to approximate $\vph$ to 
order $q:=0$ results in the smallest upper bound 
for the maximum number of steps before the 
\textbf{HOLGRIM} algorithm terminates, 
whilst seeking to approximate $\vph$ to order
$q:=k$ results in the largest upper bound.
Larger values of $q$ result in the algorithm
finding an approximation of
$\vph$ in a stronger sense throughout $\Sigma$, at the 
cost of a worse upper bound for the maximum 
number of steps the algorithm runs for before
terminating.
\end{remark}

\begin{remark}
\label{rmk:HOLGRIM_conv_thm_gamma_depend}
As the regularity parameter $\gamma > 0$ increases, the integer 
$N$ in \eqref{eq:HOLGRIM_conv_thm_N} giving an upper 
bound on the maximum number of steps completed by the 
\textbf{HOLGRIM} algorithm before termination decreases.
To be more precise we adopt the same notation as that of the 
\emph{HOLGRIM Convergence Theorem} \ref{thm:HOLGRIM_conv_thm} 
and fix the values of the parameters $C$, $\ep$, $\ep_0$, and 
fix a choice of $q \in \Z_{\geq 0}$. 
Then consider varying the regularity parameter $\gamma$ in the 
range $(q,\infty)$; requiring $\gamma > q$ ensures that we can 
apply the \textbf{HOLGRIM} algorithm as in the 
\emph{HOLGRIM Convergence Theorem} \ref{thm:HOLGRIM_conv_thm}. 
The mapping $\gamma \mapsto r(C,\gamma,\ep,\ep_0,q)$ 
for the constant $r$ defined in \eqref{eq:HOLGRIM_conv_thm_r} 
determines an increasing function on $(q,\infty)$ since the 
assumption that $\ep \in (0,C)$ means that $r \leq 1$.
Consequently the mapping 
$\gamma \mapsto N(\Sigma,C,\gamma,\ep,\ep_0,q)$ for the 
integer $N$ defined in \eqref{eq:HOLGRIM_conv_thm_N} 
determines a decreasing function on $(q,\infty)$.
Hence the data concentration required to guarantee that the 
\textbf{HOLGRIM} algorithm returns an approximation $u$ of 
$\vph$ satisfying that
$\max_{z \in \Sigma} \left\{ \Lambda^q_{\vph-u}(z) \right\} \leq \ep$
and that $u$ is a linear combination of \emph{strictly less} 
than $\n$ of the functions in $\cf$ is decreasing as the regularity 
parameter $\gamma$ increases.
That is, a larger value of $\gamma > 0$ results in a smaller 
upper bound $N$ on the maximum number of steps completed by the 
\textbf{HOLGRIM} algorithm before it returns an approximation of 
$\vph$ satisfying 
\textbf{Finite Domain Approximation Conditions} 
\ref{approx_prop_1}, \ref{approx_prop_2}, and 
\ref{approx_prop_3} from Section \ref{sec:prob_formulation}.
\end{remark}

\begin{remark}
\label{rmk:HOLGRIM_conv_thm_3}
The approximation $u$ returned by the \textbf{HOLGRIM} algorithm 
satisfying the accuracy properties guaranteed by Theorem 
\ref{thm:HOLGRIM_conv_thm} additionally satisfies the following
\emph{robustness} property.
To illustrate this robustness, assume the notation of Theorem 
\ref{thm:HOLGRIM_conv_thm} and fix a choice of a finite 
$\xi \in (\ep,\infty)$.
Suppose that $K \geq 1$ and that, for every 
$i \in \{1, \ldots , \n\}$, the function 
$f_i \in \Lip(\gamma,\Sigma,W)$ admits an extension 
$F_i \in \Lip(\gamma,V,W)$ satisfying the norm estimate
$||F_i||_{\Lip(\gamma,V,W)} \leq K 
|| f_i ||_{\Lip(\gamma,\Sigma,W)}$.

The existence of such an extension is guaranteed by the 
Stein-Whitney extension theorem (Theorem 4 in Chapter VI of 
\cite{Ste70}. Whilst that result is stated for the case that 
$V=\R^d$ and $W = \R$, the proof carries over to our 
setting verbatim.
In this result, a method of constructing an extension of
an element $\phi \in \Lip(\gamma,\Sigma,W)$
to an element in $\Lip(\gamma,V,W)$ is presented.
Moreover, the resulting operator
$\bE : \Lip(\gamma,\Sigma,W) \to \Lip(\gamma,V,W)$ 
mapping an element $\phi \in \Lip(\gamma,\Sigma,W)$
to its extension to an element in $\Lip(\gamma,V,W)$ is 
established to be a bounded linear operator.

This extension operator is \emph{not} unique; the construction 
of an extension of $\phi \in \Lip(\gamma,\Sigma,W)$ to 
an element in $\Lip(\gamma,V,W)$ in the proof of the 
Stein-Whitney extension theorem in \cite{Ste70} involves 
making several particular choices of parameters, and varying 
each choice results in a different extension.
However, after fixing a particular choice of each parameter, 
a verbatim repetition of the 
arguments of Stein in Chapter VI of \cite{Ste70} establishes 
the existence 
of a constant $K_{\bE} = K_{\bE}(\gamma,d) \geq 1$ such that for any 
$\phi \in \Lip(\gamma,\Sigma,W)$ we have the norm estimate that
$||\bE[\phi]||_{\Lip(\gamma,V,W)} \leq K_{\bE} 
|| \phi ||_{\Lip(\gamma,\Sigma,W)}$.
Consequently, we could take $K := K_{\bE}$ above.

Recalling that $\vph = \sum_{i=1}^{\n} a_i f_i$, it 
follows that $\ti{\vph} := \sum_{i=1}^{\n} a_i F_i$ determines
an extension of $\vph$ to an element 
$\ti{\vph} \in \Lip(\gamma,V,W)$.
It follows that 
$||\ti{\vph}||_{\Lip(\gamma,V,W)} \leq 
K || \vph ||_{\Lip(\gamma,\Sigma,W)} \leq CK$.
Similarly, since $u = \sum_{s=1}^{Q_m} c_s f_{e(s)}$, it follows
that $\ti{u} := \sum_{s=1}^{Q_m} c_s F_{e(s)}$ determines an 
extension of $u$ to an element $\ti{u} \in \Lip(\gamma,V,W)$.
It follows, via \eqref{eq:HOLGRIM_conv_thm_coeff_sum}, that
$||\ti{u}||_{\Lip(\gamma,V,W)} \leq 
K || u ||_{\Lip(\gamma,\Sigma,W)} \leq CK$.
Moreover, since $\ti{\vph}$ and $\ti{u}$ coincide with 
$\vph$ and $u$ respectively throughout $\Sigma$, 
the second part of 
\eqref{eq:HOLGRIM_conv_thm_approx} yields that
\beq
    \label{eq:HOLGRIM_conv_thm_rmk3_extensions_close_Sigma}
        \max_{z \in \Sigma} \left\{
        \Lambda^q_{\ti{\vph} - \ti{u} }(z) \right\}
        =
        \max_{z \in \Sigma} \left\{ 
        \Lambda^q_{\vph - u}(z) \right\} 
        \leq 
        \ep.
\eeq
Fix a choice of $l \in \{0, \ldots , q\}$ and
define a constant $r = r(C,K,q,\ep,\xi,l) > 0$ by
\beq
    \label{eq:HOLGRIM_conv_thm_rmk3_r}
        r := \twopartdef
        {\sup \left\{ \lambda > 0 : 
        2CK \lambda^{\gamma - l} + \ep e^{\lambda} 
        \leq \xi \right\} }
        {q = k}
        { \sup \left\{ \lambda > 0 : 
        2eCK \lambda^{q+1-l} + \ep e^{\lambda} \leq \xi 
        \right\}}
        {q < k.}
\eeq
Set $\Omega := \Sigma_{r} := \left\{ v \in V : 
\exists ~u \in \Sigma \text{ with } ||u-v||_V \leq r \right\}$
to be the $r$-fattening of $\Sigma$ in $V$.
We claim that the extensions $\ti{\vph}$ and $\ti{u}$ of 
$\vph$ and $u$ respectively remain close throughout 
$\Omega$ in the sense that
\beq
    \label{eq:HOLGRIM_conv_thm_rmk3_extensions_close_Omega}
        \sup_{z \in \Omega} \left\{
        \Lambda^l_{\ti{\vph} - \ti{u}} (z) \right\}
        \leq \xi.
\eeq
We first verify 
\eqref{eq:HOLGRIM_conv_thm_rmk3_extensions_close_Omega}
in the case that $q = k$. 
In this case, an examination of $r$ as defined in 
\eqref{eq:HOLGRIM_conv_thm_rmk3_r} reveals that it coincides 
with the specification of the constant $\de_0$ in 
\eqref{eq:pointwise_lip_sand_thm_de0} of the 
\emph{Pointwise Lipschitz Sandwich Theorem} 
\ref{thm:pointwise_lip_sand_thm} for the choices 
of $\ep$, $\ep_0$, $K_0$, $\gamma$, and $l$ there as
$\xi$, $\ep$, $CK$, $\gamma$, and $l$ here respectively.
Therefore \eqref{eq:HOLGRIM_conv_thm_rmk3_extensions_close_Sigma}
provides the required hypothesis to 
appeal to  the 
\emph{Pointwise Lipschitz Sandwich Theorem} 
\ref{thm:pointwise_lip_sand_thm}
with $B := \Sigma$, $\psi := \ti{\vph}$, and $\phi := \ti{u}$.
By applying this result, we conclude, for every $z \in \Omega$, that 
$\Lambda^l_{\ti{\vph} - \ti{u}}(z) \leq \xi$ as claimed in 
\eqref{eq:HOLGRIM_conv_thm_rmk3_extensions_close_Omega}.

We now consider the case that $q < k$ so that $k \geq 1$ and
$q \in \{0, \ldots , k-1\}$.
Recall the notation that for any 
$\phi = \left( \phi^{(0)} , \ldots , \phi^{(k)} \right) 
\in \Lip(\gamma,\Sigma,W)$ and any $n \in \{0, \ldots , k\}$ that
$\phi_{[n]} := \left( \phi^{(0)} , \ldots , \phi^{(n)} \right)$.
We use the analogous notation for elements in 
$\Lip(\gamma,V,W)$.

Since $q+1 \leq k < \gamma$ we can 
invoke Lemma 6.1 in \cite{LM24} to deduce that 
$\ti{\vph}_{[q]} , \ti{u}_{[q]} \in \Lip(q+1,\Sigma,W)$
with
$|| \ti{\vph}_{[q]} ||_{\Lip(q+1,\Sigma,W)} \leq 
e || \ti{\vph} ||_{\Lip(\gamma,\Sigma,W)} \leq eCK$ and 
$|| \ti{u}_{[q]} ||_{\Lip(q+1,\Sigma,W)} \leq 
e ||\ti{u} ||_{\Lip(\gamma,\Sigma,W)} \leq eCK$.
An examination of $r$ as defined in 
\eqref{eq:HOLGRIM_conv_thm_rmk3_r} reveals that it coincides 
with the specification of the constant $\de_0$ in 
\eqref{eq:pointwise_lip_sand_thm_de0} of the 
\emph{Pointwise Lipschitz Sandwich Theorem} 
\ref{thm:pointwise_lip_sand_thm} for the choices 
of $\ep$, $\ep_0$, $K_0$, $\gamma$, and $l$ there as
$\xi$, $\ep$, $eCK$, $q+1$, and $l$ here respectively.
Therefore \eqref{eq:HOLGRIM_conv_thm_rmk3_extensions_close_Sigma}
provides the required hypothesis to appeal
to the \emph{Pointwise Lipschitz Sandwich Theorem} 
\ref{thm:pointwise_lip_sand_thm}
with $B := \Sigma$, $\psi := \ti{\vph}_{[q]}$, 
and $\phi := \ti{u}_{[q]}$. 
By doing so, we conclude, for every $z \in \Omega$, that 
$\Lambda^l_{\ti{\vph} - \ti{u} }(z) \leq \xi$ 
as claimed in 
\eqref{eq:HOLGRIM_conv_thm_rmk3_extensions_close_Omega}.

Thus the extension $\ti{u}$ is a linear combination of 
$Q_m$ of the functions $F_1 , \ldots , F_{\n}$ that is 
within $\xi$ of the extension $\ti{\vph}$ throughout 
$\Omega$ in the pointwise sense that 
$\sup_{z \in \Omega} \left\{ 
\Lambda^l_{\ti{\vph}-\ti{u}}(z) \right\} \leq \xi$.
Loosely speaking, the approximation on $\Sigma$ 
found by the \textbf{HOLGRIM} algorithm remains a good 
approximation on points in $V$ that are not too far away from
$\Sigma$.
To be more precise, provided we are given, for each 
$i \in \{1, \ldots ,\n\}$, an extension $F_i \in \Lip(\gamma,V,W)$
of $f_i \in \Lip(\gamma,\Sigma,W)$, the weights
$c_1 , \ldots , c_{Q_m} \in \R$ and the indices 
$e(1) , \ldots , e(Q_m) \in \{1, \ldots , \n\}$ returned by 
the \textbf{HOLGRIM} algorithm, determining a good approximation 
of $\vph$ throughout $\Sigma$ via the linear combination 
$u = \sum_{s=1}^{Q_m} c_s f_{e(s)}$ of the functions 
$f_1 , \ldots , f_{\n}$, additionally determine a good 
approximation of the extension $\ti{\vph}$ of $\vph$ 
throughout $\Omega$ via the linear combination 
$\ti{u} = \sum_{s=1}^{Q_m} c_s F_{e(s)}$ of the extensions
$F_1 , \ldots , F_{\n}$ of the functions $f_{1} , \ldots , f_{\n}$.
\end{remark}

\begin{remark}
\label{rmk:HOLGRIM_conv_thm_Omega_ptwise}
Following the strategy outlined in Section 
\ref{sec:compact_domains}, we may combine the 
theoretical guarantees for the \textbf{HOLGRIM}
algorithm provided by Theorem \ref{thm:HOLGRIM_conv_thm} with 
the \emph{Pointwise Lipschitz Sandwich Theorem} 
\ref{thm:pointwise_lip_sand_thm} (cf. Theorem 3.11 in \cite{LM24})
to extend the \textbf{HOLGRIM} algorithm to the setting 
in which the $\Lip(\gamma)$ functions $f_1 , \ldots , f_{\n}$
are defined throughout an arbitrary non-empty compact subset 
$\Omega \subset V$ rather than a finite subset.
To make this precise, let $\n \in \Z_{\geq 1}$, 
$\gamma > 0$ with $k \in \Z_{\geq 0}$ such that 
$\gamma \in (k,k+1]$, and
suppose that $\Omega \subset V$ is a fixed 
non-empty compact subset with
$f_1 , \ldots , f_{\n} \in \Lip (\gamma,\Omega,W)$.
As in Theorem \ref{thm:HOLGRIM_conv_thm} we consider 
$\vph \in \Lip(\gamma,\Omega,W)$ defined by 
$\vph := \sum_{i=1}^{\n} a_i f_i$ for non-zero coefficients
$a_1 , \ldots , a_{\n} \in \R$.
Let
$C := \sum_{i=1}^{\n} |a_i| ||f_i||_{\Lip(\gamma,\Omega,W)} > 0$,
and take $D = D(d,k)$ to be as defined in 
\eqref{eq:HOLGRIM_conv_thm_D}.
We now illustrate how the \textbf{HOLGRIM} algorithm can 
be used to obtain an approximation of $\vph$ throughout 
the compact subset $\Omega \subset V$.

Fix choices of $\ep \in (0,C)$ and $0 \leq \ep_0 < \ep/2 c d^k$
and $q \in \{0, \ldots , k\}$.
Define a positive constant 
$\de = \de(C,\gamma,\ep,c,d,q) > 0$ by 
\beq
    \label{eq:HOLGRIM_conv_thm_rmk_r_Omega}
        \de := \sup \left\{ \lambda > 0 ~:~ 
        2C \lambda^{\gamma -q} + 
        \frac{\ep}{2}e^{\lambda} \leq \ep 
        \right\}.
\eeq 
Next we define an integer 
$\Lambda = \Lambda (\Omega,C,\gamma,\ep,c,d,q) \in \Z_{\geq 1}$
by 
\beq
    \label{eq:HOLGRIM_conv_thm_rmk_Lambda}
        \Lambda := \min \left\{ s \in \Z_{\geq 1} 
        ~:~ \exists ~ z_1 , \ldots , z_s \in \Omega 
        \text{ such that }
        \Omega \subset \bigcup_{j=1}^s \ovB_V 
        ( z_j , \de ) \right\}.
\eeq 
The compactness of the subset $\Omega \subset V$ ensures 
that the integer $\Lambda$ defined in 
\eqref{eq:HOLGRIM_conv_thm_rmk_Lambda} is finite.

Choose a finite subset $\Sigma = \left\{ z_1 , \ldots , z_{\Lambda} 
\right\} \subset \Omega$ of cardinality $\Lambda$ for which 
$\Omega \subset \cup_{s=1}^{\Lambda} \ovB_{V}(z_s,\de)$.
With the finite subset $\Sigma$ fixed, define a positive 
constant $r = r(C,\gamma,\ep,\ep_0,c,d) > 0$ by 
(cf. \eqref{eq:HOLGRIM_conv_thm_r}) 
\beq
    \label{eq:HOLGRIM_conv_thm_rmk_compact_r}
        r := \sup \left\{ \lambda > 0 ~:~ 
        2C \lambda^{\gamma-k} + \ep_0 e^{\lambda} 
        \leq \frac{\ep}{2 c d^k} \right\},
\eeq
and define an integer 
$N = N(\Sigma,C,\gamma,\ep,\ep_0,c,d) \in \Z_{\geq 1}$ by 
(cf. \eqref{eq:HOLGRIM_conv_thm_N}) 
\beq
    \label{eq:HOLGRIM_conv_thm_rmk_compact_N}
        N := \max \left\{ s \in \Z_{\geq 1} ~:~ 
        \exists ~ z_1 , \ldots , z_s \in \Sigma 
        \text{ for which } 
        || z_a - z_b ||_V > r \text{ if } a \neq b \right\}.
\eeq
Consider applying the \textbf{HOLGRIM}
algorithm to approximate $\vph$ on $\Sigma$, with 
$M := \min \left\{ \frac{\n - 1}{cD} , \Lambda \right\}$,
$\ep/2$ as the target accuracy in \textbf{HOLGRIM} \ref{HOLGRIM_A}, 
$\ep_0$ as the acceptable recombination error in 
\textbf{HOLGRIM} \ref{HOLGRIM_A}, $k$ as  
the order level in \textbf{HOLGRIM} \ref{HOLGRIM_A}, 
$s_1 = \ldots = s_M = 1$
the shuffle numbers in \textbf{HOLGRIM} \ref{HOLGRIM_A}, 
$k_1 = \ldots = k_M = 1$ as the integers
$k_1 , \ldots , k_M \in \Z_{\geq 1}$ in 
\textbf{HOLGRIM} \ref{HOLGRIM_A}, and 
$A_1 := ||f_1||_{\Lip(\gamma,\Omega,W)} , \ldots , 
A_{\n} := ||f_{\n}||_{\Lip(\gamma,\Omega,W)}$ as the 
scaling factors chosen in \textbf{HOLGRIM} \ref{HOLGRIM_A}.

If the integer $N$ defined in 
\eqref{eq:HOLGRIM_conv_thm_rmk_compact_N} satisfies that 
$N \leq M$, then
Theorem \ref{thm:HOLGRIM_conv_thm} guarantees that 
there is some integer $m \in \{1, \ldots , M\}$
for which the \textbf{HOLGRIM} algorithm terminates after 
completing $m$ steps. Thus, if $Q_m := 1+mcD(d,k)$, 
there are coefficients 
$c_{1} , \ldots , c_{Q_m} \in \R$ and 
indices $e(1) , \ldots , e(Q_m) \in 
\{ 1 , \ldots , \n \}$ with 
(cf. \eqref{eq:HOLGRIM_conv_thm_coeff_sum})
\beq
    \label{eq:HOLGRIM_conv_thm_rmk_compact_coeff_sum}
        \sum_{s=1}^{Q_m} \left|c_{s}\right|
        \left|\left| f_{e(s)}
        \right|\right|_{\Lip(\gamma,\Omega,W)}
        = C,
\eeq
and such that $u \in \Span(\cf) \subset 
\Lip(\gamma,\Omega,W)$ defined by
(cf. \eqref{eq:HOLGRIM_conv_thm_approx})
\beq
    \label{eq:HOLGRIM_conv_thm_rmk_compact_approx}
        u := \sum_{s=1}^{Q_m}
        c_{s} f_{e(s)}
        ~\left(
        \begin{array}{c}
            u^{(l)} := \sum_{s=1}^{Q_m} c_{s}
            f^{(l)}_{e(s)} \\
            \text{for every }
            l \in \{0, \ldots , k\} 
        \end{array}
        \right)
        \quad \text{satisfies} \quad
        \max_{z \in \Sigma} \left\{
        \Lambda^k_{\vph - u}(z) \right\}
        \leq \frac{\ep}{2}.
\eeq
Observe that the condition 
\eqref{eq:HOLGRIM_conv_thm_rmk_compact_coeff_sum} means that
$||u||_{\Lip(\gamma,\Omega,W)} \leq C$.
Recall that the finite subset $\Sigma \subset \Omega$ was 
chosen to be a $\de$-cover of $\Omega$ in the sense that 
$\Omega \subset \cup_{z \in \Sigma} \ovB_V(z,\de)$.
Therefore the definition of $\de$ in 
\eqref{eq:HOLGRIM_conv_thm_rmk_r_Omega} and
the estimates in 
\eqref{eq:HOLGRIM_conv_thm_rmk_compact_approx} provide
the hypotheses required to appeal to the 
\emph{Pointwise Lipschitz Sandwich Theorem} 
\ref{thm:pointwise_lip_sand_thm} with the 
$K_0$, $\gamma$, $\ep$, $\ep_0$, $l$, $\Sigma$, $B$, 
$\psi$, and $\phi$ of that result as 
$C$, $\gamma$, $\ep$, $\ep/2$, $q$, $\Omega$, $\Sigma$, 
$\vph$, and $u$ here respectively.
Indeed the definition of $\de$ here in 
\eqref{eq:HOLGRIM_conv_thm_rmk_r_Omega} coincides with 
the definition of $\de_0$ in \eqref{eq:pointwise_lip_sand_thm_de0}
for the choices specified above.
Thus we may invoke the implication 
\eqref{eq:pointwise_lip_sand_thm_imp} to conclude that 
\beq
    \label{eq:HOLGRIM_conv_thm_compact_approx_Omega}
        \sup_{z \in \Omega} \left\{
        \Lambda^q_{\vph-u}(z) \right\}
        \leq \ep.
\eeq
Consequently the \textbf{HOLGRIM} algorithm has returned an 
approximation $u \in \Span(\cf) \subset \Lip(\gamma,\Omega,W)$
of $\vph$ that has the sparsity properties as outlined in 
Remark \ref{rmk:HOLGRIM_conv_thm_1} and is
within $\ep$ of $\vph$ throughout $\Omega$
in the pointwise sense that, for every point $z \in \Omega$, 
we have $\Lambda^q_{\vph - u}(z) \leq \ep$.
That is, in this case the \textbf{HOLGRIM} algorithm 
is guaranteed to return an approximation $u$ of $\vph$ satisfying
\textbf{Compact Domain Approximation Conditions} 
\ref{compact_approx_prop_1}, \ref{compact_approx_prop_2}, and 
\ref{compact_approx_prop_3} from Section 
\ref{sec:compact_domains}.
\end{remark}

\begin{remark}
\label{rmk:HOLGRIM_conv_thm_Omega_n0_pointwise}
Following the approach outlined in Remark 
\ref{rmk:HOLGRIM_conv_thm_reverse_imp}, we illustrate the 
theoretical guarantees available for the \textbf{HOLGRIM} 
algorithm when used as in Remark
\ref{rmk:HOLGRIM_conv_thm_Omega_ptwise}
but with an additional restriction on the maximum 
number of functions from $\cf$ that may be used to construct 
the approximation $u$. 

To make this precise, let $\n \in \Z_{\geq 1}$, 
$\gamma > 0$ with $k \in \Z_{\geq 0}$ such that 
$\gamma \in (k,k+1]$, and
suppose that $\Omega \subset V$ is a fixed 
non-empty compact subset with
$f_1 , \ldots , f_{\n} \in \Lip (\gamma,\Omega,W)$.
As in Theorem \ref{thm:HOLGRIM_conv_thm} we consider 
$\vph \in \Lip(\gamma,\Omega,W)$ defined by 
$\vph := \sum_{i=1}^{\n} a_i f_i$ for non-zero coefficients
$a_1 , \ldots , a_{\n} \in \R$.
Let
$C := \sum_{i=1}^{\n} |a_i| ||f_i||_{\Lip(\gamma,\Omega,W)} > 0$,
and take $D = D(d,k)$ to be as defined in 
\eqref{eq:HOLGRIM_conv_thm_D}.
Fix a choice of $q \in \{0 , \ldots , k\}$ and, assuming that
$\n > 1 + c D$, fix 
a choice of integer $n_0 \in \{1, \ldots , \n\}$ such that
$n_0 \geq 1 + cD$.
We consider the guarantees available for the pointwise quantity 
$\sup_{z \in \Omega} \left\{ \Lambda^q_{\vph-u}(z) \right\}$ 
for the approximation returned by the \textbf{HOLGRIM} algorithm
with the restriction that $u$ is a linear combination of at most
$n_0$ of the functions in $\cf$.

For this purpose we fix $\th \geq 0$ and define 
$\lambda_0 = \lambda_0 (n_0,c,d,\gamma) > 0$ and 
$\be_0 = \be_0 (n_0,C,c,d,\gamma,\th) > 0$ by
\beq
    \label{eq:HOLGRIM_conv_thm_Omega_n0_rmk_lambda0_be0_ptwise}
        \lambda_0 := \inf \left\{ t > 0 ~:~ 
        N_{\pack}(\Omega,V,t) \leq \frac{n_0-1}{cD} \right\}
        > 0
        \qquad \text{and} \qquad
        \be_0 := 2cd^k \left( 2C \lambda_0^{\gamma - k} 
        + \th e^{\lambda_0} \right) > 0.
\eeq 
With the value of $\be_0 > 0$ fixed, define a positive
constant $\de = \de(n_0,C,c,d,\gamma,\th,q) > 0$ and an integer 
$\Lambda = \Lambda (\Omega,n_0,C,c,d,\gamma,\th,q) 
\in \Z_{\geq 1}$ by
\beq
    \label{eq:HOLGRIM_conv_thm_Omega_n0_rmk_de_Lambda_n0_ptwise}
        \de := \sup \left\{ t > 0 ~:~ 2Ct^{\gamma - q} + 
        \frac{\be_0}{2}e^t \leq \be_0 \right\} > 0
        \quad \text{and} \quad 
        \Lambda := N_{\cov}(\Omega,V,\de).
\eeq 
Since $\Omega \subset V$ is compact, we have that 
$\Lambda$ defined in 
\eqref{eq:HOLGRIM_conv_thm_Omega_n0_rmk_de_Lambda_n0_ptwise} 
is finite. Choose a subset 
$\Sigma = \{ p_1 , \ldots , p_{\Lambda} \} 
\subset \Omega \subset V$
for which $\Omega \subset \cup_{s=1}^{\Lambda} \ovB_V(p_s,\de)$.
Moreover, since 
$N_{\pack}(\Sigma,V,\lambda_0) \leq N_{\pack}(\Omega,V,\lambda_0)$,
we have via 
\eqref{eq:HOLGRIM_conv_thm_Omega_n0_rmk_lambda0_be0_ptwise} that
$N_{\pack}(\Sigma,V,\lambda_0) \leq \frac{n_0-1}{cD}$.
Furthermore, it additionally follows from 
\eqref{eq:HOLGRIM_conv_thm_Omega_n0_rmk_lambda0_be0_ptwise}
that $0 \leq \th < \be_0 / 2 cd^k$.

Therefore we may apply the \textbf{HOLGRIM} algorithm to 
approximate $\vph$ on $\Sigma$ with 
$M := \min \left\{ \frac{n_0 - 1}{cD} , \Lambda \right\}$,
$\be_0/2$ as the target accuracy in \textbf{HOLGRIM} \ref{HOLGRIM_A}, 
$\th$ as the acceptable recombination error in 
\textbf{HOLGRIM} \ref{HOLGRIM_A}, $k$ as  
the order level in \textbf{HOLGRIM} \ref{HOLGRIM_A}, 
$s_1 = \ldots = s_M = 1$
the shuffle numbers in \textbf{HOLGRIM} \ref{HOLGRIM_A}, 
$k_1 = \ldots = k_M = 1$ as the integers
$k_1 , \ldots , k_M \in \Z_{\geq 1}$ in 
\textbf{HOLGRIM} \ref{HOLGRIM_A}, and 
$A_1 := ||f_1||_{\Lip(\gamma,\Omega,W)} , \ldots , 
A_{\n} := ||f_{\n}||_{\Lip(\gamma,\Omega,W)}$ as the 
scaling factors chosen in \textbf{HOLGRIM} \ref{HOLGRIM_A}.

Observe that $\lambda_0$ defined in 
\eqref{eq:HOLGRIM_conv_thm_example_rmk_lambda0_be0_ptwise}
coincides with the positive constant $r > 0$ arising in 
\eqref{eq:HOLGRIM_conv_thm_r} in the 
\emph{HOLGRIM Convergence Theorem} \ref{thm:HOLGRIM_conv_thm}
for the constants $C$, $\gamma$, $\ep$, $\ep_0$, and $q$ there
as $C$, $\gamma$, $\be_0/2$, $\th$ and $k$ here. Thus, since
$N_{\pack}(\Sigma,V,\lambda_0) \leq \frac{n_0-1}{cD}$,
the \emph{HOLGRIM Convergence Theorem} \ref{thm:HOLGRIM_conv_thm} 
tells us that there is an integer $m \in \{1, \ldots , M\}$
for which the \textbf{HOLGRIM} algorithm terminates after 
completing $m$ steps. Thus, if $Q_m := 1+mcD(d,k)$, 
there are coefficients 
$c_{1} , \ldots , c_{Q_m} \in \R$ and 
indices $e(1) , \ldots , e(Q_m) \in 
\{ 1 , \ldots , \n \}$ with 
(cf. \eqref{eq:HOLGRIM_conv_thm_coeff_sum})
\beq
    \label{eq:HOLGRIM_conv_thm_Omega_n0_rmk_coeff_sum_n0_ptwise}
        \sum_{s=1}^{Q_m} \left|c_{s}\right|
        \left|\left| f_{e(s)}
        \right|\right|_{\Lip(\gamma,\Omega,W)}
        = C,
\eeq
and such that $u \in \Span(\cf) \subset 
\Lip(\gamma,\Omega,W)$ defined by
(cf. \eqref{eq:HOLGRIM_conv_thm_approx})
\beq
    \label{eq:HOLGRIM_conv_thm_Omega_n0_rmk_u_n0_ptwise}
        u := \sum_{s=1}^{Q_m}
        c_{s} f_{e(s)}
        ~\left(
        \begin{array}{c}
            u^{(l)} := \sum_{s=1}^{Q_m} c_{s}
            f^{(l)}_{e(s)} \\
            \text{for every }
            l \in \{0, \ldots , k\} 
        \end{array}
        \right)
        \quad \text{satisfies} \quad
        \max_{z \in \Sigma} \left\{
        \Lambda^k_{\vph - u}(z) \right\}
        \leq \frac{\be_0}{2}.
\eeq
The condition 
\eqref{eq:HOLGRIM_conv_thm_Omega_n0_rmk_coeff_sum_n0_ptwise} 
means that $||u||_{\Lip(\gamma,\Omega,W)} \leq C$.
Recall that the finite subset $\Sigma \subset \Omega$ was 
chosen to be a $\de$-cover of $\Omega$ in the sense that 
$\Omega \subset \cup_{z \in \Sigma} \ovB_V(z,\de)$.
Therefore the definition of $\de$ in 
\eqref{eq:HOLGRIM_conv_thm_example_rmk_de_Lambda_n0_ptwise} and
the estimates in 
\eqref{eq:HOLGRIM_conv_thm_example_rmk_u_n0_ptwise} provide
the hypotheses required to appeal to the 
\emph{Pointwise Lipschitz Sandwich Theorem} 
\ref{thm:pointwise_lip_sand_thm} with the 
$K_0$, $\gamma$, $\ep$, $\ep_0$, $l$, $\Sigma$, $B$, 
$\psi$, and $\phi$ of that result as 
$C$, $\gamma$, $\be_0$, $\be_0/2$, $q$, $\Omega$, $\Sigma$, 
$\vph$, and $u$ here respectively.
Indeed the definition of $\de$ here in 
\eqref{eq:HOLGRIM_conv_thm_example_rmk_de_Lambda_n0_ptwise} 
coincides with 
the definition of $\de_0$ in \eqref{eq:pointwise_lip_sand_thm_de0}
for the choices specified above.
Thus we may invoke the implication 
\eqref{eq:pointwise_lip_sand_thm_imp} to conclude that 
\beq
    \label{eq:HOLGRIM_conv_thm_Omega_n0_rmk_u_[0,1]_n0_ptwise}
        \sup_{z \in \Omega} \left\{
        \Lambda^q_{\vph-u}(z) \right\}
        \leq \be_0.
\eeq
Moreover, we have that $Q_m \leq 1 + cMD \leq n_0$ so that
$u$ is guaranteed to be a linear combination of no greater than 
$n_0$ of the functions in $\cf$.
Thus given an integer $n_0 \in \{1, \ldots , \n\}$ 
satisfying $n_0 \geq 1 + c D(d,k)$, we are 
guaranteed that the \textbf{HOLGRIM} algorithm can find 
an approximation $u \in \Span(\cf)$ of $\vph$ that is a linear
combination of no more than $n_0$ of the functions in $\cf$, 
and is within $\be_0$, for $\be_0$ defined in 
\eqref{eq:HOLGRIM_conv_thm_Omega_n0_rmk_lambda0_be0_ptwise},
of $\vph$ throughout $\Omega$ in the pointwise sense detailed 
in \eqref{eq:HOLGRIM_conv_thm_Omega_n0_rmk_u_[0,1]_n0_ptwise}.
\end{remark}

\begin{remark}
\label{rmk:HOLGRIM_conv_thm_Omega_lip_eta}
Following the strategy outlined in Section 
\ref{sec:compact_domains}, we may combine the 
theoretical guarantees for the \textbf{HOLGRIM}
algorithm provided by Theorem \ref{thm:HOLGRIM_conv_thm} with 
the \emph{Lipschitz Sandwich Theorem} 
\ref{thm:lip_sand_thm} (cf. Theorem 3.1 in \cite{LM24})
to extend the \textbf{HOLGRIM} algorithm to the setting in which
the $\Lip(\gamma)$ functions $f_1 , \ldots , f_{\n}$
are defined throughout an arbitrary non-empty compact subset 
$\Omega \subset V$ rather than a finite subset, and strengthen 
the sense in which the returned approximation $u$ approximates 
$\vph$ throughout $\Omega$ to a $\Lip(\eta)$ sense for a
given $\eta \in (0,\gamma)$.

To make this precise, let $\n \in \Z_{\geq 1}$, 
$\gamma > 0$ with $k \in \Z_{\geq 0}$ such that 
$\gamma \in (k,k+1]$, 
$\eta \in (0,\gamma)$ with $q \in \{0, \ldots , k\}$ such that 
$\eta \in (q,q+1]$, and
suppose that $\Omega \subset V$ is a fixed 
non-empty compact subset with
$f_1 , \ldots , f_{\n} \in \Lip (\gamma,\Omega,W)$.
As in Theorem \ref{thm:HOLGRIM_conv_thm} we consider 
$\vph \in \Lip(\gamma,\Omega,W)$ defined by 
$\vph := \sum_{i=1}^{\n} a_i f_i$ for non-zero coefficients
$a_1 , \ldots , a_{\n} \in \R$.
Making the choices 
$A_1 := ||f_1||_{\Lip(\gamma,\Omega,W)} , \ldots , 
A_{\n} := ||f_{\n}||_{\Lip(\gamma,\Omega,W)}$ 
in \eqref{eq:HOLGRIM_conv_thm_vph_C_D}, take 
$C := \sum_{i=1}^{\n} |a_i| ||f_i||_{\Lip(\gamma,\Omega,W)} > 0$,
and take $D = D(d,k)$ to be as defined in 
\eqref{eq:HOLGRIM_conv_thm_D}.
We now illustrate how the \textbf{HOLGRIM} algorithm can 
be used to obtain an approximation $u$ of $\vph$ throughout 
the compact subset $\Omega \subset V$ in the sense that 
the $\Lip(\eta,\Omega,W)$ norm of the difference $\vph - u$
is small.

Fix a choice of $\ep \in (0,C)$. 
Retrieve the constants 
$\de = \de(C,\gamma,\eta,\ep) > 0$ and 
$\ep_0 = \ep_0(C,\gamma,\eta,\ep) > 0$ 
arising as $\de_0$ and $\ep_0$ respectively in the
\emph{Lipschitz Sandwich Theorem} \ref{thm:lip_sand_thm} 
for the constants $K_0$, $\gamma$, $\eta$, and $\ep$ there
as $C$, $\gamma$, $\eta$, and $\ep$ here respectively. 
Note we are not actually applying Theorem \ref{thm:lip_sand_thm}, 
but simply retrieving constants in preparation for its future 
application.
Subsequently define an integer 
$\Lambda = \Lambda (\Omega,C,\gamma,\eta,\ep) \in \Z_{\geq 1}$ 
as the $\de$-covering number of $\Omega$ in $V$; that is
\beq
    \label{eq:HOLGRIM_conv_thm_rmk_lip_eta_reduct_Lambda}
        \Lambda := \min \left\{ a \in \Z_{\geq 1} 
        ~:~ \exists ~ z_1 , \ldots , z_a \in \Omega 
        \text{ for which } 
        \Omega \subset \bigcup_{s=1}^a \ovB_V(z_s,\de) 
        \right\}.
\eeq 
The subset $\Omega \subset V$ being compact ensures that 
$\Lambda$ defined in 
\eqref{eq:HOLGRIM_conv_thm_rmk_lip_eta_reduct_Lambda} 
is finite. Choose  
$\Sigma = \left\{ p_1 , \ldots , p_{\Lambda} \right\} 
\subset \Omega \subset V$ such that 
$\Omega \subset \cup_{s=1}^{\Lambda} \ovB_V(p_s,\de)$.

With the finite subset $\Sigma \subset \Omega \subset V$ fixed,
we now define $r = r(C,\gamma,\eta,\ep) > 0$ by 
\beq
    \label{eq:HOLGRIM_conv_thm_rmk_lip_eta_r}
        r := \sup \left\{ \lambda > 0 ~:~ 
        2C \lambda^{\gamma - k} + 
        \frac{\ep_0}{2cd^k} e^{\lambda} 
        \leq
        \frac{\ep_0}{cd^k} \right\}.
\eeq 
Subsequently define an integer
$N = N(\Sigma,C,\gamma,\eta,\ep) \in \Z_{\geq 1}$ by
\beq
    \label{eq:HOLGRIM_conv_thm_rmk_lip_eta_N}
        N := \max \left\{ s \in \Z ~:~
        \exists ~ z_1 , \ldots , z_s \in \Sigma 
        \text{ for which } ||z_a - z_b||_V > r 
        \text{ if } a \neq b \right\}.
\eeq 
Consider applying the \textbf{HOLGRIM}
algorithm to approximate $\vph$ on $\Sigma$, with 
$M := \min \left\{ \frac{\n - 1}{cD} , \Lambda \right\}$,
$\ep_0$ as the target accuracy in \textbf{HOLGRIM} \ref{HOLGRIM_A}, 
$\ep_0 / 2 c d^k$ as the acceptable recombination error in 
\textbf{HOLGRIM} \ref{HOLGRIM_A}, $k$ as  
the order level in \textbf{HOLGRIM} \ref{HOLGRIM_A}, 
$s_1 = \ldots = s_M = 1$
the shuffle numbers in \textbf{HOLGRIM} \ref{HOLGRIM_A}, 
$k_1 = \ldots = k_M = 1$ as the integers
$k_1 , \ldots , k_M \in \Z_{\geq 1}$ in 
\textbf{HOLGRIM} \ref{HOLGRIM_A}, and 
$A_1 := ||f_1||_{\Lip(\gamma,\Omega,W)} , \ldots , 
A_{\n} := ||f_{\n}||_{\Lip(\gamma,\Omega,W)}$ as the 
scaling factors chosen in \textbf{HOLGRIM} \ref{HOLGRIM_A}.

If the integer $N$ defined in 
\eqref{eq:HOLGRIM_conv_thm_rmk_compact_N} satisfies that 
$N \leq M$, then
Theorem \ref{thm:HOLGRIM_conv_thm} guarantees that 
there is some integer $m \in \{1, \ldots , N\}$
for which the \textbf{HOLGRIM} algorithm terminates after 
completing $m$ steps. Thus, if $Q_m := 1+mcD(d,k)$, 
there are coefficients 
$c_{1} , \ldots , c_{Q_m} \in \R$ and 
indices $e(1) , \ldots , e(Q_m) \in 
\{ 1 , \ldots , \n \}$ with 
(cf. \eqref{eq:HOLGRIM_conv_thm_coeff_sum})
\beq
    \label{eq:HOLGRIM_conv_thm_rmk_lip_eta_coeff_sum}
        \sum_{s=1}^{Q_m} \left|c_{s}\right|
        \left|\left| f_{e(s)}
        \right|\right|_{\Lip(\gamma,\Omega,W)}
        = C,
\eeq
and such that $u \in \Span(\cf) \subset 
\Lip(\gamma,\Omega,W)$ defined by
(cf. \eqref{eq:HOLGRIM_conv_thm_approx})
\beq
    \label{eq:HOLGRIM_conv_thm_rmk_lip_eta_approx}
        u := \sum_{s=1}^{Q_m}
        c_{s} f_{e(s)}
        ~\left(
        \begin{array}{c}
            u^{(l)} := \sum_{s=1}^{Q_m} c_{s}
            f^{(l)}_{e(s)} \\
            \text{for every }
            l \in \{0, \ldots , k\} 
        \end{array}
        \right)
        \quad \text{satisfies} \quad
        \max_{z \in \Sigma} \left\{
        \Lambda^k_{\vph - u}(z) \right\}
        \leq \ep_0.
\eeq
Observe that the condition 
\eqref{eq:HOLGRIM_conv_thm_rmk_lip_eta_coeff_sum} means that 
$||u||_{\Lip(\gamma,\Omega,W)} \leq C$.
Recall that the finite subset $\Sigma \subset \Omega$ was 
chosen to be a $\de$-cover of $\Omega$ in the sense that 
$\Omega \subset \cup_{z \in \Sigma} \ovB_V(z,\de)$.
Therefore, recalling how the constants $\de$ and $\ep_0$ here
were retrieved from the \emph{Lipschtz Sandwich Theorem} 
\ref{thm:lip_sand_thm}, we have the required hypotheses 
in order to appeal to the \emph{Lipschitz Sandwich Theorem} 
\ref{thm:lip_sand_thm} for the constants 
$K_0$, $\gamma$, $\eta$, and $\ep$ there as 
$C$, $\gamma$, $\eta$, and $\ep$ here respectively, 
the subsets $B$ and $\Sigma$ there are $\Sigma$ and $\Omega$
here respectively, and the functions $\psi$ and $\phi$ there
as $\vph$ and $u$ here respectively.
By doing so, we may conclude via the implication 
\eqref{eq:lip_sand_thm_imp} that 
\beq
    \label{eq:HOLGRIM_conv_thm_rmk_lip_eta_approx_B}
        \left|\left| \vph_{[q]} - u_{[q]} 
        \right|\right|_{\Lip(\eta,\Omega,W)} 
        \leq \ep
\eeq
where we use the notation that if 
$\phi = \left( \phi^{(0)} , \ldots , \phi^{(k)} \right) 
\in \Lip(\gamma,\Omega,W)$ then 
$\phi_{[q]} := \left( \phi^{(0)} , \ldots , \phi^{(q)} \right)$.
Consequently the \textbf{HOLGRIM} algorithm has returned an 
approximation $u \in \Span(\cf) \subset \Lip(\gamma,\Omega,W)$
of $\vph$ that has the sparsity properties as outlined in 
Remark \ref{rmk:HOLGRIM_conv_thm_1} and is
within $\ep$ of $\vph$ in the $\Lip(\eta,\Omega,W)$ norm 
sense detailed in 
\eqref{eq:HOLGRIM_conv_thm_rmk_lip_eta_approx_B}.
That is, in this case the \textbf{HOLGRIM} algorithm 
is guaranteed to return an approximation $u$ of $\vph$ satisfying
\textbf{Compact Domain Lipschitz Approximation Conditions} 
\ref{compact_lip_eta_approx_prop_1}, 
\ref{compact_lip_eta_approx_prop_2}, and 
\ref{compact_lip_eta_approx_prop_3} from Section 
\ref{sec:compact_domains}.
\end{remark}

\begin{remark}
\label{rmk:HOLGRIM_conv_thm_Omega_n0_lip_eta}
Following the approach outlined in Remark 
\ref{rmk:HOLGRIM_conv_thm_reverse_imp}, we illustrate the 
theoretical guarantees available for the \textbf{HOLGRIM} 
algorithm when used as in Remark
\ref{rmk:HOLGRIM_conv_thm_Omega_lip_eta}
but with an additional restriction on the maximum 
number of functions from $\cf$ that may be used to construct 
the approximation $u$. 

To make this precise, let $\n \in \Z_{\geq 1}$, 
$\gamma > 0$ with $k \in \Z_{\geq 0}$ such that 
$\gamma \in (k,k+1]$, $\eta \in (0,\gamma)$ with 
$q \in \{0 , \ldots , k\}$ such that 
$\eta \in (q,q+1]$, and
suppose that $\Omega \subset V$ is a fixed 
non-empty compact subset with
$f_1 , \ldots , f_{\n} \in \Lip (\gamma,\Omega,W)$.
As in Theorem \ref{thm:HOLGRIM_conv_thm} we consider 
$\vph \in \Lip(\gamma,\Omega,W)$ defined by 
$\vph := \sum_{i=1}^{\n} a_i f_i$ for non-zero coefficients
$a_1 , \ldots , a_{\n} \in \R$.
Let $C := \sum_{i=1}^{\n} |a_i| ||f_i||_{\Lip(\gamma,\Omega,W)} > 0$,
and take $D = D(d,k)$ to be as defined in 
\eqref{eq:HOLGRIM_conv_thm_D}.
Assuming that $\n > 1 + c D$, fix 
a choice of integer $n_0 \in \{1, \ldots , \n\}$ such that
$n_0 \geq 1 + cD$.
We consider the guarantees available for the quantity 
$||\vph_{[q]} - u_{[q]} ||_{\Lip(\eta,\Omega,W)}$ 
for the approximation returned by the \textbf{HOLGRIM} algorithm
with the restriction that $u$ is a linear combination of at most
$n_0$ of the functions in $\cf$. Here we use the notation that if 
$\phi = \left( \phi^{(0)} , \ldots , \phi^{(k)} \right) 
\in \Lip(\gamma,\Omega,W)$ then 
$\phi_{[q]} := \left( \phi^{(0)} , \ldots , \phi^{(q)} \right)$.
We will see that the stronger sense in which $u$ 
approximates $\vph$ throughout $\Omega$ comes at a cost of 
less explicit constants.

Fix $\th \geq 0$ and define 
$\lambda_0 = \lambda_0 (n_0,c,d,\gamma) > 0$ and 
$\be_0 = \be_0 (n_0,C,c,d,\gamma,\th) > 0$ by
\beq
    \label{eq:HOLGRIM_conv_thm_Omega_n0_rmk_lambda0_be0_lip_eta}
        \lambda_0 := \inf \left\{ t > 0 ~:~ 
        N_{\pack}(\Omega,V,t) \leq \frac{n_0 - 1}{cD} \right\}
        > 0
        \qquad \text{and} \qquad
        \be_0 := cd^k \left( 2C \lambda_0^{\gamma - k} 
        + \th e^{\lambda_0} \right) > 0.
\eeq 
It follows from 
\eqref{eq:HOLGRIM_conv_thm_Omega_n0_rmk_lambda0_be0_lip_eta}
that $0 \leq \th < \be_0 / cd^k$.
With the value of $\be_0 > 0$ fixed, determine a positive
constant $\xi = \xi(n_0,C,c,d,\gamma,\eta,\th) > 0$ by 
\beq
    \label{eq:HOLGRIM_conv_thm_Omega_n0_rmk_xi_lip_eta}
        \xi := \inf \left\{ \ep > 0 ~:~ \be_0 \leq 
        \ep_0(C,\gamma,\eta,\ep) \right\}
\eeq 
where, given $\ep > 0$,  
$\ep_0 = \ep_0(C,\gamma,\eta,\ep) > 0$ denotes the constant 
$\ep_0$ retrieved from the \emph{Lipschitz Sandwich Theorem} 
\ref{thm:lip_sand_thm} for the choices of the constants 
$K_0$, $\gamma$, $\eta$, and $\ep$ there as 
$C$, $\gamma$, $\eta$, and $\ep$ here respectively.
After the value of $\xi > 0$ is fixed, retrieve the constant 
$\de = \de(n_0,C,c,d,\gamma,\eta,\th) > 0$ arising as the constant
$\de_0$ in the \emph{Lipschitz Sandwich Theorem} 
\ref{thm:lip_sand_thm} for the choices of the constants 
$K_0$, $\gamma$, $\eta$, and $\ep$ there as 
$C$, $\gamma$, $\eta$, and $\xi$ here respectively.
Finally, define an integer 
$\Lambda = \Lambda (\Omega,n_0,C,c,d,\gamma,\th,q) 
\in \Z_{\geq 1}$ by
\beq
    \label{eq:HOLGRIM_conv_thm_Omega_n0_rmk_Lambda_n0_lip_eta}
        \Lambda := N_{\cov}(\Omega,V,\de).
\eeq 
Since the subset $\Omega \subset V$ is compact, the integer
$\Lambda$ defined in 
\eqref{eq:HOLGRIM_conv_thm_example_rmk_Lambda_n0_lip_eta} 
is finite. Choose a subset 
$\Sigma = \{ p_1 , \ldots , p_{\Lambda} \} \subset \Omega$
for which $\Omega \subset \cup_{s=1}^{\Lambda} \ovB_V(p_s,\de)$.
Moreover, since $N_{\pack}(\Sigma,V,\lambda_0) \leq 
N_{\pack}(\Omega,V,\lambda_0)$, 
\eqref{eq:HOLGRIM_conv_thm_Omega_n0_rmk_lambda0_be0_lip_eta}
ensures that 
$N_{\pack}(\Sigma,V,\lambda_0) \leq \frac{n_0-1}{cD}$.

Apply the \textbf{HOLGRIM} algorithm to 
approximate $\vph$ on $\Sigma$ with 
$M := \min \left\{ \frac{n_0 - 1}{cD} , \Lambda \right\}$,
$\be_0$ as the target accuracy in \textbf{HOLGRIM} \ref{HOLGRIM_A}, 
$\th$ as the acceptable recombination error in 
\textbf{HOLGRIM} \ref{HOLGRIM_A}, $k$ as  
the order level in \textbf{HOLGRIM} \ref{HOLGRIM_A}, 
$s_1 = \ldots = s_M = 1$
the shuffle numbers in \textbf{HOLGRIM} \ref{HOLGRIM_A}, 
$k_1 = \ldots = k_M = 1$ as the integers
$k_1 , \ldots , k_M \in \Z_{\geq 1}$ in 
\textbf{HOLGRIM} \ref{HOLGRIM_A}, and 
$A_1 := ||f_1||_{\Lip(\gamma,\Omega,W)} , \ldots , 
A_{\n} := ||f_{\n}||_{\Lip(\gamma,\Omega,W)}$ as the 
scaling factors chosen in \textbf{HOLGRIM} \ref{HOLGRIM_A}.

Observe that $\lambda_0$ defined in 
\eqref{eq:HOLGRIM_conv_thm_Omega_n0_rmk_lambda0_be0_lip_eta}
coincides with the positive constant $r > 0$ arising in 
\eqref{eq:HOLGRIM_conv_thm_r} in the 
\emph{HOLGRIM Convergence Theorem} \ref{thm:HOLGRIM_conv_thm}
for the constants $C$, $\gamma$, $\ep$, $\ep_0$, and $q$ there
as $C$, $\gamma$, $\be_0$, $\th$ and $k$ here. Thus, since
$N_{\pack}(\Sigma,V,\lambda_0) \leq \frac{n_0-1}{cD}$,
the \emph{HOLGRIM Convergence Theorem} \ref{thm:HOLGRIM_conv_thm} 
tells us that there is an integer $m \in \{1, \ldots , M\}$
for which the \textbf{HOLGRIM} algorithm terminates after 
completing $m$ steps. Thus, if $Q_m := 1+mcD(d,k)$, 
there are coefficients 
$c_{1} , \ldots , c_{Q_m} \in \R$ and 
indices $e(1) , \ldots , e(Q_m) \in 
\{ 1 , \ldots , \n \}$ with 
(cf. \eqref{eq:HOLGRIM_conv_thm_coeff_sum})
\beq
    \label{eq:HOLGRIM_conv_thm_Omega_n0_rmk_coeff_sum_n0_lip_eta}
        \sum_{s=1}^{Q_m} \left|c_{s}\right|
        \left|\left| f_{e(s)}
        \right|\right|_{\Lip(\gamma,\Omega,W)}
        = C,
\eeq
and such that $u \in \Span(\cf) \subset 
\Lip(\gamma,\Omega,W)$ defined by
(cf. \eqref{eq:HOLGRIM_conv_thm_approx})
\beq
    \label{eq:HOLGRIM_conv_thm_Omega_n0_rmk_u_n0_lip_eta}
        u := \sum_{s=1}^{Q_m}
        c_{s} f_{e(s)}
        ~\left(
        \begin{array}{c}
            u^{(l)} := \sum_{s=1}^{Q_m} c_{s}
            f^{(l)}_{e(s)} \\
            \text{for every }
            l \in \{0, \ldots , k\} 
        \end{array}
        \right)
        \quad \text{satisfies} \quad
        \max_{z \in \Sigma} \left\{
        \Lambda^k_{\vph - u}(z) \right\}
        \leq \be_0.
\eeq
The condition 
\eqref{eq:HOLGRIM_conv_thm_Omega_n0_rmk_coeff_sum_n0_lip_eta} 
means that $||u||_{\Lip(\gamma,\Omega,W)} \leq C$.
Recall that the finite subset $\Sigma \subset \Omega$ was 
chosen to be a $\de$-cover of $\Omega$ in the sense that 
$\Omega \subset \cup_{z \in \Sigma} \ovB_V(z,\de)$.
Further recall that $\de$ is the constant $\de_0$ arising 
in the \emph{Lipschtz Sandwich Theorem} 
\ref{thm:lip_sand_thm} for the choices of the constants
$K_0$, $\gamma$, $\eta$, and $\ep$ there as 
$C$, $\gamma$, $\eta$, and $\xi$ here.
Finally, recall that $\xi > 0$ was chosen in 
\eqref{eq:HOLGRIM_conv_thm_example_rmk_xi_lip_eta} 
so that $\be_0 \in [0,\ep_0]$ where $\ep_0$ is the constant
$\ep_0$ arising in the \emph{Lipschtz Sandwich Theorem} 
\ref{thm:lip_sand_thm} for the choices of the constants
$K_0$, $\gamma$, $\eta$, and $\ep$ there as 
$C$, $\gamma$, $\eta$, and $\xi$ here.

Therefore, we have the required hypotheses to appeal to the 
\emph{Lipschitz Sandwich Theorem} 
\ref{thm:lip_sand_thm} for the constants 
$K_0$, $\gamma$, $\eta$, and $\ep$ there as 
$C$, $\gamma$, $\eta$, and $\xi$ here respectively, 
the subsets $B$ and $\Sigma$ there are $\Sigma$ and $\Omega$
here respectively, and the functions $\psi$ and $\phi$ there
as $\vph$ and $u$ here respectively.
By doing so, we may conclude via the implication 
\eqref{eq:lip_sand_thm_imp} that 
\beq
    \label{eq:HOLGRIM_conv_thm_Omega_n0_rmk_u_[0,1]_n0_lip_eta}
        \left|\left| \vph_{[q]} - u_{[q]} 
        \right|\right|_{\Lip(\eta,\Omega,W)} 
        \leq \xi
\eeq
where we use the notation that if 
$\phi = \left( \phi^{(0)} , \ldots , \phi^{(k)} \right) 
\in \Lip(\gamma,\Omega,W)$ then 
$\phi_{[q]} := \left( \phi^{(0)} , \ldots , \phi^{(q)} \right)$.

Moreover, we have that $Q_m \leq 1 + cMD \leq n_0$ so that
$u$ is guaranteed to be a linear combination of no greater than 
$n_0$ of the functions in $\cf$.
Thus given an integer $n_0 \in \{1, \ldots , \n\}$ with 
$n_0 \geq 1 + c D(d,k)$, we are 
guaranteed that the \textbf{HOLGRIM} algorithm can find 
an approximation $u \in \Span(\cf)$ of $\vph$ that is a linear
combination of no more than $n_0$ of the functions in $\cf$, 
and is within $\xi$, for $\xi$ defined in 
\eqref{eq:HOLGRIM_conv_thm_Omega_n0_rmk_xi_lip_eta},
of $\vph$ throughout $\Omega$ in the $\Lip(\eta,\Omega,W)$ 
norm sense detailed 
in \eqref{eq:HOLGRIM_conv_thm_Omega_n0_rmk_u_[0,1]_n0_lip_eta}.
\end{remark}

\begin{remark}
\label{rmk:HOLGRIM_conv_thm_example}
We now provide an illustrative example of several of the 
previous remarks regarding the \emph{HOLGRIM Convergence Theorem}
\ref{thm:HOLGRIM_conv_thm}.
For this purpose consider a fixed $\gamma > 0$ with 
$k \in \Z_{\geq 0}$ such that $\gamma \in (k,k+1]$, 
a fixed $\eta \in (0,\gamma)$ with $q \in \{0, \ldots , k\}$
such that $\eta \in (q,q+1]$, a fixed $\ep > 0$, and a 
fixed $d \in \Z_{\geq 1}$.
Take $V := \R^d$ equipped with its usual Euclidean norm 
$||\cdot||_2$, take $\Omega := [0,1]^d \subset \R^d$ to be 
the unit cube in $\R^d$, and take $W := \R$.
The norm $||\cdot||_2$ is induced by the standard Euclidean 
dot product $\left< \cdot , \cdot \right>_{\R^d}$ on $\R^d$. 
Equip the tensor powers of $\R^d$ with admissible norms in the 
sense of Definition \ref{admissible_tensor_norm} by 
extending the inner product $\left< \cdot , \cdot \right>_{\R^d}$
to the tensor powers, and subsequently taking the norm induced 
by the resulting inner product on each tensor power 
(see, for example, Section 2 in \cite{LM24} for details of this 
construction).
Introduce the notation, for $x \in \R^d$ and $r > 0$, that 
$\B^d(x,r) := \left\{ y \in \R^d : ||x-y||_2 < r \right\}$.

Let $\rho \in (0,1)$. Then we have that 
\beq
    \label{eq:pack&cov_num_ub}
        N_{\cov} \left( [0,1]^d , \R^d , \rho \right)
        \leq
        N_{\pack} \left( [0,1]^d , \R^d , \rho \right)
        \leq
        \frac{2^d}{\omega_d} \left( 1 + \frac{1}{\rho} \right)^d
\eeq
where $\omega_d$ denotes the Euclidean volume of the unit-ball 
$\B^d(0,1) \subset \R^d$.
The first inequality in \eqref{eq:pack&cov_num_ub}
is a consequence of the fact that the 
$\rho$-covering number of a set is always no greater than 
its $\rho$-packing number.
The second inequality in \eqref{eq:pack&cov_num_ub} 
is a consequence of a volume comparison argument, 
the details of which may be found, for example, in 
Remark 4.4 in \cite{LM24}.

Fix $\n \in \Z_{\geq 1}$ and consider a collection 
$\cf = \{ f_1 , \ldots , f_{\n} \} \subset \Lip(\gamma,[0,1]^d,\R)$
of non-zero functions in $\Lip(\gamma,[0,1]^d,\R)$.
Fix a choice of non-zero coefficients 
$a_1 , \ldots , a_{\n} \in \R$ and define 
$\vph \in \Lip(\gamma,[0,1]^d,\R)$ by 
$\vph := \sum_{i=1}^{\n} a_i f_i$.
Set $C := \sum_{i=1}^{\n} |a_i| 
||f_i||_{\Lip(\gamma,[0,1]^d,\R)} > 0$
and $D = D(d,k)$ to be the constant defined in 
\eqref{eq:HOLGRIM_conv_thm_D}.

We first consider the use of the \textbf{HOLGRIM} algorithm to 
find an approximation $u \in \Span(\cf)$ of $\vph$ 
satisfying that $\sup_{x \in [0,1]^d } \left\{ 
\Lambda^q_{\vph - u}(x) \right\} \leq \ep$ by following 
the strategy detailed in Remark 
\ref{rmk:HOLGRIM_conv_thm_Omega_ptwise}.
Define a positive constant 
$\de = \de(C,\gamma,\ep,q) > 0$ and an integer
$\Lambda = \Lambda (C,d,\gamma,\ep,q) \in \Z_{\geq 1}$
by (cf. \eqref{eq:HOLGRIM_conv_thm_rmk_r_Omega})
\beq
    \label{eq:HOLGRIM_conv_thm_example_rmk_de_Lambda_ptwise}
        \de := \sup \left\{ \lambda > 0 ~:~ 
        2C \lambda^{\gamma - q} + \frac{\ep}{2} e^{\lambda}
        \leq \ep \right\} > 0
        \quad \text{and} \quad
        \Lambda := \max \left\{ m \in \Z ~:~ m \leq 
        \frac{2^d}{\omega_d} \left( 1 + \frac{1}{\de} 
        \right)^d \right\}.
\eeq 
A consequence of \eqref{eq:pack&cov_num_ub} and 
\eqref{eq:HOLGRIM_conv_thm_example_rmk_de_Lambda_ptwise} is that
$N_{\cov}([0,1]^d,\R^d,\de) \leq \Lambda$, and so 
we may choose a subset 
$\Sigma = \{ p_1 , \ldots , p_{\Lambda} \} 
\subset [0,1]^d$ for which 
$[0,1]^d \subset \cup_{s=1}^{\Lambda} \ovB^d(p_s,\de)$.

Now fix a choice of $\ep_0 \in [0 , \ep/2d^k)$ and define 
a positive constant $r = r(C,d,\gamma,\ep,\ep_0) > 0$ and an integer 
$N = N(C,d,\gamma,\ep,\ep_0) \in \Z_{\geq 1}$ by 
(cf. \eqref{eq:HOLGRIM_conv_thm_rmk_compact_r})
\beq
    \label{eq:HOLGRIM_conv_thm_example_rmk_r_N_ptwise}
        r := \sup \left\{ \lambda > 0 : 
        2C \lambda^{\gamma - k} + \ep_0 e^{\lambda} 
        \leq \frac{\ep}{2 d^k} \right\} > 0 
        \quad \text{and} \quad 
        N := \max \left\{ m \in \Z ~:~ 
        m \leq 
        \frac{2^d}{\omega_d} \left( 1 + \frac{1}{r} 
        \right)^d \right\}.
\eeq 
It is then a consequence of \eqref{eq:pack&cov_num_ub} 
and \eqref{eq:HOLGRIM_conv_thm_example_rmk_r_N_ptwise} that
$N_{\pack}(\Sigma,\R^d,r) \leq N$.

Consider applying the \textbf{HOLGRIM}
algorithm to approximate $\vph$ on $\Sigma$, with 
$M := \min \left\{ \frac{\n - 1}{D} , \Lambda \right\}$,
$\ep/2$ as the target accuracy in \textbf{HOLGRIM} \ref{HOLGRIM_A}, 
$\ep_0$ as the acceptable recombination error in 
\textbf{HOLGRIM} \ref{HOLGRIM_A}, $k$ as  
the order level in \textbf{HOLGRIM} \ref{HOLGRIM_A}, 
$s_1 = \ldots = s_M = 1$
the shuffle numbers in \textbf{HOLGRIM} \ref{HOLGRIM_A}, 
$k_1 = \ldots = k_M = 1$ as the integers
$k_1 , \ldots , k_M \in \Z_{\geq 1}$ in 
\textbf{HOLGRIM} \ref{HOLGRIM_A}, and 
$A_1 := ||f_1||_{\Lip(\gamma,\Omega,W)} , \ldots , 
A_{\n} := ||f_{\n}||_{\Lip(\gamma,\Omega,W)}$ as the 
scaling factors chosen in \textbf{HOLGRIM} \ref{HOLGRIM_A}.

If the integer $N$ defined in 
\eqref{eq:HOLGRIM_conv_thm_example_rmk_r_N_ptwise} satisfies that 
$N \leq M$, then
Theorem \ref{thm:HOLGRIM_conv_thm} guarantees that 
there is some integer $m \in \{1, \ldots , M\}$
for which the \textbf{HOLGRIM} algorithm terminates after 
completing $m$ steps. Thus, if $Q_m := 1+mD(d,k)$, 
there are coefficients 
$c_{1} , \ldots , c_{Q_m} \in \R$ and 
indices $e(1) , \ldots , e(Q_m) \in \{ 1 , \ldots , \n \}$ with 
(cf. \eqref{eq:HOLGRIM_conv_thm_coeff_sum})
\beq
    \label{eq:HOLGRIM_conv_thm_example_rmk_coeff_sum_ptwise}
        \sum_{s=1}^{Q_m} \left|c_{s}\right|
        \left|\left| f_{e(s)}
        \right|\right|_{\Lip(\gamma,[0,1]^d,\R)}
        = C,
\eeq
and such that $u \in \Span(\cf) \subset 
\Lip(\gamma,[0,1]^d,\R)$ defined by
(cf. \eqref{eq:HOLGRIM_conv_thm_approx})
\beq
    \label{eq:HOLGRIM_conv_thm_example_rmk_u_ptwise}
        u := \sum_{s=1}^{Q_m}
        c_{s} f_{e(s)}
        ~\left(
        \begin{array}{c}
            u^{(l)} := \sum_{s=1}^{Q_m} c_{s}
            f^{(l)}_{e(s)} \\
            \text{for every }
            l \in \{0, \ldots , k\} 
        \end{array}
        \right)
        \quad \text{satisfies} \quad
        \max_{z \in \Sigma} \left\{
        \Lambda^k_{\vph - u}(z) \right\}
        \leq \frac{\ep}{2}.
\eeq
Observe that the condition 
\eqref{eq:HOLGRIM_conv_thm_example_rmk_coeff_sum_ptwise} 
means that $||u||_{\Lip(\gamma,[0,1]^d,\R)} \leq C$.
Recall that the finite subset $\Sigma \subset [0,1]^d$ was 
chosen to be a $\de$-cover of $[0,1]^d$ in the sense that 
$[0,1]^d \subset \cup_{x \in \Sigma} \ovB^d(x,\de)$.
Therefore the definition of $\de$ in 
\eqref{eq:HOLGRIM_conv_thm_example_rmk_de_Lambda_ptwise} and
the estimates in 
\eqref{eq:HOLGRIM_conv_thm_example_rmk_u_ptwise} provide
the hypotheses required to appeal to the 
\emph{Pointwise Lipschitz Sandwich Theorem} 
\ref{thm:pointwise_lip_sand_thm} with the 
$K_0$, $\gamma$, $\ep$, $\ep_0$, $l$, $\Sigma$, $B$, 
$\psi$, and $\phi$ of that result as 
$C$, $\gamma$, $\ep$, $\ep/2$, $q$, $[0,1]^d$, $\Sigma$, 
$\vph$, and $u$ here respectively.
Indeed the definition of $\de$ here in 
\eqref{eq:HOLGRIM_conv_thm_example_rmk_de_Lambda_ptwise} 
coincides with 
the definition of $\de_0$ in \eqref{eq:pointwise_lip_sand_thm_de0}
for the choices specified above.
Thus we may invoke the implication 
\eqref{eq:pointwise_lip_sand_thm_imp} to conclude that 
\beq
    \label{eq:HOLGRIM_conv_thm_example_rmk_u_[0,1]_ptwise}
        \sup_{x \in [0,1]^d} \left\{
        \Lambda^q_{\vph-u}(x) \right\}
        \leq \ep.
\eeq
Consequently the \textbf{HOLGRIM} algorithm has returned an 
approximation $u \in \Span(\cf) \subset \Lip(\gamma,[0,1]^d,\R)$
of $\vph$ that is
within $\ep$ of $\vph$ throughout $[0,1]^d$
in the pointwise sense that, for every point $x \in [0,1]^d$, 
we have $\Lambda^q_{\vph - u}(x) \leq \ep$.
Moreover, $u$ is a linear combination of $Q_m = 1 + m D(d,k)$ 
of the $\n$ functions in $\cf$.
Thus, if we are in the setting that $D \Lambda \leq \n - 1$, 
then \eqref{eq:HOLGRIM_conv_thm_example_rmk_de_Lambda_ptwise} 
yields the worst-case upper bound of 
\beq
    \label{eq:HOLGRIM_conv_thm_example_rmk_Qm_ub}
        Q_m \leq 1 + \Lambda D 
        \leq 1 + D + 
        \frac{2^dD}{\omega_d} 
        \left( 1 + \frac{1}{\de} \right)^d
\eeq
for the number of functions from $\n$ that are used to 
construct the approximation $u$ returned by the 
\textbf{HOLGRIM} algorithm.

We next consider the use of the \textbf{HOLGRIM} algorithm to 
find an approximation $u \in \Span(\cf)$ of $\vph$ 
satisfying that 
$||\vph_{[q]} - u_{[q]}||_{\Lip(\eta,[0,1]^d,\R)} \leq \ep$ 
by following the strategy detailed in Remark 
\ref{rmk:HOLGRIM_conv_thm_Omega_lip_eta}.
Retrieve the constants 
$\de = \de(C,\gamma,\eta,\ep) > 0$ and 
$\ep_0 = \ep_0(C,\gamma,\eta,\ep) > 0$ 
arising as $\de_0$ and $\ep_0$ respectively in the
\emph{Lipschitz Sandwich Theorem} \ref{thm:lip_sand_thm} 
for the constants $K_0$, $\gamma$, $\eta$, and $\ep$ there
as $C$, $\gamma$, $\eta$, and $\ep$ here respectively. 
Note we are not actually applying Theorem \ref{thm:lip_sand_thm}, 
but simply retrieving constants in preparation for its future 
application.
Subsequently define an integer 
$\Lambda = \Lambda (C,d,\gamma,\eta,\ep) \in \Z_{\geq 1}$ by 
\beq
    \label{eq:HOLGRIM_conv_thm_example_rmk_Lambda_lip_eta}
        \Lambda := \max \left\{ m \in \Z_{\geq 1} 
        ~:~ m \leq
        \frac{2^d}{\omega_d} \left( 1 + \frac{1}{\de} \right)^d 
        \right\}.
\eeq 
A consequence of \eqref{eq:pack&cov_num_ub} and 
\eqref{eq:HOLGRIM_conv_thm_example_rmk_Lambda_lip_eta} is
that $N_{\cov}([0,1]^d,\R^d,\de) \leq \Lambda$, and so 
we may choose a subset 
$\Sigma = \left\{ p_1 , \ldots , p_{\Lambda} \right\} 
\subset [0,1]^d$ for which
$[0,1]^d \subset \cup_{s=1}^{\Lambda} \ovB^d(p_s,\de)$.

With the finite subset $\Sigma \subset [0,1]^d$ fixed,
we now first define $r = r(C,\gamma,\eta,\ep) > 0$ by
\beq
    \label{eq:HOLGRIM_conv_thm_example_rmk_r_lip_eta}
        r := \sup \left\{ \lambda > 0 ~:~ 
        2C \lambda^{\gamma - k} + 
        \frac{\ep_0}{2cd^k} e^{\lambda} 
        \leq
        \frac{\ep_0}{cd^k} \right\} > 0 
\eeq 
and then an integer
$N = N(C,d,\gamma,\eta,\ep) \in \Z_{\geq 1}$ by
\beq
    \label{eq:HOLGRIM_conv_thm_example_rmk_N_lip_eta}
        N := \max \left\{ m \in \Z ~:~
        m \leq \frac{2^d}{\omega_d} 
        \left( 1 + \frac{1}{r} \right)^d \right\}.
\eeq 
It is then a consequence of \eqref{eq:pack&cov_num_ub} and 
\eqref{eq:HOLGRIM_conv_thm_example_rmk_N_lip_eta} that 
$N_{\pack}(\Sigma,\R^d,r) \leq N$.

Consider applying the \textbf{HOLGRIM}
algorithm to approximate $\vph$ on $\Sigma$, with 
$M := \min \left\{ \frac{\n - 1}{D} , \Lambda \right\}$,
$\ep_0$ as the target accuracy in \textbf{HOLGRIM} \ref{HOLGRIM_A}, 
$\ep_0 / 2 c d^k$ as the acceptable recombination error in 
\textbf{HOLGRIM} \ref{HOLGRIM_A}, $k$ as  
the order level in \textbf{HOLGRIM} \ref{HOLGRIM_A}, 
$s_1 = \ldots = s_M = 1$
the shuffle numbers in \textbf{HOLGRIM} \ref{HOLGRIM_A}, 
$k_1 = \ldots = k_M = 1$ as the integers
$k_1 , \ldots , k_M \in \Z_{\geq 1}$ in 
\textbf{HOLGRIM} \ref{HOLGRIM_A}, and 
$A_1 := ||f_1||_{\Lip(\gamma,\Omega,W)} , \ldots , 
A_{\n} := ||f_{\n}||_{\Lip(\gamma,\Omega,W)}$ as the 
scaling factors chosen in \textbf{HOLGRIM} \ref{HOLGRIM_A}.

If the integer $N$ defined in 
\eqref{eq:HOLGRIM_conv_thm_example_rmk_N_lip_eta} satisfies that 
$N \leq M$, then
Theorem \ref{thm:HOLGRIM_conv_thm} guarantees that 
there is some integer $m \in \{1, \ldots , M\}$
for which the \textbf{HOLGRIM} algorithm terminates after 
completing $m$ steps. Thus, if $Q_m := 1+mD(d,k)$, 
there are coefficients 
$c_{1} , \ldots , c_{Q_m} \in \R$ and 
indices $e(1) , \ldots , e(Q_m) \in 
\{ 1 , \ldots , \n \}$ with 
(cf. \eqref{eq:HOLGRIM_conv_thm_coeff_sum})
\beq
    \label{eq:HOLGRIM_conv_thm_example_rmk_coeff_sum_lip_eta}
        \sum_{s=1}^{Q_m} \left|c_{s}\right|
        \left|\left| f_{e(s)}
        \right|\right|_{\Lip(\gamma,[0,1]^d,\R)}
        = C,
\eeq
and such that $u \in \Span(\cf) \subset 
\Lip(\gamma,[0,1]^d,\R)$ defined by
(cf. \eqref{eq:HOLGRIM_conv_thm_approx})
\beq
    \label{eq:HOLGRIM_conv_thm_example_rmk_u_lip_eta}
        u := \sum_{s=1}^{Q_m}
        c_{s} f_{e(s)}
        ~\left(
        \begin{array}{c}
            u^{(l)} := \sum_{s=1}^{Q_m} c_{s}
            f^{(l)}_{e(s)} \\
            \text{for every }
            l \in \{0, \ldots , k\} 
        \end{array}
        \right)
        \quad \text{satisfies} \quad
        \max_{x \in \Sigma} \left\{
        \Lambda^k_{\vph - u}(x) \right\}
        \leq \ep_0.
\eeq
Observe that the condition 
\eqref{eq:HOLGRIM_conv_thm_example_rmk_coeff_sum_lip_eta} 
means that 
$||u||_{\Lip(\gamma,[0,1]^d,\R)} \leq C$.
Recall that the finite subset $\Sigma \subset [0,1]^d$ was 
chosen to be a $\de$-cover of $[0,1]^d$ in the sense that 
$[0,1]^d \subset \cup_{x \in \Sigma} \ovB^d(x,\de)$.
Therefore, recalling how the constants $\de$ and $\ep_0$ here
were retrieved from the \emph{Lipschtz Sandwich Theorem} 
\ref{thm:lip_sand_thm}, we have the required hypotheses 
in order to appeal to the \emph{Lipschitz Sandwich Theorem} 
\ref{thm:lip_sand_thm} for the constants 
$K_0$, $\gamma$, $\eta$, and $\ep$ there as 
$C$, $\gamma$, $\eta$, and $\ep$ here respectively, 
the subsets $B$ and $\Sigma$ there are $\Sigma$ and $[0,1]^d$
here respectively, and the functions $\psi$ and $\phi$ there
as $\vph$ and $u$ here respectively.
By doing so, we may conclude via the implication 
\eqref{eq:lip_sand_thm_imp} that 
\beq
    \label{eq:HOLGRIM_conv_thm_example_rmk_u_[0,1]_lip_eta}
        \left|\left| \vph_{[q]} - u_{[q]} 
        \right|\right|_{\Lip(\eta,\Omega,W)} 
        \leq \ep
\eeq
where we use the notation that if 
$\phi = \left( \phi^{(0)} , \ldots , \phi^{(k)} \right) 
\in \Lip(\gamma,[0,1]^d,\R)$ then 
$\phi_{[q]} := \left( \phi^{(0)} , \ldots , \phi^{(q)} \right)$.
Consequently the \textbf{HOLGRIM} algorithm has returned an 
approximation $u \in \Span(\cf) \subset \Lip(\gamma,[0,1]^d,\R)$
of $\vph$ that is within $\ep$ of $\vph$ in the 
$\Lip(\eta,[0,1]^d,\R)$ norm sense detailed in 
\eqref{eq:HOLGRIM_conv_thm_example_rmk_u_[0,1]_lip_eta}.
Moreover, $u$ is a linear combination of $Q_m = 1 + m D(d,k)$ 
of the $\n$ functions in $\cf$.
Thus, if we are in the setting that $D \Lambda \leq \n - 1$, 
then \eqref{eq:HOLGRIM_conv_thm_example_rmk_Lambda_lip_eta} 
yields the worst-case upper bound of 
\beq
    \label{eq:HOLGRIM_conv_thm_example_rmk_Qm_ub_lip_eta}
        Q_m \leq 1 + \Lambda D 
        \leq 1 + D + 
        \frac{2^dD}{\omega_d} 
        \left( 1 + \frac{1}{\de} \right)^d
\eeq
for the number of functions from $\n$ that are used to 
construct the approximation $u$ returned by the 
\textbf{HOLGRIM} algorithm.

Assume now that $\n > 1 + 2^d D / \omega_d$.
We fix an integer $n_0 \in \{1, \ldots , \n\}$ with 
$n_0 \geq 1 + 2^d D / \omega_d$ and 
consider how well the \textbf{HOLGRIM} algorithm can 
approximate $\vph$ throughout $[0,1]^d$ using at most 
$n_0$ functions from $\cf$ (cf. Remark 
\ref{rmk:HOLGRIM_conv_thm_reverse_imp}).
We first consider the guarantees available for the pointwise
quantity $\sup_{x \in [0,1]^d} \left\{ \Lambda^q_{\vph-u}(x) 
\right\}$ for the approximation $u$ returned by the 
\textbf{HOLGRIM} algorithm with the restriction that 
$u$ is a linear combination of at most $n_0$ of the 
functions in $\cf$. We follow the approach proposed in 
Remark \ref{rmk:HOLGRIM_conv_thm_Omega_n0_pointwise}.

For this purpose we fix $\th \geq 0$ and define 
$\lambda_0 = \lambda_0 (n_0,d,\gamma) > 0$ and 
$\be_0 = \be_0 (n_0,C,d,\gamma,\th) > 0$ by
\beq
    \label{eq:HOLGRIM_conv_thm_example_rmk_lambda0_be0_ptwise}
        \lambda_0 := \left( \frac{1}{2}\left( 
        \frac{n_0 - 1}{D}\omega_d \right)^{\frac{1}{d}} - 1 
        \right)^{-1}
        > 0
        \qquad \text{and} \qquad
        \be_0 := 2 d^k \left( 2C \lambda_0^{\gamma - k} 
        + \th e^{\lambda_0} \right) > 0.
\eeq 
With the value of $\be_0 > 0$ fixed, define a positive
constant $\de = \de(n_0,C,d,\gamma,\th,q) > 0$ and an integer 
$\Lambda = \Lambda (n_0,C,d,\gamma,\th,q) \in \Z_{\geq 1}$ by
\beq
    \label{eq:HOLGRIM_conv_thm_example_rmk_de_Lambda_n0_ptwise}
        \de := \sup \left\{ t > 0 ~:~ 2Ct^{\gamma - q} + 
        \frac{\be_0}{2}e^t \leq \be_0 \right\} > 0
        \quad \text{and} \quad 
        \Lambda := \max \left\{ m \in \Z ~:~ 
        m \leq \frac{2^d}{\omega_d} 
        \left( 1 + \frac{1}{\de} \right)^d \right\}.
\eeq 
A consequence of \eqref{eq:pack&cov_num_ub} and 
\eqref{eq:HOLGRIM_conv_thm_example_rmk_de_Lambda_n0_ptwise} 
is that $N_{\cov}([0,1]^d,\R^d,\de) \leq \Lambda$, and so 
we may choose a subset 
$\Sigma = \{ p_1 , \ldots , p_{\Lambda} \} \subset [0,1]^d$
for which $[0,1]^d \subset \cup_{s=1}^{\Lambda} \ovB^d(p_s,\de)$.
Moreover, 
\eqref{eq:HOLGRIM_conv_thm_example_rmk_lambda0_be0_ptwise}
ensures that
\beq
    \label{eq:HOLGRIM_conv_thm_example_rmk_lambda0_bd_ptwise}
        \frac{2^d}{\omega_d} \left( 1 + \frac{1}{\lambda_0} 
        \right)^{d} 
        \leq
        \frac{n_0 - 1}{D}
\eeq
so that, via \eqref{eq:pack&cov_num_ub}, we have that
$N_{\pack}(\Sigma,\R^d,\lambda_0) \leq \frac{n_0-1}{D}$.
Furthermore, it follows from 
\eqref{eq:HOLGRIM_conv_thm_example_rmk_lambda0_be0_ptwise}
that $0 \leq \th < \be_0 / 2 d^k$.

Therefore we may apply the \textbf{HOLGRIM} algorithm to 
approximate $\vph$ on $\Sigma$ with 
$M := \min \left\{ \frac{n_0 - 1}{D} , \Lambda \right\}$,
$\be_0/2$ as the target accuracy in \textbf{HOLGRIM} \ref{HOLGRIM_A}, 
$\th$ as the acceptable recombination error in 
\textbf{HOLGRIM} \ref{HOLGRIM_A}, $k$ as  
the order level in \textbf{HOLGRIM} \ref{HOLGRIM_A}, 
$s_1 = \ldots = s_M = 1$
the shuffle numbers in \textbf{HOLGRIM} \ref{HOLGRIM_A}, 
$k_1 = \ldots = k_M = 1$ as the integers
$k_1 , \ldots , k_M \in \Z_{\geq 1}$ in 
\textbf{HOLGRIM} \ref{HOLGRIM_A}, and 
$A_1 := ||f_1||_{\Lip(\gamma,\Omega,W)} , \ldots , 
A_{\n} := ||f_{\n}||_{\Lip(\gamma,\Omega,W)}$ as the 
scaling factors chosen in \textbf{HOLGRIM} \ref{HOLGRIM_A}.

Observe that $\lambda_0$ defined in 
\eqref{eq:HOLGRIM_conv_thm_example_rmk_lambda0_be0_ptwise}
coincides with the positive constant $r > 0$ arising in 
\eqref{eq:HOLGRIM_conv_thm_r} in the 
\emph{HOLGRIM Convergence Theorem} \ref{thm:HOLGRIM_conv_thm}
for the constants $C$, $\gamma$, $\ep$, $\ep_0$, and $q$ there
as $C$, $\gamma$, $\be_0/2$, $\th$ and $k$ here. Thus, since
$N_{\pack}(\Sigma,\R^d,\lambda_0) \leq \frac{n_0-1}{D}$,
the \emph{HOLGRIM Convergence Theorem} \ref{thm:HOLGRIM_conv_thm} 
tells us that there is an integer $m \in \{1, \ldots , M\}$
for which the \textbf{HOLGRIM} algorithm terminates after 
completing $m$ steps. Thus, if $Q_m := 1+mD(d,k)$, 
there are coefficients 
$c_{1} , \ldots , c_{Q_m} \in \R$ and 
indices $e(1) , \ldots , e(Q_m) \in 
\{ 1 , \ldots , \n \}$ with 
(cf. \eqref{eq:HOLGRIM_conv_thm_coeff_sum})
\beq
    \label{eq:HOLGRIM_conv_thm_example_rmk_coeff_sum_n0_ptwise}
        \sum_{s=1}^{Q_m} \left|c_{s}\right|
        \left|\left| f_{e(s)}
        \right|\right|_{\Lip(\gamma,[0,1]^d,\R)}
        = C,
\eeq
and such that $u \in \Span(\cf) \subset 
\Lip(\gamma,[0,1]^d,\R)$ defined by
(cf. \eqref{eq:HOLGRIM_conv_thm_approx})
\beq
    \label{eq:HOLGRIM_conv_thm_example_rmk_u_n0_ptwise}
        u := \sum_{s=1}^{Q_m}
        c_{s} f_{e(s)}
        ~\left(
        \begin{array}{c}
            u^{(l)} := \sum_{s=1}^{Q_m} c_{s}
            f^{(l)}_{e(s)} \\
            \text{for every }
            l \in \{0, \ldots , k\} 
        \end{array}
        \right)
        \quad \text{satisfies} \quad
        \max_{x \in \Sigma} \left\{
        \Lambda^k_{\vph - u}(x) \right\}
        \leq \frac{\be_0}{2}.
\eeq
The condition 
\eqref{eq:HOLGRIM_conv_thm_example_rmk_coeff_sum_n0_ptwise} 
means that $||u||_{\Lip(\gamma,[0,1]^d,\R)} \leq C$.
Recall that the finite subset $\Sigma \subset [0,1]^d$ was 
chosen to be a $\de$-cover of $[0,1]^d$ in the sense that 
$[0,1]^d \subset \cup_{x \in \Sigma} \ovB^d(x,\de)$.
Therefore the definition of $\de$ in 
\eqref{eq:HOLGRIM_conv_thm_example_rmk_de_Lambda_n0_ptwise} and
the estimates in 
\eqref{eq:HOLGRIM_conv_thm_example_rmk_u_n0_ptwise} provide
the hypotheses required to appeal to the 
\emph{Pointwise Lipschitz Sandwich Theorem} 
\ref{thm:pointwise_lip_sand_thm} with the 
$K_0$, $\gamma$, $\ep$, $\ep_0$, $l$, $\Sigma$, $B$, 
$\psi$, and $\phi$ of that result as 
$C$, $\gamma$, $\be_0$, $\be_0/2$, $q$, $[0,1]^d$, $\Sigma$, 
$\vph$, and $u$ here respectively.
Indeed the definition of $\de$ here in 
\eqref{eq:HOLGRIM_conv_thm_example_rmk_de_Lambda_n0_ptwise} 
coincides with 
the definition of $\de_0$ in \eqref{eq:pointwise_lip_sand_thm_de0}
for the choices specified above.
Thus we may invoke the implication 
\eqref{eq:pointwise_lip_sand_thm_imp} to conclude that 
\beq
    \label{eq:HOLGRIM_conv_thm_example_rmk_u_[0,1]_n0_ptwise}
        \sup_{x \in [0,1]^d} \left\{
        \Lambda^q_{\vph-u}(x) \right\}
        \leq \be_0.
\eeq
Moreover, we have that $Q_m \leq 1 + MD \leq n_0$ so that
$u$ is guaranteed to be a linear combination of no greater than 
$n_0$ of the functions in $\cf$.
Thus given an integer $n_0 \in \{1, \ldots , \n\}$ 
for which $n_0 \geq 1 + 2^d D / \omega_d$, we are 
guaranteed that the \textbf{HOLGRIM} algorithm can find 
an approximation $u \in \Span(\cf)$ of $\vph$ that is a linear
combination of no more than $n_0$ of the functions in $\cf$, 
and is within $\be_0$, for $\be_0$ defined in 
\eqref{eq:HOLGRIM_conv_thm_example_rmk_lambda0_be0_ptwise},
of $\vph$ throughout $[0,1]^d$ in the pointwise sense detailed 
in \eqref{eq:HOLGRIM_conv_thm_example_rmk_u_[0,1]_n0_ptwise}.

We next consider the guarantees available for the 
quantity $||\vph_{[q]} - u_{[q]}||_{\Lip(\eta,[0,1]^d,\R)}$ 
for the approximation $u$ returned by the 
\textbf{HOLGRIM} algorithm with the restriction that 
$u$ is a linear combination of, at most, $n_0$ of the 
functions in $\cf$. 
We follow the approach proposed in Remark 
\ref{rmk:HOLGRIM_conv_thm_Omega_n0_lip_eta}.
As observed in Remark \ref{rmk:HOLGRIM_conv_thm_Omega_n0_lip_eta},
the stronger sense in which $u$ approximates $\vph$ 
throughout $[0,1]^d$ comes at a cost of less explicit constants.

Fix $\th \geq 0$ and define 
$\lambda_0 = \lambda_0 (n_0,d,\gamma) > 0$ and 
$\be_0 = \be_0 (n_0,C,d,\gamma,\th) > 0$ by
\beq
    \label{eq:HOLGRIM_conv_thm_example_rmk_lambda0_be0_lip_eta}
        \lambda_0 := \left( \frac{1}{2}\left( 
        \frac{n_0 - 1}{D}\omega_d \right)^{\frac{1}{d}} - 1 
        \right)^{-1}
        > 0
        \qquad \text{and} \qquad
        \be_0 := d^k \left( 2C \lambda_0^{\gamma - k} 
        + \th e^{\lambda_0} \right) > 0.
\eeq 
It follows from 
\eqref{eq:HOLGRIM_conv_thm_example_rmk_lambda0_be0_lip_eta}
that $0 \leq \th < \be_0 / d^k$.
With the value of $\be_0 > 0$ fixed, determine a positive
constant $\xi = \xi(n_0,C,d,\gamma,\eta,\th) > 0$ by 
\beq
    \label{eq:HOLGRIM_conv_thm_example_rmk_xi_lip_eta}
        \xi := \inf \left\{ \ep > 0 ~:~ \be_0 \leq 
        \ep_0(C,\gamma,\eta,\ep) \right\}
\eeq 
where, given $\ep > 0$,  
$\ep_0 = \ep_0(C,\gamma,\eta,\ep) > 0$ denotes the constant 
$\ep_0$ retrieved from the \emph{Lipschitz Sandwich Theorem} 
\ref{thm:lip_sand_thm} for the choices of the constants 
$K_0$, $\gamma$, $\eta$, and $\ep$ there as 
$C$, $\gamma$, $\eta$, and $\ep$ here respectively.
After the value of $\xi > 0$ is fixed, retrieve the constant 
$\de = \de(n_0,C,d,\gamma,\eta,\th) > 0$ arising as the constant
$\de_0$ in the \emph{Lipschitz Sandwich Theorem} 
\ref{thm:lip_sand_thm} for the choices of the constants 
$K_0$, $\gamma$, $\eta$, and $\ep$ there as 
$C$, $\gamma$, $\eta$, and $\xi$ here respectively.
Finally, define an integer 
$\Lambda = \Lambda (n_0,C,d,\gamma,\th,q) \in \Z_{\geq 1}$ by
\beq
    \label{eq:HOLGRIM_conv_thm_example_rmk_Lambda_n0_lip_eta}
        \Lambda := \max \left\{ m \in \Z ~:~ 
        m \leq \frac{2^d}{\omega_d} 
        \left( 1 + \frac{1}{\de} \right)^d \right\}.
\eeq 
A consequence of \eqref{eq:pack&cov_num_ub} and 
\eqref{eq:HOLGRIM_conv_thm_example_rmk_Lambda_n0_lip_eta} 
is that $N_{\cov}([0,1]^d,\R^d,\de) \leq \Lambda$, and so 
we may choose a subset 
$\Sigma = \{ p_1 , \ldots , p_{\Lambda} \} \subset [0,1]^d$
for which $[0,1]^d \subset \cup_{s=1}^{\Lambda} \ovB^d(p_s,\de)$.
Moreover, 
\eqref{eq:HOLGRIM_conv_thm_example_rmk_lambda0_be0_lip_eta}
ensures that
\beq
    \label{eq:HOLGRIM_conv_thm_example_rmk_lambda0_bd_lip_eta}
        \frac{2^d}{\omega_d} \left( 1 + \frac{1}{\lambda_0} 
        \right)^{d} 
        \leq
        \frac{n_0 - 1}{D}
\eeq
so that, via \eqref{eq:pack&cov_num_ub}, we have that
$N_{\pack}(\Sigma,\R^d,\lambda_0) \leq \frac{n_0-1}{D}$.

Apply the \textbf{HOLGRIM} algorithm to 
approximate $\vph$ on $\Sigma$ with 
$M := \min \left\{ \frac{n_0 - 1}{D} , \Lambda \right\}$,
$\be_0$ as the target accuracy in \textbf{HOLGRIM} \ref{HOLGRIM_A}, 
$\th$ as the acceptable recombination error in 
\textbf{HOLGRIM} \ref{HOLGRIM_A}, $k$ as  
the order level in \textbf{HOLGRIM} \ref{HOLGRIM_A}, 
$s_1 = \ldots = s_M = 1$
the shuffle numbers in \textbf{HOLGRIM} \ref{HOLGRIM_A}, 
$k_1 = \ldots = k_M = 1$ as the integers
$k_1 , \ldots , k_M \in \Z_{\geq 1}$ in 
\textbf{HOLGRIM} \ref{HOLGRIM_A}, and 
$A_1 := ||f_1||_{\Lip(\gamma,\Omega,W)} , \ldots , 
A_{\n} := ||f_{\n}||_{\Lip(\gamma,\Omega,W)}$ as the 
scaling factors chosen in \textbf{HOLGRIM} \ref{HOLGRIM_A}.

Observe that $\lambda_0$ defined in 
\eqref{eq:HOLGRIM_conv_thm_example_rmk_lambda0_be0_lip_eta}
coincides with the positive constant $r > 0$ arising in 
\eqref{eq:HOLGRIM_conv_thm_r} in the 
\emph{HOLGRIM Convergence Theorem} \ref{thm:HOLGRIM_conv_thm}
for the constants $C$, $\gamma$, $\ep$, $\ep_0$, and $q$ there
as $C$, $\gamma$, $\be_0$, $\th$ and $k$ here. Thus, since
$N_{\pack}(\Sigma,\R^d,\lambda_0) \leq \frac{n_0-1}{D}$,
the \emph{HOLGRIM Convergence Theorem} \ref{thm:HOLGRIM_conv_thm} 
tells us that there is an integer $m \in \{1, \ldots , M\}$
for which the \textbf{HOLGRIM} algorithm terminates after 
completing $m$ steps. Thus, if $Q_m := 1+mD(d,k)$, 
there are coefficients 
$c_{1} , \ldots , c_{Q_m} \in \R$ and 
indices $e(1) , \ldots , e(Q_m) \in 
\{ 1 , \ldots , \n \}$ with 
(cf. \eqref{eq:HOLGRIM_conv_thm_coeff_sum})
\beq
    \label{eq:HOLGRIM_conv_thm_example_rmk_coeff_sum_n0_lip_eta}
        \sum_{s=1}^{Q_m} \left|c_{s}\right|
        \left|\left| f_{e(s)}
        \right|\right|_{\Lip(\gamma,[0,1]^d,\R)}
        = C,
\eeq
and such that $u \in \Span(\cf) \subset 
\Lip(\gamma,[0,1]^d,\R)$ defined by
(cf. \eqref{eq:HOLGRIM_conv_thm_approx})
\beq
    \label{eq:HOLGRIM_conv_thm_example_rmk_u_n0_lip_eta}
        u := \sum_{s=1}^{Q_m}
        c_{s} f_{e(s)}
        ~\left(
        \begin{array}{c}
            u^{(l)} := \sum_{s=1}^{Q_m} c_{s}
            f^{(l)}_{e(s)} \\
            \text{for every }
            l \in \{0, \ldots , k\} 
        \end{array}
        \right)
        \quad \text{satisfies} \quad
        \max_{x \in \Sigma} \left\{
        \Lambda^k_{\vph - u}(x) \right\}
        \leq \be_0.
\eeq
The condition 
\eqref{eq:HOLGRIM_conv_thm_example_rmk_coeff_sum_n0_lip_eta} 
means that $||u||_{\Lip(\gamma,[0,1]^d,\R)} \leq C$.
Recall that the finite subset $\Sigma \subset [0,1]^d$ was 
chosen to be a $\de$-cover of $[0,1]^d$ in the sense that 
$[0,1]^d \subset \cup_{x \in \Sigma} \ovB^d(x,\de)$.
Further recall that $\de$ is the constant $\de_0$ arising 
in the \emph{Lipschtz Sandwich Theorem} 
\ref{thm:lip_sand_thm} for the choices of the constants
$K_0$, $\gamma$, $\eta$, and $\ep$ there as 
$C$, $\gamma$, $\eta$, and $\xi$ here.
Finally, recall that $\xi > 0$ was chosen in 
\eqref{eq:HOLGRIM_conv_thm_example_rmk_xi_lip_eta} 
so that $\be_0 \in [0,\ep_0]$ where $\ep_0$ is the constant
$\ep_0$ arising in the \emph{Lipschtz Sandwich Theorem} 
\ref{thm:lip_sand_thm} for the choices of the constants
$K_0$, $\gamma$, $\eta$, and $\ep$ there as 
$C$, $\gamma$, $\eta$, and $\xi$ here.

Therefore, we have the required hypotheses to appeal to the 
\emph{Lipschitz Sandwich Theorem} 
\ref{thm:lip_sand_thm} for the constants 
$K_0$, $\gamma$, $\eta$, and $\ep$ there as 
$C$, $\gamma$, $\eta$, and $\xi$ here respectively, 
the subsets $B$ and $\Sigma$ there are $\Sigma$ and $[0,1]^d$
here respectively, and the functions $\psi$ and $\phi$ there
as $\vph$ and $u$ here respectively.
By doing so, we may conclude via the implication 
\eqref{eq:lip_sand_thm_imp} that 
\beq
    \label{eq:HOLGRIM_conv_thm_example_rmk_u_[0,1]_n0_lip_eta}
        \left|\left| \vph_{[q]} - u_{[q]} 
        \right|\right|_{\Lip(\eta,[0,1]^d,\R)} 
        \leq \xi
\eeq
where we use the notation that if 
$\phi = \left( \phi^{(0)} , \ldots , \phi^{(k)} \right) 
\in \Lip(\gamma,[0,1]^d,\R)$ then 
$\phi_{[q]} := \left( \phi^{(0)} , \ldots , \phi^{(q)} \right)$.

Moreover, we have that $Q_m \leq 1 + MD \leq n_0$ so that
$u$ is guaranteed to be a linear combination of no greater than 
$n_0$ of the functions in $\cf$.
Thus given an integer $n_0 \in \{1, \ldots , \n\}$ for 
which $n_0 \geq 1 + 2^d D / \omega_d$, we are 
guaranteed that the \textbf{HOLGRIM} algorithm can find 
an approximation $u \in \Span(\cf)$ of $\vph$ that is a linear
combination of no more than $n_0$ of the functions in $\cf$, 
and is within $\xi$, for $\xi$ defined in 
\eqref{eq:HOLGRIM_conv_thm_example_rmk_xi_lip_eta},
of $\vph$ throughout $[0,1]^d$ in the $\Lip(\eta,[0,1]^d,\R)$ 
norm sense detailed 
in \eqref{eq:HOLGRIM_conv_thm_example_rmk_u_[0,1]_n0_lip_eta}.
\end{remark}
\vskip 4pt
\noindent 
We end this section by providing a proof of the 
\emph{HOLGRIM Convergence Theorem} \ref{thm:HOLGRIM_conv_thm}.
Our strategy is to use the separation of the points 
selected during the \textbf{HOLGRIM} algorithm, established in
the \emph{Point Separation Lemma}
\ref{lemma:HOLGRIM_dist_between_interp_points}, to obtain 
the desired upper bound on the number of steps which the 
algorithm can completed before terminating.

\begin{proof}[Proof of Theorem
\ref{thm:HOLGRIM_conv_thm}]
Let $\n, \Lambda , c , d \in \Z_{\geq 1}$, $\gamma > 0$ with 
$k \in \Z_{\geq 0}$ such that $\gamma \in (k,k+1]$, and
fix a choice of $q \in \{0 , \ldots , k\}$.
Let $\ep , \ep_0$ be real numbers such that
$0 \leq \ep_0 < \frac{\ep}{c d^q}$.
Let $V$ and $W$ be finite dimensional real Banach 
spaces, of dimensions $d$ and $c$ respectively, 
with $\Sigma \subset V$ finite.
Assume that the tensor products of $V$ are all 
equipped with admissible norms (cf. Definition
\ref{admissible_tensor_norm}). 
Assume, for every $i \in \{1, \ldots , \n\}$, that
$f_i = \left( f_i^{(0)} , \ldots , 
f_i^{(k)} \right) \in \Lip(\gamma,\Sigma,W)$ is 
non-zero and define 
$\cf := \left\{ f_i ~:~ i \in \{1 , \ldots , \n\}
\right\} \subset \Lip(\gamma,\Sigma,W)$. 
Choose scalars $A_1 , \ldots , A_{\n} \in \R_{>0}$ such 
that, for every $i \in \{1, \ldots , \n\}$, we have 
$||f_i||_{\Lip(\gamma,\Sigma,W)} \leq A_i$.
Given $a_1 , \ldots , a_{\n} \in \R \setminus \{0\}$, 
define $\vph = \left( \vph^{(0)} , \ldots , \vph^{(k)}
\right) \in \Span(\cf) \subset \Lip(\gamma, \Sigma,W)$ 
and constants $C , D > 0$ by
(cf. \eqref{eq:HOLGRIM_conv_thm_vph_C_D})
\beq
    \label{eq:HOLGRIM_conv_thm_pf_vph_C_D}
        (\bI) \quad 
        \vph := \sum_{i=1}^{\n} a_i f_i,
        \quad \left(
        \begin{array}{c}
            \vph^{(l)} := \sum_{i=1}^{\n} a_i
            f^{(l)}_i \\
            \text{for every }
            l \in \{0, \ldots , k\} 
        \end{array}
        \right)
        \quad \text{and} \quad
        (\bII) \quad 
        C := \sum_{i=1}^{\n} |a_i| A_i > 0. 
\eeq
and (cf. \eqref{eq:HOLGRIM_conv_thm_D})
\beq
    \label{eq:HOLGRIM_conv_thm_pf_D}
        D = D(d,k) := 
        \sum_{s=0}^k \be(d,s) =
        \mathlarger{\mathlarger{\sum}}_{s=0}^k
        \left(
        \begin{array}{c}
            d + s - 1 \\
            s
        \end{array}
        \right).
\eeq
With a view to later applying the \textbf{HOLGRIM} algorithm
to approximate $\vph$ on $\Sigma$, for each 
$i \in \{1 , \ldots , \n\}$ let $\tilde{a}_i := |a_i|$
and $\tilde{f}_i$ be given by $f_i$ if $a_i > 0$ and 
$-f_i$ if $a_i < 0$.
Observe that for every $i \in \{1, \ldots , \n\}$
we have that $\left|\left| \tilde{f}_i 
\right|\right|_{\Lip(\gamma,\Sigma,W)}
= || f_i ||_{\Lip(\gamma,\Sigma,W)}$. Moreover, 
we also have that 
$\tilde{a}_1 , \ldots , \tilde{a}_{\n} > 0$ and that
$\vph = \sum_{i=1}^{\n} \tilde{a}_i \tilde{f}_i$.
Further, for each $i \in \{1, \ldots , \n\}$ we set 
$h_i := \tilde{f}_i / A_i$ and 
$\al_i := \tilde{a}_i A_i$ (cf. \textbf{HOLGRIM} \ref{HOLGRIM_B}).
A first consequence is that for every $i \in \{1, \ldots , \n\}$
we have that $||h_i||_{\Lip(\gamma,\Sigma,W)} \leq 1$.
Further consequences are that $C$ satisfies  
\beq
    \label{eq:HOLGRIM_conv_thm_pf_new_C}
        C = \sum_{i=1}^{\n} \left|a_i\right| A_i
        = \sum_{i=1}^{\n} \tilde{a}_i A_i
        = \sum_{i=1}^{\n} \al_i,
\eeq
and, for every $i \in \{1 , \ldots , \n\}$, that 
$\al_i h_i = \tilde{a}_i \tilde{f}_i = a_i f_i$. 
Thus the expansion for $\vph$ in (\bI) of
\eqref{eq:HOLGRIM_conv_thm_pf_vph_C_D} is equivalent to
\beq    
    \label{eq:HOLGRIM_conv_thm_pf_vph_alpha_expansion}
        \varphi = \sum_{i=1}^{\n} 
        \al_i h_i,
        \qquad \text{and hence} \qquad
        || \varphi ||_{\Lip(\gamma,\Sigma,W)}
        \leq
        \sum_{i=1}^{\n} \al_i 
        || h_i ||_{\Lip(\gamma,\Sigma,W)}
        \leq
        \sum_{i=1}^{\n} \al_i
        \stackrel{
        \eqref{eq:HOLGRIM_conv_thm_pf_new_C}}{=}
        C.
\eeq
Define a positive constant
$r = r(C,\gamma,\ep ,\ep_0,q) >0$ 
as in \eqref{eq:HOLGRIM_conv_thm_r}. That is, 
\beq
    \label{eq:HOLGRIM_conv_thm_pf_r}
        r := \sup \left\{ \lambda > 0 ~:~
        2C \lambda^{\gamma - q}
        +
        \ep_0 e^{\lambda} \leq \frac{\ep}{c d^q} \right\}. 
\eeq
Define
$N = N(\Sigma, C, \gamma, \ep, \ep_0,q) 
\in \Z_{\geq 0}$ to be the $r$-packing 
number of $\Sigma$ as in 
\eqref{eq:HOLGRIM_conv_thm_N}.
That is,
\beq
    \label{eq:HOLGRIM_conv_thm_pf_N}
            N :=
            \max \left\{ s \in \Z ~:~ 
                \exists ~
                z_1 , \ldots , z_s \in \Sigma 
                \text{ for which } 
                || z_a - z_b ||_V
                > r
                \text{ if } a \neq b
            \right\},
\eeq
Consider applying the \textbf{HOLGRIM}
algorithm to approximate $\vph$ on $\Sigma$, with 
$M := \min \left\{ \frac{\n - 1}{cD} , \Lambda \right\}$,
the target accuracy in \textbf{HOLGRIM} \ref{HOLGRIM_A}
is $\ep$, 
the acceptable recombination error in 
\textbf{HOLGRIM} \ref{HOLGRIM_A} is $\ep_0$, 
the order level in \textbf{HOLGRIM} \ref{HOLGRIM_A} is $q$, 
the shuffle numbers in \textbf{HOLGRIM} \ref{HOLGRIM_A} 
are $s_1 = \ldots = s_M = 1$, the integers 
$k_1 , \ldots , k_M \in \Z_{\geq 1}$ in 
\textbf{HOLGRIM} \ref{HOLGRIM_A} are 
$k_1 = \ldots = k_M = 1$, and the scaling factors in 
\textbf{HOLGRIM} \ref{HOLGRIM_A} as 
$A_1 , \ldots , A_{\n} > 0$.
Suppose that the integer $N$ in 
\eqref{eq:HOLGRIM_conv_thm_N} satisfies 
$N \leq M$.

We first prove that the algorithm terminates after completing,
at most, $N$ steps.
If $N =M$ then this is immediate since the \textbf{HOLGRIM}
algorithm is guaranteed to terminate after step $M$. 
Hence we only verify the claim in the non-trivial setting that 
$N < M$, in which case terminating after (at most) $N$ steps 
requires the termination criterion in \textbf{HOLGRIM}
\ref{HOLGRIM_D} to be triggered.

Let $z_1 \in \Sigma$ be the point chosen in the first step 
(cf. \textbf{HOLGRIM} \ref{HOLGRIM_C}).
Let $u_1 \in \Span(\cf)$ be the approximation found via 
the \textbf{HOLGRIM Recombination Step} satisfying, in 
particular, that $\Lambda^k_{\vph - u_1}(z_1) \leq \ep_0$.
Define $\Sigma_1 := \{z_1\} \subset \Sigma$.
We conclude via \eqref{eq:HOLGRIM_point_sep_general_est} 
in Lemma 
\ref{lemma:HOLGRIM_dist_between_interp_points} that, for 
every $z \in \ovB_V(z_1,r) \cap \Sigma$, we have 
$\Lambda^q_{\vph - u_1}(z) \leq \ep / c d^q$.

If $N = 1$ then \eqref{eq:HOLGRIM_conv_thm_pf_N} means that 
$\ovB_V(z_1,r) \cap \Sigma = \Sigma$.
Thus we have established, for every $z \in \Sigma$, that
$\Lambda^q_{\vph-u_1}(z) \leq \ep / c d^q$.
For each $z \in \Sigma$ it follows from 
\eqref{eq:imp_1_num_pts_lemma} in Lemma 
\ref{lemma:number_of_coeffs_for_point_value} 
that, for every $\sigma \in \Tau_{z,q}$, we have 
$|\sigma(\vph-u_1)| \leq \ep / c d^q$.
Recalling \textbf{HOLGRIM} \ref{HOLGRIM_D}, this means 
that the algorithm terminates before step $2$ is completed.
Hence the algorithm terminates after completing 
$N=1$ steps.

If $N \geq 2$ then we note that if the stopping criterion in 
\textbf{HOLGRIM} \ref{HOLGRIM_D} is triggered at the start 
of step $l \in \{2, \ldots , N\}$ then we evidently have that 
the algorithm has terminated after carrying out no more than 
$N$ steps.
Consequently, we need only deal with the case in which the 
algorithm reaches and carries out step $N$ without terminating.
In this case, we claim that the algorithm terminates after
completing step $N$, i.e. that the termination criterion at 
the start of step $N+1$ is triggered. 

Before proving this we fix some notation.
Recalling \textbf{HOLGRIM} \ref{HOLGRIM_D}, 
for $l \in \{2, \ldots , N\}$ let $z_l \in \Sigma$ denote
the point selected at step $l$, and let $u_l \in \Span(\cf)$
denote the approximation found via the 
\textbf{HOLGRIM Recombination Step} satisfying, for 
every $s \in \{1 , \ldots , l\}$, that 
$\Lambda^k_{\vph - u_l}(z_s) \leq \ep_0$.

By appealing to Lemma \ref{lemma:HOLGRIM_dist_between_interp_points}
we deduce both that, whenever $s,t \in \{1, \ldots , l\}$
with $s \neq t$, we have 
(cf. \eqref{eq:HOLGRIM_point_sep_dist_est})
\beq
    \label{eq:HOLGRIM_conv_thm_pf_interp_pts_sep}
        || z_t - z_s ||_V > r, 
\eeq
and that whenever $t \in \{1, \ldots ,l\}$ and 
$z \in \ovB_V(z_t,r) \cap \Sigma$ we have 
(cf. \eqref{eq:HOLGRIM_point_sep_general_est})
\beq
    \label{eq:HOLGRIM_conv_thm_pf_Lambda_b_vph-ul_bd_A}
        \Lambda^b_{\vph-u_l}(z) \leq \frac{\ep}{c d^q}.
\eeq
Consider step $N+1$ of the algorithm at which we 
examine the quantity 
$K := \max \left\{ |\sigma(\vph-u_N)| : 
\sigma \in \Sigma^{\ast}_q \right\}$.
If $K \leq \ep / c d^q$ then the algorithm terminates without
carrying out step $N+1$, and thus has terminated after carrying
out $N$ steps.

Assume that $K > \ep / cd^q$ so that 
$\sigma_{N+1} := \argmax \left\{|\sigma(\vph-u_N)| : 
\sigma \in \Sigma^{\ast}_q \right\}$ satisfies 
\beq
    \label{eq:HOLGRIM_conv_thm_pf_N+1_lin_func_lb}
        |\sigma_{N+1}(\vph-u_N)| > \frac{\ep}{c d^q}.
\eeq
Recall from the \textbf{HOLGRIM Extension Step} that 
$z_{N+1} \in \Sigma$ is taken to be the point for which 
$\sigma_{N+1} \in \Tau_{z_{N+1},q}$. 
It is then a consequence of Lemma \ref{lemma:dual_norm_ests} 
that (cf. \eqref{eq:dual_norm_ests_lemma_pointwise_conc})
\beq
    \label{eq:HOLGRIM_conv_thm_pf_sigm_j_eval_ub}
        \left| \sigma_{N+1} \left( \vph - u_N \right)
        \right| 
        \leq \Lambda^q_{\vph - u_N}(z_{N+1}).
\eeq
Thus we have that
\beq
    \label{eq:HOLGRIM_conv_thm_pf_pointwise_diff_lb}
        \frac{\ep}{c d^q} 
        \stackrel{
        \eqref{eq:HOLGRIM_conv_thm_pf_N+1_lin_func_lb}
        }{<}
        \left| \sigma_{N+1} (\vph - u_N) \right|
        \stackrel{
        \eqref{eq:HOLGRIM_conv_thm_pf_sigm_j_eval_ub}
        }{\leq}
        \Lambda^q_{\vph - u_N}(z_{N+1}).
\eeq
Together, \eqref{eq:HOLGRIM_conv_thm_pf_Lambda_b_vph-ul_bd_A} 
for $l := N$ and 
\eqref{eq:HOLGRIM_conv_thm_pf_pointwise_diff_lb}
yield that, for every $t \in \{1, \ldots , N\}$, we
have $z_{N+1} \notin \ovB_V(z_t,r)$; that is, 
$||z_{N+1} - z_t||_V > r$.

Therefore the points $z_1 , \ldots , z_{N+1} \in \Sigma$
satisfy that, whenever $s,t \in \{1, \ldots , N+1\}$
with $s \neq t$, we have $||z_s - z_t||_V > r$.
Hence
\beq
    \label{eq:HOLGRIM_conv_thm_pf_contradiction}
        N \stackrel{
        \eqref{eq:HOLGRIM_conv_thm_pf_N}
        }{=}
        \max \left\{ t \in \Z : \exists ~ 
        x_1 , \ldots , x_t \in \Sigma \text{ such that }
        ||x_s - x_t||_V > r \text{ whenever } s \neq t 
        \right\} 
        \geq N+1
\eeq
which is evidently a contradiction.
Thus we must in fact have that $K \leq \ep / c d^q$, 
and hence the algorithm must terminate before carrying out 
step $N+1$.

Having established the claimed upper bound for the maximum 
number of steps that the \textbf{HOLGRIM} algorithm 
can complete without terminating, we turn our attention 
to establishing the properties claimed for the approximation 
returned after the algorithm terminates.
Let $m \in \{1, \ldots , N\}$ be the integer for which the 
\textbf{HOLGRIM} algorithm terminates after step $m$. 
Let $u := u_m \in \Span(\cf)$. 
Then $u$ is the final approximation returned by the 
\textbf{HOLGRIM} algorithm.

If $m < \{1 , \ldots , M -1\}$ then the termination 
criterion of \textbf{HOLGRIM} \ref{HOLGRIM_D} is triggered
at the start of step $m+1$.
Consequently, we have that for every linear functional 
$\sigma \in \Sigma_q^{\ast}$ that 
$|\sigma(\vph-u)| \leq \ep / c d^q$.
Recall that $\Sigma^{\ast}_q := \cup_{z \in \Sigma} \Tau_{z,q}$.
Thus, for any $z \in \Sigma$, we must have 
that $|\sigma(\vph - u)| \leq \ep / c d^q$ for 
every $\sigma \in \Tau_{z,q}$.
Consequently, by appealing to Lemma 
\ref{lemma:number_of_coeffs_for_point_value} 
(specifically to the implication 
\eqref{eq:imp_2_num_pts_lemma}), 
we conclude that $\Lambda^q_{\vph - u}(z) \leq \ep$.
Since $z \in \Sigma$ was arbitrary we have established 
that $\max_{z \in \Sigma} \left\{ \Lambda^q_{\vph - u}(z) 
\right\} \leq \ep$.

If $m = M$ then, since $N \leq M$ and 
we have already established that the 
\textbf{HOLGRIM} terminates before completing step $k$ for 
any $k > N$, we must have that $N = M$. 
Consequently, in this case we have that $u = u_N$, and thus 
have previously established that for every linear functional 
$\sigma \in \Sigma_q^{\ast}$ we have 
$|\sigma(\vph-u)| \leq \ep / c d^q$.
And so, by repeating the argument of the paragraph above, 
we may once again conclude that 
$\max_{z \in \Sigma} \left\{ \Lambda^q_{\vph - u}(z) 
\right\}\leq \ep$.

The combination of the previous two paragraphs enables us 
to conclude that 
$\max_{z \in \Sigma} \left\{ \Lambda^q_{\vph - u}(z) 
\right\} \leq \ep$
as claimed in the second part of 
\eqref{eq:HOLGRIM_conv_thm_approx}.

Recalling \textbf{HOLGRIM} \ref{HOLGRIM_C} and \ref{HOLGRIM_D}, 
the \textbf{HOLGRIM Recombination Step} has found 
$u_m \in \Span(\cf)$ satisfying, for every 
$s \in \{1, \ldots ,m\}$,
that $\Lambda^k_{\vph - u_m}(z_s) \leq \ep_0$.
In particular, this is achieved via an application of Lemma 3.1 
in \cite{LM22} to a system of $Q_m := 1 + mcD(d,k)$ 
real-valued equations 
(cf. \eqref{lip_k_recomb_eqn_num_recomb_step} and 
\textbf{HOLGRIM Recombination Step} 
\ref{HOLGRIM_recomb_step_C}).
Therefore recombination returns non-negative coefficients 
$b_1 , \ldots , b_{Q_m} \geq 0$ with 
\beq
    \label{eq:HOLGRIM_conv_thm_pf_approx_coeff_sum}
        \sum_{s=1}^{Q_m} b_{s} 
        =
        \sum_{i=1}^{\n} \al_i
        \stackrel{
        \eqref{eq:HOLGRIM_conv_thm_pf_new_C} 
        }
        {=}
        C,
\eeq
and indices
$e(1) , \ldots , e(Q_m) \in \{1, \ldots , \n\}$ for which 
\beq
    \label{eq:HOLGRIM_conv_thm_pf_approx_u_expansion}
        u = \sum_{s=1}^{Q_m}
        b_{s} h_{e(s)}
        \quad \text{so that for every }
        j \in \{0, \ldots , n\} \quad 
        u^{(j)} = \sum_{s=1}^{Q_m} b_{s}
        h^{(j)}_{e(s)}.
\eeq
A consequence of \eqref{eq:HOLGRIM_conv_thm_pf_approx_u_expansion}
is that 
\beq
    \label{eq:HOLGRIM_conv_thm_pf_lin_comb_fi's}
        u = \sum_{s=1}^{Q_m} 
        b_{s} h_{e(s)}
        = \sum_{s=1}^{Q_m}
        \frac{b_{s}}{A_{e(s)}}
        \tilde{f}_{e(s)}.
\eeq
For each $s \in \{1 , \ldots , Q_m\}$,
define $c_{s} := \frac{b_{s}}{A_{e(s)}}$ if
$\tilde{f}_{e(s)} = f_{e(s)}$ (which, we recall, is
the case if $a_{e(s)} > 0$) and 
$c_{s} := -\frac{b_{s}}{A_{e(s)}}$ if
$\tilde{f}_{e(s)} = -f_{e(s)}$ (which, we recall, is
the case if $a_{e(s)} < 0$).
Then \eqref{eq:HOLGRIM_conv_thm_pf_lin_comb_fi's}
gives the expansion for 
$u \in \Lip(\gamma,\Sigma,W)$ in terms of
the functions $f_1 , \ldots , f_{\n}$
claimed in the first part of
\eqref{eq:HOLGRIM_conv_thm_approx}.
Moreover, from \eqref{eq:HOLGRIM_conv_thm_pf_approx_coeff_sum} 
we have that
\beq
    \label{eq:HOLGRIM_conv_thm_pf_approx_coeff_sum_got}
        \sum_{s=1}^{Q_m} \left| c_{s} \right|
        A_{e(s)}
        =
        \sum_{s=1}^{Q_m} b_{s}
        \stackrel{
        \eqref{eq:HOLGRIM_conv_thm_pf_approx_coeff_sum}
        }{=}
        C
\eeq
as claimed in \eqref{eq:HOLGRIM_conv_thm_coeff_sum}.

It remains only to prove that if the coefficients 
$a_1 , \ldots , a_{\n} \in \R \setminus \{0\}$ 
corresponding to $\vph$ 
(cf. (\bI) of \eqref{eq:HOLGRIM_conv_thm_pf_vph_C_D})
are all positive (i.e. $a_1 , \ldots , a_{\n} > 0$) then 
the coefficients $c_{1} , \ldots , c_{Q_m} \in \R$ 
corresponding to $u$ 
(cf. \eqref{eq:HOLGRIM_conv_thm_pf_lin_comb_fi's}) are 
all non-negative (i.e. $c_{1} , \ldots , c_{Q_m} \geq 0$).
First note that $a_1 , \ldots , a_{\n} > 0$ means, for 
every $i \in \{1 , \ldots , \n\}$, that 
$\tilde{f}_i = f_i$. Consequently, for every 
$s \in \{1 , \ldots , Q_m\}$, we have that
$\tilde{f}_{e(s)} = f_{e(s)}$, and so by definition $c_{s} = b_{s} / A_{e(s)}$.
Since $A_{e(s)} > 0$ and $b_{s} \geq 0$, it follows that $c_{s} \geq 0$.
This completes the proof of Theorem \ref{thm:HOLGRIM_conv_thm}.
\end{proof}

\vskip 4pt 
\noindent
University of Oxford, Radcliffe Observatory,
Andrew Wiles Building, Woodstock Rd, Oxford, 
OX2 6GG, UK.
\vskip 4pt
\noindent
TL: tlyons@maths.ox.ac.uk \\
\url{https://www.maths.ox.ac.uk/people/terry.lyons}
\vskip 4pt
\noindent
AM: andrew.mcleod@maths.ox.ac.uk \\
\url{https://www.maths.ox.ac.uk/people/andrew.mcleod}

\begin{thebibliography}{1}

\bibitem[\textcolor{blue}{ABDHP21}]{ABDHP21}
        D. Alistarh, T. Ben-Nun, N. Dryden, T. Hoefler 
        and A. Peste,
        \emph{Sparsity in Deep Learning: Pruning and 
        growth for efficient inference and training in 
        neural networks},
        J. Mach. Learn. Res., 
        \textbf{22}(241), 1-124, 2021.
    
\bibitem[\textcolor{blue}{ABCGMM18}]{ABCGMM18}
    J.-P. Argaud, B. Bouriquet, F. de Caso, 
    H. Gong, Y. Maday and O. Mula,
    \emph{Sensor Placement in Nuclear Reactors 
    Based on the Generalized Empirical Interpolation 
    Method},
    J. Comput. Phys., vol. {\bf{363}},
    pp. 354-370, 2018.
    
\bibitem[\textcolor{blue}{ABGMM16}]{ABGMM16}
    J.-P. Argaud, B. Bouriquet, H. Gong, 
    Y. Maday and O. Mula,
    \emph{Stabilization of (G)EIM in Presence
    of Measurement Noise: Application to Nuclear
    Reactor Physics},
    In Spectral and High Order Methods for 
    Partial Differential Equations-ICOSAHOM 2016,
    vol. {\bf{119}} of Lect. Notes Comput. 
    Sci. Eng., pp.133-145, 
    Springer, Cham, 2017.

\bibitem[\textcolor{blue}{AGGLS18}]{AGGLS18}
    Imanol P. Arribas, Guy M. Goodwin, John R. Geddes, 
    Terry Lyons and Kate E. Saunders,
    \emph{A signature-based machine learning model 
    for distinguishing bipolar disorder and borderline
    personality disorder},
    Translational psychiatry,
    vol. {\bf{8}}, no. 1, pp. 1-7, 2018.

\bibitem[\textcolor{blue}{ACLLN24}]{Arr24}
    Paola Arrubbarrena, Tom Cass, Maud Lemercier, 
    Terry Lyons 
    and Bojan Nikolic,
    \emph{Novelty Detection on Radion Astronomy Data using 
    Signatures},
    arXiv preprint, 2024.
    \url{https://arxiv.org/abs/2402.14892}

\bibitem[\textcolor{blue}{AK13}]{AK13}
    M. G. Augasts and T. Kathirvalavakumar,
    \emph{Pruning algorithms of neural networks - 
    A comparative study},
    Open Computer Science \textbf{3}, 
    p.105-115, 2013.

\bibitem[\textcolor{blue}{BLL15}]{BLL15}
    Francis Bach, Simon Lacoste-Julien and 
    Fredrik Lindsten, 
    \emph{Sequential Kernel Herding: Frank-Wolfe
    Optimization for Particle Filtering},
    In Artificial Intelligence and Statistics,
    pages 544-552, PMLR, 2015.
  
\bibitem[\textcolor{blue}{BMPSZ08}]{BMPSZ08}    
    F. R. Bach, J. Mairal, J. Ponce, G. Sapiro and
    A. Zisserman,  
    \emph{Discriminative learned dictionaries for local
    image analysis},
    Compute Vision and Pattern Recognition (CVPR), 
    pp.1-8, IEEE 2008.
    
\bibitem[\textcolor{blue}{BMPSZ09}]{BMPSZ09}    
    F. R. Bach, J. Mairal, J. Ponce, G. Sapiro and
    A. Zisserman,
    \emph{Supervised dictionary learning},
    Advances in Neural Information Processing Systems,
    pp.1033-1040, 2009.

\bibitem[\textcolor{blue}{BGLT97}]{BGLT97}
    A. D. Back, C. L. Giles, S. Lawrence and Ah Chung Tsoi,
    \emph{Face recognition: a convolutional neural-network 
    approach},
    in IEEE Transactions on Neural Networks, 
    vol. \textbf{8}, no. 1, pp. 98-113, 
    January 1997.
    
\bibitem[\textcolor{blue}{BMNP04}]{BMNP04}
    M. Barrault, Y. Maday, N. C. Nguyen and 
    A. T. Patera, 
    \emph{An Empirical Interpolation Method: 
    Application to Efficient Reduced-Basis 
    Discretization of Partial Differential Equations},
    C. R. Acad. Sci. Paris, 
    S\'{e}rie I., {\bf{339}}, pp.667-672,
    2004.

\bibitem[\textcolor{blue}{BS22}]{BS22}
    S. Bawa and H. Singh, 
    \emph{Predicting COVID-19 statistics using machine 
    learning regression modle: Li-MuLi-Poly,}
    Multimedia Systems \textbf{28},
    113-120, 2022. 

\bibitem[\textcolor{blue}{BCKLLT24}]{BCKLLT24}
    Iman Munire Bilal, Ryan Sze-Yin Chan, Elena Kochkina, 
    Maria Liakata, Terry Lyons and Talia Tseriotou,
    \emph{Sig-Networks Toolking; Signature Networks for 
    Longitudinal Language Modelling},
    Proceedsings of the 18th Conference of the European Chapter 
    of the Association for Computational Linguistics System 
    Demonstrations, Association for Computational Linguistics,
    pages 223-237, 
    March 2024.

\bibitem[\textcolor{blue}{Bou15}]{Bou15}
    Youness Boutaib, 
    \emph{On Lipschitz maps and the H\"{o}lder regularity 
    of flows}, 
    Rev. Roumaine Math. Pures Appli. 
    \textbf{65}, no. 2, 129-175, 2020.

\bibitem[\textcolor{blue}{Bou22}]{Bou22}
    Youness Boutaib,
    \emph{The Accessibility Problem for Geometric Rough 
    Differential Equations},
    J. Dyn. Control Syst. 
    2023.
    \url{https://doi.org/10.1007/s10883-023-09648-y}
    
\bibitem[\textcolor{blue}{BL22}]{BL22}
    Youness Boutaib and Terry Lyons,
    \emph{A new definition of rough paths on manifolds},
    Annales de la Faceult\'{e} des sciences de Toulouse: 
    Math\'{e}matiques, 
    Serie 6, Volume \textbf{31}, no. 4, pp. 1223-1258, 
    2022.
    
\bibitem[\textcolor{blue}{BCGMO18}]{BCGMO18}
    F. -X. Briol, W. Y. Chen, J. Gorham,
    L. Mackey and C. J. Oates,
    \emph{Stein Points},
    In Proceedings of the 35th International 
    Conference on Machine Learning,
    Volume {\bf{80}}, pp. 843-852,
    PMLR, 2018.
    
\bibitem[\textcolor{blue}{Car81}]{Car81}
    B. Carl,
    \emph{Entropy numbers of diagonal operators with 
    an application to eigenvalue problems},
    J. Approx. Theory, 
    \textbf{32}, pp.135-150, 1981.

\bibitem[\textcolor{blue}{CS90}]{CS90}
    B. Carl and I. Stephani,
    \emph{Entropy, Compactness, and the Approximation of
    Operators},
    Cambridge University Press, Cambridge, UK, 1990.

\bibitem[\textcolor{blue}{CLL04}]{CLL04}
    M. Caruana, T. L\'{e}vy and T. Lyons,
    \emph{Differential equations driven by rough paths},
    Lecture Notes in mathematics, vol. \textbf{1908},
    Springer, 2007, 
    Lectures from the 34th Summer School on Probability Theorey 
    held in Saint-Flour, July 6-24 2004, 
    with an introduction concerning the Summer School by 
    Jean Picard.

\bibitem[\textcolor{blue}{CLL12}]{CLL12}
    T. Cass, C. Litterer and T. Lyons, 
    \emph{Rough paths on manifolds},
    in New trends in stochastic analysis and related topics,
    Interdisciplinary Mathematical Sciences,
    vol. \textbf{12},
    World Scientific, 
    pp. 33-88,
    2012.

\bibitem[\textcolor{blue}{CCFLS20}]{CCFLS20}
    Ryan Sze-Yin Chan, Thomas Cochrane, Peter Foster, 
    Terry Lyons and Zhen Shao,
    \emph{Dimensionless Anomaly Detection on 
    Multivariate Streams with Variance Norm and 
    Path Signature},
    Preprint 2020.
    \url{https://arxiv.org/abs/2006.03487}

\bibitem[\textcolor{blue}{CDLT18}]{CDLT18}
    Ming-Wei Chang, Jacob Devlin, Kenton Lee and 
    Kristina Toutanova,
    \emph{BERT: Pre-Training of Deep Bidirectional 
    Transformers for Language Understanding},
    Proceedings of NAACL-HLT, 
    Association for Computational Linguistics, 
    pages 4171-4186, 
    2019.

\bibitem[\textcolor{blue}{CHXZ20}]{CHXZ20}
    L. Chen, A. Huang, S. Xu and B. Zhang, 
    \emph{Convolutional Neural Network Pruning: 
    A Survey},
    39th Chinese Control Conference (CCC), 
    IEEE, p.7458-7463, 2020.

\bibitem[\textcolor{blue}{CSW10}]{CSW10}
    Y. Chen, A. Smola and M. Welling,
    \emph{Super-Samples from Kernel Herding},
    In Proceedings of the Conference on Uncertainty
    in Artificial Intelligence, 2010.

\bibitem[\textcolor{blue}{CLLMQW24}]{CLLMQW24}
    Yichuan Cheng, Haoliang Li, Terry Lyons, Andrew D. McLeod, 
    Tiexin Qin and Benjamin Walker,
    \emph{Log Neural Controlled Differential Equations: 
    The Lie Brackets Make a Difference},
    Accepted at ICML 2024.

\bibitem[\textcolor{blue}{CK16}]{CK16}
    Ilya Chevyrev and Andrey Kormilitzin,
    \emph{A Primer on the Signature Method in 
    Machine Learning},
    arXiv preprint, 2016.
    \url{https://arxiv.org/abs/1603.03788}
    
\bibitem[\textcolor{blue}{CM73}]{CM73}
    Jon F. Claerbout and Francis Muir,
    \emph{Robust Modeling with Erratic Data},
    Geophysics,
    Vol. {\bf{38}},
    So. 5, pp. 826-844,
    (1973)
           
\bibitem[\textcolor{blue}{DGOY08}]{DGOY08}
    J. Darbon, D. Goldfarb, S. Osher and W. Yin,
    \emph{Bregman Iterative Algorithms for 
    L1 Minimization with Applications to 
    Compressed Sensing},
    SIAM J. Imaging Sci. {\bf{1}}, pp.143-168,
    2008.
    
\bibitem[\textcolor{blue}{DM21-I}]{DM21a}
    Raaz Dwivedi and Lester Mackey,
    \emph{Kernel Thinning},
    \url{https://arxiv.org/abs/2105.05842}
    [stat.ML], 13 November 2021.
    
\bibitem[\textcolor{blue}{DM21-II}]{DM21b}
    Raaz Dwivedi and Lester Mackey,
    \emph{Generalized Kernel Thinning},
    \url{https://arxiv.org/abs/2110.01593}
    [stat.ML], 16 November 2021.
    
\bibitem[\textcolor{blue}{DMS21}]{DMS21}
    Raaz Dwivedi, Lester Mackey and
    Abhishek Shetty,
    \emph{Distribution Compression in Near-Linear
    Time},
    \url{https://arxiv.org/abs/2111.07941}.
    [stat.ML], 17 November 2021. 
   
\bibitem[\textcolor{blue}{ET96}]{ET96}
    D. E. Edmunds and H. Triebel, 
    \emph{Function Spaces, Entropy Numbers, and 
    Differential Operators},
    Cambridge University Press, Cambridge, UK, 1996.
    
\bibitem[\textcolor{blue}{EMS08}]{EMS08}
    M. Elad, J. Mairal and G. Sapiro,
    \emph{Sparse representation for color image
    restoration},
    IEEE Transactions on Image Processing,
    {\bf{17}}(1), 53-69, 2008.

\bibitem[\textcolor{blue}{EPP00}]{EPP00}
    T. Evgeniou, M. Pontil and T. Poggio,
    \emph{Regularization networks and support vector 
    machines},
    Adv. Comput. Math., \textbf{13}, pp.1-50, 2000.

\bibitem[\textcolor{blue}{FS21}]{FS21}
    S. Fischer and I. Steinwart, 
    \emph{A closer look at covering number bounds 
    for Gaussian kernels},
    Journal of Complexity, 
    Volume \textbf{62}, 2021.

\bibitem[\textcolor{blue}{FKLLO21}]{FKLLO21}
    J. Foster, P. Kidger, X. Li, T. Lyons and 
    H. Oberhauser,
    \emph{Neural SDEs as Infinite-Dimensional GANs},
    International Conference on Machine Learning 
    (ICML) 2021.

\bibitem[\textcolor{blue}{FKLM20}]{FKLM20}
    J. Foster, P. Kidger, T. Lyons and J. Morrill,
    \emph{Neural Controlled Differential Equations for
    Irregular Time Series},
    Conference on Neural Information Processing 
    Systems (NeurIPS) 2020.
    
\bibitem[\textcolor{blue}{FKLMS21}]{FKLMS21}
    J. Foster, P. Kidger, T. Lyons, J. Morrill and 
    C. Salvi,
    \emph{Neural Rough Differential Equations for 
    Long Time Series},
    In International Conference on Machine Learning
    (ICML), pp. 7829-7838, 2021.
    
\bibitem[\textcolor{blue}{FKLL21}]{FKLL21}
    James Foster, Patrick Kidger, Xuechen Li and
    Terry Lyons, 
    \emph{Efficient and Accurate Gradients for Neural 
    SDE}, 
    NeurIPS 2021.

\bibitem[\textcolor{blue}{GLSWZ21}]{GLSWZ21}
    John Glossner, Tailin Liang, Shaobo Shi, 
    Lei Wang and Xiaotong Zhang,
    \emph{Pruning and quantization for deep neural 
    network acceleration: A survey},
    Neurocomputing, Volume \textbf{461}, 
    pages 370-403, 2021.

\bibitem[\textcolor{blue}{GO09}]{GO09}    
    T. Goldstein and S. Osher,
    \emph{The Split Bregman Method for 
    L1-Regularized Problems},
    SIAM J. Imaging Sci. {\bf{2}},
    pp.323-343, 2009.
    
\bibitem[\textcolor{blue}{GMNP07}]{GMNP07}
    M. A. Grepl, Y. Maday, N. C. Nguyen and 
    A. T. Patera,
    \emph{Efficient Reduced-Basis Treatment of 
    Nonaffine and Nonlinear Partial Differential
    Equations},
    M2AN (Math. Model. Numer. Anal.),
    2007.

\bibitem[\textcolor{blue}{HLO21}]{HLO21}
    Satoshi Hayakawa, Terry Lyons and Harald Oberhauser,
    \emph{Positively weighted kernel Quadrature via
    Subsampling}, 
    Advances in Neural Information Processing Systems \textbf{35},
    NeurIPS, 
    2022.

\bibitem[\textcolor{blue}{HRSZ15}]{HRSZ15}
    Kaiming He, Shaoqing Ren, Jian Sun and 
    Xiangyu Zhang,
    \emph{Deep Residual Learning for Image Recognition},
    arXiv preprint, 2015.
    \url{https://arxiv.org/abs/1512.03385}

\bibitem[\textcolor{blue}{HKR15}]{HKR15} 
    Samer Hijazi, Rishi Kumar, and Chris Rowen,
    \emph{Using convolutional neural networks for 
    image recognition}, 
    Cadence Design Systems Inc.: San Jose, 
    CA, USA 
    \textbf{9.1} (2015).

\bibitem[\textcolor{blue}{HKLMNS19}]{HKLMNS19}
    S. Howison, A. Kormilitzin, T. Lyons, J. Morrill,
    A. Nevado-Holgado and S. Swaminathan,
    \emph{The signature-based model for early detection 
    of sepsis from electronic health records in the 
    intensive care unit},
    In: 2019 Computing in Cardiology (CinC) (2019).
	
\bibitem[\textcolor{blue}{HKLMNS20}]{HKLMNS20}
    S. Howison, A. Kormilitzin, T. Lyons, J. Morrill,
    A. Nevado-Holgado and S. Swaminathan,
    \emph{Utilization of the signature method to identify 
    the early onset of sepsis from multivariat 
    physiological time series in critical care 
    monitoring},
    Critical Care Medicine {\bf{48}}(10), 
    976--981,
    2020.

\bibitem[\textcolor{blue}{JNYZ24}]{JNYZ24}
    Lei Jiang, Hao Ni, Weixin Yang and Xin Zhang,
    \emph{GCN-DevLSTM:Path Development for Skeleton-Based 
    Action Recognition},
    arXiv Preprint, 
    2024.
    \url{https://arxiv.org/abs/2403.15212}

\bibitem[\textcolor{blue}{JLNSY17}]{JLNSY17}
    Lianwen Jin, Terry Lyons, Hao Ni, Cordelia Schmid 
    and Weixin Yang,
    \emph{Developing the Path Signature Methodology
    and its Application to Landmark-based Human Action
    Recognition},
    In: Yin, G., Zariphopoulou, T. (eds)
    Stochastic Analysis, Filtering and Stochastic 
    Optimization, 
    Springer, Cham. 2022.
    \url{https://doi.org/10.1007/978-3-030-98519-6_18}

\bibitem[\textcolor{blue}{KK20}]{KK20}
    M. Kalini\'{c} and P. K\'{o}m\'{a}r,
    \emph{Denoising DNA encoded library screens
    with sparse learning},
    ACS Combinatorial Science,
    Vol. {\bf{22}}, no. 8, pp.410-421, 
    2020.

\bibitem[\textcolor{blue}{KTT18}]{KTT18}
    Bernard Kamsu-Foguem, Fana Tangara and 
    Boukaye Boubacar Traore,
    \emph{Deep Convolution Neural Network for Image Recognition},
    Ecological Informatics, 
    Volume \textbf{48}, 
    pages 257-268, 
    2018.

\bibitem[\textcolor{blue}{Kol56}]{Kol56}
    A. N. Kolmogorov, 
    \emph{Asymptotic characteristics of some completely 
    bounded metric spaces},
    Dokl. Akad. Nauk. SSSR, \textbf{108}, pp.585-589,
    1956.

\bibitem[\textcolor{blue}{Kuh11}]{Kuh11}
    T. K\"{u}hn, 
    \emph{Covering numbers of Gaussian reproducing 
    kernel Hilbert spaces}, 
    J. Complexity, \textbf{27}, pp.489-499, 2011.

\bibitem[\textcolor{blue}{LLNNSW19}]{LLNNSW19}
	Maria Liakata, Terry Lyons, Alejo J. Nevado-Holgado,
	Hao Ni, Kate Saunders and Bo Wang,
	\emph{A Path Signature Approach for Speech Emotion 
	Recognition},
	Interspeech 2019.

\bibitem[\textcolor{blue}{LLNSTWW20}]{LLNSTWW20}
	Maria Liakata, Terry Lyons, Alejo J. Nevado-Holgardo, 
	Kate Saunders, Niall Taylor, Bo Wang and Yue Wu,
	\emph{Learning to Detect Bipolar Disorder and Borderline 
	Personality Disorder with Language and Speech
	in Non-Clinical Interviews},
	Interspeech 2020. 
	
\bibitem[\textcolor{blue}{LLSVWW21}]{LLSVWW21}
    Maria Liakata, Terry Lyons, Kate E. Saunders,
    Nemanja Vaci, Bo Wang and Yue Wu,
    \emph{Modelling Paralinguistic Properites in 
    Conversational Speech to Detect Bipolar Disorder and 
    Borderliner Personality Disorder},
    ICASSP 2021.

\bibitem[\textcolor{blue}{LY07}]{LY07}
    Y. Lin and M. Yuan,
    \emph{On the Non-Negative Garrote Estimator},
    J. R. Stat. Soc. Ser. B {\bf{69}},
    pp. 143-161, 2007

\bibitem[\textcolor{blue}{LL99}]{LL99}
    W. V. Li and W. Linde, 
    \emph{Approximation, metric entropy and small ball 
    estimates for Gaussian measures},
    Ann. Probab., \textbf{27}, pp.1556-1578, 1999.

\bibitem[\textcolor{blue}{Lyo98}]{Lyo98}
    Terry Lyons,
    \emph{Differential equations driven by rough 
    signals},
    Revista Matem\'{a}tica Iberoamericana, 
    {\bf{14}}(2): 215-310, 1998.

\bibitem[\textcolor{blue}{LM22-I}]{LM22}
    Terry Lyons and Andrew D. McLeod,
    \emph{Greedy Recombination Interpolation Method (GRIM)},
    arXiv preprint, 2022.
    \url{https://arxiv.org/abs/2205.07495}

\bibitem[\textcolor{blue}{LM22-II}]{LM22B}
    Terry Lyons and Andrew D. Mcleod,
    \emph{Signature Methods in Machine Learning},
    arXiv preprint, 2022.
    \url{https://arxiv.org/abs/2206.14674}

\bibitem[\textcolor{blue}{LM24}]{LM24}
    Terry Lyons and Andrew D. McLeod,
    \emph{Higher Order Lipschitz Sandwich Theorems},
    arXiv preprint, 2024.
    \url{https://arxiv.org/abs/2404.06849}

\bibitem[\textcolor{blue}{LP04}]{LP04}
    H. Luschgy and G. Pag\'{e}s,
    \emph{Sharp asymptotics of the Kolmogorov entropy 
    for Gaussian measures},
    J. Funct. Anal., \textbf{212}, pp.89-120, 2004.
    
\bibitem[\textcolor{blue}{MM13}]{MM13}
    Y. Maday, O. Mula, 
    \emph{A generalized empirical interpolation method: 
    Application of reduced basis techniques to data 
    assimilation}, 
    F. Brezzi, P. Colli Franzone, U. Gianazza, 
    G. Gilardi (Eds.), Analysis and Numerics of 
    Partial Differential Equations, Vol. 4 of 
    Springer INdAM Series,
    Springer Milan, 2013, pp. 221–235.

\bibitem[\textcolor{blue}{MMPY15}]{MMPY15}
    Y. Maday, O. Mula, A. T. Patera and M. Yano,
    \emph{The Generalized Empirical Interpolation Method: 
    Stability Theory On Hilbert Spaces With An Application To 
    Stokes Equation},
    Comput. Methods Appl. Mech. Engrg. \textbf{287},
    310-334, 
    2015.
    
\bibitem[\textcolor{blue}{MMT14}]{MMT14}
    Y. Maday, O. Mula and G. Turinici,
    \emph{Convergence analysis of the Generalized 
    Empirical Interpolation Method},
    SIAM J. Numer. Anal.,
    {\bf{54(3)}} 1713-1731, 2014.
    
\bibitem[\textcolor{blue}{MNPP09}]{MNPP09}
    Y. Maday, N. C. Nguyen, A. T. Patera and 
    G. S. H. Pau,
    \emph{A General Multipurpose Interpolation
    Procedure: The Magic Points},
    Commun. Pure Appl. Anal., 
    {\bf{81}}, pp. 383-404, 
    2009.

\bibitem[\textcolor{blue}{McS34}]{McS34}
    Edward James McShane, 
    \emph{Extension of range of functions},
    Bull. Amer. Math. Soc., \textbf{40},
    p.837-842, 1934.

\bibitem[\textcolor{blue}{MRT12}]{MRT12}
    Mehryar Mohri, Afshin Rostamizadeh and Ameet 
    Talwalkar, 
    \emph{Foundations of Machine Learning},
    Massachusetts: MIT Press, 
    USA, 2012.

\bibitem[\textcolor{blue}{NPS22}]{NPS22}
    M. Nikdast, S. Pasricha and F. Sunny,
    \emph{SONIC: A Sparse Neural Network Inference 
    Accelerator with Silicon Photonics for 
    Energy-Efficient Deep Learning},
    27th Asia and South Pacific Design Automation 
    Conference (ASP-DAC), pp. 214-219, 2022.

\bibitem[\textcolor{blue}{PTT22}]{PTT22}
    Sebastian Pokutta, Ken'ichiro Tanaka and
    Kazuma Tsuji, 
    \emph{Sparser Kernel Herding with Pairwise
    Conditional Gradients without Swap Steps},
    \url{https://arxiv.org/abs/2110.12650},
    [math.OC], 8 February 2022.

\bibitem[\textcolor{blue}{Ree93}]{Ree93}
    R. Reed, 
    \emph{Pruning Algorithms - A Survey},
    IEEE Transactions on Neural Networks 
    \textbf{4}, p.74-747, 1993.

\bibitem[\textcolor{blue}{Rya02}]{Rya02}
    Raymond A Ryan,
    \emph{Introduction to tensor products of Banach
    spaces},
    Springer Science \& Business Media, 2002.
    
\bibitem[\textcolor{blue}{SS86}]{SS86}    
    Fadil Santosaf and William W. Symes,
    \emph{Linear Inversion of Band-Limited
    Reflection Seismograms},
    SIAM J. ScI. STAT. COMPUT.
    Vol. {\bf{7}}, No. 4, 
    1986.

\bibitem[\textcolor{blue}{SSW01}]{SSW01}
    B. Sch\"{o}lkopf, A. J. Smola and R. C. Williamson,
    \emph{Generalization performance of regularization 
    networks and support vector machines via entrop numbers 
    of compact operators},
    IEEE Trans. Inform. Theory, \textbf{47}, 
    pp.2516-2532, 2001.

\bibitem[\textcolor{blue}{Ste70}]{Ste70}
    E. M. Stein,
    \emph{Singular Integrals and Differentiability
    Properties of Finctions},
    Princeton Mathamatical Series,
    vol. {\bf{30}},
    Princeton Univeristy Press,
    Princeton, 1970.

\bibitem[\textcolor{blue}{Ste03}]{Ste03}
    I. Steinwart, 
    \emph{Entropy numbers of convex hulls and an 
    application to learning algorithms},
    Arch. Math., \textbf{80}, pp.310-318, 2003.

\bibitem[\textcolor{blue}{SS07}]{SS07}
    I. Steinwart and C. Scovel, 
    \emph{Fast rates for support vector machines using 
    Gaussian kernels},
    Ann. Statist. \textbf{35} (2), 2007.

\bibitem[\textcolor{blue}{TT21}]{TT21}
    Ken'ichiro Tanaka and Kazuma Tsuji,
    \emph{Acceleration of the Kernel Herding
    Algorithm by Improved Gradient Approximation},
    \url{https://arxiv.org/abs/2105.07900},
    [math.NA], 17 May 2021. 
    
\bibitem[\textcolor{blue}{TW19}]{TW19}
    GL. Tian and M. Wang,
    \emph{Adaptive Group LASSO for High-Dimensional
    Generalized Linear Models},
    Stat Papers {\bf{60}}, pp. 1469-1486, 
    2019.

\bibitem[\textcolor{blue}{Tib96}]{Tib96}
    Robert Tibshirani,
    \emph{Regression Shrinkage and Selection 
    via the Lasso},
    Journal of the Royal Statistical Society.
    Series B (Methodological), Vol. {\bf{58}},
    No. 1, pp. 267-288, 1996.
    
\bibitem[\textcolor{blue}{Wel09a}]{Wel09a}
    M. Welling,
    \emph{Herding Dynamical Weights to Learn},
    In Proceedings of the 21st International 
    Conference on Machine Learning, Montreal,
    Quebec, CAN, 2009.
    
\bibitem[\textcolor{blue}{Wel09b}]{Wel09b}
    M. Welling, 
    \emph{Herding Dynamic Weights for Partially 
    Observed Random Field Models}, 
    In Proc. of the Conf. on
    Uncertainty in Artificial Intelligence, 
    Montreal, Quebec, CAN, 2009.
    
\bibitem[\textcolor{blue}{Whi34}]{Whi34}    
    H. Whitney,
    \emph{Analytic extensions of differentiable
    functions defined in closed sets},
    Trans. Amer. Math. Soc. vol. {\bf{36}}
    (1934) pp. 63-89, 1934.

\bibitem[\textcolor{blue}{Whi44}]{Whi44}
    H. Whitney,
    \emph{On the extension of differentiable
    functions},
    Bull. Amer. Math. Soc. {\bf{50(2)}}: 
    76-81, 1944.
    
\bibitem[\textcolor{blue}{XZ16}]{XZ16}
    Y. Xiang and C. Zhang,
    \emph{On the Oracle Property of Adaptive
    Group LASSO in High-Dimensional Linear Models},
    Stat. Pap. {\bf{57}}, pp. 249-265, 
    2016.

\bibitem[\textcolor{blue}{Zho02}]{Zho02}
    D. -X. Zhou, 
    \emph{The covering number in learning theory},
    J. Complexity, \textbf{18}, pp.739-767, 2002.

\end{thebibliography}
\end{document}